\crefname{theorem}{Theorem}{Theorems}
\crefname{thm}{Theorem}{Theorems}
\crefname{lemma}{Lemma}{Lemmas}
\crefname{lem}{Lemma}{Lemmas}
\crefname{remark}{Remark}{Remarks}
\crefname{prop}{Proposition}{Propositions}
\crefname{defn}{Definition}{Definitions}
\crefname{corollary}{Corollary}{Corollaries}
\crefname{conjecture}{Conjecture}{Conjectures}
\crefname{question}{Question}{Questions}
\crefname{chapter}{Chapter}{Chapters}
\crefname{section}{Section}{Sections}
\crefname{figure}{Figure}{Figures}
\crefname{example}{Example}{Examples}
\theoremstyle{plain}
\newtheorem{thm}{Theorem}[section]
\newtheorem{lemma}[thm]{Lemma}
\newtheorem{theorem}[thm]{Theorem}
\newtheorem{corollary}[thm]{Corollary}
\newtheorem{prop}[thm]{Proposition}
\newtheorem{conjecture}[thm]{Conjecture}
\newtheorem{question}[thm]{Question}
\theoremstyle{definition}
\newtheorem{example}[thm]{Example}
\newtheorem{problem}[thm]{Problem}
\theoremstyle{remark}
\newtheorem{remark}[thm]{Remark}
\numberwithin{equation}{section}
\renewcommand{\P}{\mathbb P}
\newcommand{\E}{\mathbb E}
\newcommand{\R}{\mathbb R}
\newcommand{\Z}{\mathbb Z}
\newcommand{\N}{\mathbb N}
\newcommand{\cB}{\mathcal B}
\newcommand{\cF}{\mathcal F}
\newcommand{\cG}{\mathcal G}
\newcommand{\cM}{\mathcal M}
\newcommand{\cP}{\mathcal P}
\newcommand{\sA}{\mathscr A}
\newcommand{\sB}{\mathscr B}
\newcommand{\sC}{\mathscr C}
\newcommand{\sD}{\mathscr D}
\newcommand{\sF}{\mathscr F}
\newcommand{\sG}{\mathscr G}
\newcommand{\sO}{\mathscr O}
\newcommand{\sR}{\mathscr R}
\newcommand{\sT}{\mathscr T}
\newcommand{\sW}{\mathscr W}
\newcommand{\bbX}{\mathbb X}
\newcommand{\bbY}{\mathbb Y}
\newcommand{\eps}{\varepsilon}
\newcommand{\Aut}{\operatorname{Aut}}
\newcommand{\bP}{\mathbf P}
\newcommand{\bE}{\mathbf E}
\newcommand{\stab}{\operatorname{Stab}}
\newcommand{\opleq}{\preccurlyeq}
\def\P{\mathbb{P}}
\newcommand{\Cov}{{\mathrm{Cov}}}
\newcommand{\Var}{{\mathrm{Var}}}
\newcommand{\sS}{\mathscr{S}}
\newcommand{\wlim}{\mathop{\operatorname{w-lim}}}
\DeclareMathSymbol{\leqslant}{\mathalpha}{AMSa}{"36} 
\DeclareMathSymbol{\geqslant}{\mathalpha}{AMSa}{"3E} 
\DeclareMathSymbol{\eset}{\mathalpha}{AMSb}{"3F}     
\renewcommand{\epsilon}{\varepsilon}
\newcommand{\bn}{\mathbf{n}}
\newcommand{\bm}{\mathbf{m}}
\newcommand{\bQ}{\mathbf{Q}}
\newcommand{\bC}{\mathbf{C}}
\newcommand{\bI}{\mathbf{I}}
\newcommand{\bG}{\mathbf{G}}
\newcommand{\bL}{\mathbf{L}}
\newcommand{\bp}{\mathbf{p}}
\tikzset{nomorepostaction/.code=\let\tikz@postactions\pgfutil@empty}
\newcommand\nxleftrightarrow[2][]{%
  \mathrel{\tikz[baseline=-.7ex] \path node[slash underlined,draw,<->,anchor=south] {\(\scriptstyle #2\)} node[anchor=north] {\(\scriptstyle #1\)};}}
\title{\bf Continuity of the Ising phase transition on nonamenable groups}
\renewenvironment{abstract}
 {\par\noindent\textbf{\abstractname.}\ \ignorespaces}
 {\par\medskip}
\author{{\bf Tom Hutchcroft}}
\begin{document}

\date{\small{\today}}

\maketitle

\setstretch{1.15}

\begin{abstract}
We prove rigorously that the ferromagnetic Ising model on any nonamenable Cayley graph undergoes a continuous (second-order) phase transition in the sense that 
there is a unique Gibbs measure at the critical temperature. 
The proof of this theorem is quantitative and also yields power-law bounds on the magnetization at and near criticality. Indeed, we prove more generally that the magnetization $\langle \sigma_o \rangle_{\beta,h}^+$ is a locally H\"older-continuous function of the inverse temperature $\beta$ and external field $h$ throughout the non-negative quadrant $(\beta,h)\in [0,\infty)^2$.
As a second application of the methods we develop, we also prove that the free energy of Bernoulli percolation is twice differentiable at $p_c$ on any transitive nonamenable graph.
\end{abstract}

\tableofcontents






\section{Introduction}
\label{sec:intro}

It has been known since the 19th century that 
the magnetic properties of certain metals such as iron, cobalt, and nickel undergo a qualitative change as they pass through a certain critical temperature, now known as the \emph{Curie temperate}\footnote{The Curie temperature is named after Pierre Curie, who carried out a detailed study of this phase transition in his 1895 doctoral thesis. The fact that such a transition occurs was, however, known well before the work of P.\ Curie, with credit due most appropriately to Pouillet and Faraday; see \cite{jossang1997monsieur} for details. We thank Geoffrey Grimmett for making us aware of this.} of the metal: Below the critical temperature the metal is \emph{ferromagnetic}, meaning that it will remain permanently magnetized after temporary exposure to an external magnetic field, while above the critical temperature the metal is \emph{paramagnetic}, meaning that it will become magnetized in the presence of an external magnetic field but will revert back to being unmagnetized when the external field is removed. The magnetization that remains when the external field is removed is referred to as the \emph{spontaneous magnetization}: it is positive in the ferromagnetic regime and zero in the paramagnetic regime. 

The \emph{Ising model} is a mathematical model that attempts to describe this phase transition. It was introduced in 1920 by Wilhelm Lenz, who suggested the model to his student Ernst Ising as a thesis subject \cite{ising1925beitrag}. 
The model attracted widespread attention following the 1936 work of Peierls \cite{peierls1936ising}, who argued that the model does indeed undergo a phase transition on Euclidean lattices of dimension at least two. 
The Ising model remains today arguably the most famous and intensively studied model in statistical mechanics, with a vast literature devoted to it, and is now used to model many other `cooperative' phenomena in statistical mechanics beyond magnetism. See e.g.\ \cite{MR3752129,1707.00520} for introductions to the Ising model for mathematicians, \cite{MR1446000} for a more physical introduction, and \cite{brush1967history} for a history.


Although the Ising model has traditionally been studied primarily in the setting of Euclidean lattices, there has more recently been substantial interest among both mathematicians and physicists in determining the model's behaviour in other geometric settings, such as hyperbolic spaces. A natural level of generality at which to study the model is that of \emph{(vertex-)transitive graphs}, that is, graphs for which any vertex can be mapped to any other vertex by a symmetry of the graph. The resulting literature is now rather extensive, and includes e.g.\
numerical and non-rigorous studies of critical behaviour \cite{breuckmann2020critical,serina2016free,benedetti2015critical,gendiar2014mean,iharagi2010phase},  rigorous analysis of critical behaviour for some examples \cite{MR1413244,MR1798548,MR1833805,1712.04911,1606.03763}, and analysis of the set of Gibbs measures at low temperature \cite{gandolfo2015manifold,MR1768240,series1990ising,MR1684757}.  Moreover, it is now known that the Ising model has a non-trivial phase transition on any infinite transitive graph that has superlinear volume growth (i.e., is not one-dimensional) \cite{1806.07733}. 

Once non-triviality of the phase transition has been established, it becomes of great interest to understand the model \emph{at the critical temperature}, where it is expected to display various interesting behaviours. Perhaps the most basic  question one can ask about the critical model is whether it belongs to the ferromagnetic or paramagnetic regime. Mathematically, this amounts to asking if the spontaneous magnetization of the model vanishes at the critical temperature, in which case we say that the Ising model undergoes a \emph{continuous phase transition}. 
It is widely believed that the Ising phase transition should be continuous in most cases that it is non-trivial, although this is known to be \emph{false} for certain long-range models in one dimension \cite{MR939480}.

The primary goal of this paper is to prove that the Ising model undergoes a continuous phase transition on any \emph{nonamenable, unimodular transitive graph}. Here, we recall that a graph $G=(V,E)$ is said to be \textbf{nonamenable} if 
$\inf\left\{ |\partial_E W|/\sum_{v\in W} \deg(v) : W \subseteq V \text{ finite} \right\}>0$, where $\partial_E W$ is the set of edges with one endpoint in $W$ and the other not in $W$. \emph{Unimodularity} is a technical condition that holds in most natural examples, including in every Cayley graph of a finitely generated group and every transitive amenable graph \cite{MR1082868}; see \cref{subsec:unimodularity_background} for background. The theorem applies in particular to the Ising model on tessellations of $d$-dimensional hyperbolic space $\mathbb{H}^d$ with $d\geq 2$, for which the result was only previously known under perturbative hypotheses \cite{MR1798548,MR1413244,MR1833805}.

\begin{thm}
\label{thm:main_simple}
Let $G$ be a connected, locally finite, transitive, unimodular, nonamenable graph. Then the phase transition of the Ising model on $G$ is continuous: at the critical temperature the spontaneous magnetization is zero and there is a unique Gibbs measure.
\end{thm}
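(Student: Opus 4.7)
The plan is to pass through the random-cluster (Fortuin--Kasteleyn) representation of the Ising model at $q=2$. By the Edwards--Sokal coupling, the spontaneous magnetization at inverse temperature $\beta$ equals $\phi_\beta^1[o \leftrightarrow \infty]$, the probability that the origin lies in an infinite cluster under the wired FK-Ising measure $\phi_\beta^1$; and on a unimodular transitive graph, uniqueness of the Ising Gibbs measure at $\beta_c$ is equivalent to coincidence of the free and wired FK-Ising measures there, which in turn reduces to the vanishing of $\phi_{\beta_c}^1[o \leftrightarrow \infty]$. Thus \cref{thm:main_simple} is equivalent to the absence of an infinite cluster in the wired FK-Ising measure at the critical parameter, and it is this percolation-theoretic statement that I would target.

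To establish it, I would aim for the stronger quantitative statement that $M(\beta,h) := \langle \sigma_o \rangle_{\beta,h}^+$ is locally H\"older-continuous on the closed quadrant $[0,\infty)^2$; taking $h \downarrow 0$ at $\beta = \beta_c$ would then immediately yield the conclusion. The engine for such a bound is Aizenman's random current representation together with the switching lemma, which produces the classical Aizenman--Barsky--Fern\'andez-type differential inequalities relating $M$, $\partial_h M$, and the two-point susceptibility $\chi(\beta,h)$. Integrating these along paths in parameter space converts polynomial control on $\chi$ into a H\"older bound on $M$; the role of nonamenability and unimodularity is to supply such control at and below criticality. Here I would invoke the mass-transport principle, Lyons--Schramm-type indistinguishability of infinite FK clusters, and the anchored isoperimetric inequality available for supercritical FK-Ising clusters on nonamenable transitive graphs to argue that any putative critical infinite cluster would have strong expansion and tree-like connectivity, which combined with FKG and Simon--Lieb-type correlation inequalities should enhance the random-current inequalities into a form that integrates to the desired H\"older bound.

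The main obstacle is precisely this last enhancement: extracting a quantitatively useful benefit from nonamenability in the strongly correlated FK-Ising regime, where independence is unavailable and the nonamenable geometric input must be propagated through the correlation structure of the spin system. For Bernoulli percolation on nonamenable graphs the author previously handled the analogous problem via a differential inequality essentially of the shape $\theta(p) \gtrsim p(1-p)\,\partial_p \theta(p)$, whose integration rules out a jump at $p_c$; I expect the heart of the Ising argument to be an analogue of this inequality, obtained by decorating the random-current representation with mass-transport identities valid only in the unimodular nonamenable setting. The final step would be a dichotomy: either $\phi_{\beta_c}^1[o\leftrightarrow \infty]=0$, or else a critical infinite cluster exists, triggering a mass-transport identity whose combination with anchored expansion contradicts the polynomial cluster-size bounds delivered by the random-current differential inequalities---thereby forcing continuity of the phase transition.
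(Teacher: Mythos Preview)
Your proposal correctly identifies several key structural elements: the reduction via Edwards--Sokal to absence of an infinite cluster in the wired FK-Ising model at $\beta_c$, the strategy of proving local H\"older continuity of $(\beta,h)\mapsto \langle\sigma_o\rangle_{\beta,h}^+$, and the central role of the random current representation and switching lemma. However, the mechanism you propose for extracting the H\"older bound has a genuine gap, and the paper's actual route is quite different.

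The problem lies in your second and third paragraphs. Aizenman--Barsky--Fern\'andez differential inequalities relate $M$, $\partial_h M$, and $\chi$ and are the standard tool for \emph{sharpness} (exponential decay of correlations below $\beta_c$), but they do not by themselves yield continuity \emph{at} $\beta_c$: integrating them gives mean-field lower bounds on $M$ above $\beta_c$, not upper bounds, and there is no known way to turn ``polynomial control on $\chi$'' into a H\"older bound on $M$ via these inequalities. Your proposed enhancement---anchored expansion of a putative critical infinite FK cluster combined with Lyons--Schramm indistinguishability---does not have a clear mechanism either: anchored expansion results are not established for FK-Ising clusters, indistinguishability says nothing quantitative about cluster sizes, and the final dichotomy you sketch (``critical infinite cluster triggers a mass-transport identity contradicting polynomial cluster-size bounds'') is not a recognisable argument.

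What the paper actually does is rather different and bypasses positive association entirely. The method of \cite{1808.08940} (a ``two-ghost'' inequality descending from Aizenman--Kesten--Newman, plus random-walk surgery) gives uniform power-law tail bounds on finite clusters for automorphism-invariant percolation-in-random-environment models; in that paper the method required FKG, but here the key new idea is that FKG can be replaced by a \emph{spectral gap} condition on the process, i.e.\ the bound $\rho(\mu)\le\rho(G)<1$ for the Markov operator acting on $L^2_0$ of the configuration space. The paper proves (\cref{thm:spectralradius}) that the random current and loop $O(1)$ models inherit this spectral bound from the gradient Ising model via the identity \eqref{eq:LoopO(1)Ising_infinite}, even though they are not known to be factors of i.i.d. This yields uniform power-law bounds on finite clusters in a \emph{double} random current with \emph{mismatched} temperatures $(\beta_1,h_1)$ and $(\beta_2,h_2)$ (\cref{prop:finite_clusters_mismatched}). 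An application of the switching lemma to this mismatched pair (\cref{cor:gradient}) converts these cluster bounds into the desired H\"older estimate on $\langle\sigma_x\rangle_{\beta,h}^+$; the final step uses the Grimmett monotone coupling of FK measures and a mass-transport argument. There is no contradiction or dichotomy step, and no use of ABF-type differential inequalities or anchored expansion.
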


We will in fact prove more general and quantitative versions of this theorem, \cref{thm:main,thm:main_continuity}, which establish continuity of the model at \emph{all} temperatures, as well as power-law bounds on the magnetization at and near the critical temperature under the same hypotheses.

Let us now briefly outline how our results relate to previous work.
For the hypercubic lattice $\Z^d$, continuity of the phase transition is well understood: The case $d=2$ was settled by Yang in 1952 \cite{yang1952spontaneous}, who built upon the works of Onsager \cite{MR0010315} and Kaufman \cite{kaufman1949crystal},  the case $d\geq 4$ was settled by Aizenman and Fernandez in 1986 \cite{MR857063},  while the case $d=3$ was settled relatively recently by Aizenman, Duminil-Copin, and Sidoravicius in 2015 \cite{MR3306602}. Some aspects of each of these proofs are rather specific to the hypercubic case 
 and do not generalize to other Euclidean lattices, let alone arbitrary transitive graphs. 
While various subsequent works have extended these results to several other Euclidean models \cite{sakai2007lace,MR3898174,MR1896880,MR4026609}, the rigorous understanding of the critical Ising model beyond the Euclidean setting has remained somewhat limited. In our context, the most significant progress was due to Schonmann \cite[Theorem 1.9]{MR1833805} who proved (among many other things) that the Ising model undergoes a continuous phase transition with mean-field critical exponents on certain `highly nonamenable' Cayley graphs. Similar results in the more specific setting of hyperbolic lattices have been obtained by Wu \cite{MR1798548,MR1413244}. The arguments of Schonmann and Wu are of a perturbative nature (that is, they require some parameter associated to the graph to be small), and cannot be used to treat arbitrary nonamenable transitive Cayley graphs. Aside from the classical case of trees, we are only aware of two previous works establishing non-perturbative results in the non-Euclidean context: Our earlier paper \cite{1712.04911}, in which we established continuity of the phase transition for products of regular trees of degree at least three, and the work of Raoufi \cite{1606.03763}, who combined the methods of \cite{Hutchcroft2016944} and \cite{MR3306602} to prove that the Ising model undergoes a continuous phase transition on any \emph{amenable} transitive graph of exponential volume growth. Raoufi's argument relies on amenability in a crucial way and cannot be used to analyze nonamenable examples.

Our techniques draw heavily on the machinery that has been developed to understand \emph{Bernoulli percolation} in the same context \cite{bperc96,1808.08940,BLPS99b}. Indeed, the central technical contribution of our paper is a new method, based on the spectral theory of automorphism-invariant processes, that allows the machinery of \cite{1808.08940} to be applied to certain models that are not positively associated. This new method can be applied  to prove that the \emph{double random current model} does not have any infinite clusters at criticality, from which \cref{thm:main_simple} can be deduced by the methods of Aizenman, Duminil-Copin, and Sidoravicius \cite{MR3306602}. A detailed overview of this new method and how it compares to existing techniques is given in \cref{subsec:intro_overview}.

We hope that this paper will be of value and interest  both to experts on percolation and the Ising model who know relatively little group theory and to experts on group theory who know relatively little about the Ising model; we have included a detailed discussion of background material with the aim of making the paper accessible to both communities.







\subsection{Definitions and statement of results}
\label{subsec:intro_definitions}

Let us now define the Ising model formally. Further background on the Ising model may be found in e.g.\ \cite{MR3752129,1707.00520}; see also \cite[Section 13.1]{Pete} and \cite{MR1757952} for background on aspects specific to the nonamenable case. We will take the approach of \cite{1901.10363}, which allows for a unified treatment of short- and long-range models.
We define a \textbf{weighted graph} $G=(V,E,J)$ to be a countable graph $(V,E)$ together with an assignment of positive \textbf{coupling constants} $\{J_e : e \in E\}$ such that for each vertex $v$ of $G$, the sum of the coupling constants $J_e$ over all edges $e$ adjacent to $v$ is finite. Locally finite graphs can be considered as weighted graphs by setting $J_e \equiv 1$. A graph automorphism of $(V,E)$ is a weighted graph automorphism of $(V,E,J)$ if it preserves the coupling constants, and a weighted graph $G$ is said to be \textbf{transitive} if for every two vertices $x$ and $y$ in $G$ there exists an automorphism of $G$ sending $x$ to $y$. A weighted graph $G=(V,E,J)$ is said to be \textbf{nonamenable} if $\inf\{\sum_{e\in \partial_E K} J_e /  \sum_{e \in E(K)} J_e : K \subseteq V$ finite$\}>0$, where $E(K)$ denotes the set of edges with at least one endpoint in $K$.

Let $G=(V,E,J)$ be a weighted graph with $V$ finite, so that $\sum_{e\in E} J_e <\infty$. For each $\beta\geq 0$ and $h\in \R$ we define the \textbf{Ising measure} $\bI_{\beta,h}=\bI_{G,\beta,h}$ to be the probability measure on $\{-1,1\}^V$ given by
\[
\bI_{G,\beta,h}(\{\sigma\}) \propto \exp\left[ \beta \sum_{e\in E} J_e \sigma_e + \beta \sum_{v\in V} h \sigma_v \right] \qquad \text{ for each $\sigma \in \{-1,1\}^V$}
\]
where for each edge $e\in E$ with endpoints $x$ and $y$ we define $\sigma_e=\sigma_x\sigma_y \in \{-1,1\}$. The parameters $\beta$ and $h$ are known as the \textbf{inverse temperature} and \textbf{external field} respectively. The quantity $\sigma_v\in \{-1,1\}$ is known as the \textbf{spin} at $v$. Thus, the measure favours configurations in which the spins of adjacent vertices are aligned with each other and with the external field.

Now suppose that $G=(V,E,J)$ is an \emph{infinite} weighted graph. 
For each $\beta\geq 0$ and $h\in \R$ we define $\cG_{\beta,h}$ to be the set of \textbf{Gibbs measures} for the Ising model on $G$, that is, the set of probability measures $\mu$ on $\{-1,1\}^V$ satisfying the \textbf{Dobrushin, Lanford, and Ruelle (DLR) equations}
\[
\mu\Bigl(\sigma|_A = \xi|_A \Bigm \vert \sigma|_{V\setminus A} = \xi|_{V\setminus A}\Bigr) = \frac{1}{Z(\xi|_{V\setminus A})} \exp\left[ \beta \sum_{e\in E(A)} J_e \xi_e + \beta  \sum_{v\in A}  h\xi_v \right]
\]
for every $A\subseteq V$ finite and $\xi \in \{-1,1\}^V$, where $E(A)$ denotes the set of edges that have at least one endpoint in $A$ and $Z(\xi|_{V\setminus A})$ is a normalizing constant. Note that Gibbs measures need not in general be invariant under the automorphisms of $G$. A central problem in the study of the Ising model is is to understand the structure of the set of Gibbs measures $\cG_{\beta,h}$, and in particular how this structure depends on $\beta$ and $h$. 
The \textbf{critical inverse temperature} $\beta_c$ is defined by
$\beta_c=\inf\bigl\{\beta\geq 0 : |\cG_{\beta,0}| >1\bigr\}$.

We now introduce three particularly important Gibbs measures for the Ising model: the free, plus, and minus measures.
  Let $G=(V,E,J)$ be an infinite, connected, weighted graph, and let $(V_n)_{n\geq 1}$ be an \textbf{exhaustion} of $V$, that is, an increasing sequence of finite subsets of $V$ with $\bigcup_{n \geq 1} V_n = V$. For each $n\geq 1$, let $G_n$ be the weighted subgraph of $G$ induced by $V_n$. (That is, $G_n$ has vertex set $V_n$, edge set equal to the set of all edges of $G$ with both endpoints in $V_n$, and edge weights inherited from $G$.) For each $\beta \geq 0$ and $h\in \R$, we define the \textbf{free Ising measure} $\bI_{\beta,h}^f=\bI_{G,\beta,h}:=\wlim_{n\to\infty}\bI_{G_n,\beta,h}$ to be the weak limit of the measures $\bI_{G_n,\beta,h}$, so that
\[
\bI_{G,\beta,h}^f (\sigma_a =\kappa_a \text{ for every $a\in A$}) = \lim_{n\to \infty} \bI_{G_n,\beta,h}(\sigma_a =\kappa_a \text{ for every $a\in A$})
\]
for every finite set $A \subseteq V$ and $\kappa\in \{-1,1\}^A$. See e.g.\ \cite[Exercise 3.16]{MR3752129} for a proof that this limit exists, belongs to $\cG_{\beta,h}$, and does not depend on the choice of exhaustion.
%
%
 For each $n\geq 1$ we also define $G_n^*$ to be the finite weighted graph obtained from $G$ by contracting every vertex in $V\setminus V_n$ into a single vertex $\partial_n$ and deleting all self-loops from $\partial_n$ to itself.
  For each $\beta \geq 0$ and $h\in \R$, the \textbf{plus} and \textbf{minus Ising measures} $\bI_{\beta,h}^+=\bI_{G,\beta,h}^+$ 
  and $\bI_{G,\beta,h}^-=\bI_{\beta,h}^-$ on $G$ are
    defined to be the weak limits of the \emph{conditional} measures of the Ising model on $G_n^*$ given that the boundary spin is $+1$ or $-1$ as appropriate.
   In particular, 
\begin{align}
\bI_{G,\beta,h}^+ (\sigma_a =\kappa_a \text{ for every $a\in A$}) &= \lim_{n\to \infty} \bI_{G_n^*,\beta,h}(\sigma_a =\kappa_a \text{ for every $a\in A$} \mid \sigma_{\partial_n}=1)
\end{align}
for every finite set $A \subseteq V$ and $\kappa\in \{-1,1\}^A$. 
The  fact that these weak limits exist and do not depend on the choice of exhaustion is a consequence of the Holley inequality \cite[Theorem 3.17]{MR3752129}.  

The measures $\bI_{\beta,h}^+$ and $\bI_{\beta,h}^-$ are maximal and minimal elements of $\cG_{\beta,h}$ with respect to the partial ordering of stochastic domination: If $\mu$ is any element of $\cG_{\beta,h}$ then $\mu$ stochastically dominates $\bI_{\beta,h}^-$ and is stochastically dominated by $\bI_{\beta,h}^+$ \cite[Lemma 3.23]{MR3752129}. It follows in particular that $|\cG_{\beta,h}|=1$ if and only if $\bI_{\beta,h}^+=\bI_{\beta,h}^-$ if and only if $\bI_{\beta,h}^+=\bI_{\beta,h}^f$. 
Note also that the measure $\bI_{\beta,h}^\#$ is invariant under all automorphisms of $G$ for every $\beta\geq 0$, $h\in \R$, and $\#\in\{f,+,-\}$; this follows from the fact that the limits defining these measures do not depend on the choice of exhaustion \cite[Theorem 3.17 and Exercise 3.16]{MR3752129}.

\begin{remark}
If $G$ is transitive and amenable then $\bI_{\beta,h}^-=\bI_{\beta,h}^f=\bI_{\beta,h}^+$ for every $\beta >0$ and $h \neq 0$ \cite[Section 3.2]{MR1684757}, so that the question of uniqueness of Gibbs measures is only interesting in the case $h=0$. See also \cite[Section 3.7.4]{MR3752129}. This is no longer  true when $G$ is nonamenable.  Indeed, it is a theorem of Jonasson and Steif \cite{MR1684757} that if $G$ is a nonamenable, bounded degree graph then there exists $h_0>0$ such that if $|h| \leq h_0$ then there exists $\beta_c(h)<\infty$ such that $|\cG_{\beta,h}|>1$ for all $\beta > \beta_c(h)$. (The statement they give is different since their definition of the Ising model with external field follows different conventions to  ours.) Intuitively, the difference between these two theorems stems from the fact that boundary effects are always negligible compared with bulk effects in the amenable setting, while the two effects can be of the same order in the nonamenable setting.
\end{remark}

It is traditional to denote expectations taken with respect to the measures $\bI_{\beta,h}^f$,  $\bI_{\beta,h}^+$ and $\bI_{\beta,h}^-$ using the notation $\langle \cdots \rangle_{\beta,h}^f$, $\langle \cdots \rangle_{\beta,h}^+$, and $\langle \cdots \rangle_{\beta,h}^-$ respectively, so that, for example,
\[
\langle \sigma_x \sigma_y \rangle_{\beta,h}^f = \bI_{\beta,h}^f[\sigma_x\sigma_y]
\]  
denotes the expectation of the product of the spins $\sigma_x$ and $\sigma_y$ under the measure $\bI_{\beta,h}^f$ for each $x,y\in V$. We will use both notations throughout the paper as is convenient.

\medskip

Now suppose that $G=(V,E,J)$ is a transitive weighted graph and let $o$ be a fixed root vertex of $G$. For each $\beta\geq 0$, $h\in \R$, and $\#\in\{f,+,-\}$ we define the \textbf{magnetization}
\[
m^\#(\beta,h)=m_G^\#(\beta,h) = \langle \sigma_o \rangle_{\beta,h}^\#.
\]
Note that $m^f(\beta,0)=0$ for every $\beta \geq 0$ by symmetry. For each $\beta \geq 0$, the \textbf{spontaneous magnetization} is defined by
 $m^*(\beta):= m^+(\beta,0)$
 The spontaneous magnetization is a quantitative measure of how much the measures $\bI_{\beta,0}^+$ and $\bI_{\beta,0}^f$ differ, and we have in particular that
\[
m^*(\beta)=0 \iff \bI^f_{\beta,0} = \bI^+_{\beta,0} \iff \bI^+_{\beta,0} = \bI^-_{\beta,0} \iff |\cG_{\beta,0}|=1
\]
for every $\beta \geq 0$.
Thus, we can express the critical inverse temperature $\beta_c$ equivalently as
$\beta_c=\inf\bigl\{\beta \geq 0 : |\cG_{\beta,0}| >1\bigr\}=\inf\bigl\{\beta \geq 0 : m^*(\beta)>0 \bigr\} =\inf\bigl\{\beta \geq 0 : \bI^f_{\beta,0} \neq \bI^+_{\beta,0}\bigr\}$.

\medskip

The following theorem strengthens and generalizes \cref{thm:main_simple}.

\begin{thm}
\label{thm:main}
Let $G=(V,E,J)$ be a connected, nonamenable, transitive, unimodular weighted graph.  Then there exist positive constants $C$ and $\delta$ such that
\[
|m^\#(\beta,h)| \leq C \left( |h| + \max\{ \beta-\beta_c,0\} \right)^\delta
\]
for every $\beta \geq 0$, $h\in \R$, and $\# \in \{f,+,-\}$. In particular, $m^*(\beta_c)=0$ and $|\cG_{\beta_c,0}|=1$.
\end{thm}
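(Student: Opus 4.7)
The plan is to reduce \cref{thm:main} to a statement about the double random current representation of the Ising model and then to prove that statement using the spectral-theoretic method highlighted in the introduction. First, I would invoke the random current formalism: Aizenman's switching lemma expresses correlation functions of $\bI_{\beta,h}^+$ in terms of connectivity events for a pair of independent random current configurations (the \emph{double random current}), with a ghost vertex encoding the external field $h$. The key qualitative input from Aizenman--Duminil-Copin--Sidoravicius \cite{MR3306602} is that $m^*(\beta)=0$ as soon as the free sourceless double random current at inverse temperature $\beta$ almost surely has no infinite cluster. The problem therefore reduces to proving that the free double random current at $(\beta_c,0)$ on $G$ has no infinite cluster almost surely, together with quantitative refinements that handle positive external field and the supercritical regime.

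To establish absence of an infinite cluster at criticality, I would mimic the nonamenable Bernoulli percolation arguments of \cite{1808.08940}, where $\theta(p_c)=0$ is proved on any transitive unimodular nonamenable graph by combining tail triviality, indistinguishability of infinite clusters, and an isoperimetric comparison between the boundary and volume of infinite clusters. The usual implementation of those ideas uses positive association (FKG) in an essential way, which fails for the double random current. The new ingredient is to replace FKG by a spectral estimate for the action of $\Aut(G)$ on invariant percolations: on a nonamenable unimodular transitive graph this action has a spectral gap, and from this gap one can derive an isoperimetric-style inequality relating marginal edge-densities to expected boundary sizes for \emph{any} invariant random subgraph, irrespective of whether its law is FKG. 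Applied to the double random current (and to the conditional versions of it that arise when manipulating the switching lemma), this yields the desired qualitative vanishing $m^*(\beta_c)=0$, and hence $\bI_{\beta_c,0}^+=\bI_{\beta_c,0}^-=\bI_{\beta_c,0}^f$ and $|\cG_{\beta_c,0}|=1$.

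To upgrade from qualitative vanishing to the Hölder-type bound $|m^\#(\beta,h)|\le C(|h|+\max\{\beta-\beta_c,0\})^\delta$, I would feed the spectral isoperimetric inequality into differential inequalities for $m^+(\beta,h)$ in $\beta$ and $h$ of Aizenman--Barsky--Fernandez / Simon--Lieb type. Because the spectral gap coming from nonamenability is uniform, these arguments produce \emph{quantitative} power-law estimates of the form $m^+(\beta_c,h)\le C h^\delta$ for $h>0$ and $m^*(\beta)\le C(\beta-\beta_c)^\delta$ for $\beta>\beta_c$; a standard interpolation between these regimes then yields the joint bound on $m^+$, and the corresponding bounds for $\bI_{\beta,h}^f$ and $\bI_{\beta,h}^-$ follow by stochastic domination together with the symmetry $\sigma\leftrightarrow-\sigma$. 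The principal obstacle is the failure of positive association for the double random current: without FKG one must build a genuinely new isoperimetric argument, and the spectral approach needs to be set up in sufficient generality to handle the biased configurations that arise when switching currents with prescribed sources, as well as the ghost-vertex construction (which, when $h\neq 0$, breaks exact transitivity and forces a quasi-transitive or pointed-graph version of the spectral argument).
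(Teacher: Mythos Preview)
Your high-level plan correctly identifies the two central ingredients (the random current representation and the replacement of FKG by a spectral-gap argument), but the quantitative part of your proposal has a genuine gap, and your description of the spectral mechanism does not match what actually happens in the paper.

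\medskip

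\textbf{The quantitative step.} You propose to obtain the H\"older bound by ``feeding the spectral isoperimetric inequality into differential inequalities of Aizenman--Barsky--Fern\'andez / Simon--Lieb type.'' This is not how the paper proceeds, and it is not clear that such a route exists. ABF-type differential inequalities produce \emph{lower} bounds on the supercritical magnetization (mean-field bounds of the form $m^*(\beta)\gtrsim(\beta-\beta_c)^{1/2}$), not the upper bounds you need; Simon--Lieb is a subcritical statement. Neither combines naturally with a spectral estimate on the nonamenable side to yield $m^*(\beta)\le C(\beta-\beta_c)^\delta$. The paper instead introduces a genuinely new object: a \emph{double random current with mismatched temperatures} (Section~\ref{subsec:mismatched}). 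One splits each edge of $G$ into two parallel edges of weights $(1-\theta)J_e$ and $\theta J_e$ with $\theta=1-\beta_1/\beta_2$, so that the subgraph on the first copies carries the $\beta_1$-model and the full graph carries the $\beta_2$-model; the switching lemma applied to this pair yields the exact identity of \cref{cor:gradient},
\[
\beta_1 J_x\bigl(\langle\sigma_x\rangle^+_{\beta_2,h_2}-\langle\sigma_x\rangle^+_{\beta_1,h_1}\bigr)\le \bQ_{\beta_1,h_1,\beta_2,h_2}(\sA_x),
\]
expressing the increment of the magnetization directly as a connectivity probability in the mismatched system. This is the missing idea: it converts the continuity question into a \emph{finite-cluster tail} question for a single invariant percolation model, with no differential inequality in sight.

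\medskip

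\textbf{The spectral mechanism.} What the paper uses is not an ``isoperimetric-style inequality relating marginal edge-densities to expected boundary sizes.'' The actual argument (\cref{thm:finite_percolation} and \cref{prop:finite_clusters_mismatched}) runs a random walk $X_0,\ldots,X_m$ and opens the edges along its trace; the generalized two-ghost inequality (\cref{thm:two_ghost}, proved via a martingale version of the Aizenman--Kesten--Newman method) bounds the probability that $X_{i-1}$ and $X_i$ lie in distinct large finite clusters, while the spectral gap (via \cref{thm:spectralradius} and the display in \cref{example:FiniteClusters}) bounds the covariance $\Cov(\mathbbm{1}(n\le|K_{X_0}|<\infty),\mathbbm{1}(n\le|K_{X_m}|<\infty))$ by $\rho^m$. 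Balancing $m\asymp\log n$ gives the power-law tail. The spectral radius of the random current model is obtained not from any factor-of-i.i.d.\ statement but by transferring the bound from the FK-Ising model through the explicit formula \eqref{eq:LoopO(1)Ising_infinite}; this sidesteps the ``biased configurations with prescribed sources'' you flag as an obstacle.

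\medskip

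A minor inaccuracy: the ADS criterion you quote involves one free and one \emph{wired} random current, not two free ones. Your worry about the ghost vertex breaking transitivity is also not an issue in the paper's implementation: the external field is handled by the mismatched construction on $E\cup V$ without leaving the transitive setting.
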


In fact, our proof establishes more generally that the spontaneous magnetization is continuous not just at $\beta_c$, but for \emph{all} non-negative $\beta$. The following theorem provides a strong quanitative statement to this effect which implies \cref{thm:main}. Recall that if $\alpha>0$ and $X$ is a locally compact metric space then a function $f:X\to \R$ is said to be \textbf{locally $\alpha$-H\"older continuous} if for every compact set $K \subseteq X$ there exists  $C<\infty$ such that $|f(x)-f(y)|\leq C d(x,y)^\alpha$ for every $x,y\in K$.

\begin{thm}
\label{thm:main_continuity}
Let $G=(V,E,J)$ be a connected, nonamenable, transitive, unimodular weighted graph. Then there exists $\delta>0$ such that  if $F:\{-1,1\}^V \to \R$ is any function depending on at most finitely many vertices then $\langle F(\sigma) \rangle^+_{\beta,h}$ is a 
 locally $\delta$-H\"older continuous function of $(\beta,h)\in [0,\infty)^2$. In particular, the plus Ising measure $\bI_{\beta,h}^+$ is a weakly-continuous function of $(\beta,h)\in [0,\infty)^2$. 
\end{thm}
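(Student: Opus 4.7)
Since every local function $F\colon \{-1,1\}^V \to \R$ is a finite $\R$-linear combination of spin monomials $\sigma_B := \prod_{v\in B}\sigma_v$ over subsets $B \subseteq \mathrm{supp}(F)$, it suffices to establish local $\delta$-H\"older continuity of $(\beta, h) \mapsto \langle \sigma_B \rangle^+_{\beta, h}$ for each finite $B \subseteq V$. I would work with the finite-volume approximants $\langle \sigma_B \rangle^+_{G_n^*, \beta, h}$, which are real-analytic in $(\beta, h)$, and seek uniform-in-$n$ bounds on their derivatives that can be integrated along straight-line paths in parameter space.

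The Griffiths identities give
\[
\frac{\partial}{\partial h} \langle \sigma_B \rangle^+_{G_n^*, \beta, h} \;=\; \beta \sum_{v \in V(G_n^*)} \Cov^+_{G_n^*, \beta, h}(\sigma_B, \sigma_v),
\]
together with the analogous formula for $\partial/\partial \beta$ involving a sum over edges. I would bound each covariance through the random current representation of the Ising model with the usual ghost vertex (encoding the external field): by the switching lemma, covariances of spin monomials correspond to connection probabilities in an associated double random current on the graph augmented by the ghost. The quantitative magnetization estimate \cref{thm:main}, together with the quantitative no-infinite-cluster estimate for the double current that underlies its proof, then renders the covariance sums absolutely convergent uniformly in $n$, with an explicit power-law rate in $|h|+(\beta-\beta_c)_+$.

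Integrating the resulting derivative bound along a straight-line path between two parameter pairs in a compact neighbourhood yields the desired local H\"older estimate, with exponent $\delta$ inherited from the exponent appearing in \cref{thm:main}. The main obstacle I anticipate is the non-uniqueness regime $\beta > \beta_c$, $|h|$ small: here untruncated two-point functions do not decay (the plus phase has long-range order), so one cannot hope to bound covariances pointwise by connection probabilities to $v$; instead one must control, via the switching lemma, the probability that $v$ lies in a \emph{finite} double-current cluster connected to $B$, with a quantitative power-law rate matching $\delta$. Providing this quantitative estimate, which says that the typical diameter of such a finite cluster is polynomially controlled by the distance to the critical locus, is the substance of the spectral machinery for automorphism-invariant processes that is the central technical innovation outlined in the introduction and that my plan treats as a black box.
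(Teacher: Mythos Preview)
Your approach has two genuine gaps.

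First, there is a circularity: in this paper \cref{thm:main} is \emph{deduced from} \cref{thm:main_continuity} (see the short proof at the end of \cref{subsec:mainproof}), so you cannot invoke \cref{thm:main} as an input here.

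Second, and more seriously, the derivative strategy does not close with the estimates actually available. You correctly identify that $\partial_h \langle \sigma_B\rangle^+ = \beta \sum_v \Cov^+(\sigma_B,\sigma_v)$, and that via the switching lemma the covariance is (up to constants depending on $|B|$) bounded by the probability that $v$ lies in the \emph{finite} double-current cluster of $B$. Summing over $v$ therefore requires control of the \emph{expected} size of that finite cluster. But the spectral machinery of \cref{sec:free_energy} (specifically \cref{prop:finite_clusters_mismatched} and \cref{thm:finite_clusters}) only yields tail bounds of the form $\bQ(n\le |K^1_o|<\infty)\le Cn^{-\delta}$ with $\delta$ that is in general very small; this does \emph{not} imply $\bE[|K^1_o|\mathbbm{1}(|K^1_o|<\infty)]<\infty$, nor does it give a bound of the shape $C(|h|+(\beta-\beta_c)_+)^{\delta-1}$ on that expectation. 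Indeed, at $(\beta_c,0)$ the susceptibility is expected to diverge, so the derivative you wish to integrate is not even locally integrable without an additional quantitative input that the paper does not supply.

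The paper circumvents this by never differentiating. It passes via Edwards--Sokal to the expression $\phi^w_{2,\beta,h}[2^{-\#\{\text{$w$-finite clusters meeting }A\}}]$, then uses the \emph{Grimmett monotone coupling} of $\phi^w_{2,\beta_1,h_1}$ and $\phi^w_{2,\beta_2,h_2}$ to bound the difference by the probability that some vertex of $A$ has a finite $\omega_1$-cluster containing a point where $\omega_1\neq\omega_2$. This probability is then split: for $|K^1_o|>n$ one uses the tail bound from \cref{thm:finite_clusters}; for $|K^1_o|\le n$ a mass-transport argument gives at most $n\cdot \P(o\in\cP)$, where $\P(o\in\cP)$ is controlled by \cref{cor:change_in_correlations} (itself proved via the mismatched-temperature double current of \cref{subsec:mismatched}). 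Optimising over $n$ yields H\"older continuity with exponent $\delta_1\delta_2/(1+\delta_1)$. The point is that the truncation at level $n$ replaces the divergent expectation by the bounded factor $n$, which is exactly what your derivative route cannot do.
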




It is a theorem of Raoufi \cite[Theorem 1 and Corollary 1]{MR4089494} that if $G$ is an \emph{amenable} transitive weighted graph then the plus and free Ising measures $\bI_{\beta,0}^+$ and $\bI_{\beta,0}^f$ are equal and depend continuously on $\beta$ throughout $[0,\beta_c)\cup(\beta_c,\infty)$. In fact, \cite[Theorem 1]{MR4089494} together with the uniqueness of the Gibbs measure in non-zero external field \cite[Section 3.2]{MR1684757} imply more generally that the plus Ising measure $\bI_{\beta,h}^+$ depends continuously on $(\beta,h)$ throughout $[0,\infty)^2 \setminus \{(\beta_c,0)\}$ for every amenable transitive weighted graph. Combining this result with \cref{thm:main_continuity}, we deduce that this conclusion holds for \emph{all} unimodular transitive weighted graphs, and in particular for all Cayley graphs. 

\begin{corollary}
\label{cor:general_continuity}
Let $G=(V,E,J)$ be an infinite, connected, transitive, unimodular weighted graph. Then the plus Ising measure $\bI_{\beta,h}^+$ is a weakly-continuous function of $(\beta,h)$ on $[0,\infty)^2 \setminus \{(\beta_c,0)\}$. 
\end{corollary}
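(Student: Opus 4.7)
The proof splits according to whether $G$ is amenable or nonamenable, since every transitive unimodular weighted graph falls into one of these two classes.

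First I would dispatch the nonamenable case by direct appeal to \cref{thm:main_continuity}: that theorem asserts that for every cylinder function $F$, the map $(\beta,h)\mapsto \langle F(\sigma)\rangle^+_{\beta,h}$ is locally $\delta$-H\"older continuous on all of $[0,\infty)^2$. Local H\"older continuity implies joint continuity, and continuity of all cylinder expectations is equivalent to weak continuity of the measure $\bI^+_{\beta,h}$ (since cylinder functions form a convergence-determining class for the product topology on $\{-1,1\}^V$). In particular, in the nonamenable case there is no exceptional point at all, which is strictly stronger than the conclusion being sought.

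For the amenable case I would combine the two cited inputs. Raoufi's theorem \cite[Theorem 1, Corollary 1]{MR4089494} gives that $\beta\mapsto \bI^+_{\beta,0}=\bI^f_{\beta,0}$ is weakly continuous on $[0,\beta_c)\cup(\beta_c,\infty)$. The Jonasson--Steif / Lebowitz-type result cited as \cite[Section 3.2]{MR1684757} gives uniqueness of the Gibbs measure at every $(\beta,h)$ with $h\neq 0$ (and also at every $(\beta,0)$ with $\beta<\beta_c$). To promote these inputs to joint weak continuity on $[0,\infty)^2\setminus\{(\beta_c,0)\}$, I would run the standard uniqueness-plus-compactness argument: fix $(\beta_0,h_0)\neq (\beta_c,0)$ and a sequence $(\beta_n,h_n)\to(\beta_0,h_0)$. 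The measures $\bI^+_{\beta_n,h_n}$ live on the compact space $\{-1,1\}^V$ (in the product topology), so any subsequence has a further weak subsequential limit $\mu$. A routine check that the DLR equations are preserved under weak limits in the parameters shows that $\mu\in\cG_{\beta_0,h_0}$; since $\bI^+_{\beta,h}$ is the stochastically maximal Gibbs measure and stochastic domination passes to weak limits, $\mu$ is in fact stochastically maximal at $(\beta_0,h_0)$, hence $\mu=\bI^+_{\beta_0,h_0}$. As this identifies every subsequential limit, $\bI^+_{\beta_n,h_n}\to\bI^+_{\beta_0,h_0}$ weakly.

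The only mild obstacle is the verification that weak limits in $(\beta,h)$ of Gibbs measures remain Gibbs measures and remain stochastically maximal; this is a direct check using the explicit exponential form of the DLR kernels (which depend continuously on $(\beta,h)$ and are bounded away from $0$ and $\infty$ on any compact neighbourhood of $(\beta_0,h_0)$) together with the monotonicity of $\bI^+_{G_n^*,\beta,h}(\,\cdot\,\mid\sigma_{\partial_n}=1)$ in $(\beta,h)$ supplied by the Holley inequality. Together the two cases cover every unimodular transitive weighted graph, which is the statement of the corollary.
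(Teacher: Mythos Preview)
Your overall structure matches the paper exactly: split into amenable and nonamenable, invoke \cref{thm:main_continuity} for the latter, and cite Raoufi plus uniqueness in nonzero field for the former. The paper gives no more detail than this, so you have in fact written \emph{more} than the paper does.

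That said, there is a genuine gap in your compactness argument at a supercritical point $(\beta_0,0)$ with $\beta_0>\beta_c$ in the amenable case, where the Gibbs measure is \emph{not} unique. Your sentence ``since $\bI^+_{\beta,h}$ is the stochastically maximal Gibbs measure and stochastic domination passes to weak limits, $\mu$ is in fact stochastically maximal at $(\beta_0,h_0)$'' is a non sequitur: maximality of each $\bI^+_{\beta_n,h_n}$ within $\cG_{\beta_n,h_n}$ does not by itself pass to the limit, since the sets $\cG_{\beta_n,h_n}$ are changing. What you actually get for free is the \emph{upper} bound $\mu\leq\bI^+_{\beta_0,0}$ (because $\mu\in\cG_{\beta_0,0}$ and $\bI^+$ is maximal there). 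For the matching lower bound you must use your cited ingredients in a specific way: monotonicity in $h$ gives $\bI^+_{\beta_n,h_n}\geq\bI^+_{\beta_n,0}$, Raoufi's theorem gives $\bI^+_{\beta_n,0}\to\bI^+_{\beta_0,0}$ weakly, and then the fact that stochastic domination is preserved under weak limits yields $\mu\geq\bI^+_{\beta_0,0}$. With that correction the argument is complete.
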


\begin{remark}
We show in \cref{subsec:discontinuity} that there exist nonamenable Cayley graphs for which the \emph{free} Ising measure $\bI^f_{\beta,0}$ is weakly \emph{discontinuous} at some $\beta>\beta_c$. Thus, \cref{thm:main_continuity,cor:general_continuity} cannot be extended to the free Ising measure in general.
\end{remark}

\begin{remark}
\cref{thm:main,thm:main_continuity} have various consequences for the Ising model on transitive nonamenable \emph{planar} graphs with transitive dual, which are discussed in \cref{subsec:planar}. In particular, applying the results of \cite{MR1894115}, we obtain that for any such graph there is a non-trivial interval of $\beta$ for which the free and plus Ising measures are distinct.
\end{remark}





\begin{remark}
The proofs of \cref{thm:main,thm:main_continuity} are effective, and can be used to give explicit estimates on the constants $C$ and $\delta$ depending only on a few important parameters associated to the graph, such as the spectral radius and the value of $\beta_c$.
 It is strongly believed that the Ising model on any transitive nonamenable graph should be governed by the mean-field critical exponents
\[
\langle \sigma_o \rangle_{\beta_c,h}^+ \asymp h^{1/3} \qquad \text{ and } \qquad \langle \sigma_o \rangle_{\beta_c+\eps,0}^+ \asymp \eps^{1/2}.
\]
  See \cite{MR1833805,MR857063} for further discussion. It seems unlikely that our methods can be used to establish this conjecture, and the exponent $\delta$ that we obtain will be very small in general.
 See \cite{1808.08940} for a detailed discussion of related issues in the context of Bernoulli  percolation. 
\end{remark}

\begin{remark}
All the results of this paper should generalize unproblematically to \emph{quasi-transitive} weighted graphs. We restrict attention to the transitive  case to clarify the exposition.
\end{remark}

\subsection{Overview of previous work}
\label{subsec:intro_overview}

In this section we outline previous work on critical statistical mechanics models beyond $\Z^d$, describing in particular the strengths and limitations of existing methods in the context of the Ising model.
  We also take the opportunity to define the random cluster model and briefly explain its connection to the Ising model via the Edwards--Sokal coupling \cite{edwards1988generalization}.

\emph{Bernoulli bond percolation} is by far the most-studied statistical mechanics model outside of the Euclidean context, with an extensive literature stemming from the seminal 1996 work of Benjamini and Schramm \cite{bperc96}; see \cite[Chapters 7 and 8]{LP:book} and references therein for background.
The study of the Ising model and of Bernoulli percolation are closely analogous, and techniques developed to study one model can often (but not always) be applied to study the other. The analogue of \cref{thm:main_simple} for Bernoulli percolation was established in the milestone work of Benjamini, Lyons, Peres, and Schramm \cite{BLPS99b}, who proved that
%
%
critical percolation on any unimodular transitive graph has no infinite clusters almost surely.
 This result was extended to transitive graphs of \emph{exponential growth} 
  by the author \cite{Hutchcroft2016944}.
  More recently, a new  and more quantitative method of proof was developed in \cite{1808.08940}, which allowed us to prove in particular that the the tail of the volume of the cluster of the origin in critical percolation satisfies power-law upper bounds on any unimodular transitive graph of exponential growth. These methods were pushed further to handle certain graphs of \emph{subexponential} volume growth in joint work with Hermon \cite{HermonHutchcroftIntermediate}.

The methods of both \cite{BLPS99b} and \cite{1808.08940} are not particularly specific to Bernoulli percolation and can both  be modified to establish various more general results. The proof of \cite{BLPS99b},  which relies only on soft properties of percolation, can be generalized to show in particular that if $G=(V,E)$ is a unimodular nonamenable transitive graph and  $(\omega_p)_{p\in [0,1]}$ is a family of random subsets of $E$ such that
\begin{enumerate}
  [leftmargin=1.25cm]
    \item[I.i.] The law of 
$(\omega_p)_{p\in [0,1]}$ is invariant and ergodic under the automorphisms of $G$,
  \item[I.ii.] $\omega_p$ is contained in $\omega_{p'}$ for every $p' \geq p$ almost surely,
\item[I.iii.]
 $\omega_p$ is insertion-tolerant for each $p>0$, and
 \item[I.iv.] $\omega_p = \lim_{\eps \downarrow 0} \omega_{p-\eps}$ almost surely for each $p>0$
\end{enumerate}
then the set $\{p\in [0,1] : \omega_p$ has no infinite clusters almost surely$\}$ is a closed interval \cite[Theorem 8.23]{LP:book}. See also \cite{AL07} for extensions of the results of \cite{BLPS99b} to the setting of \emph{unimodular random rooted graphs}. The minimum hypotheses needed to apply the methods of \cite{1808.08940} are a little less clear. One rather general general statement that these methods can be used to prove is as follows: 
Let $G=(V,E)$ be a unimodular transitive graph of exponential growth and suppose that $(\mu_n)_{n \geq 1}$ is a sequence of automorphism-invariant probability measures on $\{0,1\}^E$ converging weakly to some probability measure $\mu$. Suppose further that the following hold:
\begin{enumerate}
  [leftmargin=1.25cm]
\item[II.i.] each of the measures $\mu_n$ is positively associated,
\item[II.ii.] the expected size of the cluster of the origin in $\mu_n$ is finite for each $n\geq 1$, and
\item[II.iii.] each of the measures $\mu_n$ may be written as `percolation in random environment', where the conditional probability of an edge being open given the environment is bounded away from zero by some positive constant that does not depend on $n$.
\end{enumerate}
Then $\mu$ is supported on configurations in which there are no infinite clusters, and the tail of the volume of the cluster of the origin in $\mu$ satisfies a power-law upper bound.  If $G$ is taken to be nonamenable, the hypothesis II.ii above may be replaced with the weaker assumption that each of the measures $\mu_n$ is supported on configurations with no infinite clusters. See \cref{sec:free_energy} for various precise statements. While it may seem that these conditions are much more restrictive than the conditions I.i--iv required to implement the proof of \cite{BLPS99b}, we note that, crucially, we do \emph{not} require a monotone coupling of the measures $(\mu_n)_{n\geq 1}$. 

Both methods can, with work, be applied to the \emph{random cluster model} (a.k.a.\ FK-percolation) with $q\geq 1$ and \emph{free boundary conditions}; This was done for the method of \cite{BLPS99b} by H\"aggstr\"om, Jonasson, and Lyons \cite{MR1913108,MR1894115}.  Let us now quickly recall the definition of this model and its relation to the Ising model, referring the reader to e.g.\ \cite{1707.00520,GrimFKbook} for further background.
Let $G=(V,E,J)$ be a weighted graph with $V$ finite so that $\sum_{e\in E} J_e <\infty$. ($E$ may be finite or infinite.) For each $q>0$ and $\beta,h\geq 0$, we define the \textbf{random cluster measure} $\phi_{q,\beta,h}=\phi_{G,q,\beta,h}$ to be the purely atomic probability measure on $\{0,1\}^E \times \{0,1\}^V$ given by\footnote{Using $2\beta$ instead of $\beta$ in the definition of $\phi_{q,\beta,h}$ is not standard, but makes the relationship between the Ising model and random cluster models simpler to state.}
\[
\phi_{q,\beta,h}(\{\omega\}) \propto q^{k(\omega)} \prod_{e\in E} (e^{2\beta J_e}-1)^{\omega(e)}\prod_{v\in V} (e^{2\beta h}-1)^{\omega(v)}
\]
for each $\omega \in \{0,1\}^E\times\{0,1\}^V$, where $k(w)$ is the number of clusters (i.e., connected components) of the subgraph of $G$ spanned by $\{e:\omega(e)=1\}$ that do not contain a vertex $v$ with $\omega(v)=1$. 
 Note that if $q=1$ and $J_e \equiv 1$ then the measure $\phi_{q,\beta,0}$ is simply the law of Bernoulli bond percolation on $G$ with retention probability $p=(e^{2\beta}-1)/e^{2\beta}=1-e^{-2\beta}$. 

Now suppose that $G=(V,E,J)$ is an infinite connected weighted graph, let $(V_n)_{n\geq 1}$ be an exhaustion of $G$, and let $(G_n)_{n\geq 1}$ and $(G_n^*)_{n\geq 1}$ be defined as in \cref{subsec:intro_definitions}. For each $q\geq 1$ and $\beta,h\geq 0$ we define the \textbf{free} and \textbf{wired} random cluster measures $\phi^f_{q,\beta,h}$ and $\phi^w_{q,\beta,h}$ to be
\begin{align*}
\phi^f_{q,\beta,h}=\phi^f_{G,q,\beta,h}:= \wlim_{n\to\infty} \phi_{G_n,q,\beta,h} \qquad \text{ and } \qquad \phi^w_{q,\beta,h} = \phi^w_{G,q,\beta,h}:= \wlim_{n\to\infty}\phi_{G^*_n,q,\beta,h}.
\end{align*}
Both of these weak limits exist and do not depend on the choice of exhaustion.
  Indeed, it is a consequence of the Holley inequality \cite[Theorem 1.6 and Proposition 1.8]{1707.00520} that if $A\subseteq E \cup V$ is finite and $n_0$ is such that every vertex in $A$ and every edge touching $A$ belongs to $V_{n_0}$ then
\begin{align}
\phi^f_{q,\beta,h}(\omega(x)=1 \text{ for every $x\in A$}) &= \sup_{n\geq n_0} \phi_{G_n,q,\beta,h}(\omega(x)=1 \text{ for every $x\in A$}) \qquad \text{and}\\
\phi^w_{q,\beta,h}(\omega(x)=1 \text{ for every $x\in A$}) &= \inf_{n\geq n_0} \phi_{G_n^*,q,\beta,h}(\omega(x)=1 \text{ for every $x\in A$})
\end{align}
for every $\beta,h \geq 0$, so that $\phi^\#_{q,\beta,h}(\omega(x)=1 \text{ for every $x\in A$})$ depends on $(\beta,h)$ lower semicontinuously  when $\#=f$ and upper semicontinuously when $\# = w$. A further  consequence of the Holley inequality is that $\phi_{q,\beta,h}^w$ stochastically dominates $\phi_{q,\beta,h}^f$ for each fixed $\beta,h\geq 0$ and $q\geq 1$ and that $\phi_{q_1,\beta_1,h_1}^\#$ stochastically dominates $\phi_{q_2,\beta_2,h_2}^\#$ for each $\# \in \{f,w\}$, $q_2 \geq q_1 \geq 1$, $0 \leq \beta_2 \leq \beta_1$, and $0 \leq h_2 \leq h_1$ \cite[Theorem 3.21]{GrimFKbook}. Putting these two facts together, it follows  that $\phi^f_{q,\beta,h}$ is weakly left-continuous in $\beta$ 
and that $\phi^w_{q,\beta,h}$ is weakly right-continuous in $\beta$ 
\cite[Proposition 4.28]{GrimFKbook}.


For each $q \geq 1$ and $\# \in \{f,w\}$ we define the critical inverse temperature
$\beta_c^\#(q)= \sup\{ \beta\geq 0: \phi^\#_{q,\beta,0}$ is supported on configurations with no infinite clusters$\}$.
%
In \cref{sec:free_energy} we extend the analysis of \cite{1808.08940} to the random cluster model, proving the following.

\begin{thm}
\label{thm:free_random_cluster}
Let $G=(V,E)$ be a locally finite, transitive, unimodular graph of exponential growth, and let $q \geq 1$. Then there exist positive constants $\delta$ and $C$ such that
\[
\phi_{q,\beta_c^f(q),0}^f(|K_o| \geq n) \leq C n^{-\delta} \qquad \text{for every $n\geq 1$.}
\]
\end{thm}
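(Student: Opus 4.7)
The plan is to deduce \cref{thm:free_random_cluster} from a general framework, developed earlier in \cref{sec:free_energy}, that extends the methods of \cite{1808.08940} from Bernoulli percolation to a broader class of measures. In the spirit of conditions II.i--II.iii of \cref{subsec:intro_overview}, this framework takes as input a weakly convergent, automorphism-invariant sequence $(\mu_n)_{n \geq 1}$ of measures on $\{0,1\}^E$ that is positively associated, has finite expected cluster size under each $\mu_n$, and admits a representation as percolation in a random environment with conditional edge probabilities bounded below uniformly in $n$; it outputs a power-law tail bound on the cluster of the origin under the weak limit.

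I would take an increasing sequence $\beta_n \uparrow \beta_c^f(q)$ and set $\mu_n := \phi^f_{q,\beta_n,0}$. By the weak left-continuity of $\phi^f_{q,\beta,h}$ in $\beta$ recorded in \cref{subsec:intro_overview}, $\mu_n$ converges weakly to $\phi^f_{q,\beta_c^f(q),0}$, so it suffices to prove a bound of the form $\mu_n(|K_o|\geq k) \leq C k^{-\delta}$ uniformly in $n$ and $k \geq 1$; the bound then transfers to the limiting measure, yielding \cref{thm:free_random_cluster}.

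Next, verify the three hypotheses for $(\mu_n)_{n \geq 1}$. Positive association is the standard FKG property of the random cluster measure for $q \geq 1$. Finite expected cluster size under each $\mu_n$ follows from sharpness of the phase transition for random cluster models on transitive graphs, due to Duminil-Copin--Raoufi--Tassion via the OSSS inequality, which yields exponential decay of connection probabilities strictly below $\beta_c^f(q)$. For the random-environment condition, the single-edge conditional probability under $\mu_n$ that an edge $e$ is open given the configuration on $E\setminus\{e\}$ is either $p_n := 1-e^{-2\beta_n}$ or $p_n/(p_n + (1-p_n)q) \geq p_n/q$, hence uniformly bounded below by a positive constant once $n$ is large enough that $\beta_n$ is, say, at least $\beta_c^f(q)/2$. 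For integer $q$ this can be cleanly upgraded to a genuine conditional-independence representation via the Edwards--Sokal coupling with the $q$-state Potts model; for non-integer $q \geq 1$ one must work with the single-edge bound together with FKG.

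The main technical obstacle is implementing the random-environment condition for non-integer $q$: the core arguments of \cite{1808.08940} exploit conditional independence between edges, whereas for the random cluster model with non-integer $q$ one has only a single-edge lower bound combined with positive association. The bulk of \cref{sec:free_energy} should therefore be devoted to adapting the framework of \cite{1808.08940} to accommodate this weaker structural input; once in place, \cref{thm:free_random_cluster} follows by combining it with the three verified inputs above.
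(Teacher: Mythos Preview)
Your high-level outline is right: take $\beta_n \uparrow \beta_c^f(q)$, verify the hypotheses II.i--II.iii for $\mu_n = \phi^f_{q,\beta_n,0}$, obtain a uniform tail bound, and pass to the limit via left-continuity. Your verifications of FKG and of finite expected cluster size (via sharpness) are correct.

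The gap is in your treatment of the percolation-in-random-environment hypothesis. You identify non-integer $q$ as the obstacle and propose adapting the two-ghost machinery to work with single-edge conditional bounds plus FKG in place of genuine conditional independence. This is both unnecessary and harder than what the paper does. The paper uses the \emph{non-integer} Edwards--Sokal coupling of Bollob\'as, Grimmett, and Janson: colour each cluster red independently with probability $1/q$; conditional on the red set $R$, the configuration restricted to $E_o(R)$ is genuine Bernoulli percolation with edge probabilities $1-e^{-2\beta J_e}$. This works for every $q\geq 1$ and gives true conditional independence, so there is no need to weaken the framework. The real issue is not non-integer $q$ but that the edge probabilities vanish on $E\setminus E_o(R)$, so II.iii as literally stated fails; this is handled not by rewriting the two-ghost argument but by conditioning on the event $\sR_m$ that all vertices of the path lie in $R$ (costing at most a factor $q^{m+1}$), exactly as in the proof of \cref{thm:finite_clusters}. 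Your Potts/Edwards--Sokal suggestion for integer $q$ has the same zero-probability issue, which you do not address.

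You also do not say how exponential growth enters. Inside the ``black box'' for \cref{thm:free_random_cluster} the paper does \emph{not} use the random walk and spectral gap (that would require nonamenability). Instead it uses FKG and Fekete's lemma to show that $\inf_{d(x,y)\leq m}\phi^f_{q,\beta,0}(x\leftrightarrow y)^{1/m}\leq 1/\operatorname{gr}(G)$ for subcritical $\beta$, and then runs the surgery argument along a deterministic geodesic between a pair of vertices realising this infimum, using FKG in place of the spectral decorrelation step \eqref{eq:using_rho_FK}. This is where the exponential-growth hypothesis is actually consumed, and it is the main structural difference between the proofs of \cref{thm:free_random_cluster} and \cref{thm:finite_clusters}.
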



The Ising model and the $q=2$ random cluster model, a.k.a.\ the \textbf{FK-Ising model}, are related by the \textbf{Edwards--Sokal coupling} \cite[Section 1.4]{GrimFKbook}. We describe this coupling in the wired/plus case, which is the only case we will use; a similar coupling holds in the free case. Let $G=(V,E,J)$ be an infinite connected weighted graph with $V$ finite, let $\beta,h\geq 0$, and let $\omega$ a random variable with law $\phi_{2,\beta,h}^w$. 
Given $\omega$, we assign a value of $+1$ or $-1$ to each \emph{cluster} of $\omega$ as follows:
\begin{enumerate}
  \item If the cluster is infinite or intersects the set $\{v:\omega(v)=1\}$, we assign it the value $+1$.
\item Otherwise, the cluster is finite and does not intersect the set $\{v:\omega(v)=1\}$, in which case we assign it a value from $\{-1,+1\}$ uniformly at random, where the choices of signs for different clusters are made independently given $\omega$.
\end{enumerate}
Finally, let $\sigma_v$ be equal to the value assigned to the cluster of $v$ for each $v\in V$. Then the resulting random variable $\sigma=(\sigma_v)_{v\in V}$ has law $\bI^+_{\beta,h}$.
 It follows in particular that $m^*(\beta_c)=0$ if and only if the \emph{wired} FK-Ising model has no infinite clusters at $\beta_c$ \cite[Exercise 3.77]{MR3752129}. (In \cref{prop:betafbetaw} we show that $\beta_c=\beta_c^f(2)=\beta_c^w(2)$ on any transitive weighted graph.)


Unfortunately, the methods of \cite{BLPS99b} and \cite{1808.08940} cannot be used to say anything about the \emph{wired} random cluster model at criticality since the measure $\phi_{q,\beta}^w$ need not be weakly left-continuous in $\beta$. 
 This is not merely a technical obstacle, as it is expected that the random cluster model undergoes a \emph{discontinuous} phase transition on nonamenable transitive graphs when $q>2$. 
See \cite{bollobas1996random,laanait1991interfaces,duminil2016discontinuity,ray2019short,MR1894115,MR1833805} and references therein for related results.
 As such, any proof of continuity of the phase transition for the Ising model or FK-Ising model must use some property that distinguishes between the cases $q=2$ and $q>2$, and it is unclear how this could be done within the frameworks of \cite{BLPS99b} or \cite{1808.08940}.

 A similar obstacle was overcome in the \emph{amenable} setting by Aizenman, Duminil-Copin, and Sidoravicius \cite{MR3306602}, who proved in particular that if $G$ is an amenable transitive graph such that the \emph{free} FK-Ising model on $G$ has no infinite clusters at criticality, then the free and wired FK-Ising models coincide at criticality and the spontaneous magnetization of the Ising model vanishes at criticality. 
It will be informative for later developments for us to briefly outline their argument, which was based on the analysis of the \emph{random current\footnote{In this context, a \emph{current} on a graph is an $\N$-valued function on the edge set. Any measure on currents defines a measure on subgraphs by taking an edge $e$ to be open if and only if the current takes a positive value on $e$.} model}. This is an alternative graphical representation of the Ising model that was introduced by Griffiths, Hurst, and Sherman \cite{griffiths1970concavity} and developed extensively by Aizenman \cite{MR678000}; see \cref{subsec:random_currents} for further background and definitions. 
Although the random current model is in many ways a much less well-behaved object than FK-percolation (it is not positively associated or deletion-tolerant, but is insertion tolerant), it has many very interesting features which, roughly speaking, allow it to communicate information between Ising models with different parameters and boundary conditions. 
In particular, it is established in \cite{MR3306602} that the Ising model on a transitive graph undergoes a continuous phase transition if and only if a certain system of two independent random currents on the graph has no infinite clusters at the critical temperature, where one random current is taken with free boundary conditions and the other with wired. On the other hand, the probability that any two vertices are connected in this duplicated system of random currents is bounded by the probability that they are connected in the \emph{free} FK-Ising model. 

To conclude, the authors of \cite{MR3306602} applied the classical theorem of Burton and Keane \cite{burton1989density} (as generalized by Gandolfi, Keane, and Newman \cite{gandolfi1992uniqueness}) to deduce that, \emph{in the amenable case}, the duplicated system of random currents has at most one infinite cluster. Thus, the existence of an infinite cluster is incompatible with connection probabilities between the origin and a distant vertex tending to zero, and the proof of their theorem may easily be concluded. Note that the last part of this argument is very specific to the amenable setting and cannot be used in the nonamenable case where the Burton--Keane theorem does not hold. Note also that this proof is not quantitative, and does not lead to any explicit control of the magnetization near $\beta_c$.
%



\subsection{Overview of the proof and applications to the random cluster model}
 In order to prove \cref{thm:main_simple,thm:main,thm:main_continuity}, we develop a new variation on the methods of \cite{1808.08940} that can be applied to certain models that are \emph{not} positively associated. More specifically, we argue that this hypothesis may be replaced by the assumption that the measures in question have a \emph{spectral gap}; see \cref{subsec:spectral_background} for definitions. The fact that the random cluster measure on a nonamenable transitive graph has such a spectral gap follows from the results of \cite{MR1913108}. This new method also allows us to study the \emph{finite} clusters in supercritical models, leading to the following theorem which is new even in the case of Bernoulli percolation. (Note that the collection of finite clusters in the random cluster model is itself an automorphism-invariant percolation model, but is not positively associated in the supercritical regime.) 

\begin{thm}
\label{thm:finite_clusters}
Let $G=(V,E,J)$ be an infinite transitive nonamenable unimodular weighted graph and let $q \geq 1$. Then there exist positive constants $C$ and $\delta$ such that
\[
\phi_{q,\beta,h}^\#(n \leq |K_o| < \infty) \leq C n^{-\delta}
\]
for every $n\geq 1$, $\beta,h \geq 0$, and $\#\in \{f,w\}$.
\end{thm}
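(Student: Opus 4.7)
The plan is to apply the general spectral-gap-based framework developed earlier in the paper---the extension of the methods of \cite{1808.08940} to automorphism-invariant percolation processes that need not be positively associated---to the finite-cluster projection of the random cluster model. Roughly, this framework yields polynomial tail bounds on cluster sizes for any $\Aut(G)$-invariant bond percolation measure on a nonamenable transitive unimodular graph whose law has almost surely no infinite clusters and whose spectral gap (in the natural Koopman representation of $\Aut(G)$) is bounded below by a fixed positive constant, with the tail exponent depending only on the graph and the spectral gap bound.

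First, given $\omega$ sampled from $\phi^\#_{q,\beta,h}$, let $\omega^{\mathrm{fin}}$ denote the random subgraph obtained by deleting every edge of $\omega$ that belongs to an infinite $\omega$-cluster. The map $\omega\mapsto\omega^{\mathrm{fin}}$ is measurable and $\Aut(G)$-equivariant, so its law $\nu^\#_{q,\beta,h}$ is automorphism-invariant and supported on configurations with no infinite clusters. On the event $\{|K_o(\omega)|<\infty\}$, the clusters of $o$ in $\omega$ and in $\omega^{\mathrm{fin}}$ coincide, so any polynomial tail bound on $|K_o(\omega^{\mathrm{fin}})|$ produced by the framework translates directly into the desired bound on $\phi^\#_{q,\beta,h}(n\leq|K_o|<\infty)$.

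The main obstacle is to verify, uniformly in $\beta,h\geq 0$ and $\#\in\{f,w\}$, that $\nu^\#_{q,\beta,h}$ has a spectral gap bounded below by a positive constant depending only on $G$ and $q$. Because $\nu^\#_{q,\beta,h}$ is a measurable equivariant factor of $\phi^\#_{q,\beta,h}$, the Koopman representation of $\Aut(G)$ on $L^2(\nu^\#_{q,\beta,h})$ embeds as an invariant subspace of the one on $L^2(\phi^\#_{q,\beta,h})$, and hence any uniform lower bound on the spectral gap of $\phi^\#_{q,\beta,h}$ transfers automatically to $\nu^\#_{q,\beta,h}$. Thus the question reduces to proving a uniform spectral gap lower bound for $\phi^\#_{q,\beta,h}$, which I would obtain by following \cite{MR1913108}: the nonamenability of $G$ (whose isoperimetric constant is independent of $(\beta,h,\#)$), combined with the finite-energy property of the random cluster model and stochastic monotonicity in $(\beta,h)$, yields an $\ell^2$-Poincar\'e-type inequality with a constant that is uniform in the parameters.

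Finally, a bookkeeping step tracks the dependence of the framework's output on its input spectral gap in order to guarantee that the resulting $C$ and $\delta$ are independent of $(\beta,h,\#)$. The delicate point I would expect here is handling the wired measure $\phi^w_{q,\beta,h}$ simultaneously with the free one at the (possibly discontinuous) points where the two differ; this should however be manageable since the framework only requires spectral gap and the no-infinite-cluster property of the projected measure, both of which hold regardless of boundary conditions.
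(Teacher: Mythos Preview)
Your proposal has a genuine gap: you are treating the method of \cite{1808.08940} as a black box that takes as input only (i) automorphism invariance, (ii) a spectral gap, and (iii) the absence of infinite clusters, and outputs a power-law tail. That is not how the method works. The engine is the two-ghost inequality (\cref{thm:two_ghost} / \cref{cor:two_ghost_S}), which controls
\[
\E_\mu\!\left[\mathbbm{1}(\sS_{\eta,\lambda})\sqrt{\frac{\bp_\eta}{(1-\bp_\eta)J_\eta}}\right],
\]
and this is only useful if the percolation measure can be written as \emph{percolation in random environment} with edge probabilities $\bp_e$ bounded away from zero (cf.\ condition II.iii in \cref{subsec:intro_overview}). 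Your construction $\omega^{\mathrm{fin}}$ has no such representation: the trivial one $\bp_e=\omega^{\mathrm{fin}}(e)\in\{0,1\}$ makes the two-ghost inequality vacuous, and there is no evident alternative. Without a random-environment representation the random-walk surgery step---open the edges along the walk and compare to the two-ghost event---simply cannot be carried out, because there is no lower bound on the conditional probability that those edges are open.

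The paper's proof supplies exactly this missing ingredient via the Bollob\'as--Grimmett--Janson colouring (\cref{subsec:finite_FK}): colour each cluster of $\omega$ red independently with probability $1/q$ (with infinite clusters and clusters meeting the external-field set forced red), and observe that, conditional on the red vertex set $R$, the restriction of $\omega$ to $E_o(R)$ is Bernoulli bond percolation with explicit edge probabilities $1-e^{-2\beta J_e}$. This gives $\bp_e/(1-\bp_e)\succeq 1$ on $E_o(R)$ for $\beta$ bounded away from zero, so \cref{cor:two_ghost_S} yields a nontrivial bound; the surgery argument then proceeds on the red region, with the event $\sR_m$ (that the walk stays red) costing only a factor $q^{-m-1}$, and the spectral-radius input comes from \cref{thm:Ising_factor} exactly as you anticipated. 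Your spectral-gap reduction is fine as far as it goes, but it is not the heart of the matter; the random-environment representation is.
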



In order to prove our main theorems, we argue that this new method can also be applied to obtain uniform polynomial tail bounds on the finite clusters in a certain variation on the double random current model in which the two currents can have different values of $\beta$ and $h$. See \cref{prop:finite_clusters_mismatched} for a precise statement. To do this we must first bound the spectral radius of the random current model, which we do in \cref{thm:spectralradius}. In \cref{subsec:mismatched,subsec:mainproof} we use our new construction of double random currents with mismatched temperatures to develop a quantitative version of the arguments of \cite{MR3306602}. This lets us deduce the H\"older continuity claimed in \cref{thm:main_continuity}  from the uniform control of finite clusters for the double random current and FK-Ising models provided by \cref{prop:finite_clusters_mismatched} and \cref{thm:finite_clusters}. Once this is done, \cref{thm:main_simple,thm:main} are easily deduced from \cref{thm:main_continuity}. 

Our proof also yields the following analogue of \cref{thm:main_continuity} for the FK-Ising model. 

\begin{thm}
\label{thm:main_continuity_FK}
Let $G=(V,E,J)$ be a connected, nonamenable, transitive, unimodular weighted graph. Then the wired FK-Ising measure $\phi_{2,\beta,h}^w$ is a weakly-continuous function of $(\beta,h)\in [0,\infty)^2$. Moreover, there exists $\delta>0$ such that if $F:\{0,1\}^{E \cup V} \to \R$ is any function depending on at most finitely many edges and vertices of $G$ then $\phi_{2,\beta,h}^w[F(\omega)]$ is a locally $\delta$-H\"older continuous function of $(\beta,h)\in [0,\infty)^2$. 
\end{thm}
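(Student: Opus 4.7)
The plan is to deduce \cref{thm:main_continuity_FK} from \cref{thm:main_continuity} via the Edwards--Sokal coupling between the plus Ising and wired FK-Ising measures. The crucial feature of the coupling is that the conditional distribution of the FK configuration $\omega$ given the spin configuration $\sigma$ is \emph{local}: given $\sigma$, each edge $e=\{x,y\}\in E$ is independently open with probability $(1-e^{-2\beta J_e})\mathbf{1}\{\sigma_x=\sigma_y\}$, and each vertex $v\in V$ is independently marked with probability $(1-e^{-2\beta h})\mathbf{1}\{\sigma_v=+1\}$. Consequently, for any bounded function $F:\{0,1\}^{E\cup V}\to\R$ depending only on the values of $\omega$ on a finite set $S\subseteq E\cup V$, there is a finite vertex set $V_0\subseteq V$ (the union of $S\cap V$ with the endpoints of the edges in $S\cap E$) and a bounded function $\Phi_{\beta,h}:\{-1,1\}^{V_0}\to\R$ such that
\[
\phi^w_{2,\beta,h}[F(\omega)] = \bI^+_{\beta,h}\bigl[\Phi_{\beta,h}(\sigma)\bigr],
\]
where $\Phi_{\beta,h}$ depends on $(\beta,h)$ only through the Bernoulli success probabilities $1-e^{-2\beta J_e}$ for $e\in S\cap E$ and $1-e^{-2\beta h}$.

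For any compact $K\subseteq [0,\infty)^2$, the map $(\beta,h)\mapsto\Phi_{\beta,h}$ is uniformly Lipschitz in sup-norm on $K$, since each $\Phi_{\beta,h}(\sigma)$ is a polynomial in the aforementioned probabilities with coefficients bounded in terms of $|V_0|$ and $\|F\|_\infty$. Decomposing
\[
\phi^w_{2,\beta_1,h_1}[F]-\phi^w_{2,\beta_2,h_2}[F] = \bigl(\bI^+_{\beta_1,h_1}-\bI^+_{\beta_2,h_2}\bigr)[\Phi_{\beta_1,h_1}] + \bI^+_{\beta_2,h_2}\bigl[\Phi_{\beta_1,h_1}-\Phi_{\beta_2,h_2}\bigr],
\]
the first difference is $O(\|(\beta_1,h_1)-(\beta_2,h_2)\|^\delta)$ uniformly over $K$ by \cref{thm:main_continuity} applied to the local function $\Phi_{\beta_1,h_1}$ (whose support is the same finite set $V_0$ for every parameter value), while the second is $O(\|(\beta_1,h_1)-(\beta_2,h_2)\|)$ by the Lipschitz bound. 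Combined these yield the required local $\delta$-H\"older continuity of $\phi^w_{2,\beta,h}[F]$ on $[0,\infty)^2$, with the same exponent $\delta$ as in \cref{thm:main_continuity}. Weak continuity of $\phi^w_{2,\beta,h}$ in $(\beta,h)$ follows immediately by taking $F$ to range over indicators of cylinder events.

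The main technical point requiring care is verifying that the Edwards--Sokal coupling between $\bI^+_{\beta,h}$ and $\phi^w_{2,\beta,h}$ really does hold in infinite volume in the strong local form described above; this amounts to passing to the limit along the exhaustion $(G^*_n)$ jointly for the coupling and its two marginals, which is standard but should be done carefully so that the conditional-sampling recipe persists in the limit. Notably, and in contrast with the proof of \cref{thm:main_continuity} itself, the locality of the Ising-to-FK conditional distribution means that no control of the cluster structure of $\omega$ (e.g.\ via \cref{thm:finite_clusters} or \cref{prop:finite_clusters_mismatched}) is required here; \cref{thm:main_continuity_FK} is therefore essentially a corollary of \cref{thm:main_continuity} plus the Edwards--Sokal coupling.
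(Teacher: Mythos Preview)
Your proposal is correct and follows essentially the same strategy as the paper: reduce \cref{thm:main_continuity_FK} to \cref{thm:main_continuity} by expressing FK cylinder probabilities as plus-Ising expectations of local observables with explicit Lipschitz dependence on $(\beta,h)$, then split the difference as you do. The paper uses the formula \eqref{eq:FK_Gradient_Formula} (derived there from the Lupu--Werner coupling) in place of your backward Edwards--Sokal conditional, but a short computation shows that your conditional expectation $\Phi_{\beta,h}$ for the event $\{\omega|_A\equiv 0\}$ is exactly $e^{-\beta\sum_{x\in A}J_x(1+\sigma_x)}$, i.e.\ the two routes produce the identical identity; the difference is one of derivation rather than method, and your route via the standard reverse Edwards--Sokal sampling is arguably the more elementary of the two.

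One small point to tighten: in bounding your first term you implicitly need the H\"older constant in \cref{thm:main_continuity} to be uniform over the family $\{\Phi_{\beta_1,h_1}:(\beta_1,h_1)\in K\}$, not just the exponent $\delta$. This is not contained in the bare statement of \cref{thm:main_continuity}, but it does follow from its proof (where the constant depends only on the support size and the sup-norm of the test function), and the same uniformity is equally required in the paper's own ``It follows easily'' step. Your parenthetical remark about the fixed support $V_0$ is pointing at exactly this, so the argument goes through once you note that $\|\Phi_{\beta,h}\|_\infty\le\|F\|_\infty$ uniformly.
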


We show in \cref{subsec:discontinuity} that the \emph{free} FK-Ising measure $\phi_{2,\beta,0}^f$ can be weakly \emph{discontinuous} in $\beta$ under the same hypotheses.



\subsection{Corollaries for the percolation free energy} We now briefly discuss an interesting application of \cref{thm:finite_clusters} to Bernoulli percolation.
Let $G$ be a connected, locally finite, transitive graph, and write $\bE_p$ for expectations taken with respect to Bernoulli-$p$ bond percolation on $G$. For each $p\in [0,1]$, the \textbf{free energy} (a.k.a.\ open-clusters-per-vertex) $\kappa(p)$ of Bernoulli-$p$ percolation  is defined to be
\[
\kappa(p) := \bE_p \frac{1}{|K_o|}.
\]
  It has historically been a problem of great interest, motivated in part by the non-rigorous work of Sykes and Essam \cite{sykes1964exact}, to determine the location and nature of the singularities of this function. See \cite[Chapter 4]{grimmett2010percolation} for further background. In the nonamenable context, it follows from the results of \cite{HermonHutchcroftSupercritical} that $\kappa(p)$ is an analytic function of $p$ on $[0,p_c)\cup(p_c,1]$. See \cite{georgakopoulos2018analyticity,georgakopoulos2020analyticity} for analogous results in the Euclidean context. On the other hand, the nature of the singularity at $p_c$ (and indeed the question of whether or not there is such a singularity) remains open, even in the nonamenable context:

\begin{conjecture}
\label{conj:free_energy}
Let $G$ be an infinite, connected, locally finite, transitive graph with $p_c<1$. Then the percolation free energy $\kappa(p)$ is twice differentiable but \emph{not} thrice differentiable at $p_c$. 
\end{conjecture}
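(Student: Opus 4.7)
The conjecture as stated is for all transitive graphs, but the tools developed in this paper apply specifically to nonamenable unimodular graphs, so the positive direction to aim for is twice differentiability of $\kappa(p)$ at $p_c$ on such graphs. The starting point is the identity $\kappa(p) = \bE_p[1/|K_o|]$ (with the convention $1/\infty := 0$); continuity of $\kappa$ at $p_c$ is immediate from $\bP_{p_c}(|K_o|=\infty)=0$, which holds in the nonamenable case by \cite{BLPS99b}. The task reduces to establishing continuity of $\kappa'$ and $\kappa''$ through $p_c$.

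First I would use the cluster expansion
\[
\kappa(p) \;=\; \sum_{A \ni o \text{ finite}} \frac{1}{|A|}\, \bP_p(K_o = A) \;=\; \sum_A \frac{1}{|A|}\, Q_A(p)\,(1-p)^{|\partial_E A|},
\]
the sum being over finite connected subsets $A$ containing $o$, where $Q_A(p)$ is a polynomial in $p$ encoding the probability that the internal edges of $A$ form a connected spanning subgraph. Differentiating termwise and using that both the number of internal edges of $A$ and $|\partial_E A|$ are at most $\deg(o)|A|$ gives $|\kappa'(p)| \le C$ uniformly; moreover, the per-term bound $\frac{1}{|A|}\bigl|\tfrac{d}{dp}\bP_p(K_o=A)\bigr| \le C\,\bP_p(K_o=A)$ supplies dominated convergence as $p \to p_c$, yielding continuity of $\kappa'$. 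For the second derivative, the analogous bound picks up an extra factor of $|A|$ per term, so $|\kappa''(p)| \le C\, \bE_p[|K_o|\,\mathbf{1}_{|K_o|<\infty}]$ and continuity of $\kappa''$ reduces to uniform integrability of $|K_o|$ on the event $\{|K_o|<\infty\}$ as $p$ approaches $p_c$.

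This last ingredient is exactly what \cref{thm:finite_clusters} provides: the uniform polynomial tail $\bP_p(n \le |K_o| < \infty) \le Cn^{-\delta}$ supplies the required uniform integrability. Off $p_c$, the function $\kappa$ is real-analytic---on $[0,p_c)$ by classical subcritical sharpness and on $(p_c, 1]$ by \cite{HermonHutchcroftSupercritical}---so $\kappa'$ and $\kappa''$ extend continuously from each side, and matching the one-sided limits at $p_c$ uses the absence of an infinite cluster at $p_c$ to kill the `finite cluster of $o$ connecting to an infinite cluster' contribution that would otherwise cause a discontinuity.

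The hard part will be quantitative: the naive double differentiation demands $\delta > 2$ in \cref{thm:finite_clusters}, whereas the value of $\delta$ furnished by the methods of the paper is unspecified and could be small. I expect this to be handled either by a mass-transport rewriting of the derivative formula that reduces the moment on $|K_o|$ required, or by a bootstrap that sharpens the tail exponent in this specific setting. The second half of the conjecture---non-thrice differentiability at $p_c$---is genuinely beyond current non-perturbative techniques, since it would amount to a lower bound on a nontrivial critical exponent for percolation on arbitrary nonamenable transitive graphs.
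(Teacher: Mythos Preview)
The statement is a \emph{conjecture}, and the paper does not prove it. What the paper does establish is the partial result appearing as the Corollary immediately afterwards: $\kappa$ is twice continuously differentiable at $p_c$ on any connected, locally finite, transitive nonamenable graph. Your proposal is effectively an attempt at this partial result, so the comparison should be with the paper's proof of that Corollary.

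Your direct approach via termwise differentiation of the cluster expansion is more laborious than necessary and, as you yourself diagnose, runs into a moment obstruction: your bound $|\kappa''(p)| \lesssim \bE_p[|K_o|\mathbbm{1}(|K_o|<\infty)]$ requires $\delta>1$ in \cref{thm:finite_clusters} (your ``$\delta>2$'' appears to be an overcount---uniform integrability of $|K_o|$ only needs a $(1+\eps)$th moment---but the point stands that the paper does not promise any particular value of $\delta$). The paper sidesteps this entirely by invoking a criterion of Aizenman, Kesten, and Newman \cite[Proposition~3.3]{MR901151}: twice continuous differentiability of $\kappa$ near $p_c$ follows from uniform boundedness of the truncated moment $\bE_p\bigl[\mathbbm{1}(|K_o|<\infty)\log^{1+\eps}|K_o|\bigr]$ in a neighbourhood of $p_c$. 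Any polynomial tail $\bP_p(n\le |K_o|<\infty)\le Cn^{-\delta}$ with $\delta>0$, however small, makes this moment finite and uniformly bounded, so \cref{thm:finite_clusters} suffices as stated. The nonunimodular case is dispatched separately via the $L^2$ boundedness condition from \cite{hutchcroft2020slightly,Hutchcroftnonunimodularperc}. Your speculated ``mass-transport rewriting'' or ``bootstrap'' to sharpen the exponent is therefore unnecessary: the cancellation that weakens the moment requirement on $\kappa''$ is already built into the AKN result, and the paper simply cites it.

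You are correct that the non-thrice-differentiability half of the conjecture is not addressed and remains open.
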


See \cite[Chapter 4 and Proposition 10.20]{grimmett2010percolation} for an overview of progress on this conjecture. Our results lead to the following partial progress on this conjecture in the nonamenable setting.

\begin{corollary}
Let $G$ be a connected, locally finite, transitive nonamenable graph. Then the percolation free energy $\kappa(p)$ is twice continuously differentiable at $p_c$.
\end{corollary}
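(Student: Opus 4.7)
The plan is to combine the uniform polynomial tail bound on finite clusters provided by \cref{thm:finite_clusters} (applied with $q=1$) with the classical cluster expansion for the free energy to control the second derivative of $\kappa$ at $p_c$. Recall the expansion
\[
\kappa(p) = \sum_{A\ni o,\ \text{finite connected}} \frac{1}{|A|} p^{|E(A)|} (1-p)^{|\partial_e A|} = \bE_p\!\left[\frac{\mathbf{1}(|K_o|<\infty)}{|K_o|}\right],
\]
and note that, by the analyticity of $\kappa$ away from $p_c$ established in HermonHutchcroftSupercritical, it suffices to prove that $\kappa''$ extends continuously to $p_c$ from either side.

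First I would differentiate the cluster expansion twice term-by-term to obtain
\[
\kappa''(p) = \bE_p\!\left[\frac{\mathbf{1}(|K_o|<\infty)}{|K_o|}\left\{\left(\frac{|E(K_o)|}{p} - \frac{|\partial_e K_o|}{1-p}\right)^{\!2} - \frac{|E(K_o)|}{p^2} - \frac{|\partial_e K_o|}{(1-p)^2}\right\}\right]
\]
for $p \neq p_c$, the justification of term-by-term differentiation coming from the polynomial tail bound $\P_p(|K_o|=n,|K_o|<\infty) \le Cn^{-\delta}$ of \cref{thm:finite_clusters}. The quadratic summand is bounded by $C(p)|K_o|$, so the main task is to show that the resulting expectation remains well-defined and continuous as $p \to p_c$, where naively $\chi^f(p)=\bE_p[|K_o|\mathbf{1}(|K_o|<\infty)]$ may blow up.

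The key step, and in my view the main obstacle, is to rewrite the quadratic term using the mass transport principle so that it is expressed as an expectation of a bounded local observable. Writing $|E(K)| = \tfrac{1}{2}\sum_{v\in K} \deg_K(v)$ and $|\partial_e K| = \sum_{v\in K}\deg_K^c(v)$, the inner bracket becomes a sum over $v \in K_o$ of bounded per-vertex quantities; applying MTP to this sum reduces the linear-in-$|E|,|\partial_e|$ terms to expectations of bounded functionals of the configuration at $o$ (for instance, $\bE_p[|E(K_o)|/|K_o|\,\mathbf{1}(|K_o|<\infty)] = \tfrac{1}{2}\bE_p[\deg_{K_o}(o)\mathbf{1}(|K_o|<\infty)]$). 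The square requires a double application of MTP, reducing $\bE_p[\mathbf{1}(|K_o|<\infty)Z(K_o)^2/|K_o|]$ to an expected correlation between two bounded per-vertex contributions, evaluated at $o$ and a random second vertex in $K_o$. The polynomial tail bound from \cref{thm:finite_clusters} then supplies the uniform integrability needed for this correlation to be continuous in $p$ at $p_c$.

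Finally, combining this bounded representation with the continuity of $\theta$ at $p_c$ (from BLPS99b), which implies weak continuity of the law of $K_o$ restricted to $\{|K_o|<\infty\}$, and the uniform polynomial tail bound, I would conclude that each term in the reorganized formula for $\kappa''(p)$ converges as $p \to p_c$, giving a finite limit from either side. The agreement of the two one-sided limits, together with the $C^1$ regularity of $\kappa$ at $p_c$ (which is classical and does not rely on nonamenability), then yields twice continuous differentiability at $p_c$. The hardest part of this program is verifying that the iterated MTP actually produces a bounded integrand whose expectation behaves continuously at $p_c$, which is where the full strength of \cref{thm:finite_clusters} enters.
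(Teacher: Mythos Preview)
Your proposal takes a different route from the paper and, as sketched, has a genuine gap. The paper's proof is essentially a citation: Aizenman, Kesten, and Newman \cite[Proposition 3.3]{MR901151} showed that $\kappa$ is twice continuously differentiable in a neighbourhood of $p_c$ as soon as the truncated moment $\bE_p[\mathbbm{1}(|K_o|<\infty)\log^{1+\varepsilon}|K_o|]$ is bounded near $p_c$ for some $\varepsilon>0$. The polynomial tail from \cref{thm:finite_clusters} (with $q=1$) immediately yields every log-moment, so the criterion is verified in the unimodular case; the nonunimodular case is handled by citing separate results establishing the $L^2$ boundedness condition from \cite{Hutchcroftnonunimodularperc,hutchcroft2020slightly}.

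Your attempt to redo the AKN analysis directly via mass transport is where the gap lies. After one application of the mass-transport principle, the quadratic term
\[
\bE_p\!\left[\frac{\mathbbm{1}(|K_o|<\infty)}{|K_o|}\,W^2\right],
\qquad W=\frac{|E(K_o)|}{p}-\frac{|\partial_e K_o|}{1-p}=\sum_{v\in K_o} z_v,
\]
becomes $\bE_p[\mathbbm{1}(|K_o|<\infty)\, z_o W]$, with $z_o$ bounded but $|W|$ of order $|K_o|$. The ``uniform integrability'' you invoke would therefore require uniform control of the truncated susceptibility $\chi^f(p)=\bE_p[|K_o|\mathbbm{1}(|K_o|<\infty)]$. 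The tail bound $\bP_p(n\le |K_o|<\infty)\le Cn^{-\delta}$ only gives this when $\delta>1$, and the paper is explicit that $\delta$ will in general be very small. What is missing is precisely the AKN observation that $W$ is the terminal value of a martingale in the edge-by-edge exploration of $K_o$ (this is the fluctuation $h_{\bp}(K_o)$ of \cref{sec:free_energy}); that cancellation is what reduces the moment requirement from a first moment down to a $\log^{1+\varepsilon}$ moment. Your sketch does not invoke it, and without it the step does not go through.

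Separately, your argument relies throughout on the mass-transport principle and on \cref{thm:finite_clusters}, both of which assume unimodularity; the corollary is stated for all transitive nonamenable graphs, and the paper treats the nonunimodular case by a completely different route.
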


\begin{proof}
Aizenman, Kesten, and Newman \cite[Proposition 3.3]{MR901151} proved that if there exists $\eps>0$ such that the truncated $\log^{1+\eps}$-moment $\bE_p \left[\mathbbm{1}(|K_o| <\infty)\log^{1+\eps} |K_o|\right]$ is bounded in a neighbourhood of $p_c$ then the free energy $\kappa(p)$ is twice continuously differentiable in a neighbourhood of $p_c$. The $q=1$ case of \cref{thm:finite_clusters} (see also \cref{thm:finite_percolation})  is easily seen to imply that this criterion holds when $G$ is unimodular. On the other hand,  it follows from \cite[Theorem 1.1]{hutchcroft2020slightly} that this criterion holds whenever critical percolation on $G$ satisfies the \emph{$L^2$ boundedness condition}, which is always the case when $G$ is nonunimodular by the results of \cite{Hutchcroftnonunimodularperc}.
\end{proof}




\section{Background}

\subsection{Unimodularity and the mass-transport principle}
\label{subsec:unimodularity_background}

We now briefly review the notions of unimodularity and the mass-transport principle. See e.g.\ \cite[Chapter 8]{LP:book} and \cite{MR1082868} for further background.

Let $\Gamma$ be a locally compact Hausdorff topological group. Recall that a Radon measure $\nu$ on $\Gamma$ is said to be a \textbf{left Haar measure} if it is non-zero, locally finite, and left-invariant in the sense that $\nu(\gamma A)=\nu(A)$ for every Borel set $A \subseteq \Gamma$ and $\gamma \in \Gamma$. Similarly, $\nu$ is said to be a \textbf{right Haar measure} if it is locally finite and right-invariant in the sense that $\nu(A \gamma) = \nu(A)$ for every Borel set $A \subseteq \Gamma$ and $\gamma \in \Gamma$. Haar's Theorem states that every locally compact Hausdorff topological group has a left Haar measure that is unique up to multiplication by a positive scalar. 
(Similar statements hold for right Haar measures by symmetry.)
 The group $\Gamma$ is said to be \textbf{unimodular} if its left Haar measures are also right Haar measures. Note that every countable discrete group is unimodular since the counting measure is both left- and right-invariant.

Let $G=(V,E,J)$ be a transitive weighted graph, and let $\Aut(G)$ be the group of automorphisms of $G$, which is a locally compact Hausdorff topological group when equipped with the product topology (i.e., the topology of pointwise convergence). The weighted graph $G$ is said to be \textbf{unimodular} if $\Aut(G)$ is unimodular. It follows from \cite[Proposition 8.12]{LP:book} that if $\Gamma$ is a closed, transitive, unimodular subgroup of $\Aut(G)$ then every intermediate closed subgroup $\Gamma \subseteq \Gamma' \subseteq \Aut(G)$ is unimodular also. In particular, if $G$ is a Cayley graph of a finitely generated group $\Gamma$ then $\Gamma$ can also be thought of as a discrete unimodular transitive subgroup of $\Aut(G)$, so that $G$ is unimodular \cite{MR1082868}. 

Note that if $\Gamma \subseteq \Aut(G)$ is a closed subgroup of $\Aut(G)$ then the stabilizer $\stab(v)=\{\gamma \in \Gamma : \gamma v = v\}$ of each vertex $v$ of $G$ is a compact subgroup of $\Gamma$, and in particular has finite Haar measure. If $\Gamma$ is transitive then the subgroups $\{\stab(v) :v\in V\}$ are all conjugate to each other, so that if $\Gamma$ is unimodular and $\nu$ is a Haar measure on $\Gamma$ then $\nu(\stab(u))=\nu(\stab(v))$ for every $u,v\in V$. It follows that if $\Gamma$ is transitive and unimodular then there exists a unique Haar measure $\nu$ such that $\nu(\stab(v))=1$ for every $v\in V$, which we call the \textbf{unit Haar measure}.

Let $G=(V,E,J)$ be a connected transitive weighted graph, let $\Gamma$ be a closed transitive unimodular subgroup of $\Aut(G)$, and let $o$ be an arbitrary root vertex of $G$. The \textbf{mass-transport principle} \cite[Eq.\ 8.4]{LP:book} states that for every function $F:V^2\to [0,\infty]$ that is diagonally-invariant in the sense that $F(\gamma u, \gamma v)=F(u,v)$ for every $u,v \in V$ and $\gamma \in \Gamma$, we have that
\begin{equation}
\label{eq:MTP}
\sum_{v\in V} F(o,v) = \sum_{v\in V} F(v,o).
\end{equation}
As in \cite{1808.08940}, we will also use a version of the mass-transport principle indexed by oriented edges rather than vertices. 
 Write $E^\rightarrow$ for the set of oriented edges of $G$, where an oriented edge $e$ is oriented from its tail $e^-$ to its head $e^+$ and has reversal $e^\leftarrow$.
Let $\eta$ be chosen at random from the set of oriented edges of $G$ emanating from $o$ with probability proportional to $J_e$, so that $\eta$ has the law of the first edge crossed by a random walk started at $o$. Then for every function $F:E^\rightarrow \times E^\rightarrow \to [0,\infty]$ that is diagonally-invariant in the sense that $F(\gamma e_1, \gamma e_2)=F(e_1,e_2)$ for every $e_1,e_2\in E^\rightarrow$ and $\gamma \in \Gamma$, we have that
\begin{equation}
\E\sum_{e\in E^\rightarrow} J_e F(\eta,e) = \E\sum_{e\in E^\rightarrow} J_e F(e,\eta),
\label{eq:edgeMTP}
\end{equation}
where the expectation is taken over the random oriented edge $\eta$. 
This
 equality follows by applying \eqref{eq:MTP} 
 to the function 
  $\tilde F(u,v) = \sum_{e_1^-=u} \sum_{e_2^-=v} J_{e_1} J_{e_2} F(e_1,e_2)$. Moreover, the equality \eqref{eq:edgeMTP} also holds for \emph{signed} diagonally-invariant functions $F:E^\rightarrow\times E^\rightarrow \to\R$ satisfying the absolute integrability condition
\begin{equation}
\label{eq:integrability}
\E\sum_{e\in E^\rightarrow}J_e |F(\eta,e)|<\infty.
\end{equation}
This follows by applying \eqref{eq:edgeMTP} separately to the positive and negative parts of $F$, which are defined by
 $F^+(e_1,e_2)=0\vee F(e_1,e_2)$ and $F^-(e_1,e_2)=  0\vee (-F(e_1,e_2))$.

\subsection{The spectral theory of automorphism-invariant processes}
\label{subsec:spectral_background}

In this section we review some notions from the spectral theory of group-invariant processes that will be used in the proofs of our main theorems. 
Everything we discuss in this section is likely to be known to some experts, but we have given a fairly detailed and self-contained account since we expect it to be unfamiliar to many of our readers and the required material is spread over several papers and not always written in the form that we wish to apply it. Good resources for further background on this material from a probabilistic perspective include \cite{MR3729654,MR1082868,MR2825538}.

\medskip


 We begin by quickly recalling the definition of the spectral radius of a weighted graph. Let $G=(V,E,J)$ be a connected, transitive weighted graph.  The random walk on $G$ is defined to be the reversible Markov chain on $V$ which, at each time step, chooses a random oriented edge $e$ emanating from its current location with probability proportional to $J_e$ and then crosses this edge, independently of everything it has done previously. 
We write $\bE_v$ for the law of the random walk $X=(X_n)_{n\geq 0}$ on $G$ started at $v$. The \textbf{Markov operator} $P:L^2(V)\to L^2(V)$ is defined by
\[
Pf(v)=\bE_v f(X_1)
\]
for each $v\in V$. The operator $P$ is clearly self-adjoint, while Jensen's inequality implies that $P$ is bounded with operator norm $\|P\|\leq 1$. It is a well-known theorem of Kesten \cite{Kesten1959b} (see also \cite[Chapter 6.2]{LP:book}) that the strict inequality $\|P\|<1$ holds if and only if $G$ is nonamenable, and moreover that
\[
\|P\| = \lim_{n\to\infty} p_{2n}(v,v)^{1/2n} =\limsup_{n\to\infty} p_n(u,v)^{1/n}
\]
for every $u,v\in V$. The norm $\|P\|$ is known as the \textbf{spectral radius} of $G$ and is also denoted $\rho(G)$.

\medskip

\textbf{Random walks on graphs vs.\ random walk on groups.} When $G$ is a Cayley graph of a finitely generated group $\Gamma$, we can always think of the random walk on $G$ as a random walk on the group and hence as a random walk on (a subgroup of) $\Aut(G)$. 
This duality between random walks on graphs and on their automorphism groups comes with some subtleties, however.
 Let $(Z_n)_{n\geq 0}$ be a sequence of i.i.d.\ $\Gamma$-valued random variables such that $Z_n$ is distributed as the first step of a random walk on $G$ started from the identity and define $X=(X_n)_{n\geq0}$ by $X_0=\mathrm{id}$ and $X_n=X_{n-1}Z_n$ for every $n\geq 1$ then $X$ is distributed as a random walk on $G$ started at the identity. 
Moreover, if we let $\Gamma$ act on $L^2(V) \cong L^2(\Gamma)$ by $\gamma f(v) = f(\gamma^{-1} v)$ then we can define a bounded self-adjoint operator $\hat P$ on $L^2(V)$ by
\[
\hat P f(v) = \E \left[X_1 f(v)\right] = \E\left[ f(X_1^{-1} v)\right].
\]
It follows by induction on $n \geq 1$ that
\[
\hat P^n f(v) = \E\left[ Z_1 \hat P^{n-1} f(v) \right] = \E\left[  Z_1 \E \left[ Z_2 \cdots Z_n f(v)\right]\right] = \E\left[ X_n f(v)\right]
\]
for every $v\in V$, $n\geq 1$, and $f\in L^2(V)$, where we used that $X_{n-1}=Z_1\cdots Z_{n-1}$ and $Z_2 \cdots Z_n$ have the same distribution in the central equality.
Note however that $X_n^{-1}v$ does \emph{not} in general have the same distribution as the $n$th step of a random walk on $G$ started at $v$, so that the operators $P$ and $\hat P$ are not generally the same. Nevertheless, the \emph{spectral radii} $\|P\|$ and $\|\hat P\|$ are always the same, for the simple reason that $\hat P$ is the Markov operator for the random walk on the \emph{left} Cayley graph of $\Gamma$ with respect to the same generating set as $G$, which is isomorphic to $G$.
More concretely, if we consider the isometric involution $\operatorname{Inv}$ of $L^2(V) \cong L^2(\Gamma)$ defined by
\begin{align*}
\operatorname{Inv} f(\gamma) = f(\gamma ^{-1}) \qquad \text{ for all $f\in L^2(\Gamma)$ and $\gamma \in \Gamma$}
\end{align*}
then $\hat P = \operatorname{Inv} P \operatorname{Inv}$ and $P =  \operatorname{Inv} \hat P  \operatorname{Inv}$, so that all the spectral properties of the two operators $P$ and $\hat P$ are the same.


\medskip

It will be useful to have a similar duality in place for general transitive graphs. Much of this duality was developed by Soardi and Woess \cite{MR1082868}, although we will follow some slightly different conventions.
Let $G=(V,E,J)$ be a connected transitive weighted graph, let $o$ be a fixed root vertex of $G$, and let $\Gamma \subseteq \Aut(G)$ be a unimodular closed transitive group of automorphisms.
As above, $\Gamma$ acts on $L^2(V)$ by
\[
\gamma f(v) = f(\gamma^{-1}v) \text{ for every $\gamma\in \Gamma$, $f\in L^2(V)$, and $v\in V$.}
\]
 Let $X=(X_n)_{n\geq 0}$ be a random walk on $G$ started at $o$. Conditional on $X$, let $\hat X=(\hat X_{n})_{n\geq 0}$ be drawn independently at random from the normalized  Haar measures on the compact sets of automorphisms $\{\gamma \in \Gamma : \gamma o = X_n\}$. Meanwhile, let $\nu$ be the law of $\hat X_1$, let $(Z_n)_{n\geq 1}$ be i.i.d.\ random variables each with law $\nu$, and let $\hat Y = (\hat Y_n)_{n\geq 0}$ be the random walk on $\Gamma$ defined by $\hat Y_0=\mathrm{id}$ and $\hat Y_n = \hat Y_{n-1} Z_n = Z_1 \cdots Z_n$ for every $n\geq 1$. The following lemma is an easy consequence of the fact that the Haar measure on the unimodular group $\Gamma$ is both left- and right-invariant.
\begin{lemma}
\label{lem:groupsvsgraphs}
Let $G=(V,E,J)$ be a connected transitive weighted graph, let $o$ be a fixed root vertex of $G$, and let $\Gamma \subseteq \Aut(G)$ be a unimodular closed transitive group of automorphisms. Then the two processes $\hat X$ and $\hat Y$ we have just defined have the same distribution.
\end{lemma}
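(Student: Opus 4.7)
The plan is to study the left increments $\tilde Z_n := \hat X_{n-1}^{-1} \hat X_n$ of $\hat X$ for $n \geq 1$ and show that they are i.i.d.\ with common law $\nu$. Since $\hat Y_n = Z_1 \cdots Z_n$ with $(Z_n)_{n \geq 1}$ i.i.d.\ of law $\nu$, coupling $Z_n := \tilde Z_n$ will then give the identity $\hat Y_n = \hat X_0^{-1} \hat X_n$ for every $n \geq 1$, reducing the lemma to showing that left-multiplying $\hat Y$ by an independent $\mathrm{Haar}_{\stab(o)}$-distributed element does not change its law.

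For the i.i.d.\ claim, fix $n \geq 1$ and compute the conditional law of $\tilde Z_n$ given $\hat X_0, \dots, \hat X_{n-1}$. By definition of $\hat X$, the conditional law of $\hat X_n$ given the whole trajectory $X$ and the Haar samples used to produce $\hat X_0,\dots,\hat X_{n-1}$ is the normalized Haar measure on the coset $\{\gamma \in \Gamma : \gamma o = X_n\}$. Left-multiplication by the fixed element $\hat X_{n-1}^{-1}$ combined with left-invariance of Haar measure on $\Gamma$ then implies that, still under this conditioning, $\tilde Z_n$ is Haar on the translated coset $\{\gamma : \gamma o = W\}$ with $W := \hat X_{n-1}^{-1} X_n$.

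It remains to identify the conditional law of $W$. Since $X_{n-1} = \hat X_{n-1} o$, the variables $\hat X_0,\dots,\hat X_{n-1}$ determine $X_0,\dots,X_{n-1}$. The Markov property of $X$ and the independence of the Haar samples from $X$ then yield that the conditional law of $X_n$ given $\hat X_0,\dots,\hat X_{n-1}$ is the one-step transition from $X_{n-1} = \hat X_{n-1} o$; by $\Gamma$-invariance of this transition kernel, the displacement $W = \hat X_{n-1}^{-1} X_n$ has the one-step law seen from $o$, independently of the conditioning. Combining the two paragraphs, the conditional law of $\tilde Z_n$ given $\hat X_0,\dots,\hat X_{n-1}$ is that of a Haar sample on a coset chosen according to the one-step law from $o$, which is precisely the definition of $\nu$ as the law of $\hat X_1$. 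Iterating over $n$ yields the desired i.i.d.\ structure.

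To complete the argument I would verify the $\stab(o)$-invariance of $\nu$ required in the first paragraph. For $k \in \stab(o)$, the relation $ko = o$ combined with $\Gamma$-invariance of the one-step kernel shows that the law of $X_1$ starting from $o$ is $k$-invariant, and left-invariance of Haar on cosets then gives $k \cdot \nu = \nu$. It follows that the joint law of $\hat Y = (Z_1\cdots Z_n)_{n \geq 1}$ is unchanged when every coordinate is left-multiplied by a common independent $\mathrm{Haar}_{\stab(o)}$-distributed element such as $\hat X_0$, finishing the proof. The main obstacle is the bookkeeping of the various layers of conditional independence --- the trajectory $X$, the per-time Haar samples, and the conditioning on the past of $\hat X$ --- but once this is organised the argument reduces to two elementary facts: left-invariance of Haar measure on cosets and $\Gamma$-invariance of the random walk's one-step transition kernel; the full force of unimodularity is ambient to the paper but not essential to this lemma.
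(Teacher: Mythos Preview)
Your argument is correct and supplies the details that the paper omits (the paper only says the lemma ``is an easy consequence of the fact that the Haar measure on the unimodular group $\Gamma$ is both left- and right-invariant''). The decomposition into left increments $\tilde Z_n = \hat X_{n-1}^{-1}\hat X_n$, the verification that these are conditionally $\nu$-distributed given the past via left-invariance of Haar on cosets together with $\Gamma$-invariance of the transition kernel, and the final $\stab(o)$-invariance step are all sound.

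Two remarks. First, as you implicitly acknowledge by restricting to $(Z_1\cdots Z_n)_{n\ge1}$ in the last paragraph, the marginals at time $0$ actually differ whenever $\stab(o)$ is nontrivial: $\hat X_0$ is Haar on $\stab(o)$ while $\hat Y_0=\mathrm{id}$. So the lemma as literally stated holds only for $n\ge1$ (or when $\Gamma$ acts freely); this is a harmless slip in the paper and does not affect any of its applications, which only use $\hat X_n$ for $n\ge1$. Second, your closing observation that unimodularity is not essential is correct: every step you use---left-invariance of Haar on left cosets of the compact subgroup $\stab(o)$, and $\Gamma$-invariance of the walk---holds in any locally compact group. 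The paper's reference to right-invariance appears to be contextual rather than an indication that both invariances are genuinely needed here.
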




%
%
%
%
%
Thus, we may think of $\hat X$ as a random walk on $\Gamma \subseteq \Aut(G)$. 
We define the associated Markov operator $\hat P : L^2(\Gamma) \to L^2(\Gamma)$ by
\[
\hat P f(\gamma) = \E \left[ Z_1 f(\gamma)\right] = \E\left[f( Z_1^{-1}\gamma)\right]
\]
for every $f\in L^2(\Gamma)$ and a.e.\ $\gamma\in \Gamma$. Similarly to above, $\hat P$ is bounded, self-adjoint, and satisfies
\[
\hat P^n f(\gamma) = \E \left[Z_1 \hat P_{n-1} f(\gamma)\right] = \E\left[  Z_1 \E \left[ Z_2 \cdots Z_n f(\gamma)\right]\right] = \E\left[ \hat X_n f(\gamma)\right]
\]
for every $n\geq 0$, $f\in L^2(V)$, and a.e.\ $\gamma \in \Gamma$. The Markov operator $\hat P$ can be related to the usual Markov operator $P$ as follows: Let $\lambda$ be the unit Haar measure on $\Gamma$ and consider the three operators
\begin{align*}
\operatorname{Proj} : L^2(\Gamma) &\to L^2(V) \qquad &\operatorname{Proj} f(v) &= \int_\Gamma f (\gamma) \mathbbm{1}(\gamma o = v) \dif \lambda (\gamma),\\
\operatorname{Inj} : L^2(V) &\to L^2(\Gamma) \qquad & \operatorname{Inj} f(\gamma) &= f(\gamma o),
 &\text{ and }\\
\operatorname{Inv} : L^2(\Gamma) &\to L^2(\Gamma) \qquad & \operatorname{Inv} f(\gamma) &= f(\gamma^{-1}),
\end{align*}
each of which is easily seen to be bounded with norm $1$. The self-adjoint operator $\operatorname{Inv}$ is an isometric involution of $L^2(\Gamma)$, while $\operatorname{Proj}$ and $\operatorname{Inj}$ are adjoints of each other. The two Markov operators $P$ and $\hat P$ satisfy the congruence-type relation
\begin{equation}
\label{eq:congruence}
P = \operatorname{Proj} \operatorname{Inv} \hat P \operatorname{Inv} \operatorname{Inj} \qquad \text{ and } \qquad \hat P = \operatorname{Inv} \operatorname{Inj}  P  \operatorname{Proj} \operatorname{Inv},
\end{equation}
which implies in particular that $\|P\|=\|\hat P\|=\rho(G)$. The equation \eqref{eq:congruence}, which is easily verified directly, is essentially equivalent to \cite[Proposition 1]{MR1082868}.

\medskip

\textbf{Spectral radii of automorphism-invariant processes.} We now define the spectral radius of an \emph{automorphism-invariant stochastic process} on $G$. Let $G=(V,E,J)$ be a connected transitive weighted graph, let $o$ be a fixed root vertex of $G$, and let $\Gamma \subseteq \Aut(G)$ be a unimodular closed transitive group of automorphisms. Let $X=(X_n)_{n\geq 0}$ be a random walk started at $o$ on $G$ and let $\hat X= (\hat X_n)_{n\geq 0}$ be the associated random walk on $\Gamma$ as above. 
Let $\mathbb{X}_V$ and $\bbX_E$ be Polish spaces, which we will usually take to be either $\{\emptyset\}$, $\{0,1\}$, $[0,1]$, or $\N_0 = \{0,1,2,\ldots\}$, and let $\Omega$ be the product space\footnote{We have chosen to restrict to product spaces of this form to help make the resulting theory more intuitive to probabilists; one could just as well consider arbitrary actions of $\Gamma$ on probability spaces by measure preserving transformations (a.k.a.\ pmp actions of $\Gamma$), as is standard in other parts of the literature.} $\Omega=\bbX_V^V \times \bbX^E_E$. The group $\Gamma$ acts on $\Omega$ by
\[
\gamma \omega(x) = \omega(\gamma^{-1} x) \qquad \text{ for each $\gamma \in \Gamma$, $\omega \in \Omega$, and $x\in V \cup E$,}
\]
and a probability measure $\mu$ on $\Omega$ is said to be \textbf{$\Gamma$-invariant} if $\mu(A)=\mu(\gamma^{-1}A)$ for every Borel set $A \subseteq \Omega$ and $\gamma \in \Gamma$. Given an automorphism-invariant probability measure $\mu$ on $\Omega$, we define the Markov operator $\hat P_\mu$ on $L^2(\Omega,\mu)$ by
\[
\hat P_\mu f(\omega) = \bE \left[\hat X_1 f( \omega) \right] = \bE \left[f(\hat X_1^{-1} \omega) \right] \qquad \text{ for every $f\in L^2(\Omega,\mu)$ and $\omega\in \Omega$,}
\]
which is bounded and self-adjoint with norm $\|\hat P_\mu\| = 1$.
Let $L^2_0(\Omega,\mu)=\{f \in L^2(\Omega,\mu): \mu(f)=0\}$. The Markov operator $\hat P_\mu$ fixes $L_0^2(\Omega,\mu)$, and can therefore also be seen as a bounded self-adjoint operator on $L^2_0(\Omega,\mu)$. We define
\[
\rho(\mu) = \rho(\mu,\Gamma)= \sup\left\{ \frac{\|\hat P_{\mu} f\|_2}{\|f\|_2} : f\in L^2_0(\Omega,\mu) \setminus \{0\} \right\} = 
\sup\left\{ \frac{|(\hat P_{\mu} f,f)|}{|(f,f)|} : f\in L^2_0(\Omega,\mu) \setminus \{0\} \right\} 
\]
to be the spectral radius of the Markov operator $\hat P_{\mu}$ on $L^2_0(\Omega,\mu)$. We say that $\mu$ has a \textbf{spectral gap} if $\rho(\mu)<1$.

\medskip

The spectral radius may also be expressed probabilistically as follows. Let $\bbX_V$ and $\bbX_E$ be Polish spaces and let $\varphi=(\varphi_x)_{x\in V \cup E}$ be a random variable taking values in $\Omega= \bbX_V^V \times \bbX_E^E$ whose law $\mu$ is $\Gamma$-invariant and let the processes $X$ and $\hat X$ and as be defined as  above and independent of $\varphi$. The definition of the spectral radius may be rewritten probabilistically as
\begin{align}
\label{eq:covariance}
\rho(\mu) = \rho(\mu,\Gamma) &= 
\sup\left\{ \frac{\bigl|\Cov\bigl( F(\varphi),F(\hat X_{1}^{-1}\varphi)\bigr)\bigr|}{\Var(F(\varphi))}: F\in \R^\Omega, 0<\Var(F(\varphi)) <\infty
\right\}
\end{align}
where we write $\Var$ and $\Cov$ for variances and covariances taken with respect to the joint law of the random variables $\varphi$ and $\hat X$.

\medskip

 Note that we will typically be interested in random fields that are indexed only by the edge set or by the vertex set. Such fields are easily included within this formalism by setting $\bbX_V=\{\emptyset\}$ or $\bbX_E=\{\emptyset\}$ as appropriate and setting the random field to be constantly equal to $\emptyset$ over the irrelevant indices, and we will apply the results and terminology of this section to such fields without further comment in the remainder of the paper.

\begin{example}
\label{example:FiniteClusters}
Let $G=(V,E,J)$ be a connected, transitive weighted graph, let $\Gamma$ be a closed transitive unimodular subgroup of $\Aut(G)$ and let $o$ be a fixed root vertex of $G$. Let $\mu$ be an automorphism-invariant probability measure on $\{0,1\}^E$ and let $\omega \in \{0,1\}^E$ be a random variable with law $\mu$. Let $X=(X_n)_{n\geq 0}$ be a random walk on $G$ started at $o$ and let $\hat X$ be the associated random walk on $\Gamma$, where we take $X$ and $\hat X$ to be independent of $\omega$. Letting $K_v$ be the cluster of $v$ in $\omega$ for each vertex $v$ of $G$, we have that $\hat X_k\mathbbm{1}(n\leq |K_o| < \infty)= \mathbbm{1}(n\leq |K_{X_k}|<\infty)$ for every $k,n\geq 0$, so that
\begin{multline}
|\P( n \leq |K_{o}|,|K_{X_{k}}| < \infty)-\P( n \leq |K_{o}|< \infty)^2| \\\leq \rho(\mu)^{k} |\P( n \leq |K_{o}| < \infty)-\P( n \leq |K_{o}|< \infty)^2| \leq \rho(\mu)^k
\end{multline}
for every $n,k\geq 0$ by definition of $\rho(\mu)$. This inequality will play a central role in the proofs of our main theorems.
\end{example}

We next discuss some useful properties of the spectral radius that we will use in the proofs of our main theorems.

\medskip

\textbf{The limit formula.} 
We first recall some standard facts about self-adjoint operators on Hilbert spaces that will help us to compute spectral radii in examples. Let $T$ be a bounded self-adjoint operator on a Hilbert space $H$. Cauchy-Schwarz gives that
\[
\| T^{n+1} x\|^4 = (T^nx,T^{n+2}x)^2 \leq \|T^n x\|^2\|T^{n+2}x\|^2
\] 
for every $x \in H$ and $n\geq 0$, which implies that if $x\in H$ is such that $Tx \neq 0$ then $T^n x \neq 0$ for all $n\geq 0$ and that $\|T^{n+1}x\|/\|T^n x\|$ is an increasing function of $n\geq 0$. This is easily seen to imply that $\lim_{k\to\infty} \|T^{k} x\|^{1/k}$ exists for every $x\in H$ and that
\begin{equation}
\label{eq:norm_limit_bound}
\frac{\|T x\|}{\|x\|} \leq 
\lim_{k\to\infty} \|T^{k} x\|^{1/k} \leq \|T\|
\end{equation}
for every $x\in H\setminus \{0\}$. Moreover, we have by the triangle inequality that
\[\lim_{k\to\infty} \Bigl\|T^k\sum_{i=1}^m a_i x_i\Bigr\|^{1/k} 
\leq \lim_{k\to\infty} \left(\sum_{i=1}^m |a_i| \|T^k  x_i\| \right)^{1/k} =
\max \Bigl\{ \lim_{k\to\infty} \|T^kx_i\|^{1/k} : 1\leq i \leq m, a_i \neq 0\Bigr\}\]
for every $x_1,\ldots,x_m \in H$ and $a_1,\ldots a_m \in \R$. It follows that 
 if $A$ is a subset of $H$ with dense linear span $S(A)$ then
\begin{align}
\|T\|
=\sup\Bigl\{ \lim_{k\to\infty} \|T^kx\|^{1/k} : x\in S(A)\Bigr\}
=\sup\Bigl\{ \lim_{k\to\infty} \|T^kx\|^{1/k} : x\in A\Bigr\}.
\label{eq:norm_limit_subset}
\end{align}
Translating this into probabilistic notation, the formula \eqref{eq:norm_limit_subset} yields in the context of \eqref{eq:covariance} that
\begin{align}
\rho(\mu) &= 
 \sup\left\{ \lim_{k\to\infty} \Cov\left( F(\varphi),F(\hat X_{2k}^{-1}\varphi)\right)^{1/2k} : F\in A
\right\}
\label{eq:rho_limit_covariance}
\end{align}
for every set of functions $A \subseteq L^2(\Omega,\mu)$ that has dense linear span in $L^2(\Omega,\mu)$. 

\medskip

\textbf{Spectral radii of i.i.d.\ processes.}
Let $G=(V,E,J)$ be a connected transitive weighted graph and let $\Gamma \subseteq \Aut(G)$ be a closed group of automorphisms. A 
\textbf{Bernoulli process} on $G$ is a family of independent random variables $(\varphi_x)_{x\in E \cup V}$ taking values in a Polish space of the form $\bbX_V^V \times \bbX_E^E$ such that $\varphi_x$ and $\varphi_{\gamma x}$ have the same distribution for every $x\in V \cup E$ and $\gamma \in \Gamma$. The law of a Bernoulli process is called a \textbf{Bernoulli measure}. We say that a Bernoulli measure is \textbf{non-trivial} if it is not concentrated on a single point.
%
 The following theorem is  folklore. 

\begin{thm}
\label{thm:Bernoulli_radius}
Let $G=(V,E,J)$ be an infinite, connected, transitive weighted graph and let $\Gamma \subseteq \Aut(G)$ be a unimodular, closed, transitive group of automorphisms. If $\mu$ is a non-trivial Bernoulli measure on $G$ then $\rho(\mu)=\rho(G)$.
\end{thm}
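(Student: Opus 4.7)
My plan is to prove the two inequalities $\rho(\mu)\ge\rho(G)$ and $\rho(\mu)\le\rho(G)$ separately, using the product structure of $\mu$ to reduce everything to graph random walk estimates. Throughout, nontriviality of $\mu$ guarantees a coordinate with a nondegenerate marginal; by $\Gamma$-invariance of $\mu$ and transitivity of $\Gamma$, I may take this to be the vertex marginal at $o$ (the edge case is handled analogously), and I fix $f$ in this marginal with $\int f\,d\mu_o=0$ and $\|f\|_2=1$.

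For the lower bound I test $\hat P_\mu$ against the single-coordinate function $F(\varphi)=f(\varphi_o)$. Since $(\hat P_\mu F)(\omega) = \E[f(\omega_{\hat X_1 o})] = \sum_{v} p_1(o,v) f(\omega_v)$, iterating and using independence of the coordinates together with $\E[f(\varphi_o)]=0$ gives
\[
(\hat P_\mu^k F)(\omega)=\sum_{v\in V}p_k(o,v)\,f(\omega_v), \qquad \|\hat P_\mu^k F\|_2^2=\sum_{v\in V}p_k(o,v)^2=p_{2k}(o,o).
\]
Combining this with Kesten's identity $\lim_{k\to\infty} p_{2k}(o,o)^{1/(2k)}=\rho(G)$ and the limit formula \eqref{eq:norm_limit_bound} yields $\rho(\mu)=\|\hat P_\mu\|\ge \rho(G)$.

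For the upper bound I employ the Hoeffding (tensor-product) decomposition
\[
L^2_0(\Omega,\mu)=\bigoplus_{\emptyset\ne S\subseteq V\cup E\text{ finite}} H_S,
\]
where $H_S$ is the closed span of products $\prod_{x\in S} f_x(\omega_x)$ with each $f_x$ of mean zero, equivalently the space of $\omega|_S$-measurable functions with $\E[F\mid \omega|_T]=0$ for every $T\subsetneq S$. Two standard facts are crucial: (i) the $H_S$ are mutually orthogonal, shown by conditioning on $\omega|_{V\cup E\setminus\{x\}}$ for any $x\in S\triangle S'$; and (ii) for any $\gamma\in \Gamma$, the composition map $F\mapsto F\circ \gamma^{-1}$ sends $H_S$ isometrically onto $H_{\gamma S}$, using $\Gamma$-invariance of $\mu$ and that all vertex (resp.\ edge) marginals coincide. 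Conditioning on $\hat X_k$ and applying (i) and (ii) to $F\in H_S$ gives
\[
\bigl|(F,\hat P_\mu^k F)\bigr|=\bigl|\E[F(\omega)\,F(\hat X_k^{-1}\omega)]\bigr|\le \|F\|_2^2\,\P(\hat X_k S=S).
\]

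The final step is to bound $\P(\hat X_k S=S)$ by graph random walk quantities. Fix $s_0\in S$; on the event $\{\hat X_k S=S\}$ we have $\hat X_k s_0=s'$ for some $s'\in S$, and since $\hat X_k$ is a graph automorphism this forces $d(X_k,s')=d(o,s_0)=:r$, so $X_k$ lies in the finite ball $B_r(s')$. Using the transitivity/reversibility estimate $p_k(o,v)\le \sqrt{p_k(o,o)p_k(v,v)}=p_k(o,o)$, I obtain
\[
\P(\hat X_k S=S)\le \sum_{s'\in S}\P(\hat X_k s_0=s')\le \sum_{s'\in S} |B_r(s')|\,p_k(o,o)\le C(S)\,p_k(o,o).
\]
Plugging this into the previous display and using self-adjointness $\|\hat P_\mu^k F\|_2^2=(F,\hat P_\mu^{2k}F)$ yields $\lim_k \|\hat P_\mu^k F\|_2^{1/k}\le \rho(G)$ for every mean-zero cylinder $F$. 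Since such functions span a dense subspace of $L^2_0(\Omega,\mu)$, the limit formula \eqref{eq:norm_limit_subset} upgrades this to $\rho(\mu)=\|\hat P_\mu\|_{L^2_0}\le \rho(G)$, completing the proof. The main obstacle I anticipate is the bookkeeping required to set up the Hoeffding decomposition and verify that the action of the continuous group $\Gamma$ permutes the subspaces $H_S$ as claimed; the remainder is a fairly direct spectral-radius reduction for i.i.d.\ processes.
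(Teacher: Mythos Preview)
Your proof is correct, but it takes a somewhat more elaborate route for the upper bound than the paper does. The paper simply tests against cylinder indicators $\mathbbm{1}(\varphi|_A\in\mathscr A)$ and observes that, by independence of the coordinates, such a function has \emph{zero} covariance with its shift $\hat X_{2k}\mathbbm{1}(\varphi|_A\in\mathscr A)$ whenever $\hat X_{2k}^{-1}A\cap A=\emptyset$. Hence the covariance is bounded by $\P(\hat X_{2k}^{-1}A\cap A\neq\emptyset)\le (\hat P^{2k}\mathbbm{1}_{V(A)},\mathbbm{1}_{V(A)})$, and the limit formula \eqref{eq:rho_limit_covariance} gives $\rho(\mu)\le\rho(G)$ directly. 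Your Hoeffding decomposition yields the sharper orthogonality statement that $H_S$ and $\gamma H_S$ are orthogonal unless $\gamma S=S$ exactly, but this refinement is not needed here; the paper's argument avoids setting up the tensor decomposition entirely. Your lower bound via a single-coordinate test function is the natural argument the paper has in mind when it leaves that direction ``as an exercise''.

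One small slip worth noting: the inequality $p_k(o,v)\le\sqrt{p_k(o,o)p_k(v,v)}$ fails for odd $k$ on bipartite graphs (where $p_k(o,o)=0$). This does not affect your argument, since you only ever apply it at even times via $\|\hat P_\mu^kF\|_2^2=(F,\hat P_\mu^{2k}F)$, where the Cauchy--Schwarz bound $p_{2k}(o,v)=\langle P^k\delta_o,P^k\delta_v\rangle\le p_{2k}(o,o)$ is valid.
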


See \cite[Theorem 2.1 and Corollary 2.2]{MR2825538} for stronger results in the case that $G$ is a Cayley graph.

\begin{proof}[Proof of \cref{thm:Bernoulli_radius}]
Let $\varphi$ be a random variable with law $\mu$, let $X$ be a random walk started from the origin on $G$, and let $\hat X$ be the associated random walk on $\Gamma$, where we take $\varphi$ and $\hat X$ to be independent.
Observe that functions of the form $\mathbbm{1}(\varphi|_A \in \mathscr{A})$ where $A \subseteq V \cup E$ is finite and $\sA \subseteq \bbX^{A \cap V}_V \times \bbX_E^{A \cap E}$ is Borel have dense linear span in $L^2(\Omega,\mu)$. Fix one such pair of sets $A$ and $\sA$ and let $V(A)$ be the set of vertices that either belong to $A$ or are the endpoint of an edge belonging to $A$. We have by independence that
\begin{align*}
\lim_{k\to\infty} \Cov\left(\mathbbm{1}(\varphi|_A \in \mathscr{A}),\hat X_{2k} \mathbbm{1}(\varphi|_A \in \mathscr{A})\right)^{1/2k} &\leq \lim_{k\to\infty}\P(\hat X_{2k}^{-1} A \cap A \neq \emptyset)^{1/2k} \\
&\leq \lim_{k\to\infty} (\hat P^{2k} \mathbbm{1}_{V(A)}, \mathbbm{1}_{V(A)})^{1/2k} \leq \|\hat P\|=\|P\|=\rho(G),
\end{align*}
 and it follows from \eqref{eq:rho_limit_covariance} that $\rho(\mu)\leq \rho(G)$.
 The matching lower bound (which we will not use) follows by similar reasoning, using the assumption that $\mu$ is non-trivial, and is left as an exercise to the reader.
\end{proof}



\textbf{Monotonicity under factors.}
 Let $G=(V,E,J)$ be a transitive connected weighted graph and let $\Gamma$ be a closed unimodular transitive subgroup of $\Aut(G)$. Let $\bbX_V$, $\bbX_E$, $\bbY_V$, and $\bbY_E$ be Polish spaces, and suppose that $\mu$ and $\nu$ are $\Gamma$-invariant probability measures on the product spaces $\Omega_1 = \bbX_V^V \times \bbX_E^E$ and $\Omega_2=\bbY_V^V \times \bbY_E^E$ respectively.
 We say that $\nu$ is a $\Gamma$-\textbf{factor} of $\mu$ if there exists a measurable function $\pi :\Omega_1 \to \Omega_2$ such that $\mu(\pi^{-1}(A))=
\nu(A)$ for every measurable set $A \subseteq \Omega_2$ --- this means that if $\varphi=(\varphi_x)_{x\in V \cup E}$ is a random variable with law $\mu$ then $\pi(\varphi)=(\pi(\varphi)_x)_{x\in V \cup E}$ has law $\nu$ --- and that is $\Gamma$-equivariant in the sense that
\begin{equation}
\label{eq:intertwining}
\gamma \pi \omega_1 = \pi  \gamma \omega_1  \qquad \text{ for $\mu$-a.e.\ $\omega_1 \in \Omega_1$ for each $\gamma \in \Gamma$}.
\end{equation}
In this case we say that  $\nu$ is a $\Gamma$-factor of $\mu$ with \textbf{factor map} $\pi$. 
We say that a probability measure $\mu$ on a product space $\bbX_V^V \times \bbX_E^E$ is a $\Gamma$\textbf{-factor of i.i.d.} if it is a $\Gamma$-factor of a Bernoulli measure.

Observe that if $\pi:\Omega_1\to \Omega_2$ is such a factor map then
$\pi_* L^2(\Omega_2,\nu) := \{f \in L^2(\Omega_1,\mu) : f = g \circ \pi$ for some  $g \in L^2_0(\Omega_2,\nu)\}$
is a closed linear subspace of $L^2(\Omega_1,\mu)$ that is naturally identified with $L^2(\Omega_2,\nu)$ via the linear isometry 
\begin{align}
\label{eq:pi*identification}
\pi_*:L^2(\Omega_2,\nu) \to \pi_* L^2(\Omega_2,\nu) \qquad \qquad g \mapsto g \circ \pi.
\end{align}
Moreover, it follows by $\Gamma$-equivariance \eqref{eq:intertwining} that  the Markov operator $\hat P_{\nu}$ coincides with the restriction of  $\hat P_{\mu}$ to $\pi_* L^2(\Omega_2,\nu)$ under the identification \eqref{eq:pi*identification}. A simple consequence of this is that
\begin{align}
\rho(\nu,\Gamma) &= \sup\left\{ \frac{\|\hat P_{\nu} f\|_2}{\|f\|_2} : f\in L^2_0(\Omega_2,\nu) \setminus \{0\} \right\} 
=
\sup\left\{ \frac{\|\hat P_{\mu} f\|_2}{\|f\|_2} : f\in \pi_* L^2_0(\Omega_2,\nu) \setminus \{0\} \right\} 
\nonumber
\\&\leq 
\sup\left\{ \frac{\|\hat P_{\mu} f\|_2}{\|f\|_2} : f\in L^2_0(\Omega_1,\mu) \setminus \{0\} \right\} = \rho(\mu,\Gamma)
\label{eq:factor_rho}
\end{align}
whenever $\nu$ is a $\Gamma$-factor of $\mu$ with factor map $\pi$: 
 \emph{the spectral radius is decreasing under factors}.



To apply these results in our setting, we will use the fact, originally due to H\"aggstr\"om, Jonasson, and Lyons \cite{MR1913108}, that the Ising model and random cluster models can often be expressed as factors of i.i.d. The strongest and most general versions of these theorems are due to Harel and Spinka \cite{harel2018finitary}, who study the Gibbs measures of a very general class of positively associated models. The following theorem is an immediate consequence of \cite[Theorem 7]{harel2018finitary} together with \eqref{eq:factor_rho}. See also \cite{ray2019finitary,MR3603969} for further related results.

\begin{thm}
\label{thm:Ising_factor}
Let $G=(V,E,J)$ be a connected transitive weighted graph and let $\Gamma$ be a closed, transitive, unimodular subgroup of $\Aut(G)$. Then the following hold:
\begin{enumerate}
  \item The free and wired random cluster measures $\phi^f_{q,\beta,h}$ and $\phi^w_{q,\beta,h}$ on $G$ are $\Gamma$-factors of i.i.d.\ for every $q\geq 1$ and $\beta,h\geq 0$, so that
  \[\rho\bigl(\phi^\#_{q,\beta,h}\bigr) \leq \rho(G)\] for every $q \geq 1$, $\beta,h \geq 0$, and $\#\in\{f,w\}$.
  \item 
The plus Ising measure $\mathbf{I}^+_{\beta,h}$ on $G$ is a $\Gamma$-factor of i.i.d.\ for every $\beta\geq 0$ and $h\geq 0$, and therefore satisfies $\rho(\bI_{\beta,h}^+) \leq \rho(G)$ for every $\beta > 0$ and $h\geq 0$.
\end{enumerate}
\end{thm}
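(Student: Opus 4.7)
The plan is to assemble the theorem from three ingredients already in place: the factor-of-i.i.d. representation of Harel and Spinka \cite{harel2018finitary}, the monotonicity of the spectral radius under $\Gamma$-factor maps established in the inequality \eqref{eq:factor_rho}, and \cref{thm:Bernoulli_radius} identifying the spectral radius of any non-trivial Bernoulli measure with $\rho(G)$.

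First, I would check that the hypotheses of \cite[Theorem 7]{harel2018finitary} apply to each of the three measures $\phi^f_{q,\beta,h}$, $\phi^w_{q,\beta,h}$ (for $q\geq 1$), and $\bI^+_{\beta,h}$. That theorem produces a $\Gamma$-equivariant measurable factor map $\pi$ from a Bernoulli process $\nu$ on some product space $\bbX_V^V \times \bbX_E^E$ into any extremal Gibbs measure of a $\Gamma$-invariant monotone specification. For the random cluster measures with $q\geq 1$ the specifications are monotone by Holley's inequality (see \cite[Theorem 1.6]{1707.00520}), and $\phi^f_{q,\beta,h}$, $\phi^w_{q,\beta,h}$ are precisely the minimal and maximal Gibbs measures, obtained as weak limits under free and wired boundary conditions respectively. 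The plus Ising measure $\bI^+_{\beta,h}$ is likewise the maximal Gibbs measure of the (monotone) Ising specification with external field $h\geq 0$. The bounded-energy and translation-invariance hypotheses follow immediately from $G$ being a transitive weighted graph.

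Second, for each of these measures, the Harel--Spinka theorem furnishes a $\Gamma$-invariant Bernoulli measure $\nu$ together with a $\Gamma$-equivariant factor map $\pi$ whose push-forward equals the target measure. Applying \eqref{eq:factor_rho} yields
\[
\rho\bigl(\phi^\#_{q,\beta,h}, \Gamma\bigr) \;\leq\; \rho(\nu, \Gamma) \qquad \text{and} \qquad \rho\bigl(\bI^+_{\beta,h}, \Gamma\bigr) \;\leq\; \rho(\nu, \Gamma).
\]
Finally, \cref{thm:Bernoulli_radius} gives $\rho(\nu, \Gamma) = \rho(G)$ whenever $\nu$ is non-trivial. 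If in some degenerate case the Bernoulli input is trivial (constant), then the target Gibbs measure is itself deterministic and its spectral radius is zero, and the bound is vacuous.

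The main obstacle is essentially cosmetic: one only needs to verify that the Harel--Spinka framework applies in the precise generality of transitive weighted graphs with arbitrary coupling constants $\{J_e\}$ and external field $h\geq 0$, which is a direct check of monotonicity of the specifications via Holley's inequality. No new ideas beyond those already present in \cite{harel2018finitary} and the spectral monotonicity \eqref{eq:factor_rho} are required.
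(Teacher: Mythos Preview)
Your proposal is correct and matches the paper's approach exactly: the paper states that \cref{thm:Ising_factor} is an immediate consequence of \cite[Theorem 7]{harel2018finitary} together with the monotonicity inequality \eqref{eq:factor_rho} (with \cref{thm:Bernoulli_radius} supplying the final identification $\rho(\nu)=\rho(G)$), and your outline simply unpacks these citations. The verification you sketch---that the random cluster and Ising specifications are monotone and that the free, wired, and plus measures are the extremal Gibbs measures to which the Harel--Spinka machinery applies---is precisely the routine check the paper leaves implicit.
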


Note that item $1$ of this theorem does not imply that the free Ising measure is a factor of i.i.d.\ when $\beta>\beta_c$, since in this case we do not know that the Edwards--Sokal coupling can be implemented as a factor of the random cluster measure and a Bernoulli measure.
This is related to several very interesting problems regarding the regimes in which the free Ising model is a factor of i.i.d.\ that remain open in the nonamenable case, even when the underlying graph is a regular tree; see \cite{MR3603969} and references therein. In \cref{subsec:free_spectral_radius}, we show that the free \emph{gradient} Ising measure always has spectral radius at most $\rho(G)$.

\section{Bounds on the volume of finite clusters without FKG}
\label{sec:free_energy}

Let $G=(V,E,J)$ be a countable weighted graph. 
Suppose that $\mu$ is a probability measure on $[0,1]^E$, and let $\bp=(\bp_e)_{e\in E}$ be a $[0,1]^E$-valued random variable with law $\mu$. Let $(U_e)_{e\in E}$ be i.i.d.\ Uniform$[0,1]$ random variables independent of $\mathbf{p}$ and let $\omega=\omega(\bp,U)$ be the $\{0,1\}^E$-valued random variable defined by
\[
\omega(e) = \mathbbm{1}(U_e \leq \bp_e) \text{ for each $e\in E$.}
\]
 We write $\bP_\mu$ for the law of the pair of random variable $(\bp,\omega)$ and $\P_\mu$ for the joint law of $(\bp,\omega)$ and an independent random oriented root edge $\eta$ defined as in \cref{subsec:unimodularity_background}. We say that
the random variable
  $\omega$ is distributed as \textbf{percolation in random environment} on $G$ with \textbf{environment distribution} $\mu$. 
 Note that every random variable $\omega$ on $\{0,1\}^E$ can trivially be represented as percolation in random environment by taking the environment $\bp_e=\omega(e)$; we will be interested in less degenerate random environments in which  at least some of the probabilities $\bp_e$ do not belong to $\{0,1\}$. (We shall see that edge probabilities close to zero are far more problematic than edge probabilities close to $1$ as far as our methods are concerned.)

In this section we show how the methods of \cite{1808.08940} can be extended to percolation in random environment models that have a spectral gap but are not necessarily positively associated.

\subsection{The two-ghost inequality}

We begin by proving a generalization of the \emph{two-ghost inequality} of \cite{1808.08940} that applies to (possibly long-range) percolation in random environment models. The proof of this inequality is based ultimately on the methods of Aizenman, Kesten, and Newman \cite{MR901151}, who implicitly proved a related inequality in the course of their proof that Bernoulli percolation on $\Z^d$ has at most one infinite cluster almost surely. See  \cite{MR3395466} and the introduction of \cite{1808.08940} for further discussion of inequalities derived from the Aizenman-Kesten-Newman method and their applications.

Let $G=(V,E,J)$ be a connected, transitive weighted graph and let $\Gamma\subseteq \Aut(G)$ be a closed transitive group of automorphisms. Let $\mu$ be a $\Gamma$-invariant probability measure on $[0,1]^E$, let $\bp$ be a random variable with law $\mu$ and let $\omega$ be the associated percolation in random environment process as above. 
Let $h>0$. Given the environment $\bp$, let $\cG \in \{0,1\}^E$  be a random subset of $E$ where each edge $e$ of $E$ is included in $\cG$ independently at random with probability $1-e^{-hJ_e}$
 of being included, and where we take $\cG$ and $\omega$ to be conditionally independent given $\bp$. 
Following \cite{aizenman1987sharpness}, we call $\cG$ the \textbf{ghost field} and call an edge \textbf{green} if it is included in $\cG$. We write $\bP_{\mu,h}$ and $\bE_{\mu,h}$ for probabilities and expectations taken with respect to the joint law of $\bp$, $\omega$, and $\cG$. Similarly, we write $\P_{\mu,h}$ and $\E_{\mu,h}$ for probabilities and expectations taken with respect to the joint law of $\bp$, $\omega$, $\cG$, and $\eta$, where $\eta$ is the random oriented root edge of $G$ defined as in \cref{subsec:unimodularity_background}, which is taken to be independent of $(\bp,\omega,\cG)$. 
The density of $\cG$ is chosen so that
$\bP_{\mu,h}(A \cap \cG \neq \emptyset \mid \bp) = \exp\left[-h|A|_{J}\right]$ 
for every finite set $A \subseteq E$, where we write $|A|_J=\sum_{e\in A} J_e$.


Define $\sT_e$ to be the event that $e$ is closed in $\omega$ and that the endpoints of $e$ are in distinct clusters of $\omega$, each of which touches some green edge, and at least one of which is finite. The primary purpose of this section is to prove the following inequality.

\begin{thm}[Generalized Two-Ghost Inequality]
\label{thm:two_ghost} Let $G=(V,E,J)$ be a connected transitive weighted graph and let $\Gamma \subseteq \Aut(G)$ be a closed transitive unimodular subgroup of automorphisms. If $\mu$ is a $\Gamma$-invariant probability measure on $[0,1]^E$
then the inequality
\begin{align}
\label{eq:two_ghost}
\E_{\mu,h}\left[\mathbbm{1}(\sT_\eta) \sqrt{\frac{\bp_\eta}{(1-\bp_\eta)J_\eta}}\right] \leq 21 \sqrt{h}
\end{align}
holds for every $h>0$, where we take $\mathbbm{1}(\sT_\eta) \sqrt{\frac{\bp_\eta}{(1-\bp_\eta)J_\eta}}=0$ when $\bp_\eta=1$.
\end{thm}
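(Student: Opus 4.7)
The plan is to generalize the two-ghost inequality proved in \cite{1808.08940} for homogeneous Bernoulli percolation by handling the environment $\bp$ via conditioning. Conditional on $\bp$, both $\omega$ and $\cG$ are product measures on $E$: $\omega$ is inhomogeneous Bernoulli percolation with marginals $\bp_e$, and $\cG$ is an independent inhomogeneous Bernoulli field with marginals $1 - e^{-hJ_e}$. Thus the desired inequality reduces, fiberwise in $\bp$, to an inhomogeneous AKN-type bound for Bernoulli percolation with an independent ghost field; the $\Gamma$-invariance of $\mu$ is retained in the joint law, so that the edge mass-transport principle \eqref{eq:edgeMTP} remains available.

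The core of the proof is an Aizenman--Kesten--Newman martingale bound adapted to the inhomogeneous setting. The plan is to fix a $\Gamma$-equivariant exploration order and reveal the clusters of $\eta^-$ and $\eta^+$ one edge at a time, simultaneously exposing the ghost status $\cG(e)$ of each explored edge. Let $\cF_t$ be the filtration generated by the first $t$ exposures together with $\bp$, and let $M_t$ be the centered cumulative count of explored finite clusters that meet a green edge. Then $M$ is an $(\cF_t)$-martingale whose conditional quadratic variation at step $t$ is controlled by
\begin{align*}
\bE_{\mu,h}\bigl[(M_t - M_{t-1})^2 \bigm| \cF_{t-1}\bigr] \lesssim \bp_{e_t}(1 - \bp_{e_t}) \cdot h J_{e_t},
\end{align*}
where $e_t$ is the edge exposed at step $t$: the factor $\bp_{e_t}(1-\bp_{e_t})$ is the variance of the Bernoulli edge status, and $hJ_{e_t}$ bounds the expected fresh ghost mass contributed at this step.

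The events $\{\sT_e\}$ correspond precisely to exposures that force $M$ to jump (two green-incident finite clusters merge, or one merges into an infinite green-touching cluster), so the number of $\sT$-events contributes at most linearly to $|M_\infty|$. Applying Doob's $L^2$ maximal inequality to $M$ and then converting the sum of $J_e$-weighted quadratic variations into an expectation over the random root edge $\eta$ via \eqref{eq:edgeMTP}, followed by extracting the weight $\sqrt{\bp_\eta/((1-\bp_\eta) J_\eta)}$ outside through Cauchy--Schwarz (matching the $\bp(1-\bp)$ factor under a square root and converting an $e$-sum weighted by $J_e^2$ to one weighted by $J_e$), yields the claimed bound.

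The main obstacle will be setting up the exploration and its martingale so that three properties hold simultaneously: $\Gamma$-equivariance (to enable MTP), adaptedness to a tractable filtration, and the exact matching of each event $\sT_e$ to a martingale increment weighted by a quantity of order $\sqrt{\bp_e(1-\bp_e)h J_e}$ (so that Cauchy--Schwarz produces the required weight $\sqrt{\bp_e/((1-\bp_e) J_e)}$). In the homogeneous case this matching was engineered in \cite{1808.08940} via a carefully chosen exploration; the inhomogeneity here forces the variance of each step and the target weight to both depend on $e$, and aligning them up to bounded multiplicative constants requires care. Tracking constants through Doob and two applications of Cauchy--Schwarz to arrive at the explicit constant $21$ is the secondary difficulty.
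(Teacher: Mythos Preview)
Your proposal has a genuine gap in how the event $\sT_e$ is linked to a martingale. You write that ``$\sT$-events correspond precisely to exposures that force $M$ to jump (two green-incident finite clusters merge\ldots)'', but $\sT_e$ is the event that $e$ is \emph{closed} with its endpoints in distinct clusters; no merge takes place. The bridge from the closed-edge event $\sT_e$ to a martingale-friendly centered quantity is the Aizenman--Kesten--Newman switching identity, which in this setting (equation \eqref{eq:AKNmainstep2} of the paper) reads
\[
\bP_{\mu,h}(\sT_e \mid \bp\,) = \bE_{\mu,h}\!\left[\frac{\bp_e-\omega(e)}{\bp_e}\cdot\#\{\text{finite clusters touching $e$ and }\cG\}\,\Bigm|\,\bp\right].
\]
It is this centered factor $(\bp_e-\omega(e))/\bp_e$ that, after multiplying by $\sqrt{\bp_e/((1-\bp_e)J_e)}$ and summing over the edges touching a single cluster, becomes (via the mass-transport principle) the terminal value of a martingale whose increments are $\sqrt{J_{E_i}}\bigl[\sqrt{\tfrac{\bp_{E_i}}{1-\bp_{E_i}}}\mathbbm{1}(\omega(E_i)=0)-\sqrt{\tfrac{1-\bp_{E_i}}{\bp_{E_i}}}\mathbbm{1}(\omega(E_i)=1)\bigr]$ and whose predictable quadratic variation is simply $\sum_i J_{E_i}$, with no $h$ and no $\bp$ appearing. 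The weight $\sqrt{\bp_\eta/((1-\bp_\eta)J_\eta)}$ thus arises \emph{intrinsically} from the martingale increments, not from a post-hoc Cauchy--Schwarz, and the ghost field enters only through the factor $1-e^{-h|E(K_o)|_J}$; the $\sqrt{h}$ then comes from a maximal inequality of the form $\E[\tfrac{|Z_T|}{Q_T}(1-e^{-hQ_T})]\le\tfrac{21}{2}\sqrt{h}$ (\cref{lem:martingale_stuff2}).

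By contrast, your proposed martingale (a centered count of ``explored finite clusters that meet a green edge'') is not clearly adapted (finiteness of a cluster is not known mid-exploration), and your quadratic-variation heuristic $\bp_{e_t}(1-\bp_{e_t})\cdot hJ_{e_t}$ cannot produce the weight in the statement via Cauchy--Schwarz: the target weight involves $1/(1-\bp_\eta)$, which blows up as $\bp_\eta\to1$, whereas $\bp(1-\bp)$ stays bounded. In the paper the order of operations is also reversed from yours: the mass-transport principle is applied \emph{first} (\cref{lem:AKN}) to reduce the $\sT_\eta$-expectation to a one-cluster quantity $\bE[\tfrac{|h_\bp(K_o)|}{|E(K_o)|_J}\mathbbm{1}(\cdots)]$, and only then is the martingale and maximal inequality invoked. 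Finally, removing the extra hypotheses $\bp\in(0,1)^E$ and $\E[\sqrt{\bp_\eta(1-\bp_\eta)/J_\eta}]<\infty$ requires two separate limiting arguments that you would also need to supply.
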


Note that it is not obvious  \emph{a priori} that that the left hand side of \eqref{eq:two_ghost} is finite.

%
%
%
\cref{thm:two_ghost} has the following corollary which does not refer to the ghost field. For each $e\in E$ and $\lambda>0$, let $\sS_{e,\lambda}$ be the event that $e$ is closed in $\omega$ and that the endpoints of $e$ are in distinct clusters $K_1$ and $K_2$ of $\omega$, each of which has $|E(K_i)|_J \geq \lambda$ and at least one of which is finite. The deduction of \cref{cor:two_ghost_S} from \cref{thm:two_ghost} is similar to the proof of \cite[Corollary 1.7]{1808.08940} and is omitted.

\begin{corollary}
\label{cor:two_ghost_S} Let $G=(V,E,J)$ be a connected transitive weighted graph and let $\Gamma \subseteq \Aut(G)$ be a closed transitive unimodular subgroup of automorphisms. If $\mu$ is a $\Gamma$-invariant probability measure on $[0,1]^E$
then the inequality
\begin{align}
\E_{\mu}\left[\mathbbm{1}(\sS_{\eta,\lambda}) \sqrt{\frac{\bp_\eta}{(1-\bp_\eta)J_\eta}}\right] \leq \frac{42}{\sqrt{\lambda}}
\end{align}
holds for every $\lambda >0$, where we take $\mathbbm{1}(\sS_{\eta,\lambda}) \sqrt{\frac{\bp_\eta}{(1-\bp_\eta)J_\eta}}=0$ when $\bp_\eta=1$.
\end{corollary}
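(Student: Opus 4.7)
My plan is to obtain \cref{cor:two_ghost_S} from \cref{thm:two_ghost} by introducing an auxiliary ghost field of an appropriate density and comparing the events $\sS_{\eta,\lambda}$ and $\sT_\eta$ conditionally on the environment and percolation configuration.

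Fix $\lambda>0$ and let $h>0$ be a parameter to be optimized at the end. Consider the joint distribution $\P_{\mu,h}$ on $(\bp,\omega,\cG,\eta)$ so that \cref{thm:two_ghost} is applicable. The key observation is that on the event $\sS_{\eta,\lambda}$ the endpoints of $\eta$ lie in two distinct clusters $K_1$ and $K_2$ of $\omega$, both having $|E(K_i)|_J \geq \lambda$ and at least one of which is finite; moreover $\eta$ is closed in $\omega$. These are precisely the structural conditions required by $\sT_\eta$, and the only extra requirement for $\sT_\eta$ to occur is that $\cG$ contains at least one edge of $E(K_1)$ and at least one edge of $E(K_2)$. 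Since the edge sets $E(K_1)$ and $E(K_2)$ are disjoint and $\cG$ is, conditionally on $\bp$, a product measure over edges, these two events are conditionally independent given $(\bp,\omega,\eta)$. Using the formula $\bP_{\mu,h}(A\cap\cG\neq\emptyset\mid\bp)=1-\exp(-h|A|_J)$ twice, this yields the pointwise bound
\begin{equation*}
\P_{\mu,h}\bigl(\sT_\eta \bigm| \bp,\omega,\eta\bigr) \;\geq\; \mathbbm{1}(\sS_{\eta,\lambda})\,\bigl(1-e^{-h|E(K_1)|_J}\bigr)\bigl(1-e^{-h|E(K_2)|_J}\bigr) \;\geq\; \bigl(1-e^{-h\lambda}\bigr)^{2}\mathbbm{1}(\sS_{\eta,\lambda}).
\end{equation*}

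The factor $\sqrt{\bp_\eta/((1-\bp_\eta)J_\eta)}$ depends only on $(\bp,\eta)$, so multiplying both sides by it and taking expectations with respect to $\P_{\mu,h}$ gives
\begin{equation*}
\bigl(1-e^{-h\lambda}\bigr)^{2}\,\E_{\mu}\!\left[\mathbbm{1}(\sS_{\eta,\lambda})\sqrt{\tfrac{\bp_\eta}{(1-\bp_\eta)J_\eta}}\right] \;\leq\; \E_{\mu,h}\!\left[\mathbbm{1}(\sT_\eta)\sqrt{\tfrac{\bp_\eta}{(1-\bp_\eta)J_\eta}}\right] \;\leq\; 21\sqrt{h},
\end{equation*}
where the first factor on the left does not depend on $\cG$ so the expectation under $\P_{\mu,h}$ collapses to one under $\P_\mu$, and in the final step we applied \cref{thm:two_ghost}.

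To finish, I would set $h=c/\lambda$ for a constant $c$ chosen to minimize the resulting bound $21\sqrt{c}/(1-e^{-c})^{2}$ on the prefactor; the choice $c=2$ gives $21\sqrt{2}/(1-e^{-2})^{2}<42$, which yields the claimed inequality. There is no real obstacle here: the only subtlety is ensuring that the conditional independence of the greening events on disjoint edge sets is correctly invoked and that the case $\bp_\eta=1$ (handled by the stated convention $0$) does not interfere, both of which are routine.
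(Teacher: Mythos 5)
Your strategy — condition on $(\bp,\omega,\eta)$, lower bound the conditional probability of $\sT_\eta$ on the event $\sS_{\eta,\lambda}$, apply \cref{thm:two_ghost}, and optimize over $h$ with $h=c/\lambda$ — is the standard route and the one the paper intends (the proof is omitted and deferred to a cited reference, but this is the usual argument). There is, however, one genuine error in your justification of the key lower bound. You claim $E(K_1)$ and $E(K_2)$ are disjoint, but they are not: $E(K_i)$ is the set of edges with at least one endpoint in $K_i$, and since $\eta$ has one endpoint in each of the two distinct clusters, $\eta\in E(K_1)\cap E(K_2)$; the same holds for every other closed edge bridging $K_1$ and $K_2$. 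Consequently the greening events $\{E(K_1)\cap\cG\neq\emptyset\}$ and $\{E(K_2)\cap\cG\neq\emptyset\}$ are not conditionally independent given $(\bp,\omega,\eta)$.

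The bound is nonetheless correct: both events are increasing in $\cG$, and since $\cG$ is conditionally a product measure over edges given $\bp$, Harris's (FKG) inequality yields
\begin{equation*}
\P_{\mu,h}\bigl(E(K_1)\cap\cG\neq\emptyset,\; E(K_2)\cap\cG\neq\emptyset\;\big|\;\bp,\omega,\eta\bigr)\;\geq\;\prod_{i=1}^2 \bigl(1-e^{-h|E(K_i)|_J}\bigr)\;\geq\;\bigl(1-e^{-h\lambda}\bigr)^2
\end{equation*}
on $\sS_{\eta,\lambda}$, which is exactly the inequality you wanted. Replacing ``conditionally independent'' with ``positively correlated by Harris's inequality'' repairs the argument; the remainder, including the choice $h=2/\lambda$ giving $21\sqrt{2}/(1-e^{-2})^2<42$ and the observation that the convention at $\bp_\eta=1$ causes no issue (there $\eta$ is open so $\mathbbm{1}(\sS_{\eta,\lambda})=0$), is correct.
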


\medskip

We now begin to work towards the proofs of \cref{thm:two_ghost,cor:two_ghost_S}.
We will first prove these results under the additional assumptions that $\bp_e \in (0,1)$ for every $e\in E$ a.s.\ and that
\[
\E_{\mu}\left[\sqrt{\frac{\bp_\eta(1-\bp_\eta)}{J_\eta}} \right]<\infty
\]
and then show that both assumptions can be removed via a limiting argument.


Let $G=(V,E,J)$ be a connected transitive weighted graph and let $\Gamma$ be a closed transitive subgroup of automorphisms of $G$.
 For each environment $\bp\in (0,1)^E$ and subgraph $H$ of $G$, we define the \textbf{fluctuation} of $H$ to be
\begin{multline*}h_{\bp}(H):=  \sum_{e \in E(H)} \sqrt{J_e} \left[\sqrt{\frac{\bp_e}{1-\bp_e}}\mathbbm{1}\left(e\in \partial H\right)-\sqrt{\frac{1-\bp_e}{\bp_e}} \mathbbm{1}\left(e\in E_o(H)\right) \right]\\
=\sum_{e \in E(H)} \sqrt{\frac{J_e \bp_e}{1-\bp_e}} \frac{\bp_e-\mathbbm{1}(e\in E_o(H))}{\bp_e} \end{multline*}
where $E(H)$ denotes the set of (unoriented) edges that \emph{touch} $H$, i.e., have at least one endpoint in the vertex set of $H$, 
 $\partial H$ denotes the set of (unoriented) edges of $G$ that touch the vertex set of $H$ but are not included in $H$, and $E_\circ(H)$ denotes the set of (unoriented) edges of $G$ that are included in $H$, so that $E(H)=\partial H \cup E_o(H)$. This quantity is defined so that $h_{\bp}(K_v)$ and $|E(K_v)|_{J}$ are the final value and total quadratic variation of a certain martingale that arises when exploring the cluster $K_v$ of $v$ in $\omega$ in an edge-by-edge manner.

\begin{lemma}
\label{lem:AKN} Let $G=(V,E,J)$ be a connected transitive weighted graph and let $\Gamma \subseteq \Aut(G)$ be a closed transitive unimodular subgroup of automorphisms. Let $\mu$ be a $\Gamma$-invariant probability measure on $(0,1)^E$.
If 
\[
\E_{\mu}\left[\sqrt{\frac{\bp_\eta(1-\bp_\eta)}{J_\eta}} \right]<\infty
\]
then the inequality
\begin{align}
\E_{\mu,h}\left[\mathbbm{1}(\sT_\eta) \sqrt{\frac{ \bp_\eta}{J_\eta(1-\bp_\eta)}}\right] \leq 2\bE_{\mu,h}\left[\frac{|h_{\bp}(K_{o})|}{|E(K_{o})|_{J}}  \mathbbm{1}\bigl(|K_o| < \infty \text{ and } E(K_{o}) \cap \cG \neq \emptyset \bigr)\right]
\label{eq:AKN_main_estimate}
\end{align}
holds for every $p\in (0,1]$ and $h>0$.
\end{lemma}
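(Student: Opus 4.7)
The plan is to apply the edge-indexed mass-transport principle~\eqref{eq:edgeMTP} to a transport that redistributes, for each edge $e_1$ satisfying $\sT_{e_1}$, a mass of $\sqrt{\bp_{e_1}/(J_{e_1}(1-\bp_{e_1}))}$ uniformly (by $J$-weight) over the oriented edges of its associated finite cluster, and then to bound the received mass pointwise by the martingale fluctuation $|h_\bp(K)|$.

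Concretely, on the event $\sT_{e_1}$ let $K^*(e_1)$ denote the finite cluster among $\{K_{e_1^-},K_{e_1^+}\}$ (breaking ties by taking $K_{e_1^-}$), write $E^\rightarrow(K)=\{e\in E^\rightarrow:e^-\in V(K)\}$, note $|E^\rightarrow(K)|_J=2|E(K)|_J$, and define
\[
\Phi(e_1,e_2):=\mathbbm{1}(\sT_{e_1})\sqrt{\frac{\bp_{e_1}}{J_{e_1}(1-\bp_{e_1})}}\cdot\frac{\mathbbm{1}\bigl(e_2\in E^\rightarrow(K^*(e_1))\bigr)}{|E^\rightarrow(K^*(e_1))|_J}.
\]
Its expectation $F(e_1,e_2):=\E_{\mu,h}[\Phi(e_1,e_2)]$ is a diagonally $\Gamma$-invariant nonnegative function. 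A direct calculation gives $\sum_{e_2}J_{e_2}\Phi(e_1,e_2)=\mathbbm{1}(\sT_{e_1})\sqrt{\bp_{e_1}/(J_{e_1}(1-\bp_{e_1}))}$, so the LHS of~\eqref{eq:AKN_main_estimate} equals $\E_{\mu,h}[\sum_{e_2}J_{e_2}F(\eta,e_2)]$, which by~\eqref{eq:edgeMTP} and Fubini equals $\E_{\mu,h}[\sum_{e_1}J_{e_1}\Phi(e_1,\eta)]$. Since $\eta^-=o$, the condition $\eta\in E^\rightarrow(K^*(e_1))$ reduces to $K^*(e_1)=K_o$, and after simplification this reverse sum becomes
\[
\bE_{\mu,h}\!\left[\frac{\mathbbm{1}(|K_o|<\infty,\,E(K_o)\cap\cG\neq\emptyset)}{2|E(K_o)|_J}\sum_{\substack{e_1\in\partial^\rightarrow K_o\\ \sT_{e_1},\,K^*(e_1)=K_o}}\sqrt{\frac{J_{e_1}\bp_{e_1}}{1-\bp_{e_1}}}\right],
\]
where $\partial^\rightarrow K$ denotes the oriented boundary. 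The trivial bound $\mathbbm{1}(\sT_{e_1})\le\mathbbm{1}(\omega_{e_1}=0)$ combined with the assumed integrability of $\sqrt{\bp_\eta(1-\bp_\eta)/J_\eta}$ ensures that all the sums appearing here are absolutely convergent.

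It thus remains to prove the pointwise estimate
\[
\sum_{\substack{e_1\in\partial^\rightarrow K\\ \sT_{e_1},\,K^*(e_1)=K}}\sqrt{\frac{J_{e_1}\bp_{e_1}}{1-\bp_{e_1}}}\;\leq\;4\,|h_\bp(K)|,
\]
valid for every finite connected subgraph $K$ of $\omega$ that touches the ghost field. This is the essential content of the generalized Aizenman--Kesten--Newman argument. One proves it by performing a BFS exploration of $K$ under which the partial sums
\[
M_n:=\sum_{i\leq n}\sqrt{J_{e_i}}\left[\sqrt{\tfrac{\bp_{e_i}}{1-\bp_{e_i}}}\,\mathbbm{1}(e_i\text{ closed})-\sqrt{\tfrac{1-\bp_{e_i}}{\bp_{e_i}}}\,\mathbbm{1}(e_i\text{ open})\right]
\]
form a martingale (in the exploration filtration, quenched on $\bp$) with final value $h_\bp(K)$ and total quadratic variation $|E(K)|_J$; each bad boundary edge contributes a positive increment to $M_n$, and the hypothesis that the cluster on the opposite side of the edge also touch green is used via a charging/matching argument to control the total such positive contribution by a constant multiple of $|h_\bp(K)|$.

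The main obstacle is this pointwise martingale estimate; the mass-transport setup, the reduction to a statement about $K_o$, and the verification of integrability are all routine applications of~\eqref{eq:edgeMTP}.
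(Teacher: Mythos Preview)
Your mass-transport setup is reasonable, but the proof has a genuine gap at the step you call ``the essential content of the generalized Aizenman--Kesten--Newman argument.'' The claimed pointwise inequality
\[
\sum_{\substack{e_1\in\partial^\rightarrow K\\ \sT_{e_1},\,K^*(e_1)=K}}\sqrt{\frac{J_{e_1}\bp_{e_1}}{1-\bp_{e_1}}}\;\leq\;4\,|h_\bp(K)|
\]
is simply false. The left-hand side is a sum of nonnegative terms, while $h_\bp(K)$ is a \emph{signed} sum in which the positive contributions from closed boundary edges are partially cancelled by negative contributions from open interior edges. There is no mechanism forcing the restricted positive part to be controlled by the absolute value of the full signed sum. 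Concretely, in a $d$-regular graph with $J\equiv 1$ and $\bp\equiv 1/2$, a tree-like cluster on $k$ vertices has $n=k-1$ interior edges and $m=(d-2)k+2$ boundary edges, giving $|h_\bp(K)|=|m-n|=(d-3)k+3$; for $d=3$ this is $3$ regardless of $k$, whereas if all boundary edges happen to satisfy $\sT$ with $K^*=K$, your left-hand side is of order $k$. The vague ``charging/matching argument'' you invoke cannot repair this: the condition that the opposite cluster also touch green is a probabilistic constraint, not one that can be cashed in pointwise.

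The paper's proof avoids this issue entirely by working with an \emph{identity} rather than an inequality at the level of a single edge. One first shows, by an inclusion--exclusion argument toggling the status of $e$, that
\[
\bP_{\mu,h}(\sT_e\mid\bp)=\bE_{\mu,h}\!\left[\frac{\bp_e-\omega(e)}{\bp_e}\cdot\#\{\text{finite clusters touching $e$ and }\cG\}\;\Bigm|\;\bp\right].
\]
The crucial point is that the right-hand side is \emph{signed}: open edges contribute negatively. One then transports the signed quantity $\frac{\bp_{e_1}-\omega(e_1)}{\bp_{e_1}}\sqrt{\bp_{e_1}/((1-\bp_{e_1})J_{e_1})}$ rather than the indicator $\mathbbm{1}(\sT_{e_1})$ times its weight. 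After mass-transport, the received sum over all edges touching $K_o$ (open and closed alike) collapses exactly to $h_\bp(K_o)/|E(K_o)|_J$, and only then does one take absolute values. The integrability hypothesis is used precisely to justify applying the mass-transport principle to this signed transport.
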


\begin{proof}[Proof of \cref{lem:AKN}]
Let $\sF_e$ be the event that every cluster touching $e$ is finite, so that
 $\sT_e \cap \sF_e$ is the event that the endpoints of $e$ are in distinct finite clusters each of which touches $\cG$, and let $\sG_e$ be the event that there exists a finite cluster touching $e$ and $\cG$. For each edge $e$ of $G$ we can verify that
\begin{equation*}
\mathbbm{1}(\sT_e \cap \sF_e) = \mathbbm{1}(\omega(e)=0)\cdot \#\{\text{finite clusters touching $e$ and $\cG$}\}
\\- \mathbbm{1}\bigl(\{\omega(e)=0\}\cap \sG_e), 
\end{equation*}
and hence that
\begin{multline}
\label{eq:unimodghost1}
\bP_{\mu,h}(\sT_e \cap \sF_e \mid \bp\,) = \bE_{\mu,h}\left[\mathbbm{1}(\omega(e)=0)\cdot\#\{\text{finite clusters touching $e$ and $\cG$}\} \mid \bp\,\right]
\\- \bP_{\mu,h}\bigl(\{\omega(e)=0\} \cap \sG_e \mid \bp\,\bigr). 
\end{multline}
 The event $\sF_e \cap \sG_e$ is conditionally independent of the value of $\omega(e)$ given $\bp$, so that
\begin{multline}
\bP_{\mu,h}\bigl(\{\omega(e)=0\} \cap \sF_e \cap \sG_e \mid \bp\, \bigr)
= \frac{1-\bp_e}{\bp_e} \bP_{\mu,h}\bigl(\{\omega(e)=1\} \cap \sF_e \cap \sG_e \mid \bp\, \bigr).
\\
= \frac{1-\bp_e}{\bp_e} \bP_{\mu,h}\bigl(\{\omega(e)=1\}  \cap \sG_e \mid \bp\,\bigr).
\label{eq:unimodghost2}
\end{multline}
Putting together \eqref{eq:unimodghost1} and \eqref{eq:unimodghost2} yields that
\begin{multline}
\label{eq:AKNmainstep}
\bP_{\mu,h}(\sT_e \cap \sF_e \mid \bp\,) = \bE_{\mu,h}\left[\mathbbm{1}(\omega(e)=0)\cdot \#\{\text{finite clusters touching $e$ and  $\cG$}\} \mid \bp\,\right]
\\- \frac{1-\bp_e}{\bp_e}\bP_{\mu,h}(\{\omega(e)=1\} \cap \sG_e \mid \bp\,) - \bP_{\mu,h}\bigl(\{\omega(e)=0\} \cap \sG_e \setminus \sF_e \mid \bp\,\bigr).
\end{multline}
Finally, observe that $\{\omega(e)=0\} \cap \sG_e \setminus \sF_e$ and $\sT_e \cap \sF_e$ are disjoint and that $\sT_e$ coincides with $(\sT_e \cap \sF_e) \cup (\{\omega(e)=0\} \cap \sG_e \setminus \sF_e)$ up to a null set, so that \eqref{eq:AKNmainstep} implies that
\begin{multline*}
\bP_{\mu,h}(\sT_e \mid \bp\,) = \bE_{\mu,h}\left[\mathbbm{1}(\omega(e)=0)\cdot \#\{\text{finite clusters touching $e$ and  $\cG$}\} \mid \bp\right]
\\- \frac{1-\bp_e}{\bp_e}\bP_{\mu,h}(\{\omega(e)=1\} \cap \sG_e \mid \bp\,).
\end{multline*}
This equality can be written more concisely as
\begin{equation}
\label{eq:AKNmainstep2}
\bP_{\mu,h}(\sT_e \mid \bp\,)= 
 \bE_{\mu,h}\left[\frac{\bp_e-\omega(e)}{\bp_e} \cdot \#\{\text{finite clusters touching $e$ and  $\cG$}\} \;\Bigm|\; \bp\;\right].
\end{equation}
Note that we have not yet used any assumptions on the weighted graph $G$ or the group $\Gamma$.

We will now apply the assumption that the group $\Gamma$ is transitive and unimodular. 
Define a mass-transport function $F:E^\rightarrow\times E^\rightarrow \to \R$ by
\begin{equation*}
F(e_1,e_2) =\\ \bE_{\mu,h}\sum\left\{\frac{1}{2|E(K)|_{J}} \left[\frac{\bp_{e_1}-\omega(e_1)}{\bp_{e_1}}\right] \sqrt{\frac{\bp_{e_1}}{(1-\bp_{e_1})J_{e_1}}} : \begin{array}{l}\text{$K$ is a finite cluster}\\ \text{of $\omega$ touching $e_1,e_2$, and $\cG$}\end{array}\right\},
\end{equation*}
where we write $\sum\{x(i) :i\in I\} = \sum_{i\in I} x(i)$ and where we include the factor of $1/2$ to account for the fact that each edge in $E(K)$ can be oriented in two directions.
The multiset of numbers being summed over has cardinality either $0,1,$ or $2$, and we can therefore compute that
\[
 \E\sum_{e\in E^\rightarrow} J_e |F(\eta,e)| \leq 2 \E_{\mu,h}\left[ \frac{|\bp_{\eta}-\omega(\eta)|}{\bp_{\eta}} \sqrt{\frac{\bp_{\eta}}{(1-\bp_{\eta})J_{\eta}}}\right] = 4 \E_{\mu,h} \left[\sqrt{\frac{\bp_\eta(1-\bp_\eta)}{J_\eta}} \right] < \infty,
\]
where the final inequality is by the hypotheses of the lemma. 
 Thus, we may safely apply the mass-transport principle \eqref{eq:edgeMTP} together with \eqref{eq:AKNmainstep2} to deduce that
\begin{align*}
\E_{\mu,h}\left[\mathbbm{1}(\sT_\eta)\sqrt{\frac{\bp_{\eta}}{(1-\bp_{\eta})J_\eta}}\right] &=  \bE_{\mu,h}\left[ \frac{\bp_\eta-\omega(\eta)}{\bp_\eta}  \sqrt{\frac{\bp_{\eta} }{(1-\bp_{\eta})J_{\eta}}}\cdot\#\{\text{finite clusters touching $e$ and  $\cG$}\} \right]\nonumber
\\&=\E \sum_{e\in E^\rightarrow}J_e F(\eta,e) = \E \sum_{e\in E^\rightarrow}J_e F(e,\eta)\nonumber   \\&=
\E_{\mu,h} \sum\left\{\frac{h_{\bp}(K)}{|E(K)|_{J}} : \begin{array}{l}\text{$K$ is a finite cluster}\\ \text{of $\omega$ touching $\eta$ and $\cG$}\end{array}\right\}.
\end{align*}
For each vertex $v$ of $G$, let 
$\sO_v$  be the event that the cluster $K_v$ is finite and touches $\cG$. 
Then we deduce from the above that
\begin{multline*}\E_{\mu,h}\left[\mathbbm{1}(\sT_\eta)\sqrt{\frac{\bp_{\eta}}{(1-\bp_{\eta})J_\eta}}\right] 
\leq \E_{\mu,h} \sum\left\{\frac{|h_{\bp}(K)|}{|E(K)|_{J}} : \begin{array}{l}\text{$K$ is a finite cluster}\\ \text{of $\omega$ touching $\eta$ and $\cG$}\end{array}\right\}\\
\leq\E_{\mu,h}\left[\frac{|h_\bp(K_{\eta^-})|}{|E(K_{\eta^-})|}\mathbbm{1}\bigl(\sO_{\eta^-} \bigr)+ \frac{|h_\bp(K_{\eta^+})|}{|E(K_{\eta^+})|}\mathbbm{1}\bigl(\sO_{\eta^+} \bigr)\right]
=2\bE_{\mu,h}\left[\frac{|h_\bp(K_{o})|}{|E(K_{o})|}\mathbbm{1}\bigl(\sO_{o} \bigr)\right]
\end{multline*}
as claimed, where the final equality follows by transitivity. 
\end{proof}

As in \cite{1808.08940}, we will now bound the right hand side of \eqref{eq:AKN_main_estimate} using maximal inequalities for martingales\footnote{The original paper of Aizenman, Kesten and Newman \cite{MR901151} used large deviations estimates rather than maximal inequalities. The idea of using maximal inequalities instead, which leads to cleaner proofs and sharper inequalities, first arose in discussions with Vincent Tassion in 2018. }. Since the martingale we have here is a little more complicated than that of \cite{1808.08940}, we will need to introduce some more machinery before doing this.
In particular, we will employ the following simple variation on Doob's $L^2$ maximal inequality, which is inspired by Freedman's maximal inequality \cite{MR0380971}. It seems unlikely that this inequality is new, but we are not aware of a reference.
Let $X=(X_n)_{n\geq0}$ be a real-valued martingale with respect to the filtration $\cF=(\cF_n)_{n\geq 0}$, and suppose that $X_0=0$. The \textbf{quadratic variation process} $Q=(Q_n)_{n\geq 0}$ associated to $(X,\cF)$ is defined by $Q_0=0$ and
\[Q_n = \sum_{i=1}^n\E\left[ |X_i - X_{i-1}|^2 \mid \cF_{i-1} \right] \]
for each $n\geq 1$. Note that $Q$ is \emph{predictable}, that is, $Q_n$ is $\cF_{n-1}$-measurable for every $n\geq 1$.
\begin{lemma}
\label{lem:martingale_stuff}
 Let $(X_n)_{n\geq0}$ be a martingale with respect to the filtration $(\cF_n)_{n\geq 0}$ such that $X_0=0$, and let  $(Q_n)_{n\geq 0}$ be the associated quadratic variation process. Then
\[\E\Bigl[ \sup\bigl\{X_n^2 : n\geq 0,\, Q_n \leq \lambda \bigr\} \Bigr] \leq 4\lambda\]
for every $\lambda \geq 0$.
\end{lemma}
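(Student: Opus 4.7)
The plan is to reduce this to Doob's $L^2$ maximal inequality applied to a suitably stopped version of $X$, using the key fact that $Q$ is predictable. Specifically, define the stopping time
\[
\tau := \inf\{n \geq 0 : Q_{n+1} > \lambda\},
\]
with the convention $\inf \emptyset = \infty$. Since $Q$ is predictable, $Q_{n+1}$ is $\cF_n$-measurable, so $\{\tau \leq n\} = \bigcup_{k \leq n}\{Q_{k+1} > \lambda\} \in \cF_n$, confirming that $\tau$ is a stopping time with respect to $\cF$.

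Next I would verify two things. First, by the definition of $\tau$ one has $Q_\tau \leq \lambda$ pointwise (on $\{\tau < \infty\}$, $Q_\tau \leq \lambda$ because $Q_{\tau+1}$ is the first violator; on $\{\tau = \infty\}$, $Q_n \leq \lambda$ for every $n$). Second, the set $\{n \geq 0 : Q_n \leq \lambda\}$ is exactly $\{n \geq 0 : n \leq \tau\}$: monotonicity of $Q$ shows that $n \leq \tau$ implies $Q_n \leq Q_\tau \leq \lambda$, and conversely $n > \tau$ forces $Q_n \geq Q_{\tau+1} > \lambda$. Consequently,
\[
\sup\{X_n^2 : n \geq 0,\, Q_n \leq \lambda\} = \sup_{n\geq 0} X_{n\wedge \tau}^2 = \sup_{n \geq 0} (X^\tau_n)^2,
\]
where $X^\tau_n = X_{n\wedge\tau}$ is the stopped martingale.

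Finally, I would apply Doob's $L^2$ maximal inequality to the martingale $X^\tau$. Since the quadratic variation of $X^\tau$ is $Q_{n\wedge \tau} \leq \lambda$, the orthogonality of martingale increments gives $\E[(X^\tau_n)^2] = \E[Q_{n\wedge \tau}] \leq \lambda$ for every $n$, so $X^\tau$ is bounded in $L^2$. Doob's inequality then yields
\[
\E\Bigl[\sup_{n\geq 0} (X^\tau_n)^2\Bigr] \leq 4 \sup_{n\geq 0} \E[(X^\tau_n)^2] \leq 4\lambda,
\]
which is exactly the claimed bound.

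There is essentially no hard step here; the only thing requiring a touch of care is recognising that predictability of $Q$ is what makes $\tau$ a stopping time (if one instead defined $\tau$ using $Q_n$ rather than $Q_{n+1}$, the bound $Q_\tau \leq \lambda$ would fail by exactly one increment, producing a worse constant). Everything else is a direct invocation of Doob.
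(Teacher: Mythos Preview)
Your proof is correct and follows essentially the same approach as the paper: define a stopping time exploiting the predictability of $Q$, show the stopped martingale has $\E[(X^\tau_n)^2]\le\lambda$ via orthogonality of increments, and apply Doob's $L^2$ maximal inequality. In fact your stopping time $\tau=\inf\{n:Q_{n+1}>\lambda\}$ coincides with the paper's $\tau=\inf\{k:Q_k>\lambda\}-1=\sup\{k:Q_k\le\lambda\}$, and your write-up spells out the stopping-time verification and the set identity $\{n:Q_n\le\lambda\}=\{n:n\le\tau\}$ a bit more explicitly than the paper does.
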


(This lemma holds vacuously if the increments of $X$ have infinite conditional variance a.s.)

\begin{proof}
Fix $\lambda \geq 0$ and let $\tau=\sup\{k\geq 0: Q_k \leq \lambda\}=\inf\{k\geq 0 : Q_k > \lambda\}-1$, which may be infinite. Since $Q_n$ is $\cF_{n-1}$-measurable for every $n\geq 0$, $\tau$ is a stopping time and $X_{n\wedge \tau}$ is a martingale. 
Thus, we have by the orthogonality of martingale increments  that
\begin{align*}
\E\left[X^2_{n\wedge \tau}\right] &= \sum_{i=1}^n\E\left[ (X_{i\wedge \tau}-X_{(i-1)\wedge \tau})^2\right]
= \sum_{i=1}^n\E\left[ \E\left[(X_{i\wedge \tau}-X_{(i-1)\wedge \tau})^2\mid \cF_{i-1} \right]\right]\\
&=\sum_{i=1}^n\E\left[ \E\left[(X_{i}-X_{i-1})^2\mid \cF_{i-1} \right] \mathbbm{1}(i \leq \tau)\right] = \E\left[ Q_{n \wedge \tau}\right] \leq \lambda
\end{align*}
for every $n\geq 1$. The claim follows by applying Doob's $L^2$ maximal inequality to
  $(X_{n\wedge \tau})_{n\geq 0}$.
\end{proof}

We next apply this lemma to prove a generalized version of the martingale estimate appearing in the proof of \cite[Theorem 1.6]{1808.08940}. (Note that $Q_n$ is increasing in $n$, so that $Q_\infty$ is well-defined as an element of $[0,\infty]$ and the case $T=\infty$ does not cause us any problems.)

\begin{lemma}
\label{lem:martingale_stuff2}
 Let $(X_n)_{n\geq0}$ be a martingale with respect to the filtration $(\cF_n)_{n\geq 0}$ such that $X_0=0$, and let  $(Q_n)_{n\geq 0}$ be the associated quadratic variation process. Then
\begin{equation}
\label{eq:martingale_stuff2}
\E\left[ \frac{\sup_{0 \leq n \leq T}|X_n|}{Q_T} (1-e^{-h Q_T})\mathbbm{1}(0<Q_T < \infty) \right] \leq 2\sqrt{eh}\sum_{k=-\infty}^\infty \frac{1-e^{-e^{k}}}{e^{k/2}}  \leq \frac{21}{2} \sqrt{h} \end{equation}
for every stopping time $T$ and every $h> 0$.
\end{lemma}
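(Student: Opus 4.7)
The plan is to reduce the claim to \cref{lem:martingale_stuff} via a multiplicative dyadic decomposition of the range of $Q_T$. Concretely, I would partition $\{0<Q_T<\infty\}$ into the events $A_k = \{e^{k-1}/h < Q_T \leq e^k/h\}$ for $k \in \Z$. On each $A_k$ the prefactor $(1-e^{-hQ_T})/Q_T$ is trapped inside an interval of multiplicative length $e$, giving the pointwise bound $(1-e^{-hQ_T})/Q_T \leq eh(1-e^{-e^k})/e^k$. The point of the decomposition is that, on $A_k$, the supremum $\sup_{n \leq T}|X_n|$ can now be controlled by the previous lemma at the scale $\lambda_k := e^k/h$.

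For that control, I would introduce the stopping time $\tau_k := \sup\{n \geq 0 : Q_n \leq e^k/h\}$; this is a stopping time precisely because $Q$ is predictable (the measurability of $\{Q_n \leq e^k/h\}$ lies in $\cF_{n-1}$). Since $Q_T \leq e^k/h$ on $A_k$, we have $\sup_{n \leq T}|X_n|\mathbbm{1}(A_k) \leq \sup_{n\leq \tau_k}|X_n|$, and by Jensen followed by \cref{lem:martingale_stuff} applied to $\lambda = e^k/h$,
\[
\E\bigl[\sup_{n \leq \tau_k}|X_n|\bigr] \leq \sqrt{\E\bigl[\sup_{n \leq \tau_k}X_n^2\bigr]} \leq 2\sqrt{e^k/h}.
\]
Multiplying the pointwise factor by this moment bound and summing over $k$ gives the inequality
\[
\E\Bigl[\tfrac{\sup_{n\leq T}|X_n|}{Q_T}(1-e^{-hQ_T})\mathbbm{1}(0<Q_T<\infty)\Bigr] \leq 2e\sqrt{h}\sum_{k \in \Z} \frac{1-e^{-e^k}}{e^{k/2}},
\]
which already has the right form but with constant $2e$ rather than $2\sqrt{e}$. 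To recover the sharper constant, I would rewrite $(1-e^{-hQ_T})/Q_T = \int_0^h e^{-t Q_T}\,dt$, exchange the order of integration (Tonelli), run the same dyadic argument on $\E[\sup|X_n|e^{-tQ_T}]$ for each fixed $t$, and use $\int_0^h dt/\sqrt{t}=2\sqrt{h}$; the $\sqrt{e}$ factor emerges from the dyadic base while Doob's factor of $2$ collapses into a single overall $2$.

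For the final numerical step, I would split the sum at $k=0$: on the range $k \geq 0$ use $1-e^{-e^k} \leq 1$ to compare to the geometric series $\sum_{k \geq 0} e^{-k/2} = e^{1/2}/(e^{1/2}-1)$, and on the range $k<0$ use $1-e^{-e^k} \leq e^k$ to compare to $\sum_{k \geq 1} e^{-k/2} = 1/(e^{1/2}-1)$. Both contributions are effectively computable, and an explicit numerical evaluation absorbs the remaining slack into the clean bound $21/2$. The main obstacle here is purely constant-tracking: arranging the decomposition so that the Doob factor of $2$, the dyadic base of $e$, and the integration trick compose in the right order to yield $2\sqrt{e}$ rather than $2e$. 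Conceptually, once one recognizes that \cref{lem:martingale_stuff} provides the scale-by-scale estimate $\E[\sup X_n^2 : Q_n \leq \lambda] \leq 4\lambda$, the rest of the proof is essentially a Marcinkiewicz-style dyadic interpolation.
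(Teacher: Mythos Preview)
Your approach is exactly the paper's: a dyadic decomposition of $Q_T$ on a base-$e$ scale, a pointwise bound on the prefactor on each shell, and Jensen plus \cref{lem:martingale_stuff} to control the martingale supremum on each shell. The only discrepancy is in the constant. Your bound $(1-e^{-hQ_T})/Q_T \leq eh(1-e^{-e^k})/e^k$ on $A_k$ estimates numerator and denominator separately, and this is precisely where you lose the extra factor of $\sqrt e$. The paper's fix is a one-liner, not an integral trick: since $x \mapsto (1-e^{-hx})/x$ is \emph{decreasing}, on the shell $\{e^k \leq h Q_T \leq e^{k+1}\}$ one has directly
\[
\frac{1-e^{-hQ_T}}{Q_T} \;\leq\; \frac{h\bigl(1-e^{-e^k}\bigr)}{e^k}
\]
with no spare factor of $e$. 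Combining this with $\E\bigl[M_T \mathbbm{1}(hQ_T \leq e^{k+1})\bigr] \leq 2\sqrt{e^{k+1}/h}$ from \cref{lem:martingale_stuff} and Jensen yields $2\sqrt{eh}\sum_k (1-e^{-e^k})/e^{k/2}$ on the nose; the paper then simply evaluates this numerically as $\approx 10.47$ and rounds up to $21/2$. Your integral representation $(1-e^{-hQ_T})/Q_T = \int_0^h e^{-tQ_T}\,dt$ is correct, but running the dyadic argument inside the integral produces the \emph{different} series $\sum_k e^{k/2-e^k}$, and after multiplying by the Doob constant $2$ \emph{and} the factor $2$ from $\int_0^h t^{-1/2}\,dt$ (these do not collapse---they multiply to $4$) one gets a constant numerically around $11.7$, which is worse than $21/2$, not better. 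So the integration detour is both more complicated and less sharp than the monotonicity observation the paper uses.
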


\begin{proof}
Write $M_n=\max_{0\leq m \leq n} |X_n|$ for each $n\geq 0$. Since $(1-e^{-hx})/x$ is a decreasing function of $x>0$, we may write
\begin{equation*}
  \E\left[ \frac{M_T}{Q_T}\bigl(1-e^{-hQ_T}\bigr)\mathbbm{1}(0<Q_T<\infty) \right]
\leq h\sum_{k=-\infty}^\infty \frac{1-e^{-e^{k}}}{e^k} \E\left[ M_T \mathbbm{1}(e^k \leq h Q_T \leq e^{k+1})\right].
\label{eq:scales}
\end{equation*}
\cref{lem:martingale_stuff} and Jensen's inequality let us bound each summand
\begin{align}
\label{eq:martingale_Jensen_not_optimized}
\E\left[ M_T \mathbbm{1}(e^k \leq hQ_T \leq e^{k+1})\right] &\leq \E\Bigl[ \max\bigl\{X_n^2 : n\geq 0,\, hQ_n \leq e^{k+1} \bigr\} \Bigr]^{1/2}\leq \sqrt{\frac{4e^{k+1}}{h}}
\end{align}
for each $k\in \Z$, 
so that
\begin{align*}
\E\left[ \frac{M_T}{Q_T}\bigl(1-e^{-hQ_T}\bigr)\mathbbm{1}(0<Q_T<\infty) \right]
\leq 2\sqrt{e h}\sum_{k=-\infty}^\infty \frac{1-e^{-e^{k}}}{e^{k/2}}
\end{align*}
as claimed.
This series is easily seen to converge. Moreover, the constant appearing here can be evaluated numerically as $2\sqrt{e}\sum_{k=-\infty}^\infty \frac{1-e^{-e^{k}}}{e^{k/2}}=10.47\ldots$, which we bound by $21/2$ for simplicity.
\end{proof}

\begin{remark}
The inequality \eqref{eq:martingale_stuff2} can be improved if one knows something about the tail of $Q_T$ by using Cauchy-Schwarz instead of Jensen in \eqref{eq:martingale_Jensen_not_optimized}. This eventually leads to better bounds on the exponents appearing in \cref{thm:main,thm:free_random_cluster,thm:finite_clusters}. We do not pursue this further here, but similar considerations for Bernoulli percolation are discussed in detail in \cite[Section 6]{1808.08940}.
\end{remark}

\begin{proof}[Proof of \cref{thm:two_ghost}]
As discussed above, we will first prove the theorem under the additional assumption that $\bp\in (0,1)^E$ almost surely and that
\begin{equation}
\label{eq:assumption}
\E_{\mu}\left[\sqrt{\frac{\bp_\eta(1-\bp_\eta)}{J_\eta}} \right]<\infty;
\end{equation}
We will then show that these assumptions can be removed via a limiting argument.

To this end, let $\mu$ be a $\Gamma$-invariant probability measure on $(0,1)^E$ and let $(\bp,\omega)$ be random variables with law $\bP_\mu$.
Write $K=K_o$ for the cluster of $o$ in $\omega$.
Fix an enumeration $E=\{e_1,e_2,\ldots\}$ of the edge set of $G$, and let $\opleq$ be the associated well-ordering of $E$, so that $e_i \opleq e_j$ if and only if $i \leq j$. After conditioning on the environment $\bp$, we will explore  $K$ one edge at a time and define a martingale in terms of this exploration process. 
At each stage of the exploration we will have a set of vertices $U_n$, a set of revealed open edges $O_n$, and a set of revealed closed edges $C_n$. 
We begin by setting $U_0=\{o\}$ and  $C_0=O_0=\emptyset$. Let $n\geq 1$. Given everything that has happened up to and including step $n-1$ of the exploration, we define $(U_n,O_n,C_n)$ as follows: If every edge touching $U_{n-1}$ is included in $O_{n-1}\cup C_{n-1}$, we set $(U_n,O_n,C_n)=(U_{n-1},O_{n-1},C_{n-1})$. Otherwise, we take $E_n$ to be the $\opleq$-minimal element of the set of edges that touch $U_{n-1}$ but are not in $O_{n-1}$ or $C_{n-1}$. If $E_n$ is open in $\omega$, we set $O_n=O_{n-1}\cup\{E_n\}$, $C_n=C_{n-1}$, and set $U_{n}$ to be the union of $U_n$ with the set of endpoints of $E_n$. Otherwise, $E_n$ is closed in $\omega$ and we set $O_n=O_{n-1}$, $C_n =C_{n-1} \cup \{E_n\}$, and $U_n = U_{n-1}$. Let $\cF_0$ be the $\sigma$-algebra generated by the environment $\bp$ and let $(\cF_n)_{n\geq 0}$ be the filtration generated by this exploration process and the environment $\bp$. 

Let $T = \inf\{n \geq 0: E(U_n) \subseteq O_n \cup C_n\}$ be the first time that there are no unexplored edges touching $U_n$,  setting $T=\infty$ if this never occurs, and observe that $(U_T,O_T,C_T,T)$ is equal to $(K,E_\circ(K),\partial K,|E(K)|)$. 
Let the process $(Z_n)_{n\geq 0}$ be defined by $Z_0=0$ and
\[Z_n = \sum_{i=1}^{n\wedge T} 
\sqrt{J_{E_i}} 
\left[\sqrt{\frac{\bp_{E_i}}{1-\bp_{E_i}}} \mathbbm{1}(\omega(E_i)=0) - \sqrt{\frac{1-\bp_{E_i}}{\bp_{E_i}}} \mathbbm{1}(\omega(E_i)=1)\right]
\]
for each $n\geq 1$. 
The process $Z$ is a martingale with respect to the filtration $(\cF_n)_{n\geq 0}$ satisfying $Z_T = h_{\bp}(K)$. Moreover, the quadratic variation process $Q_n=\sum_{i=1}^n \bE_{\mu}[(Z_{i+1}-Z_i)^2 \mid \cF_i]$ satisfies
\[
Q_n = \sum_{i=1}^{n\wedge T} \bE_{\mu}\left[J_{E_i}
\left[\frac{\bp_{E_i}}{1-\bp_{E_i}} \mathbbm{1}(\omega(E_i)=0) + \frac{1-\bp_{E_i}}{\bp_{E_i}} \mathbbm{1}(\omega(E_i)=1)\right] \Biggm| \cF_{n-1}\right] = \sum_{i=1}^{n\wedge T} J_{E_i}
\]
for every $n\geq 0$, so that $Q_T = |E(K)|_{J}$. It follows from \cref{lem:AKN} and \cref{lem:martingale_stuff2} that if \eqref{eq:assumption} holds then
\begin{align}
\label{eq:converttoMTG}
\E_{\mu,h}\left[\mathbbm{1}(\sT_\eta) \sqrt{\frac{\bp_\eta}{(1-\bp_\eta)J_\eta}}\right] \leq 2 \bE_{p}\left[\frac{|h_{\bp}(K)|}{|E(K)|_{J}} (1-e^{-h |E(K)|_{J}}) \mathbbm{1}\bigl(|E(K)|_{J}<\infty\bigr)\right]
\nonumber
\\ = 2 \bE_p\left[ \frac{|Z_T|}{Q_T}\bigl(1-e^{-h Q_T}\bigr)\mathbbm{1}(0<Q_T<\infty) \right] \leq 21 \sqrt{h}.
\end{align}
This establishes the claim in the case that $\bp\in(0,1)^E$ almost surely and \eqref{eq:assumption} holds.

Now suppose that $\bp\in (0,1]^E$ almost surely and that \eqref{eq:assumption} does not necessarily hold. Let $\bp$ be a random environment with law $\mu$ and for each $n\geq 1$ let $\bp^n \in (0,1)^E$ be the environment defined by
\[
\bp^n_e = \min\left\{\bp_e,nJ_e,e^{-1/n}\right\} \qquad \text{ for each $e\in E$.}
\]
We couple percolation in the random environments $(\bp^n)_{n\geq 1}$ and $\bp$ in the standard monotone way by letting $(U_e)_{e\in E}$ be i.i.d.\ Uniform$[0,1]$ random variables independent of $\bp$ and setting
\[
\omega(e)=\mathbbm{1}(U_i \leq \bp_e) \quad \text{ and } \quad \omega^n(e)=\mathbbm{1}(U_i \leq \bp_e^n) \quad \text{ for each $n\geq 1$ and $e\in E$,}
\]
so that $\omega^n$ converges to $\omega$ pointwise from below almost surely.
Write $\E_h$ for expectations taken with respect to the joint law of $\bp$, $\omega$, $(\omega^n)_{n\geq 1}$, the independent ghost field $\cG$, and the independent root edge $\eta$. 
Let $\sT^n_e$ be the event that $e$ is closed in $\omega^n$ and that the endpoints of $e$ are in distinct clusters of $\omega^n$, at least one of which touches some green edge and at least one of which is finite. The law of $\bp^n$ is clearly $\Gamma$-invariant, and since 
\[
\E_{\mu}\left[ \sqrt{\frac{\bp^n_\eta (1-\bp^n_\eta)}{J_\eta}}\right] \leq \E_\mu\left[ \sqrt{n(1-\bp^n_\eta)}\right]< \infty,
\]
we may apply the inequality \eqref{eq:converttoMTG} to deduce that
\begin{align}
\label{eq:AKN_Fatou}
\E_h\left[\mathbbm{1}(\sT_\eta^n) \sqrt{\frac{ \bp_\eta^n}{(1-\bp_\eta^n)J_\eta}}\right] \leq 21 \sqrt{h}
\end{align}
for every $n\geq 1$ and $h>0$. Since $\omega^n$ converges to $\omega$ pointwise from below, if $\sT_\eta$ holds then $\sT^n_\eta$ holds for all $n$ sufficiently large almost surely. It follows that
\begin{equation}
\label{eq:Fatou}
\mathbbm{1}(\sT_\eta) \sqrt{\frac{\bp_\eta}{(1-\bp_\eta)J_\eta}} \leq \liminf_{n\to\infty} \mathbbm{1}(\sT_\eta^n) \sqrt{\frac{ \bp_\eta^n}{(1-\bp_\eta^n)J_\eta}}
\end{equation}
almost surely, and Fatou's lemma implies that
\begin{equation}
\label{eq:ppositive}
\E_{\mu,h}\left[\mathbbm{1}(\sT_\eta) \sqrt{\frac{\bp_\eta}{(1-\bp_\eta)J_\eta}}\right] \leq   21 \sqrt{h}
\end{equation}
for every $h>0$ under the assumption that $\bp \in (0,1]^E$ almost surely.
 (Eq.\ \eqref{eq:Fatou} is an inequality rather than an equality since we might have that $\eta$ is incident to a finite cluster in $\omega^n$ for every $n\geq 1$ without this being true in $\omega$.)

 It remains to consider the case in which edge probabilities may be zero. Let $\bp$ be a random environment with law $\mu$ and for each $n\geq 1$ let $\bp^n \in (0,1]^E$ be the environment defined by
\[
\bp^n_e = \max\left\{\bp_e,\min\left\{1,\frac{J_e}{n}\right\}\right\} \qquad \text{ for each $e\in E$.}
\]
Similarly to before, we can couple the associated percolation processes $\omega$ and $(\omega^n)_{n\geq 1}$ so that $\omega^n$ tends to $\omega$ pointwise from above. Since $\sum_{e\in E^\rightarrow_v} J_e<\infty$ for every $v\in V$, we have for every finite set $A \subseteq V$ there exists an almost surely finite random $N_A$ such that $\{e \in E(A) : \omega^n(e)=1\}=\{e \in E(A) : \omega(e)=1\}$ for every $n\geq N_A$. It follows easily that
\begin{equation}
\mathbbm{1}(\sT_\eta) \sqrt{\frac{\bp_\eta}{(1-\bp_\eta)J_\eta}} = \lim_{n\to\infty} \mathbbm{1}(\sT_\eta^n) \sqrt{\frac{\bp_\eta^n}{(1-\bp_\eta^n)J_\eta}}
\end{equation}
almost surely, and the claim follows from \eqref{eq:ppositive} and Fatou's lemma as before.
\end{proof}

\subsection{Finite clusters in Bernoulli percolation}

We now apply the two-ghost inequality to study finite clusters in percolation in random environment models under a spectral gap condition. 
 We begin with the case of Bernoulli percolation on a locally finite graph so that we can present the basic method in the simplest possible setting.

\begin{thm}
\label{thm:finite_percolation}
Let $G$ be a connected, locally finite, nonamenable, transitive unimodular graph with spectral radius $\rho<1$ and let $o$ be a vertex of $G$. Then there exist positive constants $C=C(\deg(o),\rho)$ and $\delta=\delta (\deg(o),\rho)$ such that
\[
\bP_p(n\leq |K_o| <\infty) \leq C n^{-\delta}
\]
for every $n\geq 1$ and $p\in [0,1]$.
\end{thm}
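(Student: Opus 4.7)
The plan is to combine the spectral-radius estimate furnished by \cref{example:FiniteClusters} and \cref{thm:Bernoulli_radius} with the two-ghost inequality \cref{thm:two_ghost}. Write $\theta_n(p) := \bP_p(n \leq |K_o| < \infty)$ and $\rho := \rho(G) < 1$. Since the Bernoulli-$p$ measure $\mu_p$ is a Bernoulli measure, \cref{thm:Bernoulli_radius} gives $\rho(\mu_p) = \rho$, and applying \cref{example:FiniteClusters} with $F = \mathbf{1}_{\{n \leq |K_o| < \infty\}}$ and an independent random walk $(X_k)_{k \geq 0}$ on $G$ started at $o$ yields
\[
\bP_p\bigl(n \leq |K_o| < \infty,\ n \leq |K_{X_k}| < \infty\bigr) \;\geq\; \theta_n(p)^2 - \rho^k.
\]
I would then decompose the left-hand side according to whether $X_k \in K_o$, obtaining the two terms
\[
(\mathrm{I}) = \bP_p(X_k \in K_o,\; n \leq |K_o| < \infty), \qquad
(\mathrm{II}) = \bP_p(X_k \notin K_o,\; K_o,\, K_{X_k} \text{ finite with} \geq n \text{ vertices}).
\]

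To bound $(\mathrm{I})$, I use that $p_{2k}(o,o) \leq \rho^{2k}$ on any nonamenable graph (by self-adjointness of $P$ with $\|P\|=\rho$). Conditioning on $\omega$ and applying Cauchy--Schwarz to $\sum_{v \in K_o} p_k(o,v)$ yields $\sum_{v \in K_o} p_k(o,v) \leq |K_o|^{1/2} \rho^k$, so
\[
(\mathrm{I}) \;\leq\; \rho^k \cdot \bE_p\bigl[|K_o|^{1/2} \mathbf{1}_{\{n \leq |K_o| < \infty\}}\bigr].
\]
The moment on the right can be expressed via summation by parts in terms of the tail sequence $(\theta_m)_{m \geq n}$ and, once a preliminary decay of $\theta_n$ is established, bootstrapped.

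The heart of the argument is the bound on $(\mathrm{II})$ via the two-ghost inequality. I would introduce an independent ghost field $\cG$ of density $h = c/n$. On $(\mathrm{II})$ both $K_o$ and $K_{X_k}$ satisfy $|E(K)| \geq n/2$ (once $n$ is large), so each touches $\cG$ independently with probability at least $1 - e^{-c/2}$. On the further event that both clusters are green, careful bookkeeping of the walker's trajectory through distinct clusters---for example, locating the first step at which the walker exits $K_o$ and lands in a \emph{green} cluster distinct from $K_o$---identifies an index $i \in \{0,\dots,k-1\}$ at which $\sT_{\eta_i}$ holds. By transitivity and the edge mass-transport principle each $\eta_i$ has the same distribution as the random root edge $\eta$, so summing and invoking \cref{thm:two_ghost},
\[
(\mathrm{II}) \;\lesssim\; \sum_{i=0}^{k-1} \bP_p(\sT_{\eta_i}) \;\lesssim\; k \sqrt{\tfrac{h(1-p)}{p}} \;=\; k \sqrt{\tfrac{1-p}{p n}}.
\]
Combining everything, choosing $k \asymp \log(1/\theta_n)/\log(1/\rho)$ so that $\rho^k \leq \theta_n^2/4$, and running a bootstrap on the scale $n$---first to show $\sup_p \theta_n(p) \to 0$ and then to control the half-moment in $(\mathrm{I})$---produces the desired polynomial bound $\theta_n(p) \leq C n^{-\delta}$.

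The principal obstacle is the identification of a $\sT$-edge on the walker's path in $(\mathrm{II})$. The walker may traverse several small intermediate clusters between $K_o$ and $K_{X_k}$, none of which need to touch $\cG$, so the first closed edge crossed does not automatically satisfy $\sT$. Handling this cleanly requires a more careful combinatorial argument---for instance, conditioning on the cluster structure visited by the walker and separately integrating over $\cG$, or summing over the random number of inter-cluster transitions---closely parallel to the Aizenman--Kesten--Newman-style analysis of \cite{1808.08940}. A secondary issue is that the factor $\sqrt{(1-p)/p}$ in the two-ghost estimate degenerates as $p \to 0^+$ or $p \to 1^-$; these regimes must be treated separately, using for small $p$ the trivial bound $\theta_n(p) \leq \bP_p(|K_o| \geq n)$ (which decays exponentially well below any threshold), and for $p$ close to $1$ the observation that the event $|K_o| < \infty$ has itself small probability.
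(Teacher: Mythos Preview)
Your overall architecture—coupling the spectral-gap correlation bound with the two-ghost inequality via an independent random walk—matches the paper's, but both of your terms $(\mathrm{I})$ and $(\mathrm{II})$ are handled differently, and your treatment of $(\mathrm{II})$ contains the gap you yourself flag.

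For $(\mathrm{I})$, your Cauchy--Schwarz bound produces an unwanted half-moment $\bE_p[|K_o|^{1/2}\mathbbm{1}_{\{n\le|K_o|<\infty\}}]$ that you then propose to control by a bootstrap. The paper avoids this entirely via a mass-transport argument (\cref{prop:Schramm}, ``Schramm's Lemma''): writing $\P(X_k\in K_o,\,|K_o|<\infty)=\bE[\langle P^k\mathbbm{1}_{K_o},\mathbbm{1}_o\rangle\mathbbm{1}(|K_o|<\infty)]$ and transporting mass uniformly over $K_o$ gives the clean bound $\rho^k$, with no moments and no bootstrap.

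For $(\mathrm{II})$, the obstacle you identify is real and your proposed fixes are too vague to constitute a proof. The paper's solution is a surgery trick that you are missing: rather than hunting for a $\sT$-edge along the raw walk, one defines $\omega^i$ by forcing the first $i$ edges traversed by the walk to be open. On the event $\sA_{n,m}$ (essentially your $(\mathrm{II})$), there is a first index $i$ at which $X_0$ and $X_m$ become connected (or the cluster of $X_0$ becomes infinite) in $\omega^i$. In $\omega^{i-1}$, the cluster of $X_{i-1}$ \emph{contains the original cluster $K_{X_0}$} (since $X_0,\dots,X_{i-1}$ are all joined by forced-open edges), and the cluster of $X_i$ contains $K_{X_m}$; so \emph{both} clusters on either side of the edge $X_{i-1,i}$ are large. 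This cleanly eliminates the ``small intermediate cluster'' problem. One then observes that, conditionally on the walk, the event that the first $i-1$ traversed edges were \emph{already} open in $\omega$ has probability at least $p^{i-1}$ and is independent of the rest of $\omega$; on this event the same picture holds in $\omega$ itself, yielding an $\sS_{\eta,n}$-type event to which \cref{cor:two_ghost_S} applies. Summing over $i$ gives $\P(\sA_{n,m})\preceq p_0^{-m}n^{-1/2}$, and combined with the spectral bounds one obtains $\theta_n(p)^2\le 2\rho^m+Cp_0^{-m}n^{-1/2}$, optimised by $m\asymp\log n$. No bootstrap is required.

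Your remark about $p\to1$ is not an actual issue: the factor $\sqrt{(1-p)/p}$ is bounded there. Only $p\to0$ needs separate treatment, and the paper does exactly what you suggest—dispense with $p\le 1/(2\deg o)$ via the trivial first-moment bound $\bE_p|K_o|\le 2$.
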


The proof will apply the following general fact about percolation on nonamenable graphs, which is a version of \emph{Schramm's Lemma}. A similar lemma for Bernoulli percolation (with a very different proof) first arose in unpublished work of Schramm; see \cite{kozma2011percolation} for a detailed discussion and \cite[Section 3]{1712.04911} for further related results.

\begin{prop}
\label{prop:Schramm}
Let $G=(V,E,J)$ be a connected, nonamenable, transitive, weighted graph, let $o$ be a vertex of $G$, and let $\Gamma \subseteq \Aut(G)$ be a closed unimodular transitive subgroup of automorphisms. Suppose that $\omega \in \{0,1\}^E$ is a random variable whose law is invariant under $\Gamma$ and that $(X_n)_{n\geq 0}$ is an independent random walk on $G$ started at $X_0=o$. Then
\[
\P(X_0 \text{ and } X_n \text{ both belong to the same finite cluster of $\omega$}) \leq \rho(G)^n
\]
for every $n\geq 0$.
\end{prop}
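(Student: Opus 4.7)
The plan is to reduce the probability to an $\ell^2(V)$ inner product and apply the spectral-radius bound for the random-walk Markov operator. Since $X$ and $\omega$ are independent and $X_0=o$, Fubini gives
\[
\P\bigl(X_0,X_n\text{ in the same finite cluster of }\omega\bigr)=\sum_{v\in V}p_n(o,v)\,q(o,v)=(P^n\delta_o,q^o)_{\ell^2(V)},
\]
where $q(u,v):=\P\bigl(u\leftrightarrow v\text{ in }\omega,\ |K_u^\omega|<\infty\bigr)$ is a $\Gamma$-invariant positive-definite kernel on $V\times V$ and $q^o(v):=q(o,v)$.

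Since $G$ is transitive, the random-walk Markov operator $P$ is self-adjoint on $\ell^2(V)$ and has operator norm $\rho(G)$ by Kesten's theorem; in particular $\|P^n\delta_o\|_2\leq\rho(G)^n$. For each $\omega$ with $|K_o|<\infty$, the inner quantity $\sum_{v\in K_o}p_n(o,v)$ can be expressed via the rank-one orthogonal projection $\Pi_\omega\delta_o=\mathbbm{1}_{K_o}/|K_o|$ onto $\mathrm{span}\,\mathbbm{1}_{K_o}$ as $|K_o|\,(P^n\delta_o,\Pi_\omega\delta_o)_{\ell^2}$. Applying Cauchy-Schwarz to this inner product together with $\|\Pi_\omega\delta_o\|_2=|K_o|^{-1/2}$ gives $\sum_{v\in K_o}p_n(o,v)\leq|K_o|^{1/2}\rho(G)^n$ on $\{|K_o|<\infty\}$.

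The direct Cauchy-Schwarz approach as above only yields the weaker estimate $\rho(G)^n\,\mathbb{E}[|K_o|^{1/2}\mathbbm{1}(|K_o|<\infty)]$, so the main obstacle in the proof is to eliminate the unwanted $|K_o|^{1/2}$ factor to reach the clean bound $\rho(G)^n$. The natural route is a mass-transport argument that exploits the unimodularity of $\Gamma$ to re-weight the contributions of vertices within each finite cluster in a way compatible with the Haar measure, so that the cluster-size factor cancels. An alternative that stays within the framework of Section~2.2 would be to identify a scalar function $F\in L^2(\Omega,\mu)$ whose autocovariance $\mathbb{E}[F(\omega)\,\hat X_n F(\omega)]$ captures the target event, and then invoke the spectral bound $\rho(\hat P_\mu,\Gamma)\leq\rho(G)$ (available e.g.\ when $\mu$ is a $\Gamma$-factor of i.i.d., as in \cref{thm:Bernoulli_radius} and \cref{thm:Ising_factor}) together with the limit formula \eqref{eq:rho_limit_covariance} to conclude.
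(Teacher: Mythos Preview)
Your proposal is not a proof: it correctly identifies the obstacle (the spurious $|K_o|^{1/2}$ from a direct Cauchy--Schwarz) and correctly guesses that mass-transport is the cure, but then stops without carrying out the argument. The missing step is short and is exactly what the paper does, so let me spell it out.

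Write the target probability as
\[
\E\bigl[\langle P^n \mathbbm{1}_{K_o},\mathbbm{1}_o\rangle\,\mathbbm{1}(|K_o|<\infty)\bigr].
\]
Now apply the mass-transport principle with
\[
F(u,v)=\E\Bigl[\mathbbm{1}\bigl(v\in K_u,\ |K_u|<\infty\bigr)\,\frac{\langle P^n\mathbbm{1}_{K_u},\mathbbm{1}_u\rangle}{|K_u|}\Bigr],
\]
which is diagonally $\Gamma$-invariant. Summing over $v$ gives the expression above; summing over $u$ and using that $o\in K_u$ iff $u\in K_o$ (with $K_u=K_o$) gives
\[
\E\Bigl[\frac{\langle P^n \mathbbm{1}_{K_o},\mathbbm{1}_{K_o}\rangle}{|K_o|}\,\mathbbm{1}(|K_o|<\infty)\Bigr].
\]
Now the inner product is \emph{symmetric} in $\mathbbm{1}_{K_o}$, so the operator bound $\langle P^n f,f\rangle\le\|P\|^n\|f\|_2^2$ yields $\langle P^n\mathbbm{1}_{K_o},\mathbbm{1}_{K_o}\rangle\le\rho(G)^n|K_o|$, and the $|K_o|$ cancels cleanly. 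This is the whole point: mass-transport replaces $\mathbbm{1}_o$ by the averaged $\mathbbm{1}_{K_o}/|K_o|$, which turns the inner product into a Rayleigh quotient and kills the cluster-size factor exactly rather than up to a square root.

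Your suggested alternative via $\rho(\hat P_\mu)\le\rho(G)$ does not work in the stated generality: the proposition is for an \emph{arbitrary} $\Gamma$-invariant law on $\{0,1\}^E$, not just factors of i.i.d., and no bound on $\rho(\mu)$ is assumed or available. The paper's argument avoids any hypothesis on $\mu$ beyond $\Gamma$-invariance precisely because it works directly with the Markov operator on $\ell^2(V)$ rather than on $L^2(\Omega,\mu)$.
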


\begin{proof}[Proof of \cref{prop:Schramm}]
Let $P:L^2(V)\to L^2(V)$ be the Markov operator on $G$, so that
\[
\P(X_0 \text{ and } X_n \text{ both belong to the same finite cluster of $\omega$})
= \E \left[\langle P^n \mathbbm{1}_{K_o},\mathbbm{1}_{o}\rangle \mathbbm{1}(|K_o|<\infty)\right].
\]
We have by the mass-transport principle that
\[
\E \left[\langle P^n \mathbbm{1}_{K_o},\mathbbm{1}_{o}\rangle \mathbbm{1}(|K_o|<\infty)\right] = \E \left[\frac{\langle P^n \mathbbm{1}_{K_o},\mathbbm{1}_{K_o}\rangle}{|K_o|} \mathbbm{1}(|K_o|<\infty)\right] \leq \|P\|^n \cdot \P(|K_o|<\infty)
\]
which implies the claim.
\end{proof}

\begin{proof}[Proof of \cref{thm:finite_percolation}]
First note that if $p \leq 1/2\deg(o)$ then counting paths gives that $\bE_p |K_o| \leq \sum_{i=0} p^i \deg(o)^i \leq 2$, so that the claim is trivial in this case. We may therefore assume throughout the proof that $p \geq p_0:= 1/2\deg(o)$.

Fix $p \geq p_0$ and let $\omega$ be an instance of Bernoulli-$p$ bond percolation on $G$.
Let $X$ be a random walk on $G$ started at $o$ and independent of the percolation configuration $\omega$, and let $X_{i,i+1}$ be the edge crossed by $X$ between times $i$ and $i+1$ for each $i\geq 0$. For each $i\geq 0$, let $\omega^i$ be obtained from $\omega$ by setting $\omega^0=\omega$ and
\[\omega^i(e) =\begin{cases} 1 & e \in \{X_{j,j+1} : 0 \leq j  \leq i-1\}\\
\omega(e) & e \notin \{X_{j,j+1} : 0 \leq j  \leq i-1\}
\end{cases}
\]
for each $i\geq 1$ and $e\in E$.
For each $n,m \geq 1$ let $\sA_{n,m}$ be the event that the cluster of $X_0=o$ in $\omega$ is finite and that $X_0$ and $X_m$ are in distinct clusters of $\omega$ each of which touches at least $n$ edges. For each $n,m \geq 0$ and $1\leq i \leq m$, let $\sB_{n,m,i}$ be the event that the following hold:
\begin{enumerate}
  \item
 $X_0$ and $X_m$ are in distinct clusters of $\omega^{i-1}$ each of which touches at least $n$ edges,
 \item the cluster of $X_0$ is finite in $\omega^{i-1}$, and
 \item either $X_0$ and $X_m$ are connected in $\omega^i$ or the cluster of $X_0$ is infinite in $\omega^i$.
\end{enumerate}
 On the event $\sA_{n,m}$ the vertices $X_0$ and $X_m$ are connected in $\omega^m$ and not connected in $\omega^0$, and since $\omega^i$ is monotone increasing in $i$ it follows that
\begin{equation}
\label{eq:ABsetinclusion}
\sA_{n,m} \subseteq \bigcup_{i=1}^m \sB_{n,m,i}
\end{equation}
for every $n,m\geq 1$.
Now, for each $n,i\geq 1$ let $\sC_{n,i}$ be the event that the cluster of $X_{i-1}$ in $\omega$ is finite and that $X_{i-1}$ and $X_{i}$ are in distinct clusters of $\omega$ each of which touches at least $n$ edges. Observe that $\sC_{n,i} \supseteq \sB_{n,m,i} \cap \{\omega(X_{j,j+1})=1 \text{ for every $0\leq j \leq i-2$}\}$ for every $n,m\geq 1$ and $1\leq i \leq m$. Moreover, these two events are conditionally independent given the random walk $X$, and we deduce that
\[
\P\bigl(\sC_{n,i}) \geq \E \left[\P(\omega(X_{j,j+1})=1 \text{ for every $0\leq j \leq i-2$}\mid X) \P\bigl( \sB_{n,m,i}\mid X)\right]\geq p^{i-1} \P(\sB_{n,m,i})
\]
for every $n,m\geq 1$ and $1\leq i \leq m$. Applying \eqref{eq:ABsetinclusion} and \cref{cor:two_ghost_S} we deduce that
\begin{equation}
\label{eq:Bernoulli_surgery}
\P(\sA_{n,m}) \leq \sum_{i=1}^m p^{-i+1} \P\bigl(\sC_{n,i})= \P\bigl(\sS_{\eta,n}) \sum_{i=1}^m p^{-i+1} 
\leq \frac{p_0^{-m+1}}{1-p_0} \sqrt{\frac{1-p}{p}} \frac{42}{ \sqrt{n}}
\leq \frac{42}{ p_0^m\sqrt{n}}
\end{equation}
for every $n,m\geq 1$, where we used transitivity in the central equality.
On the other hand, we trivially have that
\begin{equation*}\P(\sA_{n,m}) \geq \P(n \leq |K_{X_0}|,|K_{X_m}| <\infty) \\- \P(\text{$X_0$ and $X_m$ belong to the same finite cluster of $\omega$})
\end{equation*}
for every $n,m\geq 1$.
Write $\rho=\rho(G)$. Using \cref{thm:Bernoulli_radius} as in \cref{example:FiniteClusters} to bound the first term and \cref{prop:Schramm} to bound the second gives that
\begin{align}
\P(\sA_{n,m})&\geq  \bP_p(n \leq |K_{o}| <\infty)^2 - \rho^m \left[ \bP_p(n \leq |K_{o}| <\infty)-\bP_p(n \leq |K_{o}| <\infty)^2\right] - \rho^m
\nonumber
\\
&\geq \bP_p(n \leq |K_{o}| <\infty)^2 - 2 \rho^m,
\label{eq:using_rho}
\end{align}
and hence by \eqref{eq:Bernoulli_surgery} that
\[
\bP_p(n \leq |K_{o}| <\infty)^2 \leq 2\rho^m + \frac{42}{ p_0^m\sqrt{n}}
\]
for every $n,m\geq 1$. The claim follows easily by taking $m=\lceil c \log n \rceil$ for an appropriate choice of constant $c=c(\deg(o),\rho)$; we omit the details.
\end{proof}


\subsection{Finite clusters in the random cluster model}
\label{subsec:finite_FK}

We now prove \cref{thm:finite_clusters}, which concerns finite clusters in the random cluster model on transitive weighted graphs that are not necessarily locally finite.

Let us first discuss how the random cluster model may be represented as a percolation in random environment model via a non-integer version of the Edwards--Sokal coupling. This representation was first used by Bollob\'as, Grimmett, and Janson in the context of the complete graph \cite[Section 3]{bollobas1996random}.
Let $q \geq 1$ and $\beta \geq 0$ and let $\omega$ be a sample of the random cluster measure $\phi_{q,\beta,0}$ on a weighted graph $G=(V,E,J)$ with $V$ finite. 
Given $\omega$, colour each \emph{cluster} of $\omega$ \emph{red} or \emph{white} independently at random with probability $1/q$ to be coloured red, let $R$ be the set of vertices belonging to a red cluster, and let $\omega' \in \{0,1\}^E$ be defined by $\omega'(e)=\omega(e)\mathbbm{1}($both endpoints of $e$ belong to $R)$. Note that when $q \in \{2,3,\ldots\}$, the set $R$ has the same distribution as set of vertices that have some particular colour in the Potts model. For each set $A \subseteq V$, let $E(A)$ be the set of edges touching $A$, let $E_o(A)$ be the set of edges with both endpoints in $A$, and let $\overline{A}$ be the subgraph of $G$ with vertex set $V \setminus A$ and edge set $E \setminus E(A)$, where edges inherit their weights from $G$. 
 It is shown in \cite[Eq.\ (3.76)]{GrimFKbook} that
\begin{align}
\label{eq:red_percolation}
\P(R=A, \omega' = \xi) 
&= \frac{Z_{\overline{A}}(q-1,\beta,0)}{Z_G(q,\beta,0)} \prod_{e\in E_o(A)} (e^{2\beta J_e}-1)^{\xi(e)} 
\end{align}
for every $A\subseteq V$ and $\xi \in \{0,1\}^{E}$ such that $\xi(e)=0$ for every edge $e \notin E_o(A)$, where $Z_G(q,\beta,h)$ is the partition function for the random cluster model on $G$. Since this expression depends on $\xi$ only through the product $\prod_{e\in E_o(A)} (e^{2\beta J_e}-1)^{\xi(e)}$, it follows that the conditional distribution of $\omega'$ given $R$ coincides with that of the Bernoulli bond percolation process on $E_o(R)$ in which each edge of $E_o(R)$ is included independently at random with inclusion probability $(e^{2\beta J_e}-1)/e^{2\beta J_e}=1-e^{-2\beta J_e}$. This allows us to think of the restriction of the random cluster model to the (random) set of red vertices as a percolation in random environment model.

(We note that for the FK-Ising model there is an alternative percolation in random environment representation, due to Lupu and Werner \cite{MR3485382}, in which Bernoulli edges are added to the loop $O(1)$ model. See \cref{subsec:mainproof} for further discussion. This representation could also be used to prove \cref{thm:finite_clusters} in the case $q=2$. In fact, using this representation makes the proof somewhat simpler in this case since the edge-inclusion probabilities $\bp_e \geq \sinh(\beta J_e)/\cosh^2(\beta J_e)$ are bounded away from zero for each $e\in E$ almost surely.) 

Let us now discuss how this representation extends to the infinite volume case and to models with non-zero external field. Let $G=(V,E,J)$ be an infinite, connected, weighted graph. Let $q\geq 1$, $\beta,h\geq 0$, and $\# \in \{f,w\}$, and let $\omega \in \{0,1\}^{E \cup V}$ be a random variable with law $\phi_{q,\beta,h}^\#$.
Given $\omega$, we colour the clusters of $\omega$ red or white as follows:
\begin{enumerate}
  \item
Colour each cluster of $\omega$ intersecting the set $\{v:\omega(v)=1\}$ red.
\item If $\#=w$, colour each infinite cluster of $\omega$ red.
\item Choose to colour each remaining cluster of $\omega$ red or white independently at random, with probability $1/q$ to be coloured red.
\end{enumerate}
Let $R$ be the set of vertices that are coloured red. It follows from \cref{eq:red_percolation} and a straightforward limiting argument that, conditional on $R$, the restriction of $\omega$ to $E_o(R) \cup R$ is a product measure in which 
$\P(\omega(x)=1\mid R)= (e^{2\beta J_x}-1)/e^{2\beta J_x}=1-e^{-2\beta J_x}$ for every $x \in E_o(R) \cup R$, where we write $J_v = h$ for every $v\in R$. 

\begin{proof}[Proof of \cref{thm:finite_clusters}]
By scaling, we may assume without loss of generality that $\sum_{e\in E^\rightarrow_o} J_e=1/2$. 
Since the the restriction of $\phi^\#_{q,\beta,h}$ is stochastically dominated by the product measure $\phi^\#_{1,\beta,h}$ \cite[Theorem 3.21]{GrimFKbook}, a simple counting argument as before yields that $\phi^\#_{q,\beta,h} |K_o| \leq \phi^\#_{1,\beta,h} |K_o| \leq 2$ for every $\beta \leq 1/2$, $h \geq 0$, $q\geq 1$, and $\# \in \{f,w\}$. This concludes the proof in this case, so that it suffices to consider the case $\beta \geq 1/2$.

Fix $\beta \geq 1/2$, $h \geq 0$, $q\geq 1$, and $\# \in \{f,w\}$. 
Let $\omega$ be a random variable with law $\phi^\#_{q,\beta,h}$, let $R$ be the random subset of $V$ defined by colouring the clusters of $\omega$ red or white as above, and let $\omega_R \in \{0,1\}^E$ be defined by $\omega_R(e)=\omega(e) \mathbbm{1}($both endpoints of $e$ belong to $R)$. Thus, as discussed above, $\omega_R$ may be thought of as a percolation in random environment model in which the environment $\bp$ is given by
\[
\bp_e = (1-e^{-2\beta J_e})\mathbbm{1}\left(e \in E_o(R)\right).
\]
For a more general percolation in random environment model, the fact that these probabilities can be zero could be problematic. In our case, however, there is enough independence to pull the proof through with care.

Let $X$ be a random walk on $G$ started at $o$ and independent of $(\omega,R)$,  let $X_{i,i+1}$ be the edge crossed by $X$ between times $i$ and $i+1$ for each $i\geq 0$, and let $J_i$ be the weight of the edge $X_{i,i+1}$ for each $i\geq 0$. 
For each $m\geq1$, let $\sR_m$ be the event that $X_i \in R$ for every $0 \leq i \leq m$. The definitions ensure that 
\begin{equation}
\label{eq:Rprob}
\P(\sR_m \mid \omega, X) \geq q^{-m-1}\end{equation}
for every $m \geq 1$.
For each $i\geq 0$, let $\omega^i_R$ be obtained from $\omega_R$ by setting $\omega^0_R=\omega_R$ and
\[\omega^i_R(e) =\begin{cases} 1 & e \in \{X_{j,j+1} : 0 \leq j  \leq i-1\}\\
\omega_R(e) & e \notin \{X_{j,j+1} : 0 \leq j  \leq i-1\}
\end{cases}
\]
for each $i\geq 1$ and $e\in E$.
For each $m \geq 1$ and $\lambda>0$, let $\sA_{\lambda,m}$ be the event that the cluster of $X_0=o$ in $\omega$ is finite and that $X_0$ and $X_m$ are in distinct clusters of $\omega$ each of which touches a set of edges with total weight at least $\lambda$. For each $m \geq 1$, $\lambda>0$, and $1\leq i \leq m$, let $\sB_{\lambda,m,i}$ be the event that the following hold:
\begin{enumerate}
  \item
 $X_0$ and $X_m$ are in distinct clusters of $\omega^{i-1}_R$ each of which touches a set of edges with total weight at least $\lambda$,
 \item the cluster of $X_0$ is finite in $\omega^{i-1}_R$, and
 \item either $X_0$ and $X_m$ are connected in $\omega^i_R$ or the cluster of $X_0$ is infinite in $\omega^i_R$.
\end{enumerate}
 On the event $\sA_{\lambda,m} \cap \sR_m$ the vertices $X_0$ and $X_m$ are connected in $\omega^m_R$ and not connected in $\omega^0_R$, and since $\omega^i_R$ is monotone increasing in $i$ it follows that
\begin{equation}
\label{eq:ABsetinclusionFK}
\sA_{\lambda,m} \cap \sR_m \subseteq \bigcup_{i=1}^m \sB_{\lambda,m,i} \cap \sR_m
\end{equation}
for every $m\geq 1$ and $\lambda>0$.
Now, for each $i\geq 1$ and $\lambda>0$ let $\sC_{\lambda,i}$ be the event that the cluster of $X_{i-1}$ in $\omega_R$ is finite and that $X_{i-1}$ and $X_{i}$ are in distinct clusters of $\omega_R$ each of which touches a set of edges with total weight at least $\lambda$. Observe that $\sC_{\lambda,i} \cap \sR_m \supseteq \sB_{\lambda,m,i} \cap \{\omega_R(X_{j,j+1})=1 \text{ for every $0\leq j \leq i-2$}\} \cap \sR_m$ for every $n,m\geq 1$ and $1\leq i \leq m$. 
The events $\sB_{\lambda,m,i}$ and $\{\omega_R(X_{j,j+1})=1$ for every $0\leq j \leq i-2\}$ are conditionally independent given the random walk $X$ and the set $R$, and we deduce that
\begin{align}
\P\bigl(\sC_{\lambda,i} \cap \sR_m \mid X,R) &\geq \mathbbm{1}(\sR_m)\P(\omega_R(X_{j,j+1})=1 \text{ for every $0\leq j \leq i-2$}\mid X,R) \P\bigl( \sB_{\lambda,m,i}  \mid X,R) 
\nonumber
\\&\geq \mathbbm{1}(\sR_m) \P(\sB_{\lambda,m,i} \mid X,R) \prod_{j=0}^{m-2} (1-e^{-2\beta J_j}) 
\label{eq:FK_R_Surgery}
\end{align}
for every $m\geq 1$, $\lambda>0$, and $1\leq i \leq m$.

To proceed, we will first complete the proof under the additional assumption that there exists $\alpha<1$ such that $\sum_{e \in E^\rightarrow_o} J_e^{\alpha}<\infty$, which holds trivially in the locally finite case, before explaining how this assumption can be removed. 
Under this assumption we have that $\E J_\eta^{-(1-\alpha)} = \E e^{-(1-\alpha)\log J_\eta} < \infty$ and since $\beta \geq 1/2$ and  $1-e^{-x} \geq x/2$ for every $x \in [0,1]$ we deduce that
\begin{multline*}
\E \exp\left( -(1-\alpha) \log (1-e^{-2\beta J_\eta}) \right) \leq 
\E \exp\left( -(1-\alpha) \log (1-e^{-J_\eta}) \right) \\ \leq 2 \E \exp\left( -(1-\alpha) \log J_\eta \right)<\infty.
\end{multline*}
 Since the random variables $(J_i)_{i\geq 0}$ are i.i.d., we have by a Chernoff bound that there exists a finite constant $C_1$  such that
\begin{align*}
\P\left(\prod_{j=0}^{m-2} (1-e^{-2\beta J_j}) \leq e^{-C_1 (m-1)} \right) 
&= 
\P\left(\sum_{j=0}^{m-2} -\log (1-e^{-2\beta J_j}) \geq C_1 (m-1) \right)
\\&\leq e^{-C_1(1-\alpha)(m-1)}\left[2 \E \exp\left( -(1-\alpha) \log J_\eta \right)\right]^{m-1} \leq q^{-2(m-1)}
\end{align*}
for every $m \geq 1$.
For each $m \geq 1$, let $\sW_{m}$ be the event that $\prod_{j=0}^{m-2} (1-e^{-2\beta J_j}) \geq e^{-C_1 (m-1)}$. It follows from \eqref{eq:FK_R_Surgery} that
\begin{align*}
\P(\sB_{\lambda,m,i} \cap \sR_m \mid X,R) \leq e^{C_1 (m-1)} \P\bigl(\sC_{\lambda,i} \cap \sR_m \mid X,R)  + \mathbbm{1}(\sW_m) 
\end{align*}
for every $m\geq 1$, $\lambda>0$, and $1\leq i \leq m$. Taking expectations and applying \eqref{eq:ABsetinclusionFK} we deduce that
\begin{align}
\label{eq:Bernoulli_surgeryFK}
\P(\sA_{\lambda,m} \cap \sR_m)  &\leq m q^{-2(m-1)}  + e^{C_1 (m-1)} \sum_{i=1}^m \P\bigl(\sC_{\lambda,i} \cap \sR_m).
\end{align}
for every $m\geq 1$ and $\lambda>0$. There is easily seen to exist a positive constant $c_1$ such that $\bp_e / (1-\bp_e) J_e \geq c_1 \mathbbm{1}(e\in E_o(R))$ for every $\beta \geq 1/2$. Thus, \cref{cor:two_ghost_S} yields that there exists a constant $C_2 = 42/c_1$ such that
\[
\P\bigl(\sC_{\lambda ,i} \cap \sR_m ) \leq  \frac{C_2}{\sqrt{\lambda}}
\]
for every $m\geq 1$, $\lambda>0$, and $1 \leq i \leq m$ and hence that
\[
\P(\sA_{\lambda,m} \cap \sR_m) \leq m q^{-2(m-1)} + m e^{C_1 (m-1)} \frac{C_2}{\sqrt{\lambda}}
\]
for every $m\geq 1$ and $\lambda>0$.
 Since $\sA_{\lambda,m}$ is measurable with respect to the $\sigma$-algebra generated by $\omega$ and $X$, it follows from \eqref{eq:Rprob} that there exist finite constants $C_3$ and $C_4$ such that
\begin{multline}
\P(\sA_{\lambda,m})= \frac{\P(\sA_{\lambda,m} \cap \sR_m)}{\P(\sR_m \mid \sA_{\lambda,m})} \leq q^{m+1}\P(\sA_{\lambda,m} \cap \sR_m) \\\leq m q^{-m+3} + m e^{C_1 m} q^{m+1}\frac{C_2}{\sqrt{\lambda}}
\leq m q^{-m+3} + \frac{C_3 e^{C_4 m}}{\sqrt{\lambda}}
\label{eq:FK_surgery}
\end{multline}
for every $m\geq 1$ and $\lambda>0$.

We may now conclude the proof in an essentially identical way to the proof of \cref{thm:finite_percolation}. Indeed, we have trivially have that
\begin{multline*}\P(\sA_{\lambda,m}) \geq \P(\lambda \leq |E(K_{X_0})|_J,|E(K_{X_m})|_J <\infty) \\- \P(\text{$X_0$ and $X_m$ belong to the same finite cluster of $\omega$})
\end{multline*}
for every $m\geq 1$ and $\lambda>0$.
We deduce from \cref{thm:Ising_factor,prop:Schramm} that
\begin{align}
\P(\sA_{\lambda,m})
&\geq \P(\lambda \leq |E(K_{X_0})|_J <\infty)^2 - 2 \rho(G)^m,
\label{eq:using_rho_FK}
\end{align}
and hence by \eqref{eq:FK_surgery} that
\[
\P(\lambda \leq |E(K_{X_0})|_J <\infty)^2 \leq 2\rho(G)^m + m q^{-m+3} + \frac{C_3 e^{C_4 m}}{\sqrt{\lambda}}
\]
for every $m\geq 1$ and $\lambda>0$. As before, the claim follows easily by taking $m=\lceil c \log \lambda \rceil$ for an appropriate choice of constant $c$.

Let us now briefly indicate how the assumption that $\sum_{e\in E^\rightarrow_o} J_e^\alpha < \infty$ for some $\alpha<1$ can be removed; if the reader is only interested in the locally finite case they may safely skip this paragraph. First, we easily verify from the definitions that the connected, transitive weighted graph $G'=(V,E,J^2)$ is nonamenable if and only if $G=(V,E,J)$ is. Moreover, any automorphism-invariant percolation process on $G$ may also be thought of as an automorphism-invariant percolation process on $G'$, and this change in perspective does not affect whether or not the process is a factor of i.i.d. Applying \cref{thm:Ising_factor}, it follows in particular that the random cluster model on $G$ has spectral radius at most $\rho(G')<1$ when considered as a percolation process on $G'$.
Moreover, if $\eta'$ is a random edge emanating from $o$ chosen with probability proportional to $J_e^2$ then we have that $\E J_{\eta'}^{-1} = (\sum_{e\in E^\rightarrow_o} J_e)/(\sum_{e\in E^\rightarrow_o} J_e^2)<\infty$ and hence that
\[
\E \exp\left( -\log \frac{e^{2\beta J_{\eta'}}-1}{e^{2\beta J_{\eta'}}} \right) \leq 
\E \exp\left( - \log \frac{e^{J_{\eta'}}-1}{e^{J_{\eta'}}} \right) \leq 2 \E \exp\left( - \log J_{\eta'} \right)<\infty
\]
for every $\beta \geq 1/2$.
 These observations allow us to straightforwardly extend the above analysis to arbitrary connected, nonamenable, transitive weighted graphs by considering the random walk on $G'$ instead of $G$; we omit the details.
\end{proof}

\begin{proof}[Sketch of proof of \cref{thm:free_random_cluster}]
Fix $q \geq 1$.
It is proven in \cite{1901.10363} that $\phi^\#_{q,\beta,0} |K_o| < \infty$ for every $\beta<\beta_c^\#(q)$ and $\# \in \{f,w\}$. 
As in \cite{Hutchcroft2016944}, the FKG inequality implies that the sequence 
\[
\kappa_{q,\beta}^\#(m):=\inf\left\{\phi^f_{q,\beta,0}(x \leftrightarrow y) : x,y \in V, d(x,y) \leq m \right\} 
\]
is supermultiplicative in the sense that $\kappa_{q,\beta}^\#(n+m)\geq \kappa_{q,\beta}^\#(n)\kappa_{q,\beta}^\#(m)$ for every $n,m\geq 1$, $\beta \geq 0$, and $\#\in \{f,w\}$, and it follows from Fekete's lemma \cite[Appendix II]{grimmett2010percolation} that
\[
\sup_{m\geq 1} \kappa_{q,\beta}^\#(m)^{1/m} = \lim_{m\to \infty} \kappa_{q,\beta}^\#(m)^{1/m} \leq \liminf_{m\to\infty} \left(\frac{\phi^\#_{q,\beta,0} |K_o|}{|B(o,m)|} \right)^{1/m} = \frac{1}{\operatorname{gr}(G)}
\]
for every $\# \in \{f,w\}$ and $0\leq \beta < \beta_c^\#(q)$, where $\operatorname{gr}(G)=\limsup_{n\to\infty} |B(o,n)|^{1/n}$ is the exponential growth rate of $G$.
Following a very similar argument to that of \cref{thm:finite_clusters} but using a geodesic between two points $x$ and $y$ with $d(x,y)=m$ minimizing $\phi^f_{q,\beta,0}(x \leftrightarrow y)$ instead of a random walk and using the FKG inequality instead of spectral considerations in \eqref{eq:using_rho_FK} yields that there exist constants $C$ and $\delta$ (depending on $\operatorname{gr}(G)$, $\deg(o)$, and $q$) such that
\[
\phi^\#_{q,\beta,0}(|K_o|\geq n) \leq C n^{-\delta}
\]
for every $\# \in \{f,w\}$, $n\geq 1$, and $0\leq \beta<\beta_c^f$. We conclude by taking $\beta \uparrow \beta_c^f(q)$ and using left-continuity of the free random cluster measure $\phi^f_{q,\beta,0}$.
\end{proof}

\subsection{A general continuity theorem}

To illustrate the flexibility of the method of proof developed here, and for possible future applications, let us also make note of the following very general theorem for percolation in random environment models with all edge probabilities positive. This theorem is not needed for the proofs of our main results. 

 \begin{thm}
\label{thm:general_AKN}
Let $G=(V,E,J)$ be a connected, transitive, nonamenable weighted graph and let $\Gamma \subseteq \Aut(G)$ be a closed transitive unimodular subgroup of automorphisms. Let $\cM$ be a tight family of $\Gamma$-invariant probability measures on $(0,1]^E$ with $\sup_{\mu \in \cM} \rho(\mu) < 1$. Then there exists a decreasing function $f:\N\to [0,1]$ such that $\lim_{n\to\infty} f(n)=0$ and
\[
\bP_{\mu}(n \leq |K_o| < \infty) \leq f(n)\]
for every $n\geq 1$ and $\mu \in \cM$. In particular, $\cM_\infty=\{\mu \in \cM : \bP_\mu$ is supported on configurations with no infinite clusters$\}$ is a weakly closed subset of $\cM$.
 \end{thm}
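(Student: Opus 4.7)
The plan is to adapt the random-walk surgery of \cref{thm:finite_percolation} to the percolation-in-random-environment setting, using tightness of $\cM$ in place of a deterministic lower bound on edge probabilities. Fix $\mu\in\cM$, couple $(\bp,\omega)\sim\bP_\mu$ with an independent random walk $X=(X_n)_{n\geq 0}$ on $G$ started at $o$, and define the events $\sA_{\lambda,m}$, $\sB_{\lambda,m,i}$, and $\sC_{\lambda,i}$ using weighted edge counts $|E(K)|_J$ exactly as in the proof of \cref{thm:finite_percolation} (so that \cref{cor:two_ghost_S} applies directly). The deterministic inclusion $\sA_{\lambda,m}\subseteq\bigcup_{i=1}^m\sB_{\lambda,m,i}$ carries over unchanged.

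The key new step is the passage from $\sB$ to $\sC$ in the random environment. Conditional on $(X,\bp)$, the event $\sB_{\lambda,m,i}$ is independent of whether the first $i{-}1$ walk-edges are actually open in $\omega$, yielding
\[ \bP_\mu(\sB_{\lambda,m,i})\;\leq\;c^{-1}\bP_\mu(\sC_{\lambda,i})+\bP_\mu(\sH_{m,c}^c) \]
after integrating on the event $\sH_{m,c}:=\bigl\{\prod_{j=0}^{m-1}\bp_{X_{j,j+1}}\geq c\bigr\}$. By $\Gamma$-invariance of $\mu$ and stationarity of $X$, each $\bp_{X_{j,j+1}}$ has the same marginal law as $\bp_\eta$, so a union bound gives $\bP_\mu(\sH_{m,c}^c)\leq m\,T(c^{1/m})$ where $T(\delta):=\sup_{\nu\in\cM}\bP_\nu(\bp_\eta<\delta)\to 0$ as $\delta\downarrow 0$ by tightness of $\cM$. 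Summing over $i$ and using the $\Gamma$-invariance identity $\bP_\mu(\sC_{\lambda,i})=\bP_\mu(\sS_{\eta,\lambda})$ yields
\[ \bP_\mu(\sA_{\lambda,m})\;\leq\;m\,c^{-1}\bP_\mu(\sS_{\eta,\lambda})+m\,T(c^{1/m}), \]
and a parallel tightness argument converts \cref{cor:two_ghost_S} into the uniform bound $\bP_\mu(\sS_{\eta,\lambda})\leq 42(c')^{-1}\lambda^{-1/2}+T'(c')$ where $T'(c'):=\sup_{\nu\in\cM}\bP_\nu\bigl(\sqrt{\bp_\eta/((1-\bp_\eta)J_\eta)}<c'\bigr)\to 0$ as $c'\downarrow 0$ (using tightness for $\bp_\eta$ together with the $\mu$-independent fact that $\bP(J_\eta>M)\to 0$ as $M\to\infty$).

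For the matching lower bound we use the uniform spectral gap: with $\rho:=\max(\sup_{\mu\in\cM}\rho(\mu),\rho(G))<1$, the inclusion $\sA_{\lambda,m}\supseteq\{|E(K_o)|_J\geq\lambda,K_o\text{ finite}\}\cap\{|E(K_{X_m})|_J\geq\lambda,K_{X_m}\text{ finite}\}\setminus\{K_o=K_{X_m}\}$ combined with the decorrelation estimate of \cref{example:FiniteClusters} (applied with the uniform $\sup_\mu\rho(\mu)$) and Schramm's Lemma \cref{prop:Schramm} give $\bP_\mu(\sA_{\lambda,m})\geq\bP_\mu(|E(K_o)|_J\geq\lambda,K_o\text{ finite})^2-2\rho^m$. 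Given $\varepsilon>0$, successively choose $m$ with $2\rho^m<\varepsilon/3$, then $c$ with $m\,T(c^{1/m})<\varepsilon/3$, then $c'$ with $m\,c^{-1}T'(c')<\varepsilon/3$: the remaining $\lambda$-dependent term $42\,m(cc')^{-1}\lambda^{-1/2}$ tends to zero as $\lambda\to\infty$, so $\limsup_\lambda\sup_\mu\bP_\mu(|E(K_o)|_J\geq\lambda,K_o\text{ finite})^2\leq\varepsilon$. Since $\varepsilon$ is arbitrary this $\limsup$ is zero, and converting to $|K_o|$ via the elementary transitivity bound $|E(K_o)|_J\geq(D/2)|K_o|$ (where $D$ is the total weight at $o$) gives the existence of the required $f$. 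The closedness of $\cM_\infty$ follows from Portmanteau applied to the open event $\{|K_o|\geq n\}\subseteq\{0,1\}^E$: if $\mu_k\in\cM_\infty$ converges weakly to $\mu\in\cM$, then $\bP_{\mu_k}(|K_o|\geq n)\leq f(n)$, weak convergence of $\mu_k$ gives weak convergence of $\bP_{\mu_k}$ on $\{0,1\}^E$ (by dominated convergence on cylinder events), so $\bP_\mu(|K_o|\geq n)\leq f(n)$, and letting $n\to\infty$ shows $\mu\in\cM_\infty$.

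The main obstacle is that, unlike in \cref{thm:finite_percolation} or \cref{thm:finite_clusters}, the walk-path probability $\prod_j\bp_{X_{j,j+1}}$ admits no deterministic lower bound in terms of $m$ alone: tightness of $\cM$ gives pointwise positivity of $\bp$ but no uniform bound independent of $m$. Our workaround is the ordered diagonalization $\varepsilon\mapsto m\mapsto c\mapsto c'$, which uses only qualitative tightness at each step and therefore produces no explicit rate for $f$. A quantitative tightness hypothesis, such as a uniform bound on $\bE_\mu[(-\log\bp_\eta)^s]$ for some $s>0$, would yield a polynomial rate for $f$ by a Chernoff bound analogous to the one in the proof of \cref{thm:finite_clusters}.
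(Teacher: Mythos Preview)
Your proof is correct and follows essentially the same route as the paper: there one introduces the per-edge event $\sD_{\eps,i}=\{\bp_{X_{j,j+1}}\geq\eps \text{ for all } j\leq i-2\}$ in place of your product event $\sH_{m,c}$, applies \cref{cor:two_ghost_S} on that good event, and then diagonalizes over $\eps$, $m$, and $\lambda$ in the same spirit as your $m\to c\to c'$ scheme. The only blemish is that after summing $\P(\sB_{\lambda,m,i})$ over $i$ the tightness term becomes $m\cdot \P(\sH_{m,c}^c)\leq m^2\, T(c^{1/m})$ rather than $m\,T(c^{1/m})$, which is harmless for the diagonalization.
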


Note that the assumption that $\sup_{\mu \in \cM} \rho(\mu) < 1$ can be replaced by the assumption that $\cM=\overline{\cM}_\infty$ and that every measure in $\cM$ is positively associated.


\begin{proof}[Proof of \cref{thm:general_AKN}]
By scaling, we may assume without loss of generality that $\sup_e J_e \leq 1$.
The assumption that $\cM$ is tight on $(0,1]^E$ is equivalent to the assertion that there exists an increasing function $g:(0,1]\to [0,1]$ with $\lim_{\eps \downarrow 0}g(\eps)=0$ such that 
\[
\P_\mu(\bp_\eta \leq \eps) \leq g(\eps)
\]
for every $\mu \in \cM$ and $\eps>0$. 

 Let $\mu \in \cM$, let $(\bp,\omega)$ be drawn from $\bP_\mu$, and let $X=(X_m)_{m\geq 0}$ be an independent random walk on $G$ independent of $(\bp,\mu)$. Let the modified configurations $(\omega^i)_{i\geq 0}$ be defined as in the proof of \cref{thm:finite_percolation}. Similarly, for each $\lambda>0$, $m\geq 1$, and $1 \leq i \leq m$, let the events $\sA_{\lambda,m}$, $\sB_{\lambda,m,i}$, and $\sC_{\lambda,i}$ be defined as in the proof of \cref{thm:finite_percolation} but replacing each instance of the phrase `touches at least $n$ edges' with `touches a set of edges of total weight at least $\lambda$'. Thus, we have as before that
\[
\sA_{\lambda,m} \subseteq \bigcup_{j=1}^m\sB_{\lambda,m,j} \quad \text{ and } \quad \sC_{\lambda,i} \supseteq \sB_{\lambda,m,i} \cap \{\omega(X_j,X_{j+1})=1 \text{ for every $0 \leq j \leq i-2$}\}
\]
for every $\lambda>0$, $m\geq 1$, and $1\leq i \leq m$. For each $\eps>0$ and $i\geq 1$ let $\sD_{\eps,i}$ be the event that $\bp_{X_{j,j+1}} \geq \eps$ 
 for every $0\leq j \leq i-2$. The events $\sB_{\lambda,m,i}$ and $\{\omega(X_j,X_{j+1})=1$ for every $0 \leq j \leq i-2\}$ are conditionally independent given $X$ and $\bp$, so that
\begin{align*}
\P\bigl(\sC_{\lambda,i} \cap \sD_{\eps,i}) &\geq \E \left[ \mathbbm{1}(\sD_{\eps,i})\P(\omega(X_{j,j+1})=1 \text{ for every $0\leq j \leq i-2$}\mid \bp,X) \P\bigl( \sB_{\lambda,m,i}\mid \bp,X)\right]\\
&
\geq \eps^{i-1} \P(\sB_{\lambda,m,i} \cap \sD_{\eps,i}) \geq \eps^{i-1} \P(\sB_{\lambda,m,i})-\eps^{i-1}\P(\sD_{\eps,i}^c) 
\geq \eps^{i-1} \P(\sB_{\lambda,m,i})-\eps^{i-1}(i-1)g(\eps)
\end{align*}
for every $\lambda,\eps>0$ and $m \geq i \geq 1$, where the final two inequalities follow by union bounds. Meanwhile, \cref{cor:two_ghost_S} and the assumption that $\sup_e J_e \leq 1$ imply that
\[
\P(\sC_{\lambda,i} \cap \sD_{\eps,i}) \leq 42 \sqrt{\frac{1-\eps}{\eps}} \frac{1}{\sqrt{\lambda}}
\]
for every $\lambda>0$ and $i\geq 1$.
Rearranging, we deduce that
\begin{multline*}
\P(\sA_{\lambda,m}) \leq \sum_{i=1}^m \left[\eps^{-i+1}\P(\sC_{\lambda,i} \cap \sD_{\eps,i}) + (i-1)g(\eps)\right]
\leq
\sum_{i=1}^m \left[42\eps^{-i+1} \sqrt{\frac{1-\eps}{\eps}}\frac{1}{\sqrt{\lambda}} + (i-1)g(\eps)\right]
\\
\leq 42 \eps^{-m} \sqrt{\frac{\eps}{1-\eps}}\frac{1}{\sqrt{\lambda}} + \binom{m}{2} g(\eps)
\end{multline*}
for every $\lambda,\eps>0$ and $m\geq 1$. On the other hand, using \cref{prop:Schramm} and the definition of the spectral radius as in \eqref{eq:using_rho} yields that
\[
\P(\sA_{\lambda,m}) \geq \P(\lambda \leq |E(K_o)|_J<\infty)^2 - \rho(\mu)^m-\rho(G)^m
\]
and hence that
\begin{equation}
\P_\mu(\lambda \leq |E(K_o)|_J<\infty)^2  \leq \rho(\mu)^m+\rho(G)^m+42 \eps^{-m} \sqrt{\frac{\eps}{1-\eps}}\frac{1}{\sqrt{\lambda}} + \binom{m}{2} g(\eps)
\end{equation}
for every $\lambda,\eps>0$ and $m\geq 1$. 

The claim now follows by appropriate choice of $\eps>0$ and $m\geq 1$: For each $\eps>0$ let $m(\eps)$ be maximal such that $\binom{m}{2} \leq g(\eps)^{-1/2}$ and for each $\lambda>0$ let $\eps(\lambda)>0$ be minimal such that $42 \eps^{-m(\eps)} \sqrt{\frac{\eps}{1-\eps}} \leq \lambda^{1/4}$. Then we have that  $\lim_{\eps \downarrow 0} m(\eps) = \infty$ and $\lim_{\lambda \uparrow \infty} \eps(\lambda)=0$. Thus, if we define $f:(0,\infty)\to (0,\infty)$ by
\[
f(\lambda)^2 = \sup_{\nu \in \cM} \rho(\nu)^{m(\eps(\lambda))} + \rho(G)^{m(\eps(\lambda))} + \lambda^{-1/4} + \sqrt{g(\eps(\lambda))}
\]
for every $\lambda>0$ then $f$ is decreasing, $\lim_{\lambda\uparrow \infty}f(\lambda)=0$, and 
\[
\P_\mu(\lambda \leq |E(K_o)|_J<\infty)  \leq f(\lambda)
\]
for every $\lambda>0$. The first claim follows since $\mu\in \cM$ was arbitrary.  
It follows in particular that
\[
\P_\mu( |E(K_o)|_J\geq \lambda)  \leq f(\lambda)
\]
for every $\mu\in \cM_\infty$ and $\lambda>0$. The portmanteau theorem implies that the same estimate holds for every $\mu \in \overline{\cM_{\infty}}$ and $\lambda>0$, completing the proof.
\end{proof}

\section{Analysis of the Ising model}

In this section we apply the technology developed in \cref{sec:free_energy} to prove our main theorems, \cref{thm:main,thm:main_simple,thm:main_continuity,thm:main_continuity_FK}. 


It will be notationally convenient throughout this section for us to consider both the Ising measures $\bI_{\beta,h}^\#$ and the \textbf{gradient Ising measures} $\bG_{\beta,h}^\#$, defined as follows. 
Let $G=(V,E,J)$ be an infinite, connected weighted graph.
For each $\sigma \in \{-1,1\}^V$ and $h\geq 0$, we define the \textbf{gradient} $\nabla_h \sigma \in \R^{E \cup V}$ by
$\nabla_h \sigma (e) = J_e \sigma_e = J_e \sigma_x \sigma_y$
for each $e\in E$ with endpoints $x$ and $y$ and 
$\nabla_h \sigma (v) = h \sigma_v$ 
for each $v\in V$. For each $\beta,h \geq 0$ we define $\bG_{\beta,h}^f$ and $\bG_{\beta,h}^w$ to be the push-forwards of $\bI_{\beta,h}^f$ and $\bI_{\beta,h}^+$ through the gradient $\nabla_h$. That is, if $\sigma$ is a random variable with law $\bI_{\beta,h}^+$ then the random variable $\nabla_h \sigma$ has law $\bG_{\beta,h}^w$, with a similar statement holding in the free case. The gradient Ising measure $\bG_{\beta,h}$ on a finite weighted graph is defined similarly.
When $h>0$ we can trivially recover $\sigma$ from $\nabla_h \sigma$, so that the two measures are just different ways of thinking about the same object. On the other hand, when $h=0$, $\nabla_0 \sigma$ only retains the even information about the configuration $\sigma$, so that the difference is more genuine, and the two measures can have rather different properties. For example, it is possible for $\bG^f_{\beta,0}$ to be a factor of i.i.d.\ in situations when $\bI^f_{\beta,0}$ is not even ergodic \cite{ray2019finitary}.  

\subsection{Double random currents and the loop $O(1)$ model}
\label{subsec:random_currents}

We now introduce the (double) random current and loop $O(1)$ models, referring the reader to \cite{MR3890455} for further background. The random current model was introduced by Griffiths, Hurst, and Sherman \cite{griffiths1970concavity} and developed extensively by Aizenman \cite{MR678000}. It has been of central importance to most modern work on the Ising model, with notable recent applications including \cite{MR4026609,MR4089494,aizenman2019marginal,MR4072233,duminil2015new}.


Let $G=(V,E,J)$ be a weighted graph with $V$ finite. A \textbf{current} $\mathbf{n}=(\mathbf{n}_e)_{e\in E}=(\mathbf{n}(e))_{e\in E}$ on $G$ is an assignment of non-negative integers to the edges of $G$. We write $\Omega_G$ for the set of currents on $G$. A vertex $v$ of $G$ is said to be a \textbf{source} of the current $\mathbf{n}$ if $\sum_{e\in E^\rightarrow_v} \mathbf{n}_e$ is odd, and the set of sources of $\mathbf{n}$ is denoted $\partial \mathbf{n}$. For each current $\mathbf{n}$ and $\beta >0$, we define
\[
w_\beta(\mathbf{n}) = \prod_{e\in E} \frac{(\beta J_e)^{\mathbf{n}_e}}{n_e!}.
\]
Many quantities of interest for the Ising model can be expressed in terms of sums over currents. For example, if $G=(V,E,J)$ is a weighted graph with $V$ finite and $x$ and $y$ are vertices of $G$ then
\begin{equation}
\label{eq:firstrandomcurrent}
\langle \sigma_x \sigma_y \rangle_{G,\beta,0} = \frac{\sum_{\mathbf{n}\in \Omega_G:\partial \mathbf{n}=\{x,y\}} w_\beta(\mathbf{n})}{\sum_{\mathbf{n}\in \Omega_G:\partial \mathbf{n}=\emptyset} w_\beta(\mathbf{n})} 
\end{equation}
for every $\beta\geq 0$. This formula becomes much more useful when combined with the following fundamental lemma of Griffiths, Hurst, and Sherman \cite{griffiths1970concavity}, known as the \emph{switching lemma}. We write `$x \xleftrightarrow{\bn_1+\bn_2} y$ in $H$' to mean that there exists a path connecting $x$ to $y$ in $H$ all of whose edges $e$ have $\bn_1(e)+\bn_2(e)\geq 1$.  The following statement of the switching lemma is adapted from \cite[Lemma 2.2]{MR3306602}\footnote{NB: The names of $G$ and $H$ are switched in this reference.}.

\begin{lemma}[Switching Lemma]
\label{lem:switching}
 Let $G$ be a weighted finite graph, let $H$ be a subgraph of $G$, let $x$ and $y$ be vertices of $H$, and 
let $A$ be a set of vertices of $G$. Then
\begin{multline*}
\sum_{\substack{\bn_1 \in \Omega_H : \partial \bn_1 =\{x,y\}\\\bn_2 \in \Omega_G : \partial\bn_2 =A}} F(\bn_1+\bn_2) w_\beta(\bn_1)w_\beta(\bn_2)
\\=
\sum_{\substack{\bn_1 \in \Omega_H : \partial \bn_1 =\emptyset\\\bn_2 \in \Omega_G : \partial\bn_2 =A\Delta\{x,y\}}} F(\bn_1+\bn_2) w_\beta(\bn_1)w_\beta(\bn_2) \mathbbm{1}\left(x \xleftrightarrow{\bn_1+\bn_2}y \text{ \emph{in} $H$}\right)
\end{multline*}
for every $F:\Omega_G\to [0,\infty]$ and $\beta\geq 0$, where $ A \Delta B = A \cup B \setminus A \cap B$ denotes the symmetric difference of two sets.
\end{lemma}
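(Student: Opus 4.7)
The plan is to reduce the identity to a purely combinatorial statement about subsets of a finite multigraph. First, I would regroup both sides according to the value of $\bm := \bn_1 + \bn_2 \in \Omega_G$; since $\partial(\bn_1+\bn_2) = \partial\bn_1 \,\Delta\, \partial\bn_2$ for currents (each vertex-parity adds mod~$2$), the constraints imposed on $\bn_1$ and $\bn_2$ force $\partial \bm = A \,\Delta\, \{x,y\}$ on both sides. Using the elementary identity
\[
w_\beta(\bn_1)\, w_\beta(\bn_2) \;=\; w_\beta(\bm) \prod_{e\in E} \binom{\bm(e)}{\bn_1(e)},
\]
valid whenever $\bn_1+\bn_2 = \bm$, each side rewrites as a sum over $\bm$ with $\partial\bm = A\,\Delta\,\{x,y\}$ of $w_\beta(\bm)\,F(\bm)$ multiplied by a combinatorial weight $N^{\#}(\bm)$ counting multinomially-weighted decompositions $\bm = \bn_1 + \bn_2$ with $\bn_1 \in \Omega_H$ and with the appropriate source condition on $\bn_1$. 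It therefore suffices to identify $N^{\text{LHS}}(\bm)$ with $N^{\text{RHS}}(\bm) \cdot \mathbbm{1}(x \xleftrightarrow{\bm} y \text{ in } H)$ for each fixed $\bm$.

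For fixed $\bm$, I would introduce the finite multigraph $M_H$ whose edge set consists of $\bm(e)$ distinguishable copies of each edge $e \in E(H)$; edges of $G$ outside $H$ play no role, since $\bn_1 \in \Omega_H$ forces $\bn_2 = \bm$ on those edges. A decomposition $\bn_1 + \bn_2 = \bm$ with $\bn_1 \in \Omega_H$ is then encoded by a subset $S \subseteq E(M_H)$ (specifying which copies of each edge belong to $\bn_1$), and the multinomial coefficients $\prod_e \binom{\bm(e)}{\bn_1(e)}$ count exactly these encodings. Under this correspondence, $N^{\text{LHS}}(\bm)$ is the number of $S \subseteq E(M_H)$ with odd-degree vertex set $\partial S = \{x,y\}$, while $N^{\text{RHS}}(\bm)$ is the number with $\partial S = \emptyset$.

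The identity then reduces to the following combinatorial lemma: for a finite multigraph $M$ and distinct vertices $x,y$, the number of edge subsets $S \subseteq E(M)$ with $\partial S = \{x,y\}$ equals the number with $\partial S = \emptyset$ if $x$ and $y$ lie in the same connected component of $M$, and vanishes otherwise. The vanishing case follows from the parity constraint that each connected component of the subgraph $(V(M),S)$ contains an even number of odd-degree vertices. In the nontrivial case, fix any simple path $\gamma$ in $M$ from $x$ to $y$ (a subset of $E(M)$) and observe that $S \mapsto S \,\Delta\, \gamma$ is an involutive bijection between the two collections, where $\Delta$ denotes symmetric difference of edge subsets. Taking $M = M_H$ produces precisely the factor $\mathbbm{1}(x \xleftrightarrow{\bm} y \text{ in } H)$, completing the identification of $N^{\text{LHS}}(\bm)$ with $N^{\text{RHS}}(\bm) \cdot \mathbbm{1}(x \xleftrightarrow{\bm} y \text{ in } H)$ and hence the proof.

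The argument is a standard parity-and-path-swap and I do not foresee a serious obstacle. The main care required is bookkeeping: verifying that the reorganisation by $\bm$ is legitimate (which boils down to $\partial(\bn_1+\bn_2) = \partial\bn_1\,\Delta\,\partial\bn_2$ and the identity relating $w_\beta(\bn_1)w_\beta(\bn_2)$ to $w_\beta(\bm)$ times a multinomial), and keeping track that the distinguishable edge-copy interpretation correctly matches these multinomial coefficients with plain subset counts on $M_H$. A small additional point is that the path $\gamma$ used in the bijection must be chosen as a set of distinguishable edge-copies in $M_H$ rather than merely as a path in $H$; this is harmless because the existence of a path in $H$ through $\operatorname{supp}(\bm)$ is exactly what the indicator $\mathbbm{1}(x \xleftrightarrow{\bm} y \text{ in } H)$ records.
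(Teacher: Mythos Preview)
The paper does not give its own proof of the Switching Lemma: it is stated as a classical result of Griffiths, Hurst, and Sherman, with the formulation attributed to \cite[Lemma 2.2]{MR3306602}, and used without proof. Your argument is the standard one from the literature --- regroup by the total current $\bm=\bn_1+\bn_2$, use $w_\beta(\bn_1)w_\beta(\bn_2)=w_\beta(\bm)\prod_e\binom{\bm(e)}{\bn_1(e)}$ to pass to a subset-counting problem on the multigraph $M_H$, and finish with the path-symmetric-difference involution --- and it is correct as written. The only cosmetic point worth adding is the degenerate case $x=y$, where both sides are trivially equal (the convention being $\{x,x\}=\emptyset$), but this does not affect the substance of your proof.
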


It follows in particular that if $G$ is a finite graph and $H$ is a subgraph of $G$ then
\begin{multline}
\langle \sigma_x \sigma_y \rangle_{H,\beta,0}\langle \sigma_x \sigma_y \rangle_{G,\beta,0}=\frac{\sum_{\mathbf{n_1}\in \Omega_H:\partial \mathbf{n_1}=\{x,y\}} w_\beta(\mathbf{n_1})}{\sum_{\mathbf{n_1}\in \Omega_H:\partial \mathbf{n_1}=\emptyset} w_\beta(\mathbf{n_1})}
\frac{\sum_{\mathbf{n_2}\in \Omega_G:\partial \mathbf{n_2}=\{x,y\}} w_\beta(\mathbf{n_2})}{\sum_{\mathbf{n_2}\in \Omega_G:\partial \mathbf{n_2}=\emptyset} w_\beta(\mathbf{n_2})}
\\
=\frac{\sum_{\mathbf{n_1}\in \Omega_H:\partial \mathbf{n_1}=\emptyset}\sum_{\mathbf{n_2}\in \Omega_G:\partial \mathbf{n_2}=\emptyset} w_\beta(\mathbf{n_1})w_\beta(\mathbf{n_2})\mathbbm{1}(x \xleftrightarrow{\mathbf{n_1}+\bn_2}y \text{ in $H$})}
{\sum_{\mathbf{n}\in \Omega_H:\partial \mathbf{n_1}=\emptyset} w_\beta(\mathbf{n_1})\sum_{\mathbf{n_2}\in \Omega_G:\partial \mathbf{n_2}=\emptyset} w_\beta(\mathbf{n_2})}
\label{eq:correlations_to_DRC0}
\end{multline}
for every two vertices $x$ and $y$ of $H$ and every $\beta\geq 0$.
This formula motivates the definitions of the \emph{random current} and \emph{double random current} models. Given a finite graph $G$ and $\beta \geq 0$, we define the \textbf{random current measure} $\mathbf{C}_{G,\beta}$ on $\Omega_G$ by setting
\[
\mathbf{C}_{G,\beta}(\{\mathbf{n}\}) = \frac{w_\beta(\mathbf{n})\mathbbm{1}(\partial \bn = \emptyset)}{\sum_{\bm \in \Omega_G}w_\beta(\mathbf{m})\mathbbm{1}(\partial \bm = \emptyset)}
\]
for each current $\bn \in \Omega_G$. The equality \eqref{eq:correlations_to_DRC0} can be rewritten succinctly in terms of this measure as follows: If $G$ is a finite graph and $H$ is a subgraph of $G$ then
\begin{align}
\langle \sigma_x \sigma_y \rangle_{H,\beta,0}\langle \sigma_x \sigma_y \rangle_{G,\beta,0}&=\mathbf{C}_{H,\beta}\otimes\mathbf{C}_{G,\beta}
\bigl(\bigl\{
(\bn_1,\bn_2) : x \xleftrightarrow{\bn_1+\bn_2} y
\text{ in $H$}
\bigr\}\bigr)
\label{eq:correlations_to_DRC}
\end{align}
for every two vertices $x$ and $y$ of $H$ and every $\beta\geq 0$. This equality leads us naturally to consider the double random current model (i.e., the measure $\mathbf{C}_{H,\beta}\otimes\mathbf{C}_{G,\beta}$) as a percolation model on $H$, where an edge is open if it takes a positive value in at least one of the two currents.

\medskip




Correlations of the Ising model in non-zero external field may also be expressed in terms of the random current model as follows. Let $G=(V,E,J)$ be a weighted graph with $V$ finite, let $h>0$, and let $\overline{G}_h$ be the graph obtained from $G$ by the addition of a special vertex $\partial$ that is connected to every vertex of $G$ by a single edge of weight $h$, so that the edge set of $\overline{G}_h$ is naturally identified with $V \cup E$. We observe that the gradient Ising measure with zero external field on $\overline{G}_h$ coincides with the gradient Ising measure on $G$ with external field $h$, and define the random current measure 
$\bC_{G,\beta,h}=\bC_{\overline{G}_h,\beta,0}$ for each $\beta\geq 0$, which we consider as a probability measure on $\N_0^E \times \N_0^V$.
Since every subgraph $H$ of $G$ is also a subgraph of $\overline{G}_h$, we deduce from \eqref{eq:correlations_to_DRC} that
\begin{align}
\langle \sigma_x \sigma_y \rangle_{H,\beta,0}\langle \sigma_x \sigma_y \rangle_{G,\beta,h}&=\mathbf{C}_{H,\beta,0}\otimes\mathbf{C}_{G,\beta,h}
\bigl(\bigl\{
(\bn_1,\bn_2) : x \xleftrightarrow{\bn_1+\bn_2} y
\text{ in $H$}
\bigr\}\bigr).
\label{eq:correlations_to_DRC2}
\end{align}
for every $\beta,h \geq 0$ and every two vertices $x$ and $y$ of $H$. For consistency, we will from now on consider $\bC_{G,\beta,0}$ as a probability measure on $\N_0^E \times \N_0^V$ that is supported on configurations in which $\bn_v=0$ for every $v\in V$.

\medskip

\textbf{Relation to the loop $O(1)$ model.}
Let $G=(V,E,J)$ be a weighted graph with $V$ finite and let $\beta, h\geq 0$. To ease notation, we will write $J_v=h$ for each vertex $v$ of $G$.
It follows from the definitions that if $\bn=(\bn_x)_{x\in V \cup E}$ is a random variable with law $\mathbf{C}_{G,\beta,h}$ then the values of $\bn$ are conditionally independent given the the field of parities $(\mathbbm{1}(\bn_x \text{ odd}))_{x\in V \cup E}$. Indeed, if we let $(\mathsf{Odd}_x)_{x\in V \cup E}$ and $(\mathsf{Even}_x)_{x\in V \cup E}$ be non-negative integer-valued random variables, independent of $\bn$, such that all the random variables $(\mathsf{Odd}_x)_{x\in V \cup E}$ and $(\mathsf{Even}_x)_{x\in V \cup E}$ are mutually independent with laws
\begin{equation}
\label{eq:odd_even_distribution}
\P(\mathsf{Odd}_x = n) = \frac{(\beta J_x)^n}{n! \sinh(\beta J_x)}\mathbbm{1}(n \text{ odd}) \quad \text{ and } \quad \P(\mathsf{Even}_x = n) = \frac{(\beta J_x)^n}{n! \cosh(\beta J_x)}\mathbbm{1}(n \text{ even})
\end{equation}
then
\[
(\bn_x)_{x\in V \cup E} \quad \text{ has the same distribution as } \quad \Bigl(\mathsf{Odd}_x\mathbbm{1}(\bn_x \text{ odd})+\mathsf{Even}_x\mathbbm{1}(\bn_x \text{ even})\Bigr)_{x\in V \cup E}.
\]
This leads to a special role for the sign field $(\mathbbm{1}(\bn_x \text{ odd}))_{x\in V\cup E}$. The law of this random variable under $\mathbf{C}_{G,\beta,h}$ is denoted by $\mathbf{L}_{G,\beta,h}$ and is known as the \textbf{loop $O(1)$ measure} on the graph $G$. 
The loop $O(1)$ measure on the weighted graph $G=(V,E,J)$ with $V$ finite can also be defined explicitly as 
the unique purely atomic probability measure on $\{0,1\}^{E \cup V}$ satisfying
\[
\mathbf{L}_{G,\beta,h}(\{\omega\}) \propto \mathbbm{1}(\partial \omega = \emptyset)\prod_{v\in V} \tanh(\beta h)^{\mathbbm{1}(\omega(v)=1)} \prod_{e\in E} \tanh(\beta J_e)^{\mathbbm{1}(\omega(e)=1)} 
\] 
for each $\omega \in \{0,1\}^{V \cup E}$,
  where $\partial \omega = \{v\in V: \omega(v)+\sum_{e\in E^\rightarrow_v} \omega(e)$ is odd$\}$.


A further very useful expression for the distribution of the loop $O(1)$ model in terms of the (gradient) Ising model, proven in \cite[Equations 2.10-2.12]{MR3306602}, states that if $G=(V,E,J)$ is a weighted graph with $V$ finite and $\beta,h \geq 0$ then
\begin{align}
\nonumber
\mathbf{L}_{G,\beta,h} (\{ \omega : \omega(x) = 0 \text{ for every } x\in A\}) &=  \Bigl\langle \exp\Bigl(-\beta \sum_{x\in A} J_x \sigma_{x}\Bigr) \Bigr\rangle_{G,\beta,h} \prod_{x\in A} \cosh(\beta J_x) \\
 &=C_{\beta,h}(A) \mathbf{G}_{G,\beta,h}\left[e^{\beta H_A}\right]
\label{eq:LoopO(1)Ising}
\end{align}
for every finite set $A \subseteq E \cup V$, where we set $H_A=H_A(\sigma)=-\sum_{x\in A} J_x \sigma_{x}$ and $C_{\beta,h}(A) = \prod_{x\in A} \cosh(\beta J_x)$ and  recall that we write $\sigma_e=\sigma_x \sigma_y$ for an edge $e$ with endpoints $x$ and $y$. 
Note that the equation \eqref{eq:LoopO(1)Ising} completely characterizes the measure $\mathbf{L}_{G,\beta,h}$ since, by inclusion-exclusion, the family of indicator functions $\{\mathbbm{1}(\omega(x) = 0 \text{ for every } x\in A) : A \subseteq E \cup V\}$ has linear span equal to the space of all functions from $\{0,1\}^{E \cup V}$ to $\R$ depending on at most finitely many edges and vertices.

\medskip

As observed in \cite{MR3306602}, the equation \eqref{eq:LoopO(1)Ising} allows us to deduce various statements about infinite volume limits of the loop $O(1)$ and random current models from the corresponding statements concerning the (gradient) Ising model. Indeed, let $G=(V,E,J)$ be an infinite connected weighted graph, let $(V_n)_{n\geq 0}$ be an exhaustion of $G$, and let $(G_n)_{n\geq 0}$ and $(G_n^*)_{n\geq 0}$ be defined as in \cref{subsec:intro_definitions}. It follows from \eqref{eq:LoopO(1)Ising} together with the corresponding statements for the Ising model that the weak limits
\begin{align*}
\mathbf{L}_{G,\beta,h}^f &= \wlim_{n\to\infty} \mathbf{L}_{G_n,\beta,h},  &\mathbf{L}_{G,\beta,h}^w &= \wlim_{n\to\infty} \mathbf{L}_{G_n^*,\beta,h},\\
\mathbf{C}_{G,\beta,h}^f &= \wlim_{n\to\infty} \mathbf{C}_{G_n,\beta,h}, \text{ and } &\mathbf{C}_{G,\beta,h}^w &= \wlim_{n\to\infty} \mathbf{C}_{G_n^*,\beta,h}
\end{align*}
all exist and do not depend on the choice of exhaustion. 
 Moreover, these infinite volume loop $O(1)$ measures are related to the infinite volume Ising measures by the relations
\begin{align}
\mathbf{L}_{G,\beta,h}^\# (\{ \omega : \omega(x) = 0 \text{ for every } x\in A\}) =C_{\beta,h}(A)\mathbf{G}_{\beta,h}^\#\left[e^{\beta H_A}\right]
\label{eq:LoopO(1)Ising_infinite},
\end{align}
which holds for every finite set $A \subseteq V \cup E$, $\beta,h\geq 0$, and $\# \in \{f,w\}$. As in the finite case, the equation \eqref{eq:LoopO(1)Ising_infinite} completely determines the measures $\mathbf{L}_{G,\beta}^\#$ since, by inclusion-exclusion, the family of indicator functions $\{\mathbbm{1}(\omega(x) = 0 \text{ for every } x\in A) : A \subseteq V \cup E \text{ is finite}\}$ has linear span equal to the set of all functions depending on at most finitely many edges. 

\medskip

It follows by taking limits over exhaustions that, as in the finite case, we may obtain a random variable with law $\bC_{\beta,h}^\#$ as follows: Fix $\beta >0, h\geq 0$, and $\#\in \{f,w\}$. Let $L=(L_x)_{x\in V \cup E}$ be a random variable with law $\bL_{\beta,h}^\#$. Independently of $L$, let $\mathsf{Odd}=(\mathsf{Odd}_x)_{x\in V \cup E}$ and $\mathsf{Even}=(\mathsf{Even}_x)_{x\in V \cup E}$ be independent random variables with distributions given as in \eqref{eq:odd_even_distribution}. Then the random variable
\[
\bn=(\bn_x)_{x\in V \cup E}= \Bigl(\mathsf{Odd}_x\mathbbm{1}(L_x=1)+\mathsf{Even}_x\mathbbm{1}(L_x=0)\Bigr)_{x\in V \cup E}
\] 
has law $\bC_{\beta,h}^\#$. It follows in particular that if $G$ is transitive then $\bC_{\beta,h}^\#$ may be expressed as an $\Aut(G)$-factor of $\bL_{\beta,h}^\# \otimes \mu$ where $\mu$ is an appropriately chosen Bernoulli measure on $(\{0,1,\ldots\}^2)^{V \cup E}$. Moreover, this representation allows us to consider the random subgraph of $G$ spanned by those edges with a non-zero current as a percolation in random environment model, where the environment $\bp$ is defined in terms of the loop $O(1)$ configuration $\omega$ by 
\[\bp_e = \mathbbm{1}(\omega(e)=1) + \frac{\cosh(\beta J_e)-1}{\cosh(\beta J_e)} \mathbbm{1}(\omega(e)=0). \]
This representation will allow us to apply the machinery of \cref{sec:free_energy} to the random current and double random current models once we have bounded their spectral radius in \cref{subsec:randomcurrentspectrum}. 

\medskip

\textbf{The Lupu--Werner coupling}. In addition to the indirect connection between the random current model and the FK-Ising model via the Ising model and the Edwards--Sokal coupling, there is also a direct probabilistic connection between these two models due to Lupu and Werner \cite{MR3485382}. They proved that for each $\beta,h\geq 0$ and $\#\in \{f,w\}$ it is possible to obtain a sample of $\phi_{2,\beta,h}^\#$ as the union of a sample of the random current model $\bC_{\beta,h}^\#$ with an independent Bernoulli  process in which $x \in E \cup V$ is included independently at random with inclusion probability $1-e^{-\beta J_x}$, where we set $J_x=h$ for every $x\in V$. 
Combining this relationship with the relationship between the random current and loop $O(1)$ models discussed above, it follows that for each $\beta,h\geq 0$ and $\#\in \{f,w\}$ it is possible to obtain a sample of $\phi_{2,\beta,h}^\#$ as the union of a sample of the loop $O(1)$ model $\bL_{\beta,h}^\#$ with an independent Bernoulli process 
with  inclusion probabilities
\[
 1 - \frac{1}{\cosh(\beta J_x)}+\frac{1-e^{-\beta J_x}}{\cosh(\beta J_x)}=\tanh(\beta J_x).
\]
Together with the formula \eqref{eq:LoopO(1)Ising_infinite}, this implies that
\begin{align}
\phi^\#_{2,\beta,h}(\omega(x)=0 \text{ for all $x\in A$}) &= \bL^\#_{\beta,h}\bigl(\omega(x)=0 \text{ for all $x\in A$}\bigr)\prod_{x \in A} \left( 1-\tanh(\beta J_x)\right)
\nonumber
\\
&=\bG^\#_{\beta,h} \left[e^{\beta H_A} \right]\prod_{x \in A} e^{-\beta J_x} = 
\bG^\#_{\beta,h} \left[e^{-\beta \sum_{x \in A} J_x(\sigma_x + 1) } \right]
\label{eq:FK_Gradient_Formula}
\end{align}
for every finite set $A \subseteq E \cup V$, where we write $J_v=h$ for each $v\in V$ and recall that $H_A =H_A(\sigma)= -\sum_{x\in A} J_x \sigma_x$ for each finite set $A \subseteq E \cup V$. This formula, which can also be proven using the percolation-in-random-environment representation of the random cluster model used in \cref{subsec:finite_FK},  establishes a relationship between the gradient Ising and FK-Ising models that has better continuity properties than the Edwards--Sokal coupling, and will be very useful throughout our analysis.

Since $\phi_{2,\beta,h}^\#$ is stochastically dominated by the product measure $\phi_{1,\beta,h}^\#$ \cite[Theorem 3.21]{GrimFKbook}, it follows from the Lupu--Werner coupling that the measures $\bL_{\beta,h}^\#$ and $\bC_{\beta,h}^\#$ are stochastically dominated by this product measure also.
We deduce in particular that
\begin{equation}
\label{eq:bounding_current_by_FK1}
\bL_{\beta,h}^\#(e \text{ open})\leq \bC_{\beta,h}^\#(\bn(e)>0) \leq \phi_{2,\beta,h}^\#(e \text{ open}) \leq \frac{e^{2\beta J_e}-1}{e^{2\beta J_e}} \leq 2\beta J_e
\end{equation}
for every $e\in E$, $\beta,h\geq 0$, and $\#\in \{f,w\}$, and similarly that
\begin{equation}
\label{eq:bounding_current_by_FK2}
\bC_{\beta,h}^\#(\bn(v)>0) \leq \phi_{2,\beta,h}^\#(\omega(v)=1) \leq \frac{e^{2\beta h}-1}{e^{2\beta h}} \leq 2\beta h
\end{equation}
$v\in V$, $\beta,h\geq 0$, and $\#\in \{f,w\}$.

\subsection{The spectral radius of the random current model}
\label{subsec:randomcurrentspectrum}

We now apply \eqref{eq:LoopO(1)Ising_infinite} to bound the spectral radii of the loop $O(1)$ and random current models on a transitive weighted graph. Although very little is known about whether or not the loop $O(1)$ and random current models are factors of i.i.d.\ (see \cite{harel2018finitary} for some discussion), 
%
 the connections between the (gradient) Ising model and these models are strong enough to carry through bounds on the spectral radius without needing to express anything as a factor.

\begin{thm}
\label{thm:spectralradius}
Let $G=(V,E,J)$ be a connected transitive weighted graph and let $\Gamma$ be a closed unimodular transitive group of automorphisms. Then 
\[
\rho(\mathbf{L}^\#_{\beta,h}) \leq \max\left\{\rho(\mathbf{G}^\#_{\beta,h}),\rho(G)\right\}
\]
for every $\beta,h \geq 0$ and $\# \in \{w,f\}$.
\end{thm}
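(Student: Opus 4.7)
The plan is to combine the identity \eqref{eq:LoopO(1)Ising_infinite} relating loop $O(1)$ cylinder probabilities to gradient Ising expectations with the limit formula \eqref{eq:rho_limit_covariance} for spectral radii. For each finite subset $A \subseteq V \cup E$, set
\[
F_A(\omega) = \mathbbm{1}\bigl(\omega(x) = 0 \text{ for every } x \in A\bigr) - c_A, \qquad c_A := \mathbf{L}^\#_{\beta,h}\bigl[\mathbbm{1}\bigl(\omega(x) = 0 \text{ for every } x \in A\bigr)\bigr],
\]
and let
\[
\tilde F_A(\sigma) = C_{\beta,h}(A)\, e^{\beta H_A(\sigma)} - c_A,
\]
which has $\mathbf{G}^\#_{\beta,h}$-mean zero by \eqref{eq:LoopO(1)Ising_infinite}. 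By inclusion--exclusion, the family $\{F_A\}$ has dense linear span in $L^2_0(\mathbf{L}^\#_{\beta,h})$, so in view of \eqref{eq:rho_limit_covariance} it suffices to prove that
\[
\lim_{k \to \infty}\, \Cov_{\mathbf{L}^\#_{\beta,h}}\bigl(F_A,\, \hat X_{2k}^{-1}F_A\bigr)^{1/(2k)} \leq \max\bigl\{\rho(\mathbf{G}^\#_{\beta,h}),\, \rho(G)\bigr\}
\]
for every finite set $A \subseteq V \cup E$.

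The key observation is that whenever $\gamma \in \Gamma$ is such that $A \cap \gamma A = \emptyset$, applying \eqref{eq:LoopO(1)Ising_infinite} to the disjoint union $A \cup \gamma A$ yields the identity
\[
\mathbf{L}^\#_{\beta,h}\bigl[F_A \cdot (\gamma^{-1}F_A)\bigr] = \mathbf{G}^\#_{\beta,h}\bigl[\tilde F_A \cdot (\gamma^{-1}\tilde F_A)\bigr],
\]
using the multiplicative identities $C_{\beta,h}(A \cup \gamma A) = C_{\beta,h}(A)^2$ (which holds by $\Gamma$-invariance of the weights) and $H_{A \cup \gamma A} = H_A + H_{\gamma A}$, together with $\Gamma$-invariance of both measures to cancel the cross terms involving $c_A$.

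To complete the proof I integrate against the law of $\hat X_{2k}$ and split according to the event $\{A \cap \hat X_{2k}A \neq \emptyset\}$. On its complement the $\omega$-average of $F_A \cdot (\hat X_{2k}^{-1}F_A)$ agrees with that of $\tilde F_A \cdot (\hat X_{2k}^{-1}\tilde F_A)$ by the observation above; extending this to include the non-disjoint event costs at most $\bigl(\|F_A\|_\infty^2 + \|\tilde F_A\|_\infty^2\bigr)\,\mathbb{P}(A \cap \hat X_{2k}A \neq \emptyset)$ in absolute value, and the latter probability is bounded by $|V(A)|^2 \rho(G)^{2k}$ via the matrix-coefficient estimate used in the proof of \cref{thm:Bernoulli_radius}. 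Combined with the spectral radius bound $\Cov_{\mathbf{G}^\#_{\beta,h}}(\tilde F_A,\, \hat X_{2k}^{-1}\tilde F_A) \leq \rho(\mathbf{G}^\#_{\beta,h})^{2k}\|\tilde F_A\|_{L^2(\mathbf{G}^\#_{\beta,h})}^2$, this yields
\[
\Cov_{\mathbf{L}^\#_{\beta,h}}\bigl(F_A,\, \hat X_{2k}^{-1}F_A\bigr) \leq \rho(\mathbf{G}^\#_{\beta,h})^{2k}\, \|\tilde F_A\|_{L^2(\mathbf{G}^\#_{\beta,h})}^2 + C_A\, \rho(G)^{2k}
\]
for a constant $C_A$ independent of $k$; the required inequality follows upon taking $(2k)$th roots and letting $k \to \infty$. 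The only real subtlety is the treatment of the non-disjoint event, where the clean identity between $F_A$ and $\tilde F_A$ breaks down; this is absorbed by nonamenability of $G$, which forces the return of $\hat X_{2k}$ to any fixed finite set of vertices to decay at rate $\rho(G)^{2k}$ and thereby contributes only a lower-order term.
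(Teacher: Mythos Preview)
Your argument is correct and follows essentially the same route as the paper: both proofs use \eqref{eq:LoopO(1)Ising_infinite} to identify loop $O(1)$ cylinder probabilities with gradient Ising moments, split according to whether $A$ and its $\hat X_{2k}$-translate overlap, bound the main (disjoint) contribution by $\rho(\mathbf{G}^\#_{\beta,h})^{2k}$ via the spectral radius of the gradient Ising measure, and absorb the overlap event into a $\rho(G)^{2k}$ term using the estimate from the proof of \cref{thm:Bernoulli_radius}. Your device of centering the indicator $F_A$ and the exponential $\tilde F_A$ in advance is a mild cosmetic improvement over the paper, which instead expands the product and tracks three terms rather than two.

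One small remark: your closing sentence attributes the control of the overlap event to ``nonamenability of $G$'', but the theorem does not assume nonamenability and the proof does not require it. The inequality $\mathbb{P}(A \cap \hat X_{2k}A \neq \emptyset) \leq C_A\,\rho(G)^{2k}$ holds for any connected transitive unimodular $G$; when $G$ is amenable this bound is trivially $\leq C_A$ and the conclusion $\rho(\mathbf{L}^\#_{\beta,h}) \leq \max\{\rho(\mathbf{G}^\#_{\beta,h}),1\} = 1$ is likewise trivial, but still correct. So the appeal to nonamenability is unnecessary and slightly misleading, though it does not affect the validity of the argument.
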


\begin{proof}[Proof of \cref{thm:spectralradius}]
Fix $\beta,h\geq 0$, and $\# \in \{f,w\}$. To ease notation, we write $\bL=\bL_{\beta,h}^\#$ and $\bG=\bG_{\beta,h}^\#$. Let $X=(X_n)_{n\geq 0}$ be the random walk on $G$ and let $\hat X = (\hat X_n)_{n\geq 0}$ be the associated random walk on $\Gamma$ as defined in \cref{subsec:spectral_background}. We write $\bE$ for expectations taken with respect to the law of $\hat X$ and write $\E$ for expectations taken with respect to the product measure $\bE \otimes \bG$.  Given $A \subseteq V \cup E$ and $\omega \in \{0,1\}^{V \cup E}$, we write $A \perp \omega$ to mean that $\omega(x)=0$ for every $x\in A$.
Inclusion-exclusion implies that events of the form $\{\omega : A \perp \omega \}$ with $A$ finite have dense linear span in $L^2(\{0,1\}^{V \cup E},\bL)$. Thus, by \eqref{eq:rho_limit_covariance}, it suffices to prove that
\begin{equation}
\limsup_{k\to\infty} \left|\bE\left[\bL(\{\omega : A \perp \omega \text{ and } \hat X_{2k}^{-1} A \perp \omega\}) \right] -\bL(\{\omega : A  \perp \omega\})^2 \right|^{1/2k} \\\leq \max\{\rho(\bG),\rho(G)\}
\label{eq:spectralradiusclaim}
\end{equation}
for every finite set $A \subseteq V \cup E$.
Fix one such finite set $A \subseteq E$ and write $A_k=\hat X_{k}^{-1} A$ for every $k\geq 0$. 
Then we have by \eqref{eq:LoopO(1)Ising} that
\begin{align}
\bE\left[\bL(\{\omega : A \perp \omega\})\right]
 &=  \E\left[C_{\beta,h}(A ) \exp\left(\beta H_{A}\right)\right] &&\text{ and}
 \label{eq:spectral-2}
\\
\bE\left[\bL(\{\omega : A \perp \omega \text{ and } \hat X_{2k}^{-1} A \perp \omega\})\right]
 &=  \E\left[C_{\beta,h}(A \cup A_{2k}) \exp\left(\beta H_{A\cup A_{2k}}\right)\right]&&
 \label{eq:spectral-1}
 \end{align}
 for every $k\geq 0$ 
 and hence that
 \begin{align*}
 \bE\left[\bL(\{\omega : A \perp \omega \text{ and } \hat X_{2k}^{-1} A \perp \omega\})\right]
=\E\left[ C_{\beta,h}(A) \exp\left(\beta H_{A}\right) C_{\beta,h}(A_{2k})  \exp\left(\beta H_{A_{2k}}\right)\right]\phantom{.}
\nonumber
\\
+ \E\left[ C_{\beta,h}(A \cup A_{2k}) \exp\left(\beta H_{A\cup A_{2k}}\right)  \mathbbm{1}(A \cap A_{2k} \neq \emptyset)\right]\phantom{.}
\nonumber
\\
-\E\left[ C_{\beta,h}(A) \exp\left(\beta H_{A}\right) C_{\beta,h}(A_{2k})  \exp\left(\beta H_{A_{2k}}\right)\mathbbm{1}(A \cap A_{2k} \neq \emptyset)\right].
\end{align*}
(Note that $C_{\beta,h}(A)=C_{\beta,h}(A_{2k})$ is a constant.)
Since the random variables $ e^{\beta H_{A\cup A_{2k}}} C_\beta(A \cup A_{2k})$ and $ e^{\beta H_{A}}  C_\beta(A) e^{\beta H_{A_{2k}}} C_\beta(A_{2k})$ are both bounded between two positive constants (depending on $A$, $\beta$, and $h$ but not $k$), the second and third terms satisfy
\begin{multline}
\limsup_{k\to\infty} \E\left[ C_{\beta,h}(A \cup A_{2k}) \exp\left(\beta H_{A\cup A_{2k}}\right)  \mathbbm{1}(A \cap A_{2k} \neq \emptyset)\right]^{1/2k} \\ = \limsup_{k\to\infty} \P\left(\hat X_{2k}^{-1} A \cap A \neq \emptyset\right)^{1/2k}\leq \|\hat P\|= \rho(G)
\label{eq:spectral1}
\end{multline}
and
\begin{multline}
\label{eq:spectral2}
\limsup_{k\to\infty}\E\left[ C_{\beta,h}(A) \exp\left(\beta H_{A}\right) C_{\beta,h}(A_{2k})  \exp\left(\beta H_{A_{2k}}\right)\mathbbm{1}(A \cap A_{2k} \neq \emptyset)\right]^{1/2k} \\ = \limsup_{k\to\infty} \P\left(\hat X_{2k}^{-1} A \cap A \neq \emptyset\right)^{1/2k}\leq \|\hat P\|=\rho(G).
\end{multline}
Meanwhile, we also have by definition of the spectral radius that
\begin{multline*}
\Bigl|\E\left[ C_{\beta,h}(A) \exp\left(\beta H_{A}\right) C_{\beta,h}(A_{2k})  \exp\left(\beta H_{A_{2k}}\right)\right]
\\-
\E\left[ C_{\beta,h}(A) \exp\left(\beta H_{A}\right)\right]^2\Bigr| \leq \rho(\bG)^{2k} \Var\left(C_{\beta,h}(A)e^{\beta H_A}\right)
\end{multline*}
for each $k\geq 0$ and hence that
\begin{multline*}
\limsup_{k\to\infty}\Bigl|\E\left[ C_{\beta,h}(A) \exp\left(\beta H_{A}\right) C_{\beta,h}(A_{2k})  \exp\left(\beta H_{A_{2k}}\right)\right]
-
\E\left[ C_{\beta,h}(A) \exp\left(\beta H_{A}\right)\right]^2\Bigr|^{1/2k} \leq \rho(\bG).
\end{multline*}
Applying this estimate together with those of \eqref{eq:spectral1} and \eqref{eq:spectral2}
 in light of \eqref{eq:spectral-2} and \eqref{eq:spectral-1}  yields the claimed inequality \eqref{eq:spectralradiusclaim}.
\end{proof}

Since $\bC_{\beta,h}^\#$ may be expressed as an $\Aut(G)$-factor of $\bL_{\beta,h}^\# \otimes \mu$ where $\mu$ is an appropriately chosen Bernoulli measure on $(\{0,1,\ldots\}^2)^{V \cup E}$, the following corollary follows immediately from \cref{thm:Ising_factor,thm:spectralradius,thm:Bernoulli_radius}.

\begin{corollary}
\label{cor:current_radius}
Let $G=(V,E,J)$ be a connected transitive weighted graph and let $\Gamma$ be a closed unimodular transitive group of automorphisms. Then 
\[
\rho(\mathbf{C}^w_{\beta,h})\leq \rho(G) \qquad \text{ and } \qquad \rho(\mathbf{L}^w_{\beta,h})\leq \rho(G)
\]
for every $\beta,h\geq 0$.
\end{corollary}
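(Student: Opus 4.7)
The plan is to assemble the bound from the three results cited at the end of the excerpt: \cref{thm:Ising_factor} (the plus Ising measure is a $\Gamma$-factor of i.i.d.), \cref{thm:spectralradius} (the loop $O(1)$ measure's spectral radius is controlled by the gradient Ising measure's spectral radius and $\rho(G)$), and \cref{thm:Bernoulli_radius} (non-trivial Bernoulli measures have spectral radius exactly $\rho(G)$). The two inequalities are proved in turn, first $\rho(\mathbf{L}^w_{\beta,h})\le \rho(G)$ and then $\rho(\mathbf{C}^w_{\beta,h})\le \rho(G)$.

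First I would bound $\rho(\mathbf{L}^w_{\beta,h})$. By \cref{thm:Ising_factor}, the plus Ising measure $\mathbf{I}^+_{\beta,h}$ is a $\Gamma$-factor of i.i.d., so $\rho(\mathbf{I}^+_{\beta,h})\le \rho(G)$. The gradient measure $\mathbf{G}^w_{\beta,h}$ is by definition the push-forward of $\mathbf{I}^+_{\beta,h}$ under $\sigma\mapsto \nabla_h\sigma$, which is a $\Gamma$-equivariant measurable map, so $\mathbf{G}^w_{\beta,h}$ is a $\Gamma$-factor of $\mathbf{I}^+_{\beta,h}$. By the monotonicity of the spectral radius under factors (equation \eqref{eq:factor_rho}), this yields $\rho(\mathbf{G}^w_{\beta,h})\le \rho(\mathbf{I}^+_{\beta,h})\le \rho(G)$. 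Plugging this into the bound $\rho(\mathbf{L}^w_{\beta,h})\le \max\{\rho(\mathbf{G}^w_{\beta,h}),\rho(G)\}$ provided by \cref{thm:spectralradius} gives $\rho(\mathbf{L}^w_{\beta,h})\le \rho(G)$.

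For the random current bound, recall the representation described in \cref{subsec:random_currents}: a sample of $\mathbf{C}^w_{\beta,h}$ is obtained from a sample of $\mathbf{L}^w_{\beta,h}$ together with an independent family of conditionally independent $\mathsf{Odd}_x$ and $\mathsf{Even}_x$ variables, as in \eqref{eq:odd_even_distribution}. Packaging the pair $(\mathsf{Odd}_x,\mathsf{Even}_x)_{x\in V\cup E}$ as a Bernoulli process $\mu$ on $(\{0,1,\ldots\}^2)^{V\cup E}$ (whose marginals are determined by $J_x$ and are thus $\Gamma$-invariant), the construction exhibits $\mathbf{C}^w_{\beta,h}$ as a $\Gamma$-factor of $\mathbf{L}^w_{\beta,h}\otimes \mu$. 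By \eqref{eq:factor_rho} it remains to show $\rho(\mathbf{L}^w_{\beta,h}\otimes\mu)\le \rho(G)$.

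The only mildly non-routine step is handling the spectral radius of the product measure. I would use the standard orthogonal decomposition
\[
L^2_0(\mathbf{L}^w_{\beta,h}\otimes\mu) \;=\; L^2_0(\mathbf{L}^w_{\beta,h})\,\oplus\, L^2_0(\mu)\,\oplus\, \bigl(L^2_0(\mathbf{L}^w_{\beta,h})\otimes L^2_0(\mu)\bigr),
\]
which is invariant under the Markov operator $\hat P_{\mathbf{L}^w_{\beta,h}\otimes\mu}$; on each summand the operator acts as the corresponding tensor product of $\hat P_{\mathbf{L}^w_{\beta,h}}$ and $\hat P_\mu$, so the operator norm on $L^2_0$ of the product is $\max\{\rho(\mathbf{L}^w_{\beta,h}),\rho(\mu),\rho(\mathbf{L}^w_{\beta,h})\rho(\mu)\}=\max\{\rho(\mathbf{L}^w_{\beta,h}),\rho(\mu)\}$. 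Since $\rho(\mathbf{L}^w_{\beta,h})\le \rho(G)$ by the previous step and $\rho(\mu)\le \rho(G)$ by \cref{thm:Bernoulli_radius}, this gives $\rho(\mathbf{C}^w_{\beta,h})\le \rho(G)$, completing the proof. The main (and really only) obstacle is verifying this product-measure identity cleanly, but it is a routine piece of operator theory once one has the decomposition above.
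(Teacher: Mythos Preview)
Your overall strategy matches the paper's intended argument exactly, and the first part (bounding $\rho(\mathbf{L}^w_{\beta,h})$ via \cref{thm:Ising_factor}, the factor monotonicity \eqref{eq:factor_rho}, and \cref{thm:spectralradius}) is correct.

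There is, however, a genuine error in your product-measure step. The Markov operator $\hat P_{\nu\otimes\mu}$ is defined via the \emph{diagonal} action $f\mapsto \bE[\hat X_1 f]$ where $\hat X_1$ acts simultaneously on both coordinates. On the summand $L^2_0(\nu)\otimes L^2_0(\mu)$ this gives
\[
\hat P_{\nu\otimes\mu}(f\otimes g)=\bE\bigl[(\hat X_1 f)\otimes(\hat X_1 g)\bigr],
\]
which is \emph{not} equal to $(\hat P_\nu f)\otimes(\hat P_\mu g)=\bE[\hat X_1 f]\otimes\bE[\hat X_1' g]$ with an independent $\hat X_1'$. So the claim that the operator ``acts as the corresponding tensor product'' on that summand is false, and the norm there need not equal $\rho(\nu)\rho(\mu)$; in general the diagonal tensor square of a representation can have strictly larger spectral radius than the original.

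The conclusion you want is nevertheless true here because $\mu$ is Bernoulli, and the fix is to argue as in the proof of \cref{thm:Bernoulli_radius} (or \cref{thm:spectralradius}) rather than via abstract operator theory. Using \eqref{eq:rho_limit_covariance} with the dense set of functions $F=F_1(\omega_1)F_2(\omega_2)$ depending on finitely many coordinates, write
\[
\Cov\bigl(F,\hat X_{2k}F\bigr)
=(\bE F_2)^2\,\Cov\bigl(F_1(\omega_1),F_1(\hat X_{2k}^{-1}\omega_1)\bigr)+O\bigl(\P(\hat X_{2k}^{-1}A\cap A\neq\emptyset)\bigr),
\]
where $A$ is the finite support of $F_2$; the first equality uses that $\omega_2|_A$ and $\omega_2|_{\hat X_{2k}A}$ are independent under the Bernoulli measure when the supports are disjoint. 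Taking $2k$th roots and $\limsup$ then gives $\rho(\nu\otimes\mu)\le\max\{\rho(\nu),\rho(G)\}$, which is what you need. This is presumably why the paper cites \cref{thm:Bernoulli_radius} in the first place.
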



\begin{remark}
In \cref{subsec:free_spectral_radius}, we use \eqref{eq:FK_Gradient_Formula} to show that the conclusions of \cref{cor:current_radius} can in fact be extended to the case of free boundary conditions.
With slightly more work one can show that the inequalities in \cref{thm:spectralradius,cor:current_radius,thm:free_spectral_radius} are equalities when $\beta>0$.
\end{remark}

\subsection{Double random currents with mismatched temperatures}
\label{subsec:mismatched}

In this section we discuss how the switching lemma can be used to study pairs of random currents with \emph{different} values of the inverse temperature $\beta$ and external field $h$. The resulting tools, which appear to be new, lead to quantitative versions of the arguments of \cite{MR3306602} that will be used to control the effect of changing $\beta$ and $h$ on the Ising model in the proofs of our main theorems.

Let $G=(V,E,J)$ be a connected weighted graph with $V$ finite. For each $\theta \in (0,1)$, let $\tilde G_\theta$ be the weighted graph obtained from $G$ by replacing each edge $e$ of $G$ by two edges $e_1$ and $e_2$ in parallel, where $e_1$ has coupling constant $J_{e_1}:=(1-\theta)J_e$ and $e_2$ has coupling constant $J_{e_2}:=\theta J_e$.
Observe from the definitions that 
\begin{equation}
\label{eq:theta_Ising}
\bI_{\tilde G_\theta,\beta,h} = \bI_{G,\beta,h}
\end{equation}
for every $\theta\in (0,1)$ and $\beta,h\geq 0$. There is also a simple probabilistic relationship between the random current models on $G$ and $\tilde G_\theta$: Let $\mathbf{m}=(\mathbf{m}_x)_{x\in V \cup E}$ be a random variable with law $\bC_{G,\beta,h}$. Conditional on $\mathbf{m}$, for each $e \in E$ let $\mathbf{n}_{e_1}$ be a binomial random variable with distribution $\operatorname{Binom}(1-\theta,\mathbf{m}_e)$ and let $\mathbf{n}_{e_2}=\mathbf{m}_e - \mathbf{n}_{e_1}$, where we take the random variables $(\mathbf{n}_{e_1})_{e\in E}$ to be conditionally independent given $\mathbf{m}$. Finally, set $\mathbf{n}_v=\mathbf{m}_v$ for each $v\in V$. It follows from the identity
\begin{equation}
\label{eq:binomial_identity}
\sum_{k=0}^n \frac{((1-\theta)\beta)^k}{k!}\frac{(\theta \beta)^{n-k}}{(n-k)!} =
\frac{\beta^n}{n!}\sum_{k=0}^n \binom{n}{k}(1-\theta)^k\theta^{n-k}
= \frac{\beta^n}{n!}
\end{equation}
that $\mathbf{n}$ has law $\bC_{\tilde G_\theta, \beta, h}$. 
Now consider the subgraph $H_\theta$ of $\tilde G_\theta$ spanned by the edges $\{e_1:e\in E\}$, so that $H_\theta$ is isomorphic to the weighted graph obtained from $G$ by multiplying all coupling constants by $(1-\theta)$. Observe that if we identify the edge set of $H_\theta$ with that of $G$ then we have
 the equalities
\[
\bI_{H_\theta,\beta,h} = \bI_{G,(1-\theta)\beta,(1-\theta)^{-1}h}
\qquad \text{ and } \qquad
\bC_{H_\theta,\beta,h} = \bC_{G,(1-\theta)\beta,(1-\theta)^{-1}h}
\]
for every $\beta,h \geq 0$. Since $H_\theta$ is a subgraph of $\tilde G_\theta$, this construction will therefore allow us to apply the switching lemma \cref{lem:switching} to study the Ising model at two different values of the inverse temperature.

We now introduce an infinite-volume version of this construction in the wired case, in which we will also allow ourselves to change the strength of the external field. Let $G=(V,E,J)$ be an infinite, connected, weighted graph and let $\beta_2 \geq \beta_1 \geq 0$ and $h_2 \geq h_1 \geq 0$. Let $\bn_1=(\bn_1(x))_{x\in E \cup V}$ and $\bm_2=(\bm_2(x))_{x\in E\cup V}$ be independent random variables with laws $\bC^w_{\beta_1,h_1}$ and $\bC^w_{\beta_2,h_2}$ respectively. Let $\theta=1-\beta_1/\beta_2$ and let $\phi=1-(\beta_1 h_1)/(\beta_2h_2)$, so that $\theta,\phi\in [0,1]$. Conditional on $\bn_1$ and $\bm_2$, let $\bn_2 =(\bn_2(x_i))_{x\in E \cup V, i \in \{0,1\}}$ be defined as follows:
\begin{enumerate}
\item For each $e\in E$, let $\bn_2(e_1)$ be a $\operatorname{Binom}(1-\theta,\bm_2(e))$ random variable and let $\bn_2(e_2)=\bm_2(e)-\bn_2(e_1)$.
\item For each $v\in V$, let $\bn_2(v_1)$ be a $\operatorname{Binom}(1-\phi,\bn_2(v))$ random variable and let $\bn_2(v_2)=\bm_2(v)-\bn_2(v_1)$.
\end{enumerate}
We take the random variables $(\bn_2(x_1):x \in E \cup V)$ to be conditionally independent of each other and of $\bn_1$ given $\bm_2$. Let $\bQ_{\beta_1,h_1,\beta_2,h_2}$ denote the law of the resulting pair of random variables $(\bn_1,\bn_2)$, which we may think of as a measure on the product space $
\N_0^E \times \N_0^V \times (\N_0^2)^E \times (\N_0^2)^V \cong (\N_0^3)^E \times (\N_0^3)^V$.

We now relate the dependence on $\beta$ and $h$ of correlations in the Ising model to the percolative properties of $\bQ_{\beta_1,h_1,\beta_2,h_2}$. Let $(\bn_1,\bn_2)$ be a pair of random variables with law $\bQ_{\beta_1,h_1,\beta_2,h_2}$, and let $u$ and $v$ be vertices of $G$. We say that an edge or vertex $x$ of $G$ is $1$\textbf{-open} if $\bn_1(x)+\bn_2(x_1)>0$ and that $x$ is $2$\textbf{-open} if $\bn_1(x)+\bn_2(x_1)+\bn_2(x_2)>0$. In particular, every $1$-open $x$ is also $2$-open. For each vertex $v$ of $G$ and $i\in \{1,2\}$, we define $K^i_v$ to be the set of vertices that are connected to $v$ by an $i$-open path, so that $K^1_v \subseteq K^2_v$ for every $v\in V$. We say that a set $A \subseteq V$ is \textbf{$i$-infinite} if it is infinite or contains an $i$-open vertex.

For each $e\in E$, $v \in V$, and $i\in \{1,2\}$, let $K^1_{v,e}$ be the set of vertices that are connected to $v$ by an $i$-open path that does not include the edge $e$. Let $x$ and $y$ be the endpoints of $e$ and define
 $\sA_e$ to be the event that the following hold:
\begin{enumerate}
  \item
 $\bn_1(e)=0$ and $\bn_2(e_1)=1$,
 \item  $x$ and $y$ are not connected by any $1$-open path that does not include $e$ and the clusters $K^1_{x,e}$ and $K^1_{y,e}$ are not both $1$-infinite.
 \item At least one of the following hold:
 \begin{enumerate}
\item $\bn_2(e_2)>0$,
\item $x$ and $y$ are connected by a $2$-open path that does not include the edge $e$, or
\item $K^2_{x,e}$ and $K^2_{y,e}$ are both $2$-infinite.
 \end{enumerate}
\end{enumerate}
Intuitively, $\sA_e$ is the event that $x$ and $y$ are connected `through infinity' off of $e_1$ by $2$-open edges and vertices but are not connected through infinity off of $e$ by $1$-open edges and vertices. Similarly, for each $v\in V$ we define $\sA_v$ to be the event that the following hold:
\begin{enumerate}
  \item
 $\bn_1(v)=0$ and $\bn_2(v_1)=1$,
 \item  $K_v^1 \setminus \{v\}$ is not $1$-infinite.
 \item Either $\bn_2(v_2)>1$ or $K^1_v \setminus \{v\}$ is $2$-infinite.
\end{enumerate}
Note that if $G$ is locally finite then the set $\sA_x$ is closed for every $x\in E \cup V$.

\begin{prop}
\label{cor:gradient}
Let $G=(V,E,J)$ be an infinite, connected, transitive weighted graph. Then
\[
\beta_1 J_x \left(\langle \sigma_x \rangle_{\beta_2,h_2}^+ - \langle \sigma_x \rangle_{\beta_1,h_1}^+\right) \leq \bQ_{\beta_1,h_1,\beta_2,h_2}(\sA_x)
\]
for each $\beta_2 \geq \beta_1 \geq 0$, $h_2 \geq h_1 \geq 0$, and $x\in E \cup V$, where we set $J_x=h_1$ if $x\in V$.
\end{prop}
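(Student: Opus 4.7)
The plan is to combine the doubled-graph construction with the switching lemma on finite-volume wired approximations of $G$ and then pass to the infinite-volume limit. Fix $x \in E \cup V$ and write $(x_-, x_+) = (a, b)$ when $x = e$ is an edge with endpoints $a, b$, and $(x_-, x_+) = (v, \partial)$ when $x = v$ is a vertex, where $\partial$ is the ghost vertex carrying the external field, so that $\langle \sigma_x \rangle^+_{\beta, h}$ equals the two-point function $\langle \sigma_{x_-} \sigma_{x_+} \rangle$ in both cases. For each finite wired approximation $G_n^*$ augmented with a ghost $\partial$ of strength $h_2$, form the doubled graph $\tilde G^{(n)}$ in which each graph or ghost edge is replaced by two parallel edges $x_1, x_2$ whose coupling constants are chosen so that, at inverse temperature $\beta_2$, the $x_1$-subgraph $H \subseteq \tilde G^{(n)}$ supports the $(\beta_1, h_1)$-Ising measure on $G_n^*$ (with $x_1$ having effective coupling $\beta_1 J_x$, where $J_v = h_1$ for vertices) while $\tilde G^{(n)}$ itself supports the $(\beta_2, h_2)$-Ising measure. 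Applying the switching lemma to a pair of random currents on $H$ and $\tilde G^{(n)}$ at $\beta_2$, with source sets $\{x_-, x_+\}$ on $H$ and $\emptyset$ on $\tilde G^{(n)}$ and $F \equiv 1$, then yields
\[ m_1^{(n)} = m_2^{(n)} \cdot \bigl(\bC^H_\emptyset \otimes \bC^{\tilde G^{(n)}}_{\{x_-, x_+\}}\bigr)\bigl(x_- \leftrightarrow x_+ \text{ \emph{in} } H\bigr), \]
so that $m_2^{(n)} - m_1^{(n)} = m_2^{(n)} \cdot \bP(x_- \not\leftrightarrow x_+ \text{ in } H)$ under the same product measure.

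Next, I would recover the factor $\beta_1 J_x$ and the sourceless second current appearing in $\bQ$ via the substitution $\bn_2' \mapsto \bn_2 := \bn_2' + \delta_{x_1}$, a bijection between sourced currents $\bn_2'$ on $\tilde G^{(n)}$ with sources $\{x_-, x_+\}$ and sourceless currents $\bn_2$ with $\bn_2(x_1) \geq 1$ that changes weights by the factor $\beta_1 J_x /(\bn_2'(x_1) + 1)$. The key point is that the event $\{x_- \not\leftrightarrow x_+ \text{ in } H\}$ already forces $\bn_1(x_1) = \bn_2'(x_1) = 0$, because the direct edge $x_1$ of $H$ connecting $x_-$ to $x_+$ would otherwise be open; under this restriction the bijection is weight-preserving up to the clean factor $\beta_1 J_x$ and produces $\bn_2(x_1) = 1$. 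This gives the finite-volume identity
\[ \beta_1 J_x \bigl(m_2^{(n)} - m_1^{(n)}\bigr) = \bQ^{(n)}\bigl(\bn_1(x) = 0,\ \bn_2(x_1) = 1,\ x_- \not\leftrightarrow x_+ \text{ in } H \setminus \{x_1\} \text{ via } \bn_1 + \bn_2(\cdot_1)\bigr), \]
where $\bQ^{(n)}$ is the natural finite-volume analog of $\bQ_{\beta_1, h_1, \beta_2, h_2}$.

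The event on the right is precisely conditions (1) and (2) of the finite-volume version of $\sA_x$: the ``no $1$-open path avoiding $x$'' clause of (2) comes from the graph-edge portion of $H$, and the ``$K^1_{x_-, x}$ and $K^1_{x_+, x}$ not both $1$-infinite'' clause comes from the ghost-edge portion of $H$ (which would otherwise route $x_-$ to $x_+$ through $\partial$). Condition (3) is then automatic on this event: since $\bn_2$ is sourceless on $\tilde G^{(n)}$ and $\bn_2(x_1) = 1$, the residual current $\bn_2 - \delta_{x_1}$ has parity exactly $\{x_-, x_+\}$ on the doubled graph and therefore contains a positive-current path from $x_-$ to $x_+$ that avoids $x_1$; this path either uses $x_2$ (yielding condition 3(a)) or lies entirely outside $\{x_1, x_2\}$ (yielding condition 3(b) for edges, or 3(c) for vertices via the ghost portion of the path). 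Hence $\beta_1 J_x(m_2^{(n)} - m_1^{(n)}) = \bQ^{(n)}(\sA_x^{(n)})$ in finite volume, and the proposition follows by taking $n \to \infty$ using weak convergence of the wired random current measures (Section~\ref{subsec:random_currents}) together with the closedness of $\sA_x$. The main technical obstacle is this last step: one must carefully reconcile the finite-volume notion of ``$1$-infinite'' (a cluster touching the wired boundary vertex $\partial_n$ or containing a $1$-open vertex) with its infinite-volume counterpart (an infinite cluster or a $1$-open vertex), and the appearance of $\leq$ rather than equality reflects the possible mass loss in this portmanteau-type limit.
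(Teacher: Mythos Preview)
Your proof is correct and follows essentially the same route as the paper: both arguments set up the doubled-graph construction on finite wired approximations, use the switching lemma plus the increment bijection $\bn_2' \mapsto \bn_2' + \delta_{x_1}$ to obtain an exact finite-volume identity (the paper isolates this step as \cref{lem:general_gradient}), identify the resulting event with the finite-volume version of $\sA_x$, and then pass to the limit via portmanteau using the closedness of $\sA_x$. Your parity argument for why condition~(3) holds automatically is exactly the observation used inside the paper's proof of \cref{lem:general_gradient} (``the constraint $\partial\bn_2=\{x,y\}$ forces $x$ and $y$ to be connected in $\bn_2$''), so there is no substantive difference between the two approaches.
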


\begin{remark}
With a little more work this inequality can be shown to be an equality.
\end{remark}

We will deduce this proposition from the following general lemma, which is similar to \cite[Lemma 4.5]{1707.00520} and \cite[Eq.\ 3.10]{MR3306602}.

\begin{lemma}
\label{lem:general_gradient}
Let $G=(V,E,J)$ be a weighted graph with $V$ finite, let $H$ be a subgraph of $G$, and let $e$ be an edge of $H$ with endpoints $x$ and $y$. Then
\[\beta J_e \left(\langle \sigma_e \rangle_{G,\beta,0}- \langle \sigma_e \rangle_{H,\beta,0}\right) =  \mathbf{C}_{H,\beta}\otimes\mathbf{C}_{G,\beta}
\bigl(\sB_e\bigr)\]
for every $\beta \geq 0$, 
where $\sB_e$ is the set of pairs $(\bn_1,\bn_2)$ such that $\bn_1(e)=0$, $\bn_2(e)=1$,  and $x$ and $y$ are connected to each other by an $(\bn_1+\bn_2)$-open path in $G \setminus \{e\}$ but not in $H\setminus \{e\}$.
\end{lemma}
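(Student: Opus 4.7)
The plan is to compute $\mathbf{C}_{H,\beta}\otimes\mathbf{C}_{G,\beta}(\sB_e)$ directly by conditioning on the value of each current at $e$, applying the switching lemma to one of the two resulting terms, and then matching the answer against an explicit formula for $\langle\sigma_e\rangle_{G,\beta,0}-\langle\sigma_e\rangle_{H,\beta,0}$ obtained from the factorisation $Z(\emptyset)=\cosh(\beta J_e)Z_{\setminus e}(\emptyset)+\sinh(\beta J_e)Z_{\setminus e}(\{x,y\})$.

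First, since $H\setminus\{e\}\subseteq G\setminus\{e\}$, I can write the indicator of $\sB_e$ as the difference $\mathbbm{1}(x\leftrightarrow y \text{ in } G\setminus\{e\})-\mathbbm{1}(x\leftrightarrow y \text{ in } H\setminus\{e\})$, multiplied by $\mathbbm{1}(\bn_1(e)=0,\bn_2(e)=1)$. On the event $\{\bn_1(e)=0,\bn_2(e)=1\}$ the current $\bn_1$ is naturally a current on $H\setminus\{e\}$ with $\partial\bn_1=\emptyset$, while $\bn_2':=\bn_2-\delta_e$ is a current on $G\setminus\{e\}$ with $\partial\bn_2'=\{x,y\}$, and $w_\beta(\bn_2)=\beta J_e\cdot w_\beta(\bn_2')$. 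Writing $Z_{G'}(A)$ for the partition function of currents on a graph $G'$ with sources $A$, and putting $a=Z_{G\setminus e}(\emptyset)$, $b=Z_{G\setminus e}(\{x,y\})$, $c=Z_{H\setminus e}(\emptyset)$, $d=Z_{H\setminus e}(\{x,y\})$, the first indicator term is automatic (because $\bn_2'$ already connects $x$ to $y$ in $G\setminus\{e\}$), and contributes
\[\frac{\beta J_e}{Z_H(\emptyset)Z_G(\emptyset)}\,c\,b.\]

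For the second term, I will apply the switching lemma on the pair $(H\setminus\{e\},G\setminus\{e\})$ with $F\equiv 1$, $A=\emptyset$, and the sources of $\bn_1$ set to $\{x,y\}$: this states
\[\sum_{\substack{\bn_1\in\Omega_{H\setminus e}\,:\,\partial\bn_1=\{x,y\}\\ \bn_2'\in\Omega_{G\setminus e}\,:\,\partial\bn_2'=\emptyset}}w_\beta(\bn_1)w_\beta(\bn_2')=\sum_{\substack{\bn_1\in\Omega_{H\setminus e}\,:\,\partial\bn_1=\emptyset\\ \bn_2'\in\Omega_{G\setminus e}\,:\,\partial\bn_2'=\{x,y\}}}w_\beta(\bn_1)w_\beta(\bn_2')\mathbbm{1}\bigl(x\xleftrightarrow{\bn_1+\bn_2'}y\text{ in }H\setminus\{e\}\bigr),\]
so the right-hand side equals $d\cdot a$. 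Subtracting gives
\[\mathbf{C}_{H,\beta}\otimes\mathbf{C}_{G,\beta}(\sB_e)=\frac{\beta J_e\,(bc-ad)}{Z_H(\emptyset)Z_G(\emptyset)}.\]

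It remains to show that $\langle\sigma_e\rangle_{G,\beta,0}-\langle\sigma_e\rangle_{H,\beta,0}=(bc-ad)/(Z_H(\emptyset)Z_G(\emptyset))$. Splitting each partition function according to the parity of $\bn(e)$ yields $Z_G(\emptyset)=\cosh(\beta J_e)a+\sinh(\beta J_e)b$, $Z_G(\{x,y\})=\cosh(\beta J_e)b+\sinh(\beta J_e)a$, and similarly for $H$ with $(a,b)$ replaced by $(c,d)$. A direct expansion, using $\cosh^2(\beta J_e)-\sinh^2(\beta J_e)=1$, gives
\[\langle\sigma_e\rangle_{G,\beta,0}-\langle\sigma_e\rangle_{H,\beta,0}=\frac{Z_G(\{x,y\})Z_H(\emptyset)-Z_H(\{x,y\})Z_G(\emptyset)}{Z_G(\emptyset)Z_H(\emptyset)}=\frac{bc-ad}{Z_G(\emptyset)Z_H(\emptyset)},\]
which combined with the previous display is the required identity. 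The only nontrivial step is the application of the switching lemma, which is immediate from its statement; the real content of the lemma is the conceptual setup, namely recognising that $\sB_e$ exactly isolates the contribution of $\bn_2(e)=1$ after a switching move on $G\setminus\{e\}$ and $H\setminus\{e\}$.
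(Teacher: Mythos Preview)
Your proof is correct. It differs from the paper's route in a pleasant way, so let me compare.

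The paper starts on the Ising side: it writes $\langle\sigma_x\sigma_y\rangle_{G,\beta,0}-\langle\sigma_x\sigma_y\rangle_{H,\beta,0}$ as a difference of current ratios on the \emph{full} graphs $H$ and $G$, applies the switching lemma once on $(H,G)$ to cancel, and lands on a sum over pairs $(\bn_1,\bn_2)$ with $\partial\bn_1=\emptyset$, $\partial\bn_2=\{x,y\}$, and $x\nleftrightarrow y$ in $H$. It then observes that the constraints force $\bn_1(e)=\bn_2(e)=0$ and produces a bijection with $\sB_e$ by incrementing $\bn_2(e)$ from $0$ to $1$, picking up the factor $\beta J_e$. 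You instead start from the $\sB_e$ side, strip off the edge $e$ immediately to pass to currents on $H\setminus\{e\}$ and $G\setminus\{e\}$, apply the switching lemma on those reduced graphs, and then recover the full-graph correlations via the $\cosh/\sinh$ factorisation and the algebraic identity $(C^2-S^2)(bc-ad)=bc-ad$. The paper's route is a touch slicker in that the bijection replaces your hyperbolic computation; your route is arguably more transparent about the provenance of the $\beta J_e$ factor and makes explicit the reduction to the edge-deleted graphs, which is conceptually where the ``connected in $G\setminus\{e\}$ but not $H\setminus\{e\}$'' condition lives. Both are one-switching-lemma proofs of the same identity.
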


\begin{proof}
We may assume that $\beta>0$, the claim being trivial otherwise.
We have by \eqref{eq:firstrandomcurrent} that
\begin{align*}
\langle \sigma_x \sigma_y \rangle_{G,\beta,0}- \langle \sigma_x \sigma_y \rangle_{H,\beta,0} &= 
\frac{\sum_{\mathbf{n_2}\in \Omega_G:\partial \mathbf{n_2}=\{x,y\}} w_\beta(\mathbf{n_2})}{\sum_{\mathbf{n_2}\in \Omega_G:\partial \mathbf{n_2}=\emptyset} w_\beta(\mathbf{n_2})}
-
\frac{\sum_{\mathbf{n_1}\in \Omega_H:\partial \mathbf{n_1}=\{x,y\}} w_\beta(\mathbf{n_1})}{\sum_{\mathbf{n_1}\in \Omega_H:\partial \mathbf{n_1}=\emptyset} w_\beta(\mathbf{n_1})}
\\
&=
\frac{
\sum_{
\substack{
\mathbf{n_1}\in \Omega_H:\partial \mathbf{n_1}=\emptyset 
\\
\mathbf{n_2}\in \Omega_G:\partial \mathbf{n_2}=\{x,y\}
}}
 w_\beta(\bn_1)w_\beta(\bn_2)
-
\sum_{
\substack{
\mathbf{n_1}\in \Omega_H:\partial \mathbf{n_1}=\{x,y\} 
\\
\mathbf{n_2}\in \Omega_G:\partial \mathbf{n_2}=\emptyset
}}
 w_\beta(\mathbf{n_1})w_\beta(\bn_2)
}
{\sum_{\mathbf{n_1}\in \Omega_H:\partial \mathbf{n_1}=\emptyset} \sum_{\mathbf{n_2}\in \Omega_G:\partial \mathbf{n_2}=\emptyset} w_\beta(\mathbf{n_2}) w_\beta(\mathbf{n_1})},
\end{align*}
and applying the switching lemma to the second term yields that
\begin{align*}
\langle \sigma_x \sigma_y \rangle_{G,\beta,0}- \langle \sigma_x \sigma_y \rangle_{H,\beta,0}&=
\genfrac{}{}{}{}{\raisebox{1.1em}{$\sum_{
\substack{
\mathbf{n_1}\in \Omega_H:\partial \mathbf{n_1}=\emptyset 
\\
\mathbf{n_2}\in \Omega_G:\partial \mathbf{n_2}=\{x,y\}
}}
 w_\beta(\bn_1)w_\beta(\bn_2)
\mathbbm{1}\Bigl(x \nxleftrightarrow{\bn_1+\bn_2} y \text{ in $H$}\Bigr)
$}}{\raisebox{-0em}{$\sum_{\mathbf{n_1}\in \Omega_H:\partial \mathbf{n_1}=\emptyset} \sum_{\mathbf{n_2}\in \Omega_G:\partial \mathbf{n_2}=\emptyset} w_\beta(\mathbf{n_2}) w_\beta(\mathbf{n_1})$}}.
\end{align*}
 (This equality holds for all vertices $x$ and $y$.) The constraint ``$\partial \bn_2 = \{x,y\}$'' forces $x$ and $y$ to be connected in $\bn_2$, while the constraint ``$x \nxleftrightarrow{\bn_1+\bn_2} y$ in $H$" forces $\bn_1(e)=\bn_2(e)=0$. Thus, if we consider the two sets
\[
A = \{(\bn_1,\bn_2) \in \Omega_H \times \Omega_G : \partial \bn_1 = \emptyset,\, \partial \bn_2 = \{x,y\},\, \text{ and } x \nxleftrightarrow{\bn_1+\bn_2} y \text{ in $H$}\}
\]
and
\begin{multline*}
B = \{(\bn_1,\bn_2) \in \Omega_H \times \Omega_G : \partial \bn_1 = \emptyset,\, \partial \bn_2 = \emptyset,\, x \nxleftrightarrow{\bn_1+\bn_2} y \text{ in $H \setminus \{e\}$},\\ x \xleftrightarrow{\bn_1+\bn_2} y \text{ in $G \setminus \{e\}$},\, \bn_1(e)=0, \text{ and } \bn_2(e)=1\}
\end{multline*}
then we have a bijection $A \to B$ given by incrementing the value of $\bn_2(e)$ from $0$ to $1$. This increment changes the weight $w_\beta(\bn_2)$ by a factor of $\beta J_e$, so that
\begin{align*}
\langle \sigma_x \sigma_y \rangle_{G,\beta,0}- \langle \sigma_x \sigma_y \rangle_{H,\beta,0}&=
\frac{\sum_{\bn_1,\bn_2 \in A}
 w_\beta(\bn_1)w_\beta(\bn_2)}
 {\sum_{\mathbf{n_1}\in \Omega_H:\partial \mathbf{n_1}=\emptyset} \sum_{\mathbf{n_2}\in \Omega_G:\partial \mathbf{n_2}=\emptyset} w_\beta(\mathbf{n_2}) w_\beta(\mathbf{n_1})}\\
 &=
 \frac{\sum_{\bn_1,\bn_2 \in B}
 w_\beta(\bn_1)w_\beta(\bn_2)}
 {\beta J_e\sum_{\mathbf{n_1}\in \Omega_H:\partial \mathbf{n_1}=\emptyset} \sum_{\mathbf{n_2}\in \Omega_G:\partial \mathbf{n_2}=\emptyset} w_\beta(\mathbf{n_2}) w_\beta(\mathbf{n_1})}.
\end{align*}This is equivalent to the claim.
\end{proof}

\begin{proof}[Proof of \cref{cor:gradient}]
We may assume that $\beta>0$, the claim being trivial otherwise.
We prove the formula in the case that $x$ is an edge of $G$, the case that $x$ is a vertex being similar. 
We will also assume for convenience that $h_2>h_1$ and $\beta_2>\beta_1$; the case of equality is similar but requires one to define the graphs $\tilde G_\theta$ and $H_\theta$ differently to avoid having edges of weight zero, which were not allowed by the definition of a weighted graph. (This does not cause any actual problems in the proof.)

Fix $e\in E$, $\beta_2 > \beta_1 > 0$, and $h_2 > h_1 \geq 0$. Let $\theta=1-\beta_1/\beta_2$, let $\phi=1-\beta_1h_1/\beta_2h_2$, and let $\bQ=\bQ_{\beta_1,h_1,\beta_2,h_2}$. Let $\tilde G_\theta$ be the weighted graph obtained from $G$ by replacing each edge $e$ of $G$ by parallel edges $e_1$ and $e_2$, where $e_1$ has coupling constant $J_{e_1}:=(1-\theta)J_e$ and $e_2$ has coupling constant $J_{e_2}:=\theta J_e$, and let $H_\theta$ be the subgraph of $G_\theta$ spanned by the edges $\{e_1 :e \in E\}$. As above, it follows from the definitions and  \eqref{eq:binomial_identity} that a random variable $(\bn_1,\bn_2)$ with law $\bQ_{\beta_1,h_1,\beta_2,h_2}$ can be obtained from a random variable $(\bn_1,\bm_2)$ with law $\bC^w_{H_\theta,\beta_2,(1-\phi) h_2} \otimes \bC^w_{G_\theta,\beta_2,h_2}$ by setting $\bn_2(e_i)=\bm_2(e_i)$ for each $e\in E$ and $i\in \{1,2\}$, taking  $(\bn_2(v_1))_{v\in V}$ to be independent Binomial random variables conditioned on $(\bn_1,\bm_2)$, and setting $\bn_2(v_2)=\bm_2(v)-\bn_2(v_1)$ for each $v\in V$.

Let $(V_n)_{n\geq 1}$ be an exhaustion of $V$ such that both endpoints of $x$ belong to $V_n$ for every $n\geq 1$, and let $(G_n^*)_{n\geq 1}$ be formed by contracting each vertex in $V\setminus V_n$ into a single vertex $\delta_n$ as in \cref{subsec:intro_definitions}. For each $n\geq 1$, let the weighted graph  $\tilde G_n$ be obtained from $G_n^*$ as follows:
\begin{enumerate}
  \item  Replace each edge $e$ of $G_n^*$ with two parallel edges $e_1$ and $e_2$ with coupling constants $(1-\theta)J_e$ and $\theta J_e$.
  \item Add an additional vertex $\delta$ distinct from $\delta_n$. 
\begin{enumerate}
  \item If $h_1=0$, attach each vertex $v$ of $G_n^*$ (including $\delta_n$) to $\delta$ by a single edge of weight $h_2$. Call this edge $g_2(v)$.
  \item If $h_1>0$, attach each vertex $v$ of $G_n^*$ (including $\delta_n$) to $\delta$ by two edges in parallel. The first edge is called $g_1(v)$ and is given weight $(1-\phi)h_2=\beta_1 h_1/\beta_2$, while the second edge is called $g_2(v)$ and is given weight $\phi h_2$. 
\end{enumerate}
\end{enumerate}
If $h_1>0$ we define $H_n$ be the subgraph of $\tilde G_n$ spanned by the edges $\{e_1: e$ an edge of $G_n\} \cup \{g_2(v) : v\in V_n \cup \{\delta_n\} \}$. Otherwise, $h_1=0$ and we define $H_n$ to be the subgraph of $\tilde G_n$ spanned by the edges $\{e_1: e$ an edge of $G_n\}$. 
These weighted graphs are defined so that the gradient Ising models at inverse temperature $\beta$ and zero external field on $\tilde G_n$ and $H_n$ are equivalent to the gradient Ising models on $G_n^*$ with inverse temperatures $\beta_2$ and $\beta_1$ and external fields $h_2$ and $h_1$ respectively.
Let $(\bm_{n,1},\bm_{n,2})$ have law $\bC_{H_n,\beta_2,0}\otimes \bC_{\tilde G_n,\beta_2,0}$ and let $\bQ_n$ be the law of the random variable $(\bn_{n,1},\bn_{n,2}) \in (\N_0^3)^{E \cup V}$ defined by
\[
\bn_{n,1}(x) = \begin{cases} \bm_{n,1}(x_0)\mathbbm{1}(\text{$x$ has an endpoint in $V_n$}) & x \text{ is an edge}\\
\bm_{n,1}(x_0)\mathbbm{1}(x \in V_n) & x \text{ is a vertex}
\end{cases}
\]
for each $x\in E \cup V$ and
\[
\bn_{n,2}(x_i) = \begin{cases} \bm_{n,2}(x_i)\mathbbm{1}(\text{$x$ has an endpoint in $V_n$}) & x \text{ is an edge}\\
\bm_{n,2}(g_i(x))\mathbbm{1}(x \in V_n) & x \text{ is a vertex, $h>0$}\\
\bm_{n,2}(g_i(x))\mathbbm{1}(x \in V_n, i=2) & x \text{ is a vertex, $h=0$}
\end{cases}
\]
for each $x\in E \cup V$ and $i\in \{1,2\}$.
It follows from the definitions and from \cref{eq:LoopO(1)Ising,eq:LoopO(1)Ising_infinite} that $\bQ=\wlim_{n\to\infty} \bQ_n$.

Since $H_n$ is a subgraph of $\tilde G_n$ for each $n\geq 1$, we may apply \cref{lem:general_gradient} to deduce that
\begin{align}
\nonumber
\beta_1 J_x \langle \sigma_x \rangle_{G_n^*,\beta_2,h_2} - \langle \sigma_x \rangle_{G_n^*,\beta_1,h_1}
&=\beta_2 (1-\theta) J_x \left[\langle \sigma_x \rangle_{\tilde G_n,\beta_2,0} - \langle \sigma_x \rangle_{H_n,\beta_2,0}\right]
 \\&=  \bC_{H_n,\beta_2,0}\otimes \bC_{\tilde G_n,\beta_2,0}
\bigl(\sB_{x,n}\bigr)
=  \bC_{H_n,\beta_2,0}\otimes \bC_{\tilde G_n,\beta_2,0}
\bigl(\sB_{x,n}\bigr) 
\label{eq:Gradient_to_Bxn}
\end{align}
where $\sB_{x,n}$ is the set of pairs $(\bm_{n,1},\bm_{n,2})$ such that $\bm_{n,1}(x_1)=0$, $\bm_{n,2}(x_1)=1$, and the endpoints $u$ and $v$ of $x$ are connected to each other by an $(\bm_{n,1}+\bm_{n,2})$-open path in $\tilde G_n \setminus \{x_1\}$ but not in $H_n\setminus \{x_1\}$. 
Let $u$ and $v$ be the endpoints of $x$. For each $n\geq 1$, let $\sA_{x,n}$ be the set of pairs $(\bn_{1},\bn_{2}) \in \N_0^{E \cup V} \times (\N_0^2)^{E \cup V}$ such that the following hold:
\begin{enumerate}
  \item
 $\bn_1(x)=0$ and $\bn_2(x_1)=1$.
 \item  the endpoints $u$ and $v$ of $x$ are not connected by any $1$-open path that does not include $x$, $K^1_{u,x}(\bn_1,\bn_2)$ and $K^1_{v,x}(\bn_1,\bn_2)$ do not both intersect the set  $V \setminus V_n$, and do not both intersect the set $\{ w \in V: \bn_1(w)+\bn_2(w_1)>0 \}$.
 \item At least one of the following conditions hold:
 \begin{enumerate}
\item $\bn_2(x_1)>0$,
\item $u$ and $v$ are connected by a $2$-open path that does not include the edge $x$, or
\item $K^2_{u,x}(\bn_1,\bn_2)$ and $K^2_{v,x}(\bn_1,\bn_2)$ either both intersect the set  $V \setminus V_n$ or both intersect the set $\{ w \in V: \bn_1(w)+\bn_2(w_1)+\bn_2(w_2)>0 \}$.
 \end{enumerate}
 \end{enumerate}
 Here, as before $K^i_{w,x}(\bn_1,\bn_2)$ denotes the set of vertices that are connected to $w \in V$ by an $i$-open path in $G \setminus \{x\}$, where 
 an edge $e$ is said to be $1$-open if $\bn_1(e)+\bn_2(e_1)>0$ and $2$-open if $\bn_1(e)+\bn_2(e_1)+\bn_2(e_2)>0$. 
 It follows from the definitions that $\bC_{H_n,\beta_2,0}\otimes \bC_{\tilde G_n,\beta_2,0}
\bigl(\sB_{x,n}\bigr) = \bQ_n(\sA_{x,n})$
 and that
\[
\sA_x = \limsup_{n\to\infty} \sA_{x,n} = \liminf_{n\to\infty} \sA_{x,n}.
\]
If $G$ is locally finite then $\sA_x$ is closed in $\N_0^{E \cup V} \times (\N_0^2)^{E \cup V}$ and it follows by \eqref{eq:Gradient_to_Bxn} and the portmanteau theorem that
\[
\bQ(\sA_x) \geq \lim_{n\to\infty} \bQ_n (\sA_{x,n}) = \beta_1 J_x \lim_{n\to\infty} \left(\langle \sigma_x \rangle_{G_n^*,\beta_2,h_2} - \langle \sigma_x \rangle_{G_n^*,\beta_1,h_1} \right)
=
 \beta_1 J_x  \left(\langle \sigma_x \rangle_{\beta_2,h_2}^+ - \langle \sigma_x \rangle_{\beta_1,h_1}^+ \right),
 \]
 completing the proof in this case.

Let us now briefly discuss how the proof can be extended to the case that $G$ is not locally finite. The problem in this case is that $\sA_x$ need not be closed 
in the product topology on $\N_0^{E \cup V} \times (\N_0^2)^{E \cup V}$. (Indeed, the set of pairs $(\bn_1,\bn_2)$ for which $K_{u,x}^2$ contains more than one vertex is not closed.) 
This can be remedied as follows:  Let $X$ be the subset of $\N_0^{E \cup V} \times (\N_0^2)^{E \cup V}$ 
such that $\sum_{e\in E^\rightarrow_w} \bn_1(e)+\bn_2(e_1)+\bn_2(e_2)<\infty$ for every $w\in V$, and endow $X$ with the weakest topology 
that makes the families of functions
\begin{align*}
(\bn_1,\bn_2) &\mapsto (\bn_1(x),\bn_2(x_1),\bn_2(x_2)) &&: x\in E \cup V \qquad \text{ and }
\\ (\bn_1,\bn_2) &\mapsto \sum_{e\in E^\rightarrow_w} \bn_1(e)+\bn_2(e_1)+\bn_2(e_2) &&: w \in V
\end{align*}
continuous. Note that this topology is stronger than the product topology on $X$ and that $\sA_x \cap X$ is a closed subset of $X$ with respect to this topology. It follows easily from the Lupu--Werner coupling and the domination of the FK-Ising model by Bernoulli percolation that the measures $(\bQ_n)_{n\geq 1}$ are all supported on $X$ and are tight with respect to this topology. Since the probability measures $\bQ_n$ weakly converge to the probability measure $\bQ$ with respect to the product topology on $X$, it follows by tightness that they also converge to $\bQ$ with respect to the stronger topology introduced above. The claim now follows from the portmanteau theorem as before. \qedhere

\end{proof}

\subsection{H\"older continuity of the plus Ising measure}
\label{subsec:mainproof}

In this section we complete the proof of \cref{thm:main,thm:main_simple,thm:main_continuity,thm:main_continuity_FK}. We begin by applying the methods of \cref{sec:free_energy} to the percolation model considered in \cref{subsec:mismatched}.

\begin{prop}
\label{prop:finite_clusters_mismatched}
Let $G=(V,E,J)$ be a connected, unimodular, transitive, nonamenable weighted graph and let $o$ be a vertex of $G$. There exist positive constants $C$ and $\delta$ such that
\[
\bQ_{\beta_1,h_1,\beta_2,h_2} \left( n \leq |K_o^1| < \infty \right) \leq C n^{-\delta}
\]
for every $n\geq 1$, $\beta_2 \geq \beta_1 \geq 0$ and $h_2 \geq h_1 \geq 0$.
\end{prop}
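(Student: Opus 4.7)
The plan is to adapt the proof of \cref{thm:finite_clusters} to the present setting, treating the $1$-open configuration on $G$ as an automorphism-invariant percolation model and combining a Schramm-type decorrelation estimate with a version of the two-ghost inequality plus random-walk surgery.

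First I would establish the spectral radius bound $\rho(\bQ_{\beta_1,h_1,\beta_2,h_2}) \leq \rho(G)$. The measure $\bQ$ may be realized as an $\Aut(G)$-factor of $\bC^w_{\beta_1,h_1} \otimes \bC^w_{\beta_2,h_2} \otimes \mu_{\mathrm{split}}$, where $\mu_{\mathrm{split}}$ is the product of auxiliary Bernoulli measures used to implement the binomial splits. Combining the bounds $\rho(\bC^w_{\beta_i,h_i}) \leq \rho(G)$ from \cref{cor:current_radius} and $\rho(\mu_{\mathrm{split}}) \leq \rho(G)$ from \cref{thm:Bernoulli_radius} with the monotonicity of spectral radii under factors \eqref{eq:factor_rho} and the elementary tensorization bound $\rho(\mu \otimes \nu) \leq \max(\rho(\mu),\rho(\nu))$ (which follows by diagonalizing $\hat P_\mu \otimes \hat P_\nu$ on the mean-zero subspace of $L^2(\mu \otimes \nu)$), one obtains $\rho(\bQ) \leq \rho(G)$. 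Arguing as in \cref{example:FiniteClusters,prop:Schramm}, this yields
\[
\bQ\bigl(\text{$X_0$ and $X_m$ lie in the same finite $1$-cluster}\bigr) \leq \rho(G)^m
\]
for every $m \geq 0$, where $X$ is an independent random walk on $G$ started at $o$.

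Next I would treat the $1$-open configuration as a percolation-in-random-environment model with environment $(\bn_1,\bm_2)$ and conditional inclusion probability
\[
\bp_x = \mathbbm{1}(\bn_1(x)>0) + \bigl(1 - \theta^{\bm_2(x)}\bigr)\mathbbm{1}(\bn_1(x)=0),
\]
where $\theta = 1 - \beta_1/\beta_2$ for edges (with an analogous formula for vertices using $\phi = 1 - \beta_1 h_1/\beta_2 h_2$). Since this environment is $\Aut(G)$-invariant, \cref{cor:two_ghost_S} applies. I would then carry out a random-walk surgery in the style of the proof of \cref{thm:finite_clusters}: letting $\omega^{1,i}$ denote the modification of the $1$-open configuration that forces the walk edges $X_{0,1},\ldots,X_{i-1,i}$ to be $1$-open, and defining events $\sA_{\lambda,m}$, $\sB_{\lambda,m,i}$, and $\sC_{\lambda,i}$ analogously (with ``open'' replaced by ``$1$-open'' and ``at least $n$ edges'' replaced by ``total $J$-weight at least $\lambda$''), one has $\sA_{\lambda,m} \subseteq \bigcup_i \sB_{\lambda,m,i}$ and $\sC_{\lambda,i} \supseteq \sB_{\lambda,m,i} \cap \{\omega^1(X_{j,j+1})=1 \text{ for all } 0 \leq j \leq i-2\}$. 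The crucial point is that conditionally on $(\bn_1,\bm_2,X)$ the $1$-openness of distinct edges is independent, so that for any good event $\sG$ measurable with respect to $(\bn_1,\bm_2,X)$,
\[
\P(\sC_{\lambda,i} \cap \sG) \geq \E\!\left[\mathbbm{1}(\sG)\,\P\bigl(\sB_{\lambda,m,i} \mid \bn_1,\bm_2,X\bigr)\prod_{j=0}^{i-2}\bp_{X_{j,j+1}}\right].
\]
Choosing $\sG$ so that $\prod_j \bp_{X_{j,j+1}}$ is bounded below by some $c_0^m$ on $\sG$, and so that $\bp_x/((1-\bp_x)J_x)$ is bounded below on walk edges (giving a useful bound from \cref{cor:two_ghost_S} on $\P(\sC_{\lambda,i} \cap \sG)$), would yield an estimate of the form $\P(\sA_{\lambda,m}) \lesssim m c_1^{-m} + c_2^m/\sqrt{\lambda}$ for appropriate constants; combining this with the Schramm-type lower bound on $\P(\sA_{\lambda,m})$ and optimizing $m = c\log\lambda$ gives the desired polynomial tail bound.

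The main obstacle is the correct choice of the good event $\sG$ and the control of the associated constants uniformly in the four parameters. The natural choice $\sG = \{\bm_2(X_{j,j+1}) \geq 1 \text{ for all } 0 \leq j \leq m-1\}$ ensures $\bp_{X_{j,j+1}} \geq 1-\theta$ and hence gives good control when $\beta_1/\beta_2$ is bounded away from $0$; however, it degrades as $\beta_1 \ll \beta_2$ (i.e.\ $\theta \to 1$). I would resolve this by a separate case analysis: (i) when $\beta_2$ is below an absolute threshold, the $1$-open configuration is stochastically dominated by a subcritical Bernoulli product measure via the Lupu--Werner couplings of $\bn_1$ with $\phi^w_{2,\beta_1,h_1}$ and of $\bm_2$ with $\phi^w_{2,\beta_2,h_2}$, so that $\bE_{\bQ}|K_o^1| < \infty$ and the polynomial tail is trivial; (ii) when $\beta_1$ is very small compared to $\beta_2$, the sub-current $\bn_2(\cdot_1)$ is typically empty and the $1$-open configuration essentially reduces to the support of $\bn_1 \sim \bC^w_{\beta_1,h_1}$, for which the polynomial tail bound follows from a variant of \cref{thm:finite_clusters} applied to a single random current via the Lupu--Werner domination; and (iii) in the remaining intermediate regime the PIRE-surgery argument outlined above applies with $\sG$ as above, yielding the desired uniform polynomial tail bound.
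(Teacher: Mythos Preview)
Your overall architecture (spectral gap via factorization, Schramm-type bound, two-ghost inequality, random-walk surgery) is the same as the paper's, and the spectral-radius step is fine. The gap is in your choice of percolation-in-random-environment representation and the case analysis that tries to compensate for it. Conditioning on $(\bn_1,\bm_2)$ and using the binomial split gives $\bp_e \geq 1-\theta = \beta_1/\beta_2$ on your good event $\sG$, which is useless when $\beta_1$ is moderate (say $\beta_1\ge 1/8$) but $\beta_2\to\infty$. Your case (i) does not cover this (since $\beta_2$ is large), and your case (ii) as written (``$\beta_1$ very small compared to $\beta_2$'') also does not: with $\beta_1\ge 1/8$ the single current $\bn_1$ need not be subcritical, and ``a variant of \cref{thm:finite_clusters} via Lupu--Werner domination'' does not give a finite-cluster tail for $\bn_1$, because stochastic domination by FK-Ising does not transfer such tails in either direction. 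So the regime $\beta_1$ fixed, $\beta_2\to\infty$ falls through your case split.

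The paper avoids all of this by choosing a different PIRE representation. Instead of conditioning on $(\bn_1,\bm_2)$, condition on the loop $O(1)$ configuration $L_1$ of $\bn_1$ together with all of $\bn_2(\cdot_1)$. Conditionally on these, the $1$-open status of an edge $e$ with $L_1(e)=0$ and $\bn_2(e_1)=0$ is decided by whether the independent $\mathsf{Even}$ variable for $\bn_1(e)$ is positive, which happens with probability $(\cosh(\beta_1 J_e)-1)/\cosh(\beta_1 J_e)$. Hence $\bp_e \ge (\cosh(\beta_1 J_e)-1)/\cosh(\beta_1 J_e)$ \emph{uniformly in $\beta_2,h_1,h_2$}. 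For $\beta_1\ge 1/8$ this is bounded below by $cJ_e^2$, and for $\beta_1\le 1/8$ the Lupu--Werner domination of both $\{\bn_1>0\}$ and $\{\bn_2(\cdot_1)>0\}$ by Bernoulli with parameter $1-e^{-2\beta_1 J_e}$ makes the $1$-open configuration subcritical outright. With this representation the surgery argument of \cref{thm:finite_clusters} goes through directly --- in fact more simply, since the edge probabilities are never zero and no ``red set'' conditioning is needed --- and the bound is uniform in all four parameters without any further case analysis.
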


\begin{proof}
By scaling, we may assume without loss of generality that $\sum_{e\in E^\rightarrow_o} J_e=1$. 
Let $(\bn_1,\bn_2)$ be distributed according to $\bQ_{\beta_1,\beta_2,h}$, let $\omega\in \{0,1\}^E$ be defined by $\omega(e)=\mathbbm{1}(\bn_1(e)+\bn_2(e_1) >0)$, and let $\mu_{\beta_1,\beta_2,h}$ be the law of $\omega$. 
It follows from the Lupu-Werner coupling that the sets $\{e:\bn_1(e)>0\}$ and $\{e:\bn_2(e_1)>0\}$ are each stochastically dominated by the Bernoulli percolation process with edge inclusion probabilities $1-e^{-2\beta_1 J_e} \leq 2\beta_1 J_e$ and hence that $\omega$ is stochastically dominated by the Bernoulli percolation process with edge inclusion probabilities $1-e^{-4\beta_1 J_e} \leq 4\beta_1 J_e$. It follows by a counting argument that $\bQ |K_o^1| \leq 2$ when $\beta_1 \leq 1/8$, so that it suffices to consider the case $\beta \geq 1/8$.

The connection between the loop $O(1)$ model and the random current model allows us to consider $\omega$ as a percolation in random environment model where the environment $\bp$ satisfies $\bp_e \geq (\cosh(\beta_1 J_e)-1)/\cosh(\beta_1 J_e)$ almost surely for every $e\in E$, and it follows by calculus that there exists a positive constant $c$ such that $\bp_e \geq (\cosh(\beta_1 J_e)-1)/\cosh(\beta_1 J_e) \geq c J_e^2$ for every $e\in E$ and $\beta_1 \geq 1/8$. Moreover, \cref{thm:spectralradius} implies that $\rho(\mu_{\beta_1,\beta_2,h})\leq \rho(\bQ_{\beta_1,\beta_2,h}) \leq \rho(G)$. With these ingredients in place, the claim follows by a similar (and slightly simpler) proof to that of \cref{thm:finite_clusters}, and we omit the details.
\end{proof}

We next apply \cref{prop:finite_clusters_mismatched,cor:gradient} to control the change in the expected degree in the FK-Ising model as we increase $\beta$ or $h$.

\begin{lemma}
\label{cor:change_in_correlations}
Let $G=(V,E,J)$ be a connected, unimodular, transitive, nonamenable weighted graph, and let $o$ be a vertex of $G$. There exists $\delta>0$ such that
\[
\beta h \langle \sigma_o \rangle_{\beta,h}^+, \quad \sum_{e\in E^\rightarrow_o} \beta J_e \langle \sigma_e \rangle_{\beta,h}^+, \quad  \phi_{2,\beta,h}^w\bigl(\omega(o)=1\bigr), \; \text{ and } \; \sum_{e\in E^\rightarrow_o} \phi_{2,\beta,h}^w\bigl(\omega(e)=1\bigr)
\]
are locally $\delta$-H\"older continuous functions of $(\beta,h)\in [0,\infty)^2$.
%
\end{lemma}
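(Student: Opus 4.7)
I would proceed in three main steps. First, for each of the four target quantities, the difference between its values at $(\beta_2,h_2)$ and $(\beta_1,h_1)$ decomposes into a locally Lipschitz term plus a term of the form $\beta_1 J_x\bigl(\langle\sigma_x\rangle^+_{\beta_2,h_2} - \langle\sigma_x\rangle^+_{\beta_1,h_1}\bigr)$ with $x \in \{o\} \cup E^\rightarrow_o$ and $J_o := h_1$. For the two Ising quantities this is immediate; for the two FK-Ising quantities it follows from the Lupu--Werner identity \eqref{eq:FK_Gradient_Formula} applied to singletons, which yields
\[
\phi^w_{2,\beta,h}\bigl(\omega(x)=1\bigr) = \tfrac{1}{2}\bigl(1-e^{-2\beta J_x}\bigr)\bigl(1+\langle\sigma_x\rangle^+_{\beta,h}\bigr),
\]
a product of a smooth prefactor bounded by $\beta J_x$ and an affine function of $\langle\sigma_x\rangle^+_{\beta,h}$. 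By Proposition~\ref{cor:gradient}, each key term is dominated by $\bQ_{\beta_1,h_1,\beta_2,h_2}(\sA_x)$. Monotonicity of the plus measure and the triangle inequality via the coordinatewise maximum reduce us to the case $(\beta_2,h_2) \geq (\beta_1,h_1)$, whereupon the problem reduces to proving $\sum_{x \in \{o\}\cup E^\rightarrow_o}\bQ(\sA_x) \lesssim \eps^\delta$ for $\eps := (\beta_2-\beta_1) + (\beta_2 h_2 - \beta_1 h_1)$.

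The core step combines an expected-extension estimate with the polynomial tail of Proposition~\ref{prop:finite_clusters_mismatched}. On $\sA_x$, at least one side of $x$ in the 1-configuration off $x$ is a finite set $C$ containing no 1-open vertex, while the 2-configuration extends $C$ by at least one of: $\bn_2(x_2) > 0$, a ``2-only'' open edge in $E(C)$, or a ``2-only'' open vertex in $C$ (with ``2-only'' meaning $\bn_2(\cdot_2) > 0$ while $\bn_1 = \bn_2(\cdot_1) = 0$). The binomial thinning defining $\bn_2$ from $(\bn_1,\bm_2)$ gives each 2-only event conditional probability $\leq C_0 \eps J_{(\cdot)}$ given the 1-configuration, so the conditional expected number of extensions given $C$ is $\lesssim \eps \cdot |E(C)|_J \lesssim \eps|C|$ by transitivity. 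Combining with the uniform tail $\bQ(n \leq |K^1_o| < \infty) \leq C_1 n^{-\delta_0}$ yields
\[
\bQ(\sA_x) \lesssim \eps\,\bE\bigl[|C|\mathbbm{1}(|C| \leq n)\bigr] + \bQ(|C| > n) \lesssim \eps n + n^{-\delta_0},
\]
which, upon taking $n \sim \eps^{-1/(1+\delta_0)}$, gives $\bQ(\sA_x) \lesssim \eps^{\delta_0/(1+\delta_0)}$. Summing over $e \in E^\rightarrow_o$ is handled by absorbing a bounded (uniformly in $L^2$) 1-degree factor into the optimization, using that the random current model is stochastically dominated by a product measure (via Lupu--Werner).

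The principal technical obstacle is the case analysis when $x = e$ is an edge whose ``finite 1-side'' is $K^1_{e^+,e}$ rather than $K^1_{o,e}$: since Proposition~\ref{prop:finite_clusters_mismatched} bounds only the finite cluster \emph{of the root}, this estimate must be transported back to $o$ via the mass-transport principle~\eqref{eq:edgeMTP} before the polynomial tail can be applied. The sub-cases (b) and (c) of condition~3 of $\sA_e$, in which the 2-configuration joins the endpoints of $e$ or extends $C$ to infinity through chains of 2-only edges or vertices, also require care; however, every such 2-extension of a finite cluster $C$ with no 1-open vertex must initiate via a 2-only edge in $E(C)$ or a 2-only vertex in $C$, and the expected number of such initiations is controlled by the $\eps \cdot |C|$ bound above. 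Once these bookkeeping issues are settled, the H\"older continuity of all four quantities follows with the exponent $\delta := \delta_0/(1+\delta_0)$ inherited from Proposition~\ref{prop:finite_clusters_mismatched}.
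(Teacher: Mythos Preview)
Your proposal is correct and follows essentially the same route as the paper: reduce to bounding $\bQ(\sA_x)$ via Proposition~\ref{cor:gradient}, observe that $\sA_x$ forces a finite $1$-cluster through $o$ to touch the set of ``$2$-only'' edges and vertices (the paper's set $\cB$), combine mass-transport with Proposition~\ref{prop:finite_clusters_mismatched} in a cutoff argument $\eps n + n^{-\delta_0}$, and then deduce the FK-Ising statements from the Lupu--Werner identity. Your $L^2$ treatment of the edge sum matches the paper's Cauchy--Schwarz step, and your explicit handling of the far-side finite cluster via mass-transport is a point the paper's inclusion $\sA_e\subseteq\sB$ passes over somewhat lightly.
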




\medskip

Throughout the remainder of this section, we will use  $\asymp$, $\preceq,$ and $\succeq$ to denote equalities and inequalities that hold to within positive multiplicative constants depending on the weighted graph $G$ but not any further parameters.

\begin{proof}[Proof of \cref{cor:change_in_correlations}]
By scaling we may assume without loss of generality that $\sum_{e\in E^\rightarrow_o} J_e=1$.
We first prove local H\"older continuity of $\beta h\langle \sigma_o \rangle_{
\beta,h}^+$ and $\sum_{e\in E^\rightarrow_o} J_e \langle \sigma_e \rangle_{\beta,h}^+$. Since $\langle \sigma_x \rangle_{\beta,h}^+$ is increasing in $\beta$ and $h$ for each $x\in E \cup V$, it suffices to prove that there exists $\delta>0$ such that
\begin{align}
\beta_1 h_1 \left[\langle \sigma_o \rangle_{\beta_2,h_2}^+-\langle \sigma_o \rangle_{\beta_1,h_1}^+\right] &\preceq \left(\beta_2h_2 -\beta_1 h_1 + \beta_2-\beta_1\right)^{\delta} \qquad \text{ and }
\label{eq:HolderClaim11}
\\
\sum_{e\in E^\rightarrow_o} \beta_1 J_e \left[ \langle \sigma_e \rangle_{\beta_2,h_2}^+ - \langle \sigma_e\rangle_{\beta_1,h_1}^+\right] &\preceq (\beta_1^{1/2} \vee \beta_1)  \left(\beta_2h_2 -\beta_1 h_1 + \beta_2-\beta_1\right)^{\delta}
\label{eq:HolderClaim12}
\end{align}
for every $\beta_2 \geq \beta_1 \geq 0$ and $h_2 \geq h_1 \geq 0$, the claim then following by  a little elementary analysis. Fix one choice of these parameters $\beta_1,\beta_2,h_1,h_2$, and let $\bQ=\bQ_{\beta_1,h_1,\beta_2,h_2}$. All the implicit constants appearing in this proof will depend on $G$ but not on the choice of these parameters. 
We have by \cref{cor:gradient} that
\[
\beta_1 J_x \left[\langle \sigma_x \rangle_{\beta_2,h_2}^+ - \langle \sigma_x \rangle_{\beta_1,h_1}^+\right]
\leq
\bQ (\sA_x)
\]
for each $x\in E \cup V$,
where $\sA_x$ is the event defined just before the statement of that proposition and where we set $J_v=h_1$ for each $v\in V$.
 Let $\cB$ be the set of vertices $v$ of $G$ such that either $\bn_2(v_2)>0$ or there exists an edge $e$ of $G$ touching $v$ such that $\bn_2(e_2)>0$, and let $\sB$ be the event that $K_o^1$ is finite and that $K_o^1\cap \cB \neq \emptyset$. Observe that 
\[
\sA_x \subseteq \sB \cap \{\bn_2(x_1)>0\}
\]
for every $x \in E^\rightarrow_o \cup \{o\}$, so that
\[
\beta_1 h_1 \left[\langle \sigma_o \rangle_{\beta_2,h_2}^+-\langle \sigma_o \rangle_{\beta_1,h_1}^+\right] \leq \bQ(\sB)
\]
and similarly that
\begin{align}
\sum_{e\in E^\rightarrow_o} \beta_1 J_e \left[\langle \sigma_e \rangle_{\beta_2,h_2}^+ - \langle \sigma_e \rangle_{\beta_1,h_1}^+\right] 
&\leq \sum_{e\in E^\rightarrow_o} 
\bQ(\sA_e) \leq \bQ \left[ \mathbbm{1}(\sB) \cdot \#\{e\in E^\rightarrow_o : \bn_2(e_1)>0 \} \right]
\nonumber
\\
&
\leq  \sqrt{\bQ\left(\sB\right) \bQ\left[\#\{e\in E^\rightarrow_o : \bn_2(e_1)>0 \}^2 \right]},
\label{eq:degree_2nd_moment}
\end{align}
where we used Cauchy-Schwarz in the second line. (One can improve the exponent obtained here by using H\"older instead of Cauchy-Schwarz.) As discussed in \cref{subsec:random_currents}, it is a consequence of the Lupu-Werner coupling that the set $\{e:\bn_2(e_1)>0\}$ is stochastically dominated by the random cluster model on $G$ and hence also by a Bernoulli bond percolation process on $G$ in which each edge $e$ of $G$ is included independently at random with probability $(e^{\beta_1 J_e}-1)/e^{\beta_1 J_e} \leq \beta_1 J_e$. It follows easily that  
\[\bQ\left[\#\{e\in E^\rightarrow_o : \bn_2(e_1)>0 \}^2 \right]
\leq \sum_{e\in E^\rightarrow_o} \beta_1 J_e + 2\sum_{e \neq e' \in E^\rightarrow_o} \beta_1^2 J_e J_{e'}
\preceq \beta_1 \vee \beta_1^2 \] and hence that
\begin{align}
\sum_{e\in E^\rightarrow_o} \beta_1 J_e \left[\langle \sigma_e \rangle_{\beta_2,h_2}^+ - \langle \sigma_e \rangle_{\beta_1,h_1}^+\right] 
&
\preceq  (\beta_1^{1/2} \vee \beta_1) \bQ\left(\sB\right)^{1/2}.
\label{eq:degree_2nd_moment2}
\end{align}
Thus, to prove the claimed inequalities \eqref{eq:HolderClaim11} and \eqref{eq:HolderClaim12}, it suffices to prove that there exists a constant $\delta>0$ such that
\begin{equation}
\label{eq:HolderClaim2}
\bQ(\sB) \preceq \left(\beta_2h_2 -\beta_1 h_1 + \beta_2-\beta_1\right)^{\delta}.
\end{equation}

To this end, we consider the union bound 
\begin{equation}
\label{eq:Qunion}
\bQ(\sB)
\\\leq \bQ(n<|K_o^1|<\infty) + \bQ(\sB \cap \{|K_o^1| \leq n\}),
\end{equation}
which holds for every $n\geq 1$. It follows from \cref{prop:finite_clusters_mismatched} that there exists a positive constant $\delta_1$ such that the first term satisfies 
\begin{equation}
\label{eq:Qlargen}
\bQ(n<|K_o^1|<\infty) \preceq n^{-\delta_1}
\end{equation}
for every $n\geq 1$. We now bound the second term.
For each $n\geq 1$ define a mass-transport function $F_n:V^2 \to [0,\infty]$ by
\[
F_n(u,v) = \bQ\left(|K_u^1| \leq n, v \in K_u \cap \cB \right),
\]
so that
\begin{multline}
\bQ(\sB \cap \{|K_o^1| \leq n\}) \leq \sum_{v\in V} F_n(o,v) = \sum_{v\in V} F_n(v,o)\\ = \bQ\left[|K_o^1| \mathbbm{1}\left(|K_o^1|\leq n, o \in \cB\right) \right]\leq n \bQ(o \in \cB).
\end{multline}
(The final inequality here is presumably rather wasteful.)
Let $\theta=1-\beta_1/\beta_2$ and $\phi=1-(\beta_1h_1)/(\beta_2h_2)$. 
It follows straightforwardly from \eqref{eq:bounding_current_by_FK1} and \eqref{eq:bounding_current_by_FK2} and the definition of $\bQ$ that 
\begin{align}
\label{eq:Qsmalln}
\bQ(o \in \cB) &\leq \bQ(\bn_2(o_2)>0)+\sum_{e\in E^\rightarrow_o}\bQ(\bn_2(e_2)>0)
\\&\leq \phi \beta_2 h_2 + \sum_{e\in E^\rightarrow_o}  \theta \beta_2 J_e = \beta_2h_2-\beta_1h_1 + \beta_2-\beta_1.
\end{align}
Putting together \eqref{eq:Qunion}, \eqref{eq:Qlargen}, and \eqref{eq:Qsmalln} yields that
\begin{equation*}
\bQ(\sB)
\\\preceq  n^{-\delta_1}+ n \left(\beta_2h_2-\beta_1h_1 + \beta_2-\beta_1\right)
\end{equation*}
for every $n\geq 1$, and taking $n=\lceil (\beta_2h_2-\beta_1h_1 + \beta_2-\beta_1)^{-1/(1+\delta_1)} \rceil$ yields the claimed inequality \eqref{eq:HolderClaim2} with the exponent $\delta=\delta_1/(1+\delta_1)$. This completes the proof of local H\"older continuity of $\beta h\langle \sigma_o \rangle_{
\beta,h}^+$ and $\sum_{e\in E^\rightarrow_o} J_e \langle \sigma_e \rangle_{\beta,h}^+$.


We now deduce local H\"older continuity of $\sum_{e\in E^\rightarrow_o} \phi_{2,\beta,h}^w\bigl(\omega(e)=1\bigr)$ and $\phi_{2,\beta,h}^w\bigl(\omega(o)=1\bigr)$ from local H\"older continuity of $\sum_{e\in E^\rightarrow_o} \beta J_e \langle \sigma_e \rangle_{\beta,h}^+$ and $\beta h\langle \sigma_o \rangle_{\beta,h}^+$.
The equality \eqref{eq:LoopO(1)Ising_infinite} implies that
\[
\phi_{2,\beta,h}^w\bigl(\omega(x)=1\bigr)
= 1 - e^{-\beta J_x}\langle e^{-\beta J_x \sigma_x}\rangle_{\beta,h}^+.
\]
Using the identity 
$e^{-\beta J_x \sigma_x} = \cosh(\beta J_x) - \sigma_x \sinh(\beta J_x)$
(which holds since $\sigma_x\in \{-1,+1\}$) 
yields that
\begin{align}
\phi_{2,\beta,h}^w\bigl(\omega(x)=1\bigr) &= 1- e^{-\beta J_x} \cosh(\beta J_x) + e^{-\beta J_x} \sinh(\beta J_x)\langle \sigma_x \rangle_{\beta,h}^+ 
\label{eq:phisigma_formula}
\end{align}
for every $x\in E \cup V$ and $\beta,h \geq 0$. 
 Since the functions $1-e^{-\beta J_x} \cosh(\beta J_x)$ and $e^{-\beta J_x}\sinh(\beta J_x)/\beta J_x$ are both locally Lipschitz and since local H\"older continuity is preserved under sums and products, we deduce immediately that $\phi_{2,\beta,h}^w(\omega(o)=1)$ is locally $\delta$-H\"older continuous as claimed. The proof that $\sum_{e\in E^\rightarrow_o} \phi_{2,\beta,h}^w\bigl(\omega(e)=1\bigr)$ is locally $\delta$-H\"older continuous for some $\delta>0$ is similar and we omit the details.
\end{proof}

\begin{remark}
The $\beta$-derivatives of $\sum_{e\in E^\rightarrow_o} J_e \langle \sigma_e \rangle_{\beta,0}^+$ and $\sum_{e\in E^\rightarrow_o} \phi_{2,\beta,0}^w\bigl(\omega(e)=1\bigr)$ are closely related to the \emph{specific heat} of the Ising model. It is conjectured, and known in some cases, that these  derivatives are bounded (but not necessarily continuous) in high-dimensional models and unbounded in low-dimensional models; see e.g.\ \cite{MR3162481,MR678000,MR857063,MR588470} for detailed discussions.
\end{remark}



We are now ready to complete the proof of \cref{thm:main_continuity}.
 (Note that local H\"older continuity of the magnetization $\langle \sigma_o \rangle_{\beta,h}^+$ is \emph{not} implied by \cref{cor:change_in_correlations}, which does not give any control of $\langle \sigma_o \rangle_{\beta,0}^+$.)
The proof will use the fact that there exists an automorphism-invariant monotone
coupling, sometimes known as the \textbf{Grimmett coupling} \cite{MR1379156}, between the two random cluster measures $\phi^w_{q,\beta_1,h_1}$ and $\phi^w_{q,\beta_2,h_2}$ whenever $q\geq 1$, $\beta_2 \geq \beta_1 >0$ and $h_2 \geq h_1 \geq 0$. That is, for each such $q,\beta_1,\beta_2,h_1,h_2$, there exists a pair of random variables $(\omega_1,\omega_2)$ whose law is invariant under the automorphisms of $G$ such that the marginal law of $\omega_1$ is $\phi^w_{q,\beta_1,h_1}$, the marginal law of $\omega_2$ is $\phi^w_{q,\beta_2,h_2}$, and $\omega_1(x) \leq \omega_2(x)$ for every $x\in V \cup E$ almost surely.
The existence of such a coupling is essentially due to H\"aggstr\"om, Jonasson, and Lyons \cite{MR1913108}, who proved that Grimmett's monotone coupling of the random cluster model at different temperatures \cite{MR1379156} can be extended to infinite graphs in an automorphism-invariant way; while that paper considers only locally finite models in zero external field, the proof generalizes straightforwardly to possibly long-range models in non-negative external field.

\begin{proof}[Proof of \cref{thm:main_continuity}]

 By inclusion-exclusion, it suffices to prove that 
there exists $\delta>0$ such that $\bI^+_{\beta,h}(\sigma(v)=1 \text{ for every $v\in A$})$ is a locally $\delta$-H\"older continuous function of $(\beta,h)\in [0,\infty)^2$ for each finite $A \subseteq V$. 
 Given the configuration $\omega$, we say that a set $A \subseteq V$ is $w$\textbf{-finite} if it is finite and does not intersect the set $\{w\in V: \omega(w)=1\}$.
The Edwards--Sokal coupling implies that
\begin{equation}
\label{eq:ES_cylinder}
\bI^+_{\beta,h}(\sigma(v)=1 \text{ for every $v\in A$}) = \phi^w_{2,\beta,h}\left[2^{-\#\{w\text{-finite clusters intersecting }A\}} \right]
\end{equation}
for every $\beta,h \geq 0$ and every finite set $A \subseteq V$. Since the right hand side is increasing in $\beta$ and $h$, it suffices to prove that for each $M<\infty$ there exists a constant $C=C(M)$ such that
\begin{multline}
\bI^+_{\beta_2,h_2}(\sigma(v)=1 \text{ for every $v\in A$})
-
\bI^+_{\beta_1,h_1}(\sigma(v)=1 \text{ for every $v\in A$})
\\
\hspace{3.5cm}\leq C |A| (\beta_2-\beta_1 + h_2-h_1)^\delta
\end{multline}
for every $A \subseteq V$, $0\leq \beta_1 \leq \beta_2 \leq M$ and $0\leq h_1 \leq h_2 \leq M$.
%
%

 Fix $M$ and one such choice of $M \geq \beta_2 \geq \beta_1 \geq 0$, $M \geq h_2 \geq h_1 \geq 0$.
We write $\preceq_M$ for an inequality that holds to within a positive multiplicative constant depending on $M$ but not any further parameters. 
 Let $(\omega_1,\omega_2)$ be an automorphism-invariant monotone coupling of $\phi^w_{2,\beta_1,h_1}$ and $\phi^w_{2,\beta_2,h_2}$ as above. We write $\P$ for probabilities taken with respect to the joint law of $(\omega_1,\omega_2)$, and write $K^1_v$ and $K^2_v$ for the clusters of $v$ in $\omega_1$ and $\omega_2$ respectively for each $v\in V$. 
We say that a set $W \subseteq V$ is $w_i$\textbf{-finite} if it is finite and does not intersect the set $\{w\in V: \omega_i(w)=1\}$.
 Let $\sA$ be the event that there are fewer $w_2$-finite clusters intersecting $A$ in $\omega_2$ than there are $w_1$-finite clusters intersecting $A$ in $\omega_1$. The equality \eqref{eq:ES_cylinder} implies that
\begin{align*}
\bI^+_{\beta_2,h_2}(\sigma(v)=1 \text{ for every $v\in A$})
-
\bI^+_{\beta_1,h_1}(\sigma(v)=1 \text{ for every $v\in A$})
\leq \P(\sA).
\end{align*}
%
%
Let $\cP$ be the set of vertices $v\in V$ such that either $\omega_2(v)=1$ and $\omega_1(v)=0$ or there exists $e\in E^\rightarrow_v$ such that $\omega_2(e)=1$ and $\omega_1(e)=0$. We have from the definitions that
\[
\sA \subseteq \bigcup_{v\in A} \{ K^1_v \text{ is $w_1$-finite and } K^1_v \cap \cP \neq \emptyset\} 
\subseteq \bigcup_{v\in A} \{ |K^1_v|<\infty \text{ and } K^1_v \cap \cP \neq \emptyset\} 
\]
so that transitivity and a union bound give that
\begin{multline*}
\bI^+_{\beta_2,h_2}(\sigma(v)=1 \text{ for every $v\in A$})
-
\bI^+_{\beta_1,h_1}(\sigma(v)=1 \text{ for every $v\in A$})\\
\leq \P(\sA) \leq |A| \P\bigl(|K_o^1|<\infty \text{ and } K_o^1 \cap \cP \neq \emptyset\bigr).
\end{multline*}
Thus, to conclude the proof it suffices to prove that there exists a positive constant $\delta$ such that
\begin{equation}
\label{eq:HolderClaim4}
\P\bigl(|K_o^1|<\infty \text{ and } K_o^1 \cap \cP \neq \emptyset\bigr) \preceq_M (\beta_2-\beta_1 + h_2-h_1)^\delta.
\end{equation}

We prove \eqref{eq:HolderClaim4} by following a similar strategy to the proof of \eqref{eq:HolderClaim2} but using \cref{thm:finite_clusters} and \cref{cor:change_in_correlations} instead of \cref{prop:finite_clusters_mismatched} and \cref{eq:bounding_current_by_FK1,eq:bounding_current_by_FK2}. 
We begin by writing down for each $n\geq 1$ the union bound
\[
\P(|K_o^1|<\infty \text{ and } K_o^1 \cap \cP \neq \emptyset) \leq \P(n \leq |K_o^1| < \infty) + \P(|K_o^1| \leq n \text{ and } K_o^1 \cap \cP \neq \emptyset).
\]
\cref{thm:free_random_cluster} implies that there exists a constant $\delta_1>0$ such that $\P(n \leq |K_o^1| <\infty) \preceq n^{-\delta_1}$. Meanwhile, as in the proof of \cref{cor:change_in_correlations}, we may apply the mass-transport principle to bound
\[
\P(|K_o^1| \leq n \text{ and } K_o^1 \cap \cP \neq \emptyset) \leq n \, \P(o \in \cP)
\]
for every $n\geq 1$. Another union bound then implies that
\begin{align*}
\P(o \in \cP) &\leq \P(\omega_2(o) = 1)-\P(\omega_1(o) = 1) + 
\sum_{e\in E^\rightarrow_o} \left[\phi_{2,\beta_2,h_2}^w\bigl(\omega(e)=1\bigr)-\phi_{2,\beta_1,h_1}^w\bigl(\omega(e)=1\bigr) \right],
\end{align*}
and applying \cref{cor:change_in_correlations} yields that there exists a positive constant $\delta_2$ such that
\[\P(o \in \cP) \preceq_M (\beta_2-\beta_1 + h_2-h_1)^{\delta_2}.\]
It follows that
\[
\P\bigl(|K_o^1|<\infty \text{ and } K_o^1 \cap \cP \neq \emptyset\bigr) \preceq_M  n^{-\delta_1} + n \left(\beta_2-\beta_1+h_2-h_1\right)^{\delta_2}
\]
for each $n\geq 1$, and taking 
$n=\left\lceil \left(\beta_2-\beta_1+h_2-h_1\right)^{-\delta_2/(1+\delta_1)} \right\rceil$ implies the claimed inequality \eqref{eq:HolderClaim4}. This completes the proof.
\qedhere

\end{proof}


\begin{proof}[Proof of \cref{thm:main,thm:main_simple}]
It follows immediately from \cref{thm:main_continuity} that 
\[m^*(\beta_c)=m^+(\beta_c,0)=\lim_{\beta \uparrow \beta_c} m^+(\beta,0)=0,\]
establishing \cref{thm:main_simple}. Similarly, \cref{thm:main} follows from \cref{thm:main_continuity} together with the fact that
\[
m^\#(\beta,h) \leq m^+(\beta \vee \beta_c,|h|) = m^+(\beta \vee \beta_c,|h|) - m^+(\beta_c,0)
\]
for every $\beta \geq 0$, $h\in \R$, and $\#\in \{f,+,-\}$.
\end{proof}

It remains to deduce \cref{thm:main_continuity_FK} from \cref{thm:main_continuity}.

\begin{proof}[Proof of \cref{thm:main_continuity_FK}]
 By \cref{thm:main_continuity}, there exists $\delta>0$ such that if $F:\{0,1\}^V \to \R$ depends on at most finitely many vertices then $\bI^+_{\beta,h} [F(\sigma)]$ is a locally $\delta$-H\"older continuous function of $(\beta,h)\in [0,\infty)^2$. It follows easily that $\bG^w_{\beta,h} \left[e^{\beta H_A} \right]$ is locally $\delta$-H\"older continuous for each finite $A \subseteq E \cup V$, and hence by \eqref{eq:FK_Gradient_Formula} that $\phi^w_{2,\beta,h}(\omega(x)=0 \text{ for all $x\in A$})$ is locally $\delta$-H\"older continuous for each finite $A \subseteq E \cup V$. The claim now follows by inclusion-exclusion.
\end{proof}

We note that the local H\"older continuity provided by \cref{thm:main_continuity} is presumably very far from optimal when $\beta \neq \beta_c$. We conjecture that the following much stronger statement holds. This conjecture is most interesting in the case that $h=0$ and $\beta>\beta_c$.

\begin{conjecture}
Let $G$ be an infinite Cayley graph. Then the magnetization $m^+(\beta,h)$ is an infinitely differentiable function of $(\beta,h) \in [0,\infty)^2 \setminus \{(\beta_c,0)\}$.
\end{conjecture}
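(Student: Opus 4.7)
The plan is to decompose the region $[0,\infty)^2 \setminus \{(\beta_c,0)\}$ into three overlapping open pieces and establish $C^\infty$ smoothness on each.

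\textbf{Subregion A: $\{h>0,\,\beta\geq 0\}$.} Here one appeals to the Lee--Yang theorem, which asserts that for any finite weighted graph the Ising partition function $Z(\beta,h)$ is non-zero whenever $\Re h>0$. Ruelle-type arguments then show that $\log Z_{G_n^*}(\beta,h)/|V_n|$ extends holomorphically in $h$ to the right half-plane, and the exhaustion-invariant limit defining $\langle \sigma_o\rangle^+_{\beta,h}$ can be realized as a normal limit of logarithmic derivatives. Joint smoothness in $(\beta,h)$ follows because $\beta$ enters the finite-volume measure through the couplings $\beta J_e$, which can be absorbed into the edge weights without disturbing the Lee--Yang property. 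This handles in particular the critical strip $\{\beta=\beta_c, h>0\}$, which is otherwise delicate.

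\textbf{Subregion B: $\{\beta<\beta_c,\, h\in\mathbb{R}\}$.} In this regime the spontaneous magnetization vanishes and, via Theorem~\ref{thm:main}, one expects exponential decay of the two-point function for $\beta$ bounded away from $\beta_c$. One would then run a high-temperature cluster expansion (or, alternatively, a random-current representation organized by connected components of currents with sources) whose convergence radius in an appropriate norm extends uniformly over compact subsets of the parameter space. This gives real-analyticity, hence $C^\infty$, on this open set.

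\textbf{Subregion C: $\{\beta>\beta_c,\, h\geq 0\}$.} This is the genuine low-temperature regime and is the main obstacle. The Edwards--Sokal coupling expresses
\[
1-m^+(\beta,h)\;=\;2\,\phi^w_{2,\beta,h}\bigl(K_o \text{ is finite and contains no vertex with } \omega(v)=1\bigr),
\]
so smoothness of $m^+$ reduces to smoothness of a quantity governed by the finite clusters of the wired FK-Ising model in its supercritical regime. The ideal strategy would be to establish \emph{exponential} tail bounds for $|K_o|$ restricted to the event that $K_o$ is finite, and then to set up a convergent polymer expansion of the plus state around the ferromagnetic ground configuration, treating finite clusters in $\omega$ as polymers. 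This would yield analyticity of $m^+(\beta,h)$ throughout $\{\beta>\beta_c,h\geq 0\}$.

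\textbf{The main obstacle} is precisely the exponential tail bound for finite clusters in the supercritical wired FK-Ising model on a nonamenable Cayley graph. The present paper provides only \emph{power-law} control (Theorem~\ref{thm:finite_clusters}), which is far too weak to drive any convergent expansion. Upgrading this to exponential decay would likely require combining the spectral-gap surgery developed in Section~\ref{sec:free_energy} with a large-deviation input controlling the geometry of the infinite cluster, in the spirit of the Hermon--Hutchcroft analyticity results for supercritical Bernoulli percolation; extracting such an input for a non-positively-associated model like the double random current or for the finite clusters of the supercritical FK-Ising model appears to be the key new ingredient that would need to be developed.
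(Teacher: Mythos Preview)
The statement you are addressing is a \emph{conjecture}, not a theorem: the paper offers no proof and explicitly presents it as an open problem, remarking only that the case $h=0$, $\beta>\beta_c$ is the most interesting and pointing to analogous Bernoulli-percolation results for context. There is therefore no ``paper's own proof'' to compare against. Your proposal is not a proof either---it is an honest research outline that correctly identifies the decisive gap, namely the absence of exponential tails for finite clusters in the supercritical wired FK-Ising model. That self-assessment is accurate: the power-law bound of Theorem~\ref{thm:finite_clusters} is indeed far too weak to feed any convergent polymer or cluster expansion, and upgrading it to exponential decay for the FK-Ising (or double random current) model on a general nonamenable Cayley graph is genuinely open.

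A few comments on the pieces of the outline. In Subregion~B you invoke Theorem~\ref{thm:main} for exponential decay, but that theorem gives only power-law control of the magnetization; exponential decay of two-point functions for $\beta<\beta_c$ comes instead from sharpness of the phase transition (Aizenman--Barsky--Fern\'andez, Duminil-Copin--Tassion). In Subregion~A the Lee--Yang route is plausible, but the passage to infinite volume on a nonamenable graph is more delicate than you indicate: the normalized free energy $\log Z_{G_n^*}(\beta,h)/|V_n|$ need not converge in the nonamenable setting, so one cannot simply take a normal limit of its logarithmic derivatives---one must instead work directly with finite-volume correlation functions and control their derivatives uniformly. Finally, the conjecture as stated applies to \emph{all} infinite Cayley graphs, including amenable ones such as $\mathbb{Z}^d$, where $C^\infty$ smoothness of $m^+$ throughout the low-temperature phase is likewise not established in general; your Subregion~C strategy would face the same missing exponential-decay input there as well.
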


See \cite{HermonHutchcroftSupercritical,georgakopoulos2018analyticity,georgakopoulos2020analyticity} for related results for Bernoulli percolation.

\section{Closing remarks and open problems}

\subsection{Equality of critical parameters for the FK-Ising model}
\label{subsec:betafbetaw}

Let $G=(V,E,J)$ be a connected, transitive, weighted graph. Recall that we define $\beta_c^\#(q)= \sup\{ \beta \geq 0: \phi^\#_{q,\beta,0}$ is supported on configurations with no infinite clusters$\}$ for each $q \geq 1$ and $\#\in \{f,w\}$. When $G$ is amenable and $q\geq 1$, it is a classical theorem \cite[Theorem 4.63]{GrimFKbook} that $\phi^f_{q,\beta,0} \neq \phi^w_{q,\beta,0}$ for at most countably many values of $\beta$  and hence that $\beta_c^f(q)=\beta_c^w(q)$ for every $q\geq 1$. On the other hand, when $G$ is nonamenable, Jonasson \cite{MR1671859} proved that there is a strict inequality $\beta_c^w(q)<\beta_c^f(q)$ between these two critical parameters   for all sufficiently large values of $q$ \cite{MR1671859}. For a regular tree, strict inequality holds if and only if $q>2$ \cite{MR1373377}. We now show that equality always holds when $q=2$; we believe that this result is new in the nonamenable case.

\begin{prop}
\label{prop:betafbetaw}
Let $G=(V,E,J)$ be a connected, transitive, weighted graph. Then the critical inverse temperatures for the Ising model, the wired FK-Ising model, and the free FK-Ising model coincide. That is, $\beta_c=\beta_c^f(2)=\beta_c^w(2)$.
\end{prop}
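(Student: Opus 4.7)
The three equalities split naturally into routine consequences of the Edwards--Sokal coupling and one substantive direction. The identity $\beta_c = \beta_c^w(2)$ is essentially immediate: the Edwards--Sokal coupling reviewed in \cref{subsec:random_currents} yields $m^*(\beta) = \phi^w_{2,\beta,0}(o \leftrightarrow \infty)$, so both sides are positive on the same set of $\beta$. Likewise, the inequality $\beta_c^w(2) \leq \beta_c^f(2)$ follows at once from the stochastic domination $\phi^f_{2,\beta,0} \leq_{\mathrm{st}} \phi^w_{2,\beta,0}$. The only substantive direction is therefore $\beta_c^f(2) \leq \beta_c^w(2) = \beta_c$.

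For this, I would combine three ingredients. First, applying the Edwards--Sokal coupling with \emph{free} boundary conditions to each finite subgraph of an exhaustion and passing to the limit produces the identity
\[
\phi^f_{2,\beta,0}(o \leftrightarrow x) \;=\; \langle \sigma_o \sigma_x \rangle^f_{\beta,0} \qquad \text{for every } x \in V,
\]
so the FK-Ising susceptibility with free boundary conditions equals the free Ising susceptibility. Second, sharpness of the random-cluster phase transition --- the fact, proven in \cite{1901.10363} and used as input to the sketch of \cref{thm:free_random_cluster} above, that $\phi^f_{q,\beta,0}|K_o| < \infty$ for every $\beta < \beta_c^f(q)$ --- specialized to $q=2$ gives, for every $\beta < \beta_c^f(2)$,
\[
\chi^f(\beta) \;:=\; \sum_{x \in V} \langle \sigma_o \sigma_x \rangle^f_{\beta,0} \;=\; \phi^f_{2,\beta,0}|K_o| \;<\; \infty.
\]
Third, I would invoke the classical fact that summability of the free Ising two-point function forces the spontaneous magnetization to vanish. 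The cleanest route is via the Aizenman--Barsky--Fernandez machinery: the random-current representation used throughout the paper combined with the GHS inequality --- which is valid on any transitive weighted graph --- shows that $\chi(\beta, h) = \partial_h m(\beta,h)$ is non-increasing in $h \geq 0$, hence $m(\beta,h) \leq h\cdot \chi(\beta,0^+)$; one then uses the random-current identities to bound $\chi(\beta,0^+)$ in terms of $\chi^f(\beta)$, and lets $h\downarrow 0$ to conclude $m^*(\beta)=0$, i.e.\ $\beta \leq \beta_c$.

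\textbf{Main obstacle.} The delicate step is the last one. On a general (possibly nonamenable) transitive graph, the free Ising measure $\bI^f_{\beta,0}$ can differ substantially from the average $\tfrac{1}{2}(\bI^+ + \bI^-)$ of extremal states --- for example, on a regular tree there are multiple extremal Gibbs states at low temperature --- so the Bricmont--Lebowitz--Pfister-type identification of $\lim_{|x|\to\infty}\langle \sigma_o \sigma_x\rangle^f_{\beta,0}$ with $m^*(\beta)^2$ simply fails, and one cannot pointwise bound $\langle \sigma_o \sigma_x \rangle^f$ below by $m^{*2}$. The deduction $\chi^f(\beta) < \infty \Rightarrow m^*(\beta) = 0$ must therefore go through the random-current/differential-inequality route rather than through direct comparison of two-point functions, and keeping track of which Gibbs state each susceptibility refers to (free vs.\ plus, $h=0$ vs.\ $h\downarrow 0$) is where care is required.
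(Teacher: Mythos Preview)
Your overall structure matches the paper's: both proofs combine the Edwards--Sokal identity $\phi^f_{2,\beta,0}(o\leftrightarrow x)=\langle\sigma_o\sigma_x\rangle^f_{\beta,0}$, sharpness of the random-cluster phase transition (finite expected cluster size below $\beta_c^\#$), and sharpness of the Ising phase transition. The paper simply cites the last of these as a black box --- specifically, that $\sum_x \langle\sigma_o\sigma_x\rangle^f_{\beta,0}<\infty \iff \sum_x \langle\sigma_o\sigma_x\rangle^+_{\beta,0}<\infty \iff \beta<\beta_c$, referring to \cite{MR894398,duminil2015new} --- and then reads off $\beta_c^f(2)=\beta_c^w(2)=\beta_c$ directly.

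The gap in your proposal is in the sketched argument for the implication $\chi^f(\beta)<\infty\Rightarrow m^*(\beta)=0$. The GHS concavity of $h\mapsto m(\beta,h)$ gives
\[
m(\beta,h)\;\leq\; m(\beta,0^+)+h\cdot\chi(\beta,0^+),
\]
not $m(\beta,h)\leq h\cdot\chi(\beta,0^+)$; the latter would already presuppose $m^*(\beta)=m(\beta,0^+)=0$. Running the argument in finite volume with free boundary conditions does give $m_n^f(\beta,h)\leq h\,\chi_n^f(\beta,0)$, and passing to the limit yields $m^f(\beta,h)\leq h\,\chi^f(\beta)$ --- but this only controls the \emph{free} magnetization, and on a nonamenable graph one can have $m^f(\beta,h)<m^+(\beta,h)$ even for $h>0$ (as noted in the paper's discussion following the definition of the plus and minus measures). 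So the step ``bound $\chi(\beta,0^+)$ in terms of $\chi^f(\beta)$ and let $h\downarrow 0$'' is precisely where the boundary-condition subtlety you flag in your own ``main obstacle'' paragraph bites, and your sketch does not resolve it. The correct move is simply to invoke the Ising sharpness theorem itself, which is proved via the full Aizenman--Barsky--Fern\'andez or Duminil-Copin--Tassion differential-inequality machinery rather than GHS alone; this is exactly what the paper does.
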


\begin{proof}
It follows by sharpness of the Ising phase transition \cite{MR894398,duminil2015new} that
 \[\sum_{x\in V} \langle \sigma_o \sigma_x \rangle_{\beta,0}^f < \infty \iff \sum_{x\in V} \langle \sigma_o \sigma_x \rangle_{\beta,0}^+ < \infty  \iff 
\text{$\beta < \beta_c$}.
 \] 
See in particular \cite[Theorem 2.1 and Section 2.2, Remark 4]{duminil2015new}. On the other hand, we have by the Edwards--Sokal coupling that
\[
\langle \sigma_o \sigma_x \rangle_{\beta,0}^f = \phi^f_{2,\beta,0}(o \leftrightarrow x) \qquad \text{ and } \qquad \langle \sigma_o \sigma_x \rangle_{\beta,0}^+ = \phi^w_{2,\beta,0}(o \leftrightarrow x \text{ or } |K_o|=|K_x|=\infty),
\]
so that $\phi_{2,\beta,0}^\#[|K_o|]<\infty$ if and only if $\beta<\beta_c$ for each $\# \in \{f,w\}$. The claimed equality $\beta_c^f(2)=\beta_c^w(q)=\beta_c$ follows from the sharpness of the phase transition for the random cluster model \cite{MR3898174,1901.10363}, and in particular from \cite[Theorem 1.5]{1901.10363} which states that $\phi_{q,\beta,0}^\#[|K_o|] < \infty$ for every $\beta < \beta_c^\#(q)$ and hence that $\beta_c^\#(q)=\sup\{\beta \geq 0 : \phi_{q,\beta,0}^\#[|K_o|] < \infty\}$ for every $q\geq 1$ and $\# \in \{f,w\}$.
\end{proof}

\subsection{Consequences for planar graphs}
\label{subsec:planar}

We now explain how our results interact with those of H\"aggstr\"om, Jonasson, and Lyons \cite{MR1894115} to deduce further consequences in the planar case. 
Let $G=(V,E)$ be a transitive nonamenable graph. For each $q \geq 1$ and $\#\in \{f,w\}$ we define
\[
\beta_u^\#(q) = \inf \bigl\{ \beta \geq 0 : \phi^\#_{q,\beta,0} \text{ is supported on configurations with a unique infinite cluster}\bigr\}.
\]
It follows from a theorem of Lyons and Schramm \cite[Theorem 4.1]{LS99} that if $G$ is unimodular then $\phi^\#_{q,\beta,0}$ is supported on configurations with a unique infinite cluster if and only if $\inf_{u,v\in V} \phi^\#_{q,\beta,0}(u \leftrightarrow v) > 0$, so that $\phi^\#_{q,\beta,0}$ is supported on configurations with a unique infinite cluster for every $\beta>\beta_u^\#(q)$. The relationships between $\beta_c^f,\beta_c^w,\beta_u^f,$ and $\beta_u^w$ are discussed in detail in \cite[Section 3]{MR1894115}.


 Let $G$ be a unimodular, quasi-transitive, nonamenable proper plane graph with locally finite quasi-transitive dual $G^\dagger$.
Let $\omega$ be a random variable with law $\phi^\#_{G,q,\beta,0}$ for some $q\geq 1$, $\beta > 0$, and $\# \in \{f,w\}$. It is well-known (see e.g.\ \cite[Proposition 3.4]{MR1894115}) that the dual configuration $\omega^\dagger = \{e^\dagger : e\notin \omega\}$ has law 
$\phi^{\#^\dagger}_{G^\dagger,q,\beta^\dagger,0}$, where $w^\dagger=f$, $f^\dagger = w$, and $\beta^\dagger=\beta^\dagger(q,\beta)>0$ is the unique solution to
\[
(e^{2\beta}-1)(e^{2\beta^\dagger}-1)=q.
\]
Let $k$ and $k^\dagger$ be the number of infinite clusters of $\omega$ and $\omega^\dagger$ respectively. Proposition 3.5 of \cite{MR1894115} generalizes an argument of Benjamini and Schramm \cite{BS00} to show that 
\begin{equation}
\label{eq:duality_clusters}
(k,k^\dagger) \in \bigl\{(0,1),(1,0),(\infty,\infty)\bigr\} \qquad \text{almost surely.}
\end{equation} It follows in particular that
$\beta_c^\#(G^\dagger,q) = \beta_u^{\#^\dagger}(G,q)^\dagger$ for every $q \geq 1$ \cite[Corollary 3.6]{MR1894115}. 
When $q=2$ and $G^\dagger$ is transitive we have that $\beta_c^w(G^\dagger)=\beta_c^f(G^\dagger)$ by \cref{prop:betafbetaw} and hence that $\beta_u^w(G)=\beta_u^f(G)$ also.
Since there are a.s.\ no infinite clusters in the critical free random-cluster model on $G^\dagger$, it follows from \eqref{eq:duality_clusters} that there is a unique infinite cluster in the wired random cluster model on $G$ at $\beta_u^w$ \cite[Corollary 3.7]{MR1894115}. 
Combining these facts with \cref{thm:main,thm:main_continuity} yields the following corollary.  

\begin{corollary}[The non-uniqueness phase on planar graphs]
\label{cor:planar}
Let $G=(V,E)$ be a unimodular, transitive, nonamenable proper plane graph with locally finite transitive dual $G^\dagger$, and consider the FK-Ising model on $G$. Then the following hold:
\begin{enumerate}
\item The parameters $\beta_u=\beta_u^f=\beta_u^w$ coincide and satisfy $\beta_u>\beta_c$.
\item The free and wired FK-Ising measures on $G$ coincide at $\beta_u$ and are both supported on configurations with a unique infinite cluster.
\item The free FK-Ising measure $\phi^f_{2,\beta,0}$ is weakly continuous in $\beta$.
\item For each $\beta \geq 0$, $\bG^w_{\beta,0} \neq \bG^f_{\beta,0}$ if and only if $\phi^w_{2,\beta,0} \neq \phi^f_{2,\beta,0}$ if and only if $\beta_c < \beta <\beta_u$.
\end{enumerate}
\end{corollary}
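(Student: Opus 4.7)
The plan is to combine the main theorems of this paper, applied to both $G$ and its planar dual $G^\dagger$, with the duality machinery recorded in \cref{subsec:planar} and the criterion of H\"aggstr\"om--Jonasson--Lyons \cite{MR1894115}. The key preliminary observation is that $G^\dagger$ inherits the standing hypotheses: being the planar dual of a transitive nonamenable unimodular graph with locally finite transitive dual, $G^\dagger$ is itself transitive, locally finite, unimodular, and nonamenable, so \cref{thm:main_simple,thm:main_continuity_FK,prop:betafbetaw} all apply to $G^\dagger$. I would then begin with (1): \cref{prop:betafbetaw} applied to $G^\dagger$ gives $\beta_c^f(G^\dagger)=\beta_c^w(G^\dagger)=:\beta_c(G^\dagger)$, and substituting into the planar-duality identity $\beta_c^\#(G^\dagger,q)=\beta_u^{\#^\dagger}(G,q)^\dagger$ recalled in \cref{subsec:planar} yields $\beta_u^f(G)=\beta_u^w(G)=\beta_c(G^\dagger)^\dagger=:\beta_u$. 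For strict inequality $\beta_u>\beta_c$, I would apply \cref{thm:main_simple} to $G$ to see that $\phi^w_{G,2,\beta_c,0}$ has no infinite cluster, so by \eqref{eq:duality_clusters} its dual $\phi^f_{G^\dagger,2,\beta_c^\dagger,0}$ has a unique infinite cluster; whereas \cref{thm:main_simple} applied to $G^\dagger$, combined with stochastic domination of the free measure by the wired measure, shows that $\phi^f_{G^\dagger,2,\beta_c(G^\dagger),0}$ has no infinite cluster. Stochastic monotonicity of $\phi^f$ in $\beta$ then forces $\beta_c^\dagger>\beta_c(G^\dagger)$, and since $\beta\mapsto\beta^\dagger$ is strictly decreasing, $\beta_u>\beta_c$.

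For (2), \cref{thm:main_continuity_FK} applied to $G^\dagger$ yields weak continuity of $\phi^w_{G^\dagger,2,\cdot,0}$. Combined with the absence of infinite clusters in both the wired and free measures at $\beta_c(G^\dagger)$ and the HJL uniqueness criterion (on unimodular nonamenable transitive graphs, $\phi^w=\phi^f$ iff the wired measure has at most one infinite cluster almost surely), this forces $\phi^w_{G^\dagger,2,\beta_c(G^\dagger),0}=\phi^f_{G^\dagger,2,\beta_c(G^\dagger),0}$. Dualizing gives $\phi^w_{G,2,\beta_u,0}=\phi^f_{G,2,\beta_u,0}$, and the presence of a unique infinite cluster at $\beta_u$ is read off from \eqref{eq:duality_clusters}. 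Item (3) is then obtained by dualizing \cref{thm:main_continuity_FK}: the weak continuity of $\phi^w_{G^\dagger,2,\beta^\dagger,0}$ in $\beta^\dagger$, combined with continuity of the involution $\beta\mapsto\beta^\dagger$ and weak continuity of the duality map $\omega\mapsto\omega^\dagger$, yields weak continuity of $\phi^f_{G,2,\beta,0}$ in $\beta$.

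For (4), the equivalence $\bG^w_{\beta,0}\neq\bG^f_{\beta,0}\iff\phi^w_{2,\beta,0}\neq\phi^f_{2,\beta,0}$ follows from the Lupu--Werner formula \eqref{eq:FK_Gradient_Formula}: this formula writes arbitrary cylinder probabilities under $\phi^\#$ in terms of expectations under $\bG^\#$, and conversely each cylinder event in $\bG^\#$ can be recovered by inclusion-exclusion, so each measure determines the other. The identification of the interval of non-equality with $(\beta_c,\beta_u)$ then invokes the HJL criterion once more, together with the fact that $\phi^w_{G,2,\beta,0}$ has $0$, $\infty$, or $1$ infinite clusters according to whether $\beta$ lies in $[0,\beta_c]$, $(\beta_c,\beta_u)$, or $[\beta_u,\infty)$ (by \cref{thm:main_simple}, the definition of $\beta_u$, and items (1)--(2)). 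The main obstacle I anticipate is the appeal to the HJL uniqueness criterion \emph{at} the critical point of $G^\dagger$ in Step (2): I would need to verify carefully that this criterion applies at the critical value itself (not merely above it), and if it does not quite apply as stated, I would supplement it by combining \cref{thm:main_continuity_FK} on $G^\dagger$ with the left-continuity of $\phi^f$ and the right-continuity of $\phi^w$ in $\beta$ to squeeze out equality of the two critical measures from weak continuity of $\phi^w$ at $\beta_c(G^\dagger)$.
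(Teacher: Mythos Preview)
Your proposal is correct and follows essentially the same route as the paper. The paper proves item (2) first (using \cref{thm:main_continuity_FK} on $G^\dagger$ to get $\phi^f=\phi^w$ at $\beta_c(G^\dagger)$, then dualizing via \cite[Theorem 3.1]{MR1894115}) and only afterwards reads off $\beta_u>\beta_c$ from the contrast between no infinite cluster at $\beta_c$ and a unique one at $\beta_u$; you instead argue $\beta_u>\beta_c$ directly by comparing $\beta_c^\dagger$ with $\beta_c(G^\dagger)$, but the two orderings are interchangeable. Your hedge about the HJL criterion at the critical point is well placed, and your fallback---squeezing via left-continuity of $\phi^f$ together with the continuity of $\phi^w$ furnished by \cref{thm:main_continuity_FK}---is precisely how the paper justifies that step; for item (4) the paper likewise cites \cite[Proposition 3.8]{MR1894115} together with \eqref{eq:FK_Gradient_Formula}, exactly as you do.
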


The analogous results for Bernoulli percolation are due to Benjamini and Schramm \cite{BS00}. The condition that the dual of $G$ is transitive should not really be necessary, since \cref{thm:main,thm:main_continuity} should both extend to quasi-transitive nonamenable graphs. 

\begin{proof}
The identity $\beta_u=\beta_u^f=\beta_u^w$ follows from \cref{prop:betafbetaw} and duality as explained above. \cref{thm:main_continuity_FK} implies that the free and wired FK-Ising models on $G^\dagger$ coincide at $\beta_c$, so that item $2$ follows by planar duality together with the corresponding fact for the free measure at $\beta_c$ \cite[Theorem 3.1]{MR1894115}. Since the free and wired FK-Ising measures both have no infinite clusters at $\beta_c$ and a unique infinite cluster at $\beta_u$, it follows that $\beta_u>\beta_c$, completing the proof of item $1$. Item $3$ follows immediately from \cref{thm:main_continuity_FK} applied to $G^\dagger$ together with planar duality. Item $4$ follows from \cite[Proposition 3.8]{MR1894115} and the formula \eqref{eq:FK_Gradient_Formula}.
\end{proof}

\subsection{Discontinuity of the free Ising measure}
\label{subsec:discontinuity}

It is natural to wonder whether an analogue of \cref{thm:main_continuity} also holds for the \emph{free} Ising measure with zero external field; we now argue that this is not the case in general. 
In the previous subsection we saw examples of Cayley graphs in which there is a unique infinite cluster in the free FK-Ising model at $\beta_u^f$.
It is also possible for there to be infinitely many infinite clusters at $\beta_u^f$. Indeed, it follows from the proof of \cite[Corollary 6.6]{LS99} that if $G$ is a Cayley graph of an infinite Kazhdan group then $\beta_f^u<\infty$ and the number of infinite clusters in the free FK-Ising model at $\beta_c^f$ is either $0$ or $\infty$. The perturbative criteria of \cite{MR1833805} imply that every nonamenable group has a Cayley graph for which $\beta_u^f > \beta_c$ (see \cite{MR1756965} for a similar result for Bernoulli percolation), and it follows that there exists a nonamenable Cayley graph such that the free FK-Ising model has infinitely many infinite clusters at the uniqueness threshold $\beta_u^f$. 

We claim that if $\beta_u^f>\beta_c$ and there is non-uniqueness at $\beta_u^f$ then there exist vertices $u$ and $v$ such that
$\langle \sigma_u \sigma_v \rangle^f_{\beta,0} =\phi^f_{2,\beta,0}(u \leftrightarrow v)$ is discontinuous at $\beta_u^f$. Indeed, under this assumption, we have by the aforementioned theorem of Lyons and Schramm \cite[Theorem 4.1]{LS99} that there exist $u,v \in V$ such that
\[
\phi^f_{2,\beta_u^f,0}(u \leftrightarrow v) \leq \frac{1}{2}\phi^f_{2,\beta_u^f,0}(o \to \infty)^2.
\]
On the other hand, for each $\beta>\beta_u^f$ we have by FKG that
\[
\phi^f_{2,\beta,0}(u \leftrightarrow v) \geq \phi^f_{2,\beta,0}(u \to \infty \text{ and } v\to \infty) \geq \phi^f_{2,\beta_u^f,0}(o \to \infty)^2,
\]
which yields the desired discontinuity. It follows that the measures $\bI^f_{\beta,0}$ and $\bG_{\beta,0}^f$ are both weakly discontinuous in $\beta$ at $\beta_u^f$, so that \cref{thm:main_continuity} cannot be extended to the case of free boundary conditions. Similar phenomena for Bernoulli percolation are discussed in \cite[Section 5.1]{HermonHutchcroftSupercritical}.

In fact, the free FK-Ising measure $\phi^f_{2,\beta,0}$ is also weakly discontinuous in $\beta$ at $\beta_u^f$ in the same class of examples. This follows from the following general proposition, which shows that the gradient Ising model and FK-Ising model always have the same continuity properties.

\begin{prop}
\label{prop:continuity_equivalence}
Let $G=(V,E)$ be an infinite connected weighted graph, let $\# \in \{f,w\}$, and let $((\beta_n,h_n))_{n\geq1}$ be a sequence in $[0,\infty)^2$ converging to some $(\beta,h)$. Then 
\[
\bG^\#_{\beta,h} = \wlim_{n\to\infty} \bG^\#_{\beta_n,h_n}  \iff \phi^\#_{\beta,h} = \wlim_{n\to\infty} \phi^\#_{\beta_n,h_n}.
\] 
\end{prop}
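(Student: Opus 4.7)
The plan is to prove both implications using the Lupu--Werner coupling identity \eqref{eq:FK_Gradient_Formula}, which states that
\[
\phi^\#_{2,\beta,h}(\omega(x)=0 \text{ for all } x\in A) = \bG^\#_{\beta,h}\!\left[\exp\Bigl(-\beta \sum_{x \in A} J_x(\sigma_x + 1)\Bigr)\right]
\]
for every finite $A\subseteq E\cup V$, where we set $J_v = h$ for $v\in V$. Since $h_n$ is bounded, the gradient measures $(\bG^\#_{\beta_n,h_n})_{n \geq 1}$ are all supported on a common compact subset of $\R^{E\cup V}$, so weak convergence on either side is equivalent to convergence of all cylinder probabilities and, by inclusion--exclusion (as discussed in \cref{subsec:random_currents}), to convergence of the ``all-closed'' probabilities $\phi^\#_{2,\beta,h}(\omega=0\text{ on }A)$ and the ``all-plus'' probabilities $\bG^\#_{\beta,h}(\sigma_x=+1\,\forall x\in A)$ for every finite $A$.

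For the direction $\bG^\#_n\to \bG^\#\Rightarrow\phi^\#_n\to\phi^\#$, I would fix a finite $A$ and observe that the test function $f_n(\eta) := \exp\bigl(-\beta_n\sum_{x\in A}(\eta_x + J_x^{(n)})\bigr)$ (with $J_e^{(n)}=J_e$ and $J_v^{(n)}=h_n$) is uniformly bounded on the common compact support and converges uniformly to the corresponding test function $f$ for $(\beta,h)$; standard weak-convergence plus uniform approximation then gives $\bG^\#_n[f_n]\to\bG^\#[f]$, which is precisely convergence of $\phi^\#_n(\omega=0\text{ on }A)$. For the converse, tightness on the common compact support reduces the task to identifying any weak subsequential limit $\mu$ of $(\bG^\#_n)$ with $\bG^\#_{\beta,h}$. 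Running the previous argument along the subsequence together with the assumed convergence of $\phi^\#_n$ shows that
\[
\mu\!\left[e^{-\beta\sum_{x\in A}J_x(\sigma_x+1)}\right] = \phi^\#_{2,\beta,h}(\omega=0\text{ on }A) = \bG^\#_{\beta,h}\!\left[e^{-\beta\sum_{x\in A}J_x(\sigma_x+1)}\right]
\]
for every finite $A$. Expanding the pointwise identity $e^{-\beta J_x(\sigma_x+1)} = 1 - (1-e^{-2\beta J_x})\mathbbm{1}(\sigma_x=+1)$ and applying M\"obius inversion on the subset lattice converts this to
\[
\prod_{x\in B}(1-e^{-2\beta J_x})\,\bigl[\mu - \bG^\#_{\beta,h}\bigr]\!\bigl(\sigma_x=+1\ \forall x\in B\bigr) = 0
\]
for every finite $B$, so $\mu$ agrees with $\bG^\#_{\beta,h}$ on the cylinder event $\{\sigma_x=+1\,\forall x\in B\}$ whenever the normalising product is nonzero, and then agrees on all cylinder events by a final inclusion--exclusion.

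The main obstacle will be handling the degenerate cases in which the coefficient $\prod_{x\in B}(1-e^{-2\beta J_x})$ vanishes, which happens precisely when $\beta=0$ or when $h=0$ and $B$ contains a vertex. The case $\beta=0$ is straightforward: $\phi^\#_{2,0,h}$ is the Dirac mass on the all-closed configuration and $\bG^\#_{0,h}$ is the push-forward of the uniform Ising measure, and continuity of both at $\beta=0$ can be read off directly from the DLR/weak-limit definitions. When $h=0$ with $h_n>0$, the vertex coordinate $\eta_v=h_n\sigma_v$ is supported on $\{\pm h_n\}$ and hence its marginal converges trivially to $\delta_0$; the M\"obius inversion applied only to $B\subseteq E$ pins down the edge marginal of $\mu$, and the joint distribution reconstructs as this edge marginal times $\delta_{\mathbf 0}$ on the vertices, matching $\bG^\#_{\beta,0}$.
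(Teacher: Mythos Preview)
Your proof is correct and follows essentially the same approach as the paper's, both hinging on the identity \eqref{eq:FK_Gradient_Formula} and the fact that the family of exponentials $e^{-\beta\sum_{x\in A}J_x(\sigma_x+1)}$ determines the gradient measure. The paper packages the $\Leftarrow$ direction slightly differently---showing via $e^{-\beta J_x\sigma_x}=\cosh(\beta J_x)-\sigma_x\sinh(\beta J_x)$ that the monomials $\prod_{x\in B}\sigma_x$ lie in the linear span of these exponentials and then invoking Stone--Weierstrass---whereas your explicit M\"obius inversion and separate handling of the $h=0$ vertex coordinates amount to the same computation.
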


A similar proof extends this equivalence to the random current and loop $O(1)$ models. It is also possible to prove a similar statement for the random cluster model with $q\in \{3,4,\ldots\}$ and the Potts model by using the fact that the random cluster model can be represented as Bernoulli percolation on the colour clusters to deduce a formula analogous to \cref{eq:FK_Gradient_Formula}.

\begin{proof}
The claim is trivial when $\beta=0$ so we may suppose that $\beta>0$.
The implication $\Rightarrow$ follows immediately from \eqref{eq:FK_Gradient_Formula}. To deduce the implication $\Leftarrow$ from $\eqref{eq:FK_Gradient_Formula}$, it suffices to prove that 
\begin{itemize}\item the functions $\{e^{-\beta \sum_{e\in A} J_e \sigma_e} : A \subseteq E \text{ finite}\}$ have dense linear span in $C(\{-1,1\}^E,\| \cdot \|_\infty)$ for each $\beta>0$, and that
\item the functions $\{e^{-\beta \sum_{e\in A \cap E} J_e \sigma_e - \beta \sum_{v\in A \cap V} h\sigma_v } : A \subseteq E \cup V \text{ finite}\}$ have dense linear span in $C(\{-1,1\}^{E\cup V},\| \cdot \|_\infty)$ for each $\beta,h>0$. 
\end{itemize}
In both cases, the set $A$ is allowed to be empty. 
We prove the first claim, the proof of the second being similar. Fix $\beta>0$. Using the identity $e^{-\beta \sum_{e\in A} J_e \sigma_e} =\prod_{e\in A} (\cosh(\beta J_e)-\sigma_x \sinh(\beta J_e))$ one can prove by induction on $|B|$ that $\prod_{e\in B}  \sigma_e$ belongs to the linear span of $\{e^{-\beta \sum_{e\in A} J_e \sigma_e} : A \subseteq E \text{ is finite}\}$ for every finite (possibly empty) set $B \subseteq E$. The linear span of $\{\prod_{e\in B}  \sigma_e : B \subseteq E$ finite$\}$ is an algebra that separates points and is therefore dense in $C(\{-1,1\}^E,\| \cdot \|_\infty)$ by Stone-Weierstrass, concluding the proof.
\end{proof}

\begin{remark}
At this point, the reader may be wondering where the \emph{proof} of \cref{thm:main_continuity,thm:main_continuity_FK} breaks down in the case of free boundary conditions. Here is a short answer: The free version of \cref{cor:gradient} in zero external field does not allow connections through infinity. This means that one must also consider infinite clusters when applying the free version of this proposition to control the change in the edge marginals of the FK-Ising model as in \cref{cor:change_in_correlations}, and the free version of \cref{prop:finite_clusters_mismatched} does not suffice to do this. A similar problem also arises when attempting to apply the free version of \eqref{eq:ES_cylinder}.
\end{remark}

\subsection{The spectral radius of the free gradient Ising and random current models}
\label{subsec:free_spectral_radius}

We now note that, although the free gradient Ising measure $\bG^f_{\beta,0}$ is not known to be a factor of i.i.d.\ when $\beta > \beta_c$, it always has spectral radius at most that of the graph. Note that, in contrast, the free Ising measure $\bI^f_{\beta,0}$ on a $k$-regular tree has spectral radius strictly greater than that of the tree when $\beta$ is sufficiently large \cite{MR3603969}. 

\begin{thm}
\label{thm:free_spectral_radius}
Let $G=(V,E,J)$ be a connected transitive weighted graph and let $\Gamma$ be a closed unimodular transitive group of automorphisms. Then 
$\rho(\mathbf{G}^f_{\beta,h})$, $\rho(\mathbf{C}^f_{\beta,h})$, and $\rho(\mathbf{L}^f_{\beta,h})$ are all at most $\rho(G)$ 
for every $\beta,h\geq 0$.
\end{thm}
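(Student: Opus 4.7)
The plan is to reduce to the statement for $\bG^f_{\beta,h}$ and then transfer the spectral gap from the free FK-Ising measure via the Lupu--Werner identity \eqref{eq:FK_Gradient_Formula}. Once $\rho(\bG^f_{\beta,h}) \leq \rho(G)$ is established, \cref{thm:spectralradius} (which is stated for both $\# \in \{f,w\}$) immediately yields $\rho(\bL^f_{\beta,h}) \leq \max\{\rho(\bG^f_{\beta,h}), \rho(G)\} = \rho(G)$. The bound on $\rho(\bC^f_{\beta,h})$ will follow because, as in the wired case described in \cref{subsec:random_currents}, the free random current measure is a $\Gamma$-factor of $\bL^f_{\beta,h} \otimes \mu$ for a non-trivial Bernoulli measure $\mu$; combining \eqref{eq:factor_rho}, \cref{thm:Bernoulli_radius}, and the fact that $\rho(\mu_1\otimes\mu_2) = \max(\rho(\mu_1), \rho(\mu_2))$ (via the tensor decomposition of $L^2_0$) gives the claim.

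For the reduction step, I will first dispose of $\beta=0$, where spins are i.i.d.\ so $\bG^f_{0,h}$ is a $\Gamma$-factor of a non-trivial Bernoulli measure and \cref{thm:Bernoulli_radius} with \eqref{eq:factor_rho} applies. For $\beta>0$, the key observation is that the functions
\[
f_A(\sigma) = \exp\Big(-\beta \sum_{x \in A} J_x (\sigma_x+1)\Big), \qquad A \subseteq E \cup V \text{ finite}
\]
(with the convention $J_v = h$) have dense linear span in $L^2(\bG^f_{\beta,h})$: this is a standard Stone--Weierstrass argument (cf.\ the proof of \cref{prop:continuity_equivalence}), restricting to $A \subseteq E$ and using symmetric functions if $h=0$. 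By the limit formula \eqref{eq:rho_limit_covariance}, it therefore suffices to show that for every finite $A$,
\[
\lim_{k \to \infty} \big|\Cov\!\big(f_A(\sigma), f_A(\hat X_{2k}^{-1}\sigma)\big)\big|^{1/2k} \leq \rho(G).
\]

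The covariance will be controlled via the following observation: setting $A_k = \hat X_{2k} A$ and using $\Gamma$-invariance of $J$, one has $f_A(\hat X_{2k}^{-1}\sigma) = f_{A_k}(\sigma)$, and \emph{when $A$ and $A_k$ are disjoint} the identity $f_A f_{A_k} = f_{A \cup A_k}$ combined with \eqref{eq:FK_Gradient_Formula} gives the exact translation
\[
\bG^f_{\beta,h}(f_A f_{A_k}) = \phi^f_{2,\beta,h}(T_A T_{A_k}), \qquad T_B(\omega) := \mathbbm{1}(\omega|_B = 0),
\]
while $\bG^f_{\beta,h}(f_A) = \phi^f_{2,\beta,h}(T_A)$ holds unconditionally. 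Splitting according to whether $A \cap A_k = \emptyset$ and using $0 \leq f_A, T_A \leq 1$, the contribution from $A \cap A_k \neq \emptyset$ is bounded by $\P(A \cap A_k \neq \emptyset)$, which decays like $\rho(G)^{2k}$ by the random-walk hitting argument already used in the proof of \cref{thm:Bernoulli_radius}. The remaining (disjoint) part becomes a covariance under $\phi^f_{2,\beta,h}$, to which the spectral gap $\rho(\phi^f_{2,\beta,h}) \leq \rho(G)$ from \cref{thm:Ising_factor}(1) applies, again yielding decay $\rho(G)^{2k}$. Assembling these two pieces gives $|\Cov(f_A, f_{A_k})| = O(\rho(G)^{2k})$ and the theorem follows.

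The main conceptual point, and the only nontrivial step, is the use of \eqref{eq:FK_Gradient_Formula} to trade the problematic free Ising measure (which need not be a factor of i.i.d.\ when $\beta > \beta_c$) for the free FK-Ising measure (which always is, by \cref{thm:Ising_factor}); the asymmetric non-disjoint contribution is routine to control thanks to the boundedness of $f_A$. No FKG or monotonicity input is required anywhere in the argument.
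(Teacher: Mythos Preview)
Your proposal is correct and follows essentially the same route as the paper: reduce to $\bG^f_{\beta,h}$ via \cref{thm:spectralradius} and the factor structure of $\bC^f_{\beta,h}$, then use the Lupu--Werner identity \eqref{eq:FK_Gradient_Formula} to transfer the covariance computation to $\phi^f_{2,\beta,h}$ (whose spectral radius is at most $\rho(G)$ by \cref{thm:Ising_factor}), splitting off the $A \cap A_k \neq \emptyset$ contribution exactly as in the proof of \cref{thm:spectralradius}. The paper's test functions $F_A = e^{\beta H_A}$ differ from your $f_A$ only by a deterministic multiplicative constant, so the arguments are identical.
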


Note that when $G$ is a tree, the gradient Ising measure $\mathbf{G}^f_{\beta,0}$ is equivalent to Bernoulli bond percolation on $G$, so that this inequality is trivial.

\begin{proof}[Sketch of proof]
The claim is trivial when $\beta=0$, so suppose $\beta>0$. By \cref{thm:spectralradius}, it suffices to prove the claim for the gradient Ising measure $\mathbf{G}^f_{\beta,h}$. 
Let $X=(X_n)_{n\geq 0}$ be the random walk on $G$ and let $\hat X = (\hat X_n)_{n\geq 0}$ be the associated random walk on $\Gamma$ as defined in \cref{subsec:spectral_background}, and let $\sigma$ be a random variable with law $\bG^f_{\beta,h}$ that is independent of $\hat X$.
Recall from \eqref{eq:FK_Gradient_Formula} that
\begin{align}
\phi^f_{2,\beta,h}(\omega(x)=0 \text{ for all $x\in A$}) 
&=\bG^f_{\beta,h} \left[e^{\beta H_A} \right]\prod_{x \in A} \left( 1-\tanh(\beta J_x)\right)
\end{align}
for every finite set $A \subseteq E \cup V$, where we write $J_v=h$ for each $v\in V$ and recall that $K_A = \sum_{x\in A} J_x \sigma_x$. 
Thus, it follows by a similar analysis to the proof of \cref{thm:spectralradius} that if we set $F_A(\sigma)=e^{\beta H_A(\sigma)}$  for each finite $A \subseteq E \cup V$ (so that $F_\emptyset \equiv 1$) then
\begin{equation}
\limsup_{k\to\infty} \Cov\left(F_A(\sigma),F_A(\hat X_{2k}^{-1} \sigma) \right)^{1/2k} \\\leq \max\{\rho(\phi^f_{\beta,h}),\rho(G)\} = \rho(G),
\end{equation}
for every finite set $A \subseteq E \cup V$, 
where the final inequality follows from \cref{thm:Ising_factor}. The proof of \cref{prop:continuity_equivalence} implies that the set $\{F_A : A \subseteq E \cup V$ is finite$\}$ has dense linear span in $L^2(\bG^\#_{\beta,h})$, so that the claim follows from \eqref{eq:rho_limit_covariance}.
\end{proof}

This theorem raises the following natural question. See \cite{ray2019finitary,MR3603969,harel2018finitary} for related results. 

\begin{question}
Let $G=(V,E,J)$ be a connected transitive weighted graph. For what values of $\beta$, $h$, and $\#$ can $\mathbf{G}^\#_{\beta,h}$, $\mathbf{C}^\#_{\beta,h}$, and $\mathbf{L}^\#_{\beta,h}$ be expressed as factors of i.i.d.?
\end{question}

\subsection{Other graphs}

We remark that the methods used to prove \cref{thm:finite_clusters,thm:free_random_cluster} can easily be combined with the methods of \cite{HermonHutchcroftIntermediate} to prove the following theorem, which applies in particular to certain Cayley graphs of intermediate volume growth.

\begin{theorem}
Let $G=(V,E)$ be a connected, locally finite, unimodular transitive graph, and suppose that there exist constants $c>0$ and $\gamma > 1/2$ such that the return probabilities for simple random walk on $G$ satisfy $p_n(o,o) \leq e^{-cn^\gamma}$ for every $n\geq 1$. Then for each $q\geq 1$ the critical free random cluster measure $\phi^f_{q,\beta_c^f(q),0}$ is supported on configurations with no infinite clusters.
\end{theorem}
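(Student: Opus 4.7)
We adapt the proofs of \cref{thm:finite_clusters} and \cref{thm:free_random_cluster} using the heat-kernel based methodology of \cite{HermonHutchcroftIntermediate}, which was developed precisely to handle such graphs in the Bernoulli percolation setting. Throughout we work in the subcritical regime $\beta < \beta_c^f(q)$, where all clusters are a.s.\ finite by sharpness of the phase transition \cite{1901.10363,MR3898174}, and aim for a polynomial tail bound on $|K_o|$ that is uniform in $\beta$; the conclusion at criticality will then follow by left-continuity of $\phi^f_{q,\beta,0}$ via the Holley inequality.

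The surgery argument from the proof of \cref{thm:finite_clusters} does not require nonamenability: the random-walk surgery combined with the two-ghost inequality (\cref{cor:two_ghost_S}) produces constants $C_1, C_2$, uniformly bounded as $\beta$ ranges over a compact subset of $[0,\infty)$, such that
\[
\phi^f_{q,\beta,0}(\sA_{\lambda, m}) \leq \frac{C_1 e^{C_2 m}}{\sqrt{\lambda}}, \qquad \lambda, m \geq 1,
\]
where $\sA_{\lambda,m}$ is the event that a random walk $X_0, \dots, X_m$ independent of $\omega$ has $X_0$ and $X_m$ lying in distinct finite clusters each of edge-weight at least $\lambda$. Separately, in place of Schramm's lemma (\cref{prop:Schramm}), we use the following heat-kernel substitute: since $p_m(o, x) \leq p_m(o, o)$ by Cauchy--Schwarz on any transitive graph,
\[
\P\bigl(X_0 \leftrightarrow X_m \text{ in } \omega,\ |K_o| \leq L \bigr) \leq \E\bigl[\mathbbm{1}(|K_o| \leq L)\, |K_o|\, p_m(o,o)\bigr] \leq L\, e^{-cm^\gamma}.
\]

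In the subcritical regime, FKG applied to the increasing events $\{|K_{X_0}|\geq\lambda\}$ and $\{|K_{X_m}|\geq\lambda\}$ gives $\alpha_\beta(\lambda)^2 \leq \P\bigl(|K_{X_0}|\geq\lambda,\, |K_{X_m}|\geq\lambda\bigr)$, where $\alpha_\beta(\lambda) := \phi^f_{q,\beta,0}(|K_o| \geq \lambda)$. Decomposing the right-hand side according to whether $X_0$ and $X_m$ lie in distinct or identical clusters and truncating the identical-cluster case at scale $L \geq \lambda$ yields the master recursion
\[
\alpha_\beta(\lambda)^2 \leq \frac{C_1 e^{C_2 m}}{\sqrt{\lambda}} + L\, e^{-cm^\gamma} + \alpha_\beta(L)
\qquad \text{for all } \lambda, L, m \geq 1.
\]
Running the multi-scale bootstrap of \cite{HermonHutchcroftIntermediate} on this recursion yields a uniform polynomial bound $\alpha_\beta(\lambda) \leq C\lambda^{-\delta}$ for all $\beta < \beta_c^f(q)$ and $\lambda \geq 1$. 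Taking $\beta \uparrow \beta_c^f(q)$ and using the left-continuity of $\phi^f_{q,\beta,0}$ transfers the bound to the critical measure and therefore rules out infinite clusters.

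The main obstacle is the optimization in the bootstrap step. Without a spectral gap, the exponential surgery cost $e^{C_2 m}$ must be controlled against the sub-exponential heat-kernel decay $e^{-cm^\gamma}$, taking advantage of the two-ghost gain $1/\sqrt{\lambda}$ and the recursive $\alpha_\beta(L)$ term. A naive choice of $m$ succeeds easily when $\gamma \geq 1$, but for $\tfrac{1}{2} < \gamma < 1$ one needs the careful multi-scale iteration of \cite{HermonHutchcroftIntermediate}; the condition $\gamma > \tfrac{1}{2}$ is sharp for this to close, matching the threshold identified for Bernoulli percolation and dictated by how the exponents $1/2$ (from two-ghost) and $\gamma$ (from the heat kernel) interact in the recursion.
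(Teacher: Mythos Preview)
Your proposal is correct and follows precisely the route the paper itself indicates (the paper does not give a detailed proof, only the remark that the methods of \cref{thm:finite_clusters,thm:free_random_cluster} combine with those of \cite{HermonHutchcroftIntermediate}). You have correctly identified the three ingredients: the random-walk surgery with the red/white coloring from \cref{thm:finite_clusters} (which indeed does not use nonamenability), the FKG lower bound on two-point cluster-size probabilities from the subcritical argument of \cref{thm:free_random_cluster}, and the heat-kernel replacement for Schramm's lemma together with the multi-scale bootstrap from \cite{HermonHutchcroftIntermediate}. Your master recursion is the correct one, and the role of the threshold $\gamma>1/2$ is identified accurately.
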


Applying the results of \cite{MR3306602}, we immediately deduce the following corollary.

\begin{corollary}
Let $G=(V,E)$ be an \emph{amenable}, connected, locally finite, unimodular transitive graph, and suppose that there exist constants $c>0$ and $\gamma > 1/2$ such that the return probabilities for simple random walk on $G$ satisfy $p_n(o,o) \leq e^{-cn^\gamma}$ for every $n\geq 1$. Then the Ising model on $G$ has a continuous phase transition in the sense that $m^*(\beta_c)=0$.
\end{corollary}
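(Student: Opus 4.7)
The plan is to deduce the corollary by concatenating three ingredients already available in the paper. First, the preceding theorem guarantees that under the hypothesis $p_n(o,o) \leq e^{-cn^\gamma}$ with $\gamma > 1/2$, the critical free FK-Ising measure $\phi^f_{2,\beta_c^f(2),0}$ is supported on configurations with no infinite clusters. Second, by \cref{prop:betafbetaw}, the identity $\beta_c = \beta_c^f(2) = \beta_c^w(2)$ holds on any connected transitive weighted graph, so that the previous statement is really about the FK-Ising model at $\beta_c$. Third, and this is where amenability enters, the main theorem of Aizenman--Duminil-Copin--Sidoravicius (\cite{MR3306602}) asserts that on an amenable transitive graph, if the free FK-Ising model has no infinite cluster at criticality then $\phi^f_{2,\beta_c,0} = \phi^w_{2,\beta_c,0}$ and the spontaneous magnetization of the Ising model vanishes at $\beta_c$.

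In slightly more detail, once we know that both the free and wired FK-Ising measures at $\beta_c$ assign probability zero to the existence of an infinite cluster, the Edwards--Sokal coupling gives $m^*(\beta_c) = \langle \sigma_o\rangle^+_{\beta_c,0} = \phi^w_{2,\beta_c,0}(o\leftrightarrow \infty) = 0$, which is the claimed continuity. The coincidence of the free and wired FK-Ising measures at criticality is exactly what the amenable argument based on the Burton--Keane uniqueness theorem for the duplicated random current model provides, as outlined in \cref{subsec:intro_overview}: the duplicated random current on an amenable graph has at most one infinite cluster almost surely, so the vanishing of long-distance two-point connection probabilities in the duplicated current (which is controlled by the free FK-Ising two-point function) forces no infinite clusters in the current, and then the ADS argument closes the loop to give $m^*(\beta_c)=0$.

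There is essentially no obstacle to this deduction; it is a direct citation of the preceding theorem and of \cite[Theorem 1.2]{MR3306602}, together with \cref{prop:betafbetaw} to translate between the Ising and FK critical points. The only mild subtlety is that the return-probability hypothesis only enters in establishing the input to the ADS machinery (namely, absence of infinite clusters at criticality in the free FK-Ising model); amenability is what allows that input to be upgraded to continuity of the Ising magnetization. In particular, the return-probability decay ensures $G$ has superpolynomial volume growth, so the hypotheses required by the preceding theorem (which is the one piece of nontrivial input) are met.
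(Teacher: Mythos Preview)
Your proposal is correct and matches the paper's approach: the paper simply says the corollary follows immediately from the preceding theorem together with the results of \cite{MR3306602}, and you have spelled out exactly this deduction (with the helpful addition of invoking \cref{prop:betafbetaw} to identify $\beta_c^f(2)=\beta_c$). The only minor wobble is your final remark about superpolynomial volume growth, which is unnecessary since the return-probability bound is itself the hypothesis of the preceding theorem.
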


Our results naturally raise the following interesting problem.

\begin{problem}
Extend \cref{thm:main_continuity,thm:main_continuity_FK} to nonunimodular transitive graphs.
\end{problem}

 One approach to this problem, which may be very challenging, would be to attempt to extend the analysis of \cite{Hutchcroftnonunimodularperc} from Bernoulli percolation to the Ising model. This would have the added benefit of giving a very complete description of the Ising model at and near criticality on such graphs, or more generally on graphs with a nonunimodular transitive subgroup of automorphisms such as $T \times \Z$, going far beyond the conclusions of \cref{thm:main_continuity,thm:main_continuity_FK}.

\section*{Acknowledgments}

We thank Jonathan Hermon for making us aware of Freedman's work on maximal inequalities for martingales \cite{MR0380971}, which inspired \cref{lem:martingale_stuff,lem:martingale_stuff2}. We also thank Hugo Duminil-Copin, Geoffrey Grimmett, and Russ Lyons for helpful comments on an earlier version of the manuscript.

 \setstretch{1}
 \footnotesize{
  \bibliographystyle{abbrv}
  \bibliography{unimodularthesis.bib}
  }

\end{document}